\numberwithin{equation}{section}
\numberwithin{figure}{section}
\numberwithin{table}{section}
\newtheorem{theorem}{Theorem}[section]
\newtheorem{corollary}[theorem]{Corollary}
\newtheorem{lemma}[theorem]{Lemma}
\newtheorem{construction}[theorem]{Construction}
\newtheorem{property}[theorem]{Property}
\newtheorem{defn}[theorem]{Definition}
\newtheorem{assumption}[theorem]{Assumption}
\newtheorem{proposition}[theorem]{Proposition}
\theoremstyle{remark}
\newtheorem{remark}[theorem]{Remark}
\newtheorem{example}[theorem]{Example}
\numberwithin{equation}{section}
\newcommand{\veps}{\varepsilon}
\newcommand{\PP}{\mathbb{P}}
\newcommand{\EE}{\mathbb{E}}
\newcommand{\RR}{\mathbb{R}}
\newcommand{\NN}{\mathbb{N}}
\newcommand{\clv}{\mathcal{V}}
\newcommand{\clu}{\mathcal{U}}
\newcommand{\clm}{\mathcal{M}}
\newcommand{\clr}{\mathcal{R}}
\newcommand{\cla}{\mathcal{A}}
\newcommand{\clb}{\mathcal{B}}
\newcommand{\cld}{\mathcal{D}}
\newcommand{\clg}{\mathcal{G}}
\newcommand{\clf}{\mathcal{F}}
\newcommand{\clk}{\mathcal{K}}
\newcommand{\clj}{\mathcal{J}}
\newcommand{\cle}{\mathcal{E}}
\newcommand{\clp}{\mathcal{P}}
\newcommand{\cli}{\mathcal{I}}
\newcommand{\bfL}{\mathbf{L}}
\newcommand{\om}{\omega}
\newcommand{\Om}{\Omega}
\newcommand{\tplus}{A_+}
\newcommand{\barLP}{\bar{L}_{\mbox{\tiny{pair}}}}
\newcommand{\MP}{M_{\mbox{\tiny{pair}}}}
\newcommand{\cbLP}{\check{\bfL}_{\mbox{\tiny{pair}}}}
\newcommand{\newgamma}{\gamma}
\newcommand{\bdelta}{\bm{\delta}}
\definecolor{expcol}{rgb}{0.5,0.1,1.0}
\definecolor{ecol}{rgb}{0.0, 0.0, 1.0}
\newcommand{\AB}[1]{{\color{black} #1}}
\newcommand{\PZ}[1]{{\color{black} #1}}
\begin{document}

\title{Large Deviations  for Empirical Measures of  Self-Interacting Markov Chains}
\author{Amarjit Budhiraja, Adam Waterbury, Pavlos Zoubouloglou}
\maketitle

\begin{abstract}
Let $\Delta^o$ be a finite set and, for each probability measure $m$ on $\Delta^o$, let $G(m)$ be a transition kernel on $\Delta^o$. Consider the sequence $\{X_n\}$ of $\Delta^o$-valued random variables such that,  given $X_0,\ldots,X_n$, the conditional distribution of $X_{n+1}$ is $G(L^{n+1})(X_n,\cdot)$, where $L^{n+1}=\frac{1}{n+1}\sum_{i=0}^{n}\bdelta_{X_i}$. Under conditions on $G$ we establish a large deviation principle for the sequence $\{L^n\}$. As one application of this result we obtain large deviation asymptotics for the Aldous-Flannery-Palacios (1988) approximation scheme for quasi-stationary distributions of finite state Markov chains. The conditions on $G$ cover other models as well, including certain models with edge or vertex reinforcement.\\

\noindent {\bf Keywords:} reinforced random walks, quasi-stationary distributions, empirical measure, large deviations, stochastic approximations, self-interacting Markov chains, multiscale systems.
 \end{abstract}

\section{Introduction}

In this work we are interested in the large deviations behavior of certain types of self-interacting Markov chains. The terminology `Markov chain' is in fact a misnomer as these processes are very far from being Markovian, {since} the conditional law of the state at the next time instant, given the past, depends on the whole history of the process through its empirical distribution. {Despite the fact that such processes are non-Markovian, the term self-interacting Markov chain is widely used to describe them (cf. \cite{delmorarnjasand, delmordou2010annals}).  Other terminology, such as stochastic chains with occupational self-interactions \cite{delmormic2003} has also  been used to describe these processes.  
} The general setting {considered in this work} is as follows. Consider a finite set $\Delta^o \doteq \{1, \ldots , d\}$ and let $G$ be a map from $\clp(\Delta^o)$ (the space of probability measures on $\Delta^o$) to the space
$\clk(\Delta^o)$ of transition probability kernels on $\Delta^o$. Fix $x_0 \in \Delta^o$ and let
$\{X_n, \; n \in \NN_0\}$ be a sequence of $\Delta^o$-valued random variables defined recursively as follows: $X_0 = x_0$, and given $X_0, \ldots , X_n$, the conditional law of $X_{n+1}$  is $G(L^{n+1})(X_n, \cdot)$, where
$L^{n+1} \doteq \frac{1}{n+1} \sum_{i=0}^n \bdelta_{X_i}$ is the empirical measure at time instant $n$.
Many types of reinforced  stochastic dynamical  systems fall within this framework and such processes arise in several different contexts, e.g., Monte-Carlo methods for quasi-stationary distributions \cite{AFP88, benclo, blanchet2016}, population growth models in mathematical ecology \cite{sch01, sinliv}, self organization in dynamical models of social networks, models for random monopolies in economics, models for neuron growth, bandit problems in sequential analysis, generalized P\'{o}lya's urn models, and many others; see the excellent survey by Pemantle \cite{Pemantle05} for discussion of these diverse applications.  Using techniques from stochastic approximation theory and branching processes,  under suitable conditions on $G$, law of large numbers and central limit results for the empirical measure sequence $\{L^n, \; n \in \NN\}$ have been studied in various works \cite{AFP88, benclo,BenClo18,Pemantle05}.

The goal of the current work is to establish a large deviation principle (LDP) for the sequence $\{L^n, \; n \in \NN\}$ under a broad set of conditions on the map $G$. Our main result, Theorem \ref{thm:ldp1}, provides a large deviation  upper bound only requiring that the map $G$ is Lipschitz (Assumption \ref{assu:lip}). Furthermore, this theorem shows that under a stronger condition (Assumption \ref{ass1}) the  matching large deviation lower bound holds as well, thus establishing a LDP for $\{L^n, \; n \in \NN\}$. Assumption \ref{ass1} imposes four main conditions on the model: {the} first condition says that $G$ is an affine map; the second condition imposes a natural communicability structure on the transition probability matrix $G(m)$ for $m \in \clp(\Delta^o)$; the third condition requires that the fixed point equation $\pi^* G(\pi^*) = \pi^*$  admits a  strictly positive solution in  $\clp(\Delta^o)$; and, finally, the fourth condition says that the empirical measure $L^n$ eventually charges all points in $\Delta^o$, a.s.
As discussed in Example \ref{ex:operatorexamples}, Remark \ref{rem:conditions1}, and Section \ref{sec:examp}, these conditions are satisfied for many interesting settings.

{One such setting} is the reinforced Markov chain Monte-Carlo scheme for approximating quasi-stationary distributions (QSD) of finite state Markov chains that was introduced in the work of Aldous {\em et al.}\cite{AFP88}. For an overview of QSD, see \cite{qsdbook} and see Example \ref{ex:introduceqsdalg1} for a precise definition of a QSD. Let $P$ be the transition kernel of a $\Delta = \Delta^o \cup \{0\}$-valued Markov chain  that is absorbed at $0$, and consider the substochastic kernel $P^o$ obtained by restricting $P$ to $\Delta^o$. Suppose that $P^o$ is irreducible. Then, there is a unique QSD of $P$ which is characterized as 
 the normalized Perron-Frobenius eigenvector of $P^o$. The QSD captures the long-term pre-absorption behavior of the Markov chain with transition kernel $P$,  consequently,  QSD are widely used to understand metastability behavior of stochastic systems in ecology and biology \cite{bucpol, gos2, gylsil}, chemical kinetics \cite{parpol, pol4}, epidemiology \cite{andbri, artecolop1, artecolop2}, and other fields. In particular  numerical approximation of QSD is of significant interest. Various numerical methods have been proposed to approximate QSD,  and one important family of methods are described in terms of self-interacting chains \cite{AFP88, benclo, blanchet2016, budfrawat2}. 
The precise description of this approximation scheme is recalled in Example \ref{ex:operatorexamples}; here we merely note that the scheme corresponds to simulating a self-interacting Markov chain for which the function $G$ is given as
$G(m)(x,y) \doteq P(x,y) + P(x,0)m(y)$, for $x,y \in \Delta^o$,  and 
$m \in \clp(\Delta^o)$.
The law of large numbers (LLN) for the empirical measure sequence $\{L^n, \; n \in \NN\}$ associated with this Monte-Carlo method giving a.s. convergence to the QSD has been  established in \cite{AFP88, benclo}. Under exactly the conditions  for the LLN, the current work establishes a LDP for this sequence. Beyond this example, as discussed in Section \ref{sec:examp}, the Assumptions of Theorem \ref{thm:ldp1} cover many other types of self-interacting Markov chains as well, including certain variants of edge reinforced and vertex reinforced random walks, a type of personalized PageRank algorithm, and certain generalized P\'{o}lya urn schemes.

{We  now discuss some related literature on large deviations, specifically focusing on self-interacting processes and related urn models.} The model that we consider can be formulated as a type of urn model. Large deviations for a family of urn models (that are very different from the one considered here) have been studied in \cite{dupuis2004large}. The paper \cite{bryc2009large}  studies large deviations associated with a
preferential attachment random graph by viewing it as a  special type of an urn model. In the case $d=2$, large deviations for certain generalized P\'{o}lya urns have been studied in \cite{Franchini17}. For a special choice of the
 `urn function' in \cite{Franchini17},  this model reduces to a model of the form considered in the current work with $d=2$ and $G(m)(x,y) = (mP^o)(y)$, $x,y \in \Delta^o$, $m \in \clp(\Delta^o)$ where $P^o$ is a $2\times 2$ 
 transition probability matrix with strictly positive entries. Large deviation principle for a similar model but with a general $d$ was recently studied in \cite{BudWat3} under the condition that $P^o(x,y)> 0$ for all $x,y \in \Delta^o$. The  proofs  in the latter paper  also use   stochastic control representations as in the current work, however the arguments there are significantly simpler due to fact that $G(m)(x,y)$ does not depend on $x$; in particular the main technical challenge of  time varying  equilibria does not arise in \cite{BudWat3}. In fact, as a corollary of the current work we obtain a substantial extension of the result in \cite{BudWat3} where the condition $P^o(x,y)> 0$ for all $x,y \in \Delta^o$ is relaxed to the requirement that $P^o$ is an irreducible   transition probability matrix (see Section \ref{sec:examp}, Example \ref{ex:oldmodel}).
Our results also cover certain types of edge reinforced random walks (see Section \ref{sec:examp}, Example \ref{ex:edgereinforced}). Some results on large deviations for specific  kinds 
of  edge reinforced random walks (once-reinforced random walks) can be found in
 \cite{HuaLiuKai} and \cite{Zhang14}. \AB{Finally, the very recent article \cite{GuiNecWu24}, which appeared after the current paper was submitted for publication, establishes a large deviation principle for the occupation measure, for a continuous time strong Feller process, conditioned not to exit a given subregion.
 The results are applied to
several stochastic processes such as the solutions of elliptic stochastic
diﬀerential equations driven by a rotationally invariant $\alpha$-stable process, the kinetic
Langevin process, and the overdamped Langevin process driven by a Brownian motion. Techniques and motivations for this work are different from the current work.
}

 In the special case where $G(m) = P^o$ is independent of $m$, the LDP in the current work reduces to the classical empirical measure LDP for finite state Markov chains \cite{donvar1,  donvar3}. 
As is well known, in this case the rate function takes the following simple form 
	\begin{equation} \label{eq:donvarrf} \tilde I(m) = \inf_{ \lambda \in \cli(m)} R(\lambda \| m \otimes P^o), \; m \in \clp(\Delta^o),\end{equation}
	where $\cli(m) \doteq \{ \lambda \in \clp(\Delta^o\times \Delta^o): \lambda(\Delta^o \times \cdot) = \lambda( \cdot\times \Delta^o )=m(\cdot)\}$,
 $m \otimes P^o \in \clp(\Delta^o\times \Delta^o)$ is defined as $m \otimes P^o(x,y)
 = m(x)P^o(x,y)$, $x,y \in \Delta^o$, and $R$ is the relative entropy function.
{This rate function can be interpreted as follows. Fix $m \in \clp(\Delta^o)$ and  consider $\lambda \in \cli(m)$ achieving the infimum in \eqref{eq:donvarrf}. We can disintegrate $\lambda$ as $\lambda(x,y ) = m \otimes {Q}(x,y),$
where $m(\cdot) \doteq \lambda_{(1)}(\cdot)$ is the first marginal of $\lambda$
and $Q(x,y)$ gives the conditional probability that the second coordinate is $y$ given that the first coordinate is $x$.
Since $\lambda \in \cli(m)$, it follows that  $m(y) = \sum\nolimits_{x \in \Delta^o} m(x) {Q}(x,y)$. That is,  $m$ is the stationary distribution of a Markov chain with transition kernel ${Q}$. The  rate function in \eqref{eq:donvarrf} can now be interpreted as saying that the most likely way for the empirical measure of $\{X_n, \; n \in \NN_0\}$ to (asymptotically) be close to the measure $m$ is for $\{X_n,\; n \in \NN_0\}$ to behave like a Markov chain with transition kernel ${Q}$.}
 Indeed, this  insight and an appropriate use of the ergodic theorem are the key ingredients in the proof of the large deviation lower bound in this classical setting.
 
 In contrast {to the  Markovian setting}, for the self-interacting  chains considered in the current work, the atypical behaviors for which the empirical measure sequence is asymptotically close to a   given $m \in \clp(\Delta^o)$ are significantly more complex.  Roughly speaking, after a long period of time, the suitably interpolated path constructed from the empirical measure sequence $\{L^n, \; n \in \NN\}$ behaves like a trajectory, with a linear velocity,  that converges to $m$ and whose evolution is governed by certain dynamic local equilibria associated with time-dependent transition probability kernels on $\Delta^o\times \Delta^o$ (cf. \eqref{P2}, \eqref{P3}). The instantaneous local averaging that is manifested in the form of the rate function is somewhat akin to the forms of rate functions for large deviations from stochastic averaging principles for multiscale stochastic dynamical systems \cite{buddupgan2, dupspi, ver3}.
	This atypical behavior that produces a given $m \in \clp(\Delta^o)$ can be seen from the definition of the rate function ${I_A}$ in \eqref{def:ratefunction1}, which is described in terms of time-reversal of such linear paths, so that the convergence to $m$ at $\infty$ is replaced with  the initial condition on the path {to start at $m$}. The  variational formula for the rate function involves an exponential discount factor which arises due to the natural time interpolation that is associated with the discrete evolution of $L^n$ with steps of sizes $1/(n+1)$ (see \eqref{eq:barlnk}). Such time interpolation is quite standard in the asymptotic analysis of stochastic approximation schemes \cite{benSA, BMP12, bor09, kus03} and indeed a discounted cost has been previously observed in a rate function for certain large deviation problems arising from some stochastic approximation schemes with Gaussian noises \cite{kushner1984asymptotic}. We remark that in the special case {discussed in the previous paragraph, namely when} $G(m) = P^o$, the rate function in \eqref{def:ratefunction1} is easily seen to reduce to the classical formula in \eqref{eq:donvarrf} (see Example \ref{ex:sec8example1} in Section \ref{sec:examp}). We also note that the natural analogue of $\tilde I$ in the general self-interacting setting, defined as 
 \begin{equation}\label{eq:1230nn}
 \tilde I(m) = \inf_{ \lambda \in \cli(m)} R(\lambda \| m \otimes G(m)), \; m \in \clp(\Delta^o),
 \end{equation}
 satisfies the inequality ${I_A}(m) \le \tilde I(m)$, $m \in \clp(\Delta^o)$; see Remark 
 \ref{rem:ivstili}.

We now make some comments on proof techniques. The basic idea is to use stochastic control representations for  Laplace functionals of the form in \eqref{eq:varrep} \cite{buddupbook, dupell}.
Using this variational formula, the proof of the upper bound proceeds via natural tightness and weak convergence arguments.
The main challenges and novelty are in the proof of the large deviation lower bound and so we limit our remarks to this inequality. The basic idea is to choose a near-optimal control $\eta$, and the corresponding trajectory $M$ given through \eqref{P3},  in the variational formula for the rate function ${I_A(m)}$ in \eqref{def:ratefunction1}, and then construct controlled empirical measures as in \eqref{eq:barlnk} that suitably approximate $M$ and for which the associated cost, as given by the second term on the right side of \eqref{eq:varrep}, appropriately approximates the cost associated with $\eta$ in \eqref{def:ratefunction1}. However, such a construction for an arbitrary near-optimal control $\eta$ appears quite daunting, mainly due to the local equilibrium property \eqref{P2} that the constructed stochastic controls are required to achieve asymptotically. In order to handle this, we proceed by a series of approximations that lead to a `well behaved' simple form near-optimal control that is more tractable for a suitable construction of controlled empirical measures. This is the main content of Section \ref{sec:lowbd}.  Next, in Section \ref{sec:llb} we proceed to the construction of controlled empirical measures that are designed to suitably approximate the simple form near-optimal path constructed in Section \ref{sec:lowbd}. This construction and proof of convergence are technically the most involved part of the proof. Detailed discussion of the construction can be found at the start of Section \ref{sec:llb}, but at a high level the idea is to employ the ergodic theorem in a dynamic fashion to successively approximate all the local equilibria that make up the simple form control $\eta$ using suitably controlled empirical measures in such a manner that the associated costs also have the correct asymptotic behavior.

\subsection{Notation} \label{sec:not} In this section we introduce some notation that is used throughout this work.
Fix $d \in \NN$, and let   $\Delta^o = \{1, \ldots , d\}$. For a metric space $S$, $\clb(S)$ denotes the corresponding Borel $\sigma$-field and $\clp(S)$ denotes the space of probability measures on $(S, \clb(S))$ equipped with the topology of weak convergence. When $S$ is a finite set, we let $\clp_{+}(S) \doteq \left\{ m \in \clp(S ): \min_{x \in S} m(x) > 0  \right \}$.
Recall that a function $I: S \to [0, \infty]$ is called a rate function if it has compact sublevel sets, namely
$S_k \doteq \{ x \in S: I(x) \le k\}$ is compact for every $k \in [0, \infty)$. For $x \in S$, $\bdelta_x \in \clp(S)$ denotes the Dirac probability measure concentrated at  $x$. For a probability measure $\eta$ on $S_1 \times S_2 \times S_3$, $\eta_{(i)}$ denotes the marginal distribution of $\eta$ on $S_i$, $i = 1,2, 3$, and for $i < j$,  $\eta_{(i,j)}$ denotes the marginal distribution of $\eta$ on $S_i \times S_j$. Similar notation is used for probability measures on other product spaces. \AB{Given probability measures $\mu_1, \mu_2$ on $S_1$ and $S_2$, respectively, $\mu_1\otimes \mu_2$ denotes the probability measure on $S_1 \times S_2$ characterized by $\mu_1 \otimes \mu_2 (A \times B) = \mu_1(A)\mu_2(B)$, where $A \in \clb(S_1)$ and $B \in \clb(S_2)$.}  \AB{Given a probability measure $\theta$ on $S$, and a transition probability kernel $\gamma: S \times \clp(S) \to [0,1]$, $\theta \otimes \gamma$ denotes a probability measure on $S\times S$ characterized as 
 $\theta\otimes \gamma (A\times B) = \int_A \gamma(x,B) \theta(dx)$, $A, B \in \clb(S)$.} \AB{If $\mu$ is a probability measure on a finite measurable space $S$, then, for $x \in S$, we often write $\mu(x)$ in place of $\mu(\{x\})$.}
For $\mu, \nu \in \clp(S)$, we denote the relative entropy of $\nu$ with respect to $\mu$
as $R(\nu \|\mu)$, which is the extended real number defined as
\[
R(\nu \|\mu) \doteq \int_S \left(\log \frac{d\nu}{d\mu}\right) d\nu,
\]
if $\nu$ is absolutely continuous with respect to $\mu$, and $+\infty$ otherwise.
Let {$\clv^d \doteq \{ \bdelta_x : x \in \Delta^o\}$ and note that $\clv^d \subset \clp(\Delta^o)$.} For a locally compact space $S$, let $\clm(S)$ denote the space of locally finite measures on $S$ equipped with the vague topology.
 We denote by $C_b(\clp(\Delta^o))$  the space of bounded continuous functions from $\clp(\Delta^o)$ to $\RR$. For $m, \tilde m \in \clp (\Delta^o)$, we write  $\| m - \tilde m\| \doteq \sum_{x\in\Delta^o} |m(x) - \tilde m(x)|$, and we use the same notation for the $L^1$-norm of a vector in $\RR^d$. For a Polish space $S$, $C([0, \infty):S)$
 will denote the space of continuous functions from 
 $[0,\infty)$ to $S$, equipped with the topology of local uniform convergence. 
 As a convention $\int_a^b f(s) ds$ is taken to be $0$ if $a\ge b$ and $\sum_{i=k}^{j}a_i$ is taken to be $0$ if $k > j$. For $v \in \RR^d$ we use  $v_x$ and $v(x)$ interchangeably to denote the $x$-th coordinate of $v$.
 A transition kernel $K$ on a finite set $S$ is a map $K: S\times S \to [0,1]$ such that $\sum_{y \in S}K(x,y)=1$ for all $x \in S$.
  For such a kernel and $x,y \in S$, we use $K_{x,y}$ and $K(x,y)$ interchangeably, and we write $\clk(S)$ to denote the set of transition kernels on $S$. 
  {Note that a $K \in \clk(S)$ can be identified with a $|S|\times |S|$ stochastic matrix, where $|S|$ denotes the cardinality of the set $S$, and with this identification $K_{x,y} = K(x,y)$ is the $(x,y)$-th entry of this matrix.}
  We write $I_d$ to denote the $d\times d$ identity matrix. For a matrix $A$,  {$A_{x,y}$ denotes its $(x,y)$-th entry} and we write $A>0$ to denote that all of its entries are strictly positive. Finally, we write $\RR_+$ to denote $[0,\infty)$.

  \subsection{Outline}

This paper is organized as follows.  In Section \ref{sec:mainresults} we introduce the model of interest, state our main large deviation result (Theorem \ref{thm:ldp1}), and provide one basic example that motivates this study. In Section \ref{sec:controlproc} we present the stochastic control representation that is used  in the proof of Theorem \ref{thm:ldp1}; both in proving the large deviation upper bound and lower bound. The large deviation upper bound is  proved in Section \ref{sec:lub}. In Section \ref{sec:lowbd}, through appropriate perturbation, mollification, and discretization, we construct  simple form near-optimal trajectories and controls that are tractable for constructing suitable controlled empirical measures for the proof of the large deviation lower bound. In Section \ref{sec:llb} we proceed with 
this construction and provide the proof of the convergence of the controlled processes and costs, which finishes the proof of the large deviation lower bound. In Section \ref{sec:levelset} we show  that the function ${I_A}$ introduced in \eqref{def:ratefunction1} is a rate function, namely it  has compact sublevel sets. Finally in Section \ref{sec:examp} we present several examples for which the assumptions of Theorem \ref{thm:ldp1} are satisfied.

\section{Setting and Main Result}
\label{sec:mainresults}
\subsection{Description of the Model}\label{sec:modeldescription}
Consider a map $G: \clp(\Delta^o) \to \clk(\Delta^o)$ and
fix $x_0 \in \Delta^o$.  We consider a collection $\{X_n, \; n \in \NN_0\}$ of $\Delta^o$-valued random variables, a collection $\{L^n,\;  n \in \NN\}$ of $\mathcal{P}(\Delta^o)$-valued random measures, and a filtration $\{\mathcal{F}_n, \; n \in \NN_0\}$ on some probability space $(\Omega,\mathcal{F}, P)$, defined recursively as follows. Let $X_0  \doteq x_0$, $\mathcal{F}_0 \doteq \{\emptyset, \Omega\}$,  and $L^1 \doteq \bdelta_{x_0}$. Having defined 
$\{X_i, L^{i+1},\;  0\le i \le n\}$
and $\sigma$-fields $\{\clf_i, \; i \le n\}$ for some $n \in \NN_0$, define 
\begin{equation}\label{eq:rmcdynamic1}
P(X_{n+1} = y \mid \mathcal{F}_n) \doteq G(L^{n+1})_{X_n,y},\; \; y \in \Delta^o,
\end{equation}
 $\mathcal{F}_{n+1} \doteq \sigma\{X_k, \; k \leq n+1\}$,
and
\begin{equation}\label{eq:occmzr}
L^{n+2} \doteq \frac{1}{n+2} \sum\limits_{i=0}^{n+1} \bdelta_{X_i}. 
\end{equation}
{Thus, $L^{n+1}$ is the empirical measure of the first $n+1$ states of the chain $\{X_k, \; k \in \NN_0\}$ and the first display above says that the conditional law of $X_{n+1}$ given the whole history $\clf_n$ is determined by the most recent state $X_n$ and the empirical measure of the first $n+1$ states of the chain. The dependence of the conditional law on the empirical measure $L^{n+1}$ says in particular that $\{X_n, \; n\in \NN_0\}$ is not a Markov chain in general. Furthermore, noting that 
$$L^{n+2} = \frac{n+1}{n+2} L^{n+1} + \frac{1}{n+2}\bdelta_{X_{n+1}}$$
and the fact that the conditional law of $X_{n+1}$ given $\clf_n$ is a function of not only $L^{n+1}$ but also $X_n$ says that the chain $\{L^n, \; n\in \NN\}$ of $\clp(\Delta^o)$-valued random variables is not Markovian either. Nonetheless,
it is easy to see that the sequence $\{(X_n, L^{n+1}), \; n \in \NN_0\}$ is indeed a Markov chain with values in $\Delta^o\times \clp(\Delta^o)$, whose transition kernel is inhomogeneous in time (cf \eqref{eq:defL}). This sequence can be given a pathwise representation using certain $\clv^d$-valued random fields indexed by $(x,m) \in \Delta^o\times \clp(\Delta^o)$. This representation, which is given at the start of Section \ref{sec:controlproc}, will be used throughout in our proofs. }
{Note that for each $m \in \clp(\Delta^o)$, $G(m)$ is a transition kernel on $\Delta^o$, 
 which can be identified with a $d\times d$ stochastic matrix. Recall that $G(m)_{x,y}$ denotes the $(x,y)$-th entry of this stochastic matrix which is occasionally also written as $G(m)(x,y)$.
}

{We now give one basic example of a self-interacting Markov chain to which our results apply.

\begin{example}[{\bf Quasi-Stationary Distributions}]\label{ex:introduceqsdalg1}
Let $\Delta = \Delta^o \cup \{0\}$, $P \in \clk(\Delta)$, and let $\{Y_n, \; n \in \NN_0\}$ be a Markov chain with transition probability kernel $P$. Then $\pi^* \in \clp(\Delta^o)$ is called a quasi-stationary distribution for the chain $\{Y_n, \; n \in \NN_0\}$ if
    \[
    P_{\pi^*}(Y_n = x \mid Y_n \in \Delta^o) = \pi^*_x, \; \; x \in \Delta^o, n \in \NN_0,
    \]
	where $P_{\pi^*}$ denotes the probability measure under which $Y_0$ is distributed as $\pi^*$.
 Suppose that the substochastic matrix $P^o$ defined by
\[
P^o_{x,y} \doteq P_{x,y}, \; \; x,y \in \Delta^o
\]
is irreducible. Then it is known that there is a unique QSD for the Markov chain $\{Y_n, \; n \in \NN_0\}$ \cite{qsdbook}.
In \cite{AFP88} a basic Monte-Carlo method for computing this QSD was introduced.
Define $G: \clp(\Delta^o) \to \clk(\Delta^o)$ as
    \begin{equation}\label{eq:qsdG}
    G(m)_{x,y} \doteq P_{x,y} + P_{x,0}m_y, \;\; x,y \in \Delta^o, m \in \clp(\Delta^o)
    \end{equation}
	and construct $\{X_n, L^{n+1}, \; n\in \NN_0\}$ as in 
 \eqref{eq:rmcdynamic1} and \eqref{eq:occmzr}. The self-interacting Markov chain $\{X_n, \; n \in \NN_0\}$ is a process that evolves according to the transition kernel $P$ until it reaches state $0$. Then, upon reaching state $0$,  the state of the  chain  immediately jumps to a state in $\Delta^o$ according to the probability distribution given by its current empirical occupation measure. Then \cite{AFP88, benclo} show that $L^n$ converges a.s. to the unique QSD $\pi^*$ of the chain $\{Y_n, \; n \in \NN_0\}$. 
\end{example}
}

\subsection{Statement of Results}
{In this section we present our key assumptions and the main result of this work (Theorem \ref{thm:ldp1}). One basic example where our result applies is provided as well. Additional examples are given in Section \ref{sec:examp}.}
\label{sec:statres}
We introduce the following two assumptions on  the operator $G$.
\begin{assumption} \label{assu:lip}
        $\textit{[Lipschitz Continuity]}$ There is  $L_G \in (0,\infty)$ such that for all $m, \tilde{m} \in \clp(\Delta^o)$,
        \begin{equation}\label{eq:246aaa}
         \sum\limits_{x,y \in \Delta^o} | G(m) _{x,y} -G(\tilde{m})_{x,y}| 
		 \le  L_G \|m - \tilde{m}\|.
         \end{equation}
                 \label{ass:contin}
\end{assumption}
The above assumption is the only requirement for the large deviation upper bound. 

A $d\times d$ matrix $A$ is called an \textit{adjacency matrix} if it has entries $0$ or $1$; and it is called an \textit{irreducible adjacency matrix} if 
 for each $x, y \in \Delta^o$ there is a $m \in \NN$ such that {$(A^m)_{x,y}>0$}. 
 {Adjacency matrices such as $A$ will be used to describe the communicability structure of the chain $\{X_n, \; n \in \NN_0\}$. 
 Specifically, we denote by $\cla$ the collection of all adjacency matrices $A$ that have the property that, for each $(x,y) \in \Delta^o \times \Delta^o$, {$A_{x,y}= 0$ implies $G(m)(x,y) =0$ for all 
 $m \in \clp(\Delta^o)$ (Recall from Section 1.2 that we use the notation $G(m)_{x,y}$ and $G(m)(x,y)$ interchangeably)}. Thus for any $A \in \cla$ if any entry $A_{x,y}$ is zero then we must have
 $P(X_n=x, X_{n+1}=y) =0$ for all $n \in \NN_0$ and so such a matrix records the permissible transition states for the chain $\{X_n, \;  n \in \NN_0\}$. Note that the class $\cla$ is nonempty as it contains the matrix $A_1$ consisting of  all ones (in that case the above property is vacuously true).  A matrix $A\in \cla$ will be used to formulate a key communication condition (Assumption \ref{ass1}(2) below) that is needed for the proof of the lower bound. One could simplify this condition and the statement of the large deviation principle by requiring that it hold for the matrix $A_1$ (the matrix with all $1$'s), but that is too restrictive for many interesting settings, e.g. the case where for each $m \in \clp(\Delta^o)$, $G(m)$ is the transition kernel of a nearest neighbor random walk. The condition we formulate allows for such settings where $G(m)_{x,y}$ need not be strictly positive for all $(x,y) \in \Delta^o\times \Delta^o$, e.g. in the Markovian setting where  $G(m) = G_0$ for all $m \in \Delta_0$, this condition simply requires that the transition matrix $G_0$ is irreducible, in which case the matrix $A$ can be simply taken to be $A_{x,y} = \bm{1}_{\{G_0(x,y)\neq 0\}}$, $(x,y) \in \Delta^o\times \Delta^o$.
}
 We write  \begin{equation}\label{eq:deltaplusplus}
            \tplus \doteq \{ (x,y ) \in \Delta^o \times \Delta^o : A_{x,y} = 1 \}.
            \end{equation}

For the lower bound we introduce the following additional assumption. Part 2 of the condition identifies  the `minimal' $A \in \cla$ associated with the map $G$.		
\begin{assumption} \label{ass1}$\,$
    \begin{enumerate}
        \item $\textit{[Linearity]}$
        \label{ass:linear}
        For all $\kappa \in [0,1]$ and $m, \tilde m \in \clp(\Delta^o)$,
        \[ G(\kappa m + (1-\kappa) \tilde m) = \kappa G(m) + (1-\kappa) G(\tilde m).\]
        \item $\textit{[Communication Structure]}$
        There is an irreducible adjacency matrix  $A$ such that the following hold: 
        \label{ass:aux}
        \begin{enumerate}
            \item \label{ass:222a} $A \in \cla$.
            \label{ass:aux0}
            \item There is a  $\delta_0^A \in (0,\infty)$ such that if $(x,y) \in \tplus$, then
            \[
            G(m)_{x,y} \ge \delta_0^A \min_{z \in \Delta^o} m_z, \; \; m \in \clp(\Delta^o).
            \]
            \label{ass:aux1}
        \end{enumerate}
         
       \item$\textit{[Positive Fixed Point]}$
        There is a  $\pi^* \in \clp_+(\Delta^o)$ such that $\pi^* G(\pi^*) = \pi^*$. \label{ass:qsdG}
        \item  $\textit{[Nondegeneracy of Empirical measure]}$
        For every $x \in \Delta^o$
        $$P(\om \in \Om: \mbox{ for some } n \in \NN, L^{n}(\om)(x)>  0)=1.$$
       \label{ass:conv-occ}
\end{enumerate} 
\end{assumption}

\begin{remark}\label{rem:conditions1} $\,$
\begin{enumerate}
\item Note that Assumption \ref{ass1} \eqref{ass:linear} implies that 
Assumption \ref{assu:lip} holds with $L_G=1$.
\item \AB{The linearity in Assumption \ref{ass1}(\ref{ass:linear}) is only used at one key step in the proof of the lower bound in  ensuring  certain nondegeneracy estimates in \eqref{eq:relentetarhom1} - \eqref{eq:1243} through a perturbation construction in \eqref{eq:gmks}. The need for this nondegeneracy property in the proof of the lower bound is discussed in Section \ref{sec:overview5.1}.}
{\item Assumption \ref{ass1}(2) is a mild condition. It will hold if, in addition to Assumption \ref{ass1} \eqref{ass:linear}, we have that   \begin{equation}\label{eq:newass}
\mbox{ for some } m \in \clp(\Delta^o),\; G(m) \mbox{ is irreducible.}
\end{equation}
Indeed, consider the adjacency matrix $A^*$ defined as 
\begin{equation}
    A^*_{x,y} = \begin{cases}
        1, & \text{if there exists }m \in \clp(\Delta^o) \text{ such that } G(m)_{x,y} > 0, \\
        0, & \text{if }G(m)_{x,y} = 0 \; \text{for all }m \in \clp(\Delta^o) .
    \end{cases}
\end{equation}
Then $A^*\in \cla$ and from the assumption that for some $m \in \clp(\Delta^o)$, $G(m)$ is irreducible, it follows that $A^*$ is irreducible. This assumption, together with the linearity of $G$ also implies that for all $x,y \in \Delta^o$, $\sum_{z\in \Delta^o} G(\delta_z)_{x,y}>0$. Let $\delta_0 \doteq \inf_{(x,y) \in A^*_+}\sum_{z\in \Delta^o} G(\delta_z)_{x,y}$. Then, for any $(x,y) \in A^*_+$ and $m \in \clp(\Delta^o)$, using the linearity of $G$,
$$G(m)_{x,y} = \sum_{z \in \Delta^o} m_z G(\delta_z)_{x,y} \ge
\min_{z \in \Delta^o} m_z \sum_{z \in \Delta^o} G(\delta_z)_{x,y}
\ge \delta_0 \min_{z \in \Delta^o} m_z
$$
which shows that Assumption \ref{ass1}(2) holds. In fact, in the presence of Assumption \ref{ass1} \eqref{ass:linear}, the condition in \eqref{eq:newass} and Assumption \ref{ass1}(2) are equivalent and there is a unique $A$ that satisfies this assumption which is given by $A^*$ defined above.}
    \item In many examples of interest we will have that a strong law of large numbers holds and that the limiting measure is non-degenerate, namely
    $L^{n} \to \pi^*$ a.s. for some $\pi^* \in \clp_+(\Delta^o)$ as $n \to \infty$. In such a case,  Assumption \ref{ass1} \eqref{ass:conv-occ} clearly holds.
    Also, in such a case, it is easy to verify that  $\pi^*$ is a fixed point, namely $\pi^*G(\pi^*)=\pi^*$. Thus, Assumption \ref{ass1} \eqref{ass:qsdG} holds as well.
    \item \label{rem:brouwercomment} Consider the map $T: \clp(\Delta^o) \to \clp(\Delta^o)$ given by $Tm \doteq m G(m)$.  Since $\clp(\Delta^o)$ is compact and convex, Assumption \ref{ass:contin} and Brouwer's fixed point theorem ensure that  there is some $\pi^* \in \clp(\Delta^o)$ such that $T \pi^* = \pi^* G(\pi^* ) = \pi^* $.  In addition, in many situations of interest $G(m)$ will be an irreducible transition probability kernel for all $m \in \clp(\Delta^o)$. In such cases we have in fact that $\pi^* \in \clp_+(\Delta^o)$ and
 so Assumption \ref{ass1} \eqref{ass:qsdG}   holds.
    Suppose the following stronger form of irreducibility holds: 
	\begin{equation}\label{eq:1059nn}
		\mbox{ For  some } K \in \NN \mbox{ and all } m_1, \ldots , m_K \in \clp(\Delta^o),\; \; 
    \sum_{j=1}^KG(m_1)G(m_2)\cdots G(m_j) >0.
	\end{equation}
	 Then, as shown in
    Lemma \ref{lem:irred} in the Appendix, in this case  Assumption \ref{ass1} \eqref{ass:conv-occ} holds as well.

\end{enumerate}
  In Section 
    \ref{sec:examp} we present several examples for which the assumptions of Theorem \ref{thm:ldp1} are satisfied; see also Example \ref{ex:operatorexamples} below.
\end{remark}
We now introduce the rate function that governs the large deviation asymptotics. This function will be defined in terms of a matrix $A \in \cla$.

Let
\begin{equation}\label{eq:nicemzr}
	\begin{aligned}
	\clp^*(\Delta^o)
	& \doteq \{m \in \clp(\Delta^o): \mbox{ for some } \bar m \in \clp(\Delta^o \times \Delta^o),\\
 &\quad \quad\bar m_{(2)}=\bar m_{(1)} = m \mbox{ and }
	\bar m(x,y)= 0 \mbox{ for all } (x,y) \in (A_+)^c\}.
	\end{aligned}
\end{equation}
The class $\clp^*(\Delta^o)$ consists of all probability measures $m$ that are invariant measures for some transition probability kernel for which, at each state charged by $m$, jumps can only occur to neighbors as defined by the adjacency matrix $A$.
 Let $\clu$ denote the collection of all measurable maps from $\RR_+$ to $\clp(\Delta^o \times \Delta^o )$. 
 {For  $\eta \in \clu$, we will use the notation
 $$\eta(s)(\{x\}\times \{y\}) = \eta(\{x\}\times \{y\}\mid s) =
 \eta(x,y \mid s), \, (x,y) \in \Delta^o\times \Delta^o.$$
 Note that for any such $\eta \in \clu$ and $s\in \RR_+$, the probability measure $\eta(\cdot \mid s)$ on $\Delta^o\times \Delta^o$ can be disintegrated as
 \[
\eta(x,y\mid s) = \eta_{(1)}(x \mid s) \eta_{2|1} (y \mid s,x), \; (x,y) \in \Delta^o\times \Delta^o.
\]
For $\eta \in \clu$ and $m \in \clp(\Delta^o)$ consider the equation
\begin{equation}\label{P3} 
	M(t) = m - \int_0^t \eta_{(1)}(s) ds + \int_0^t M(s) ds, \;\; t \in \RR_+, 
	\end{equation}
 where $\eta_{(1)}(s) \doteq \eta_{(1)}(\cdot \mid s) \in \clp(\Delta^o)$.
 Regarding $\eta_{(1)}$ as an $\RR^d$-valued vector field,  this equation has a unique solution in $C([0,\infty): \RR^d)$ and for each $t\in \RR_+$, $M(t)$ can be viewed as a signed measure on $\Delta^o$ with the property that $\sum_{x\in \Delta^o} M(t)(x) =1$. 
 For  $x,y \in \Delta^o$ and $t \in \RR_+$, let
\begin{equation}\label{eq:defnbeta}
\beta(\{x\}\times \{y\} \times [0,t]) \doteq \int_0^t \eta(s)(\{x\}\times \{y\})ds.
\end{equation}
 
 }

{For fixed $m \in \clp(\Delta^o)$ and  $ \eta \in \clu$ consider  the following set of three properties.
\begin{property}\label{prop:z1}
$\;$
\begin{enumerate}[label = (\alph*)] 
\item \label{cluma} 
For $\beta$ as in \eqref{eq:defnbeta},
\begin{equation}
    \beta(\{x\}\times \{y\} \times [0,t]) = 0,\; \; \text{for all } \; (x,y) \in (A_+)^c \mbox{ and } t \in \RR_+.
    \label{P1}
\end{equation}
{Note that the above is equivalent to $\eta(t)(\{x\}\times \{y\}) = 0$ for all $(x,y) \in (A_+)^c$ and
a.e. $t\in \RR_+$.
}

\item \label{clumb} For a.e. $s \in \RR_+$, the two marginals of $\eta(\cdot\mid s)$ are the same, namely,  
\begin{equation}
\begin{split}
\eta_{(1)}(x \mid s) &= \sum_{z \in \Delta^o}\eta(z,x\mid s) \doteq \eta_{(2)}(x\mid s), \mbox{ for all } x \in \Delta^o, \mbox{ a.e. } s \in \RR_+.
\label{P2}
\end{split}
\end{equation}
\item \label{clumc} 
{The function $M$ defined in \eqref{P3}  is in $C(\RR_+: \clp(\Delta^o))$.}
Furthermore, there is a $\MP \in C(\RR_+: \clp(\Delta^o\times \Delta^o))$ satisfying, for all $s,t  \in \RR_+$, $\|\MP(t)-\MP(s)\| \le 2 |t-s|$,
$M(t) = (\MP(t))_{(1)} = (\MP(t))_{(2)}$, and $\MP(t)(x,y) =0$ for all $(x,y) \in (A_+)^c$. In particular, 
 for all $t \in \RR_+$, $M(t) \in \clp^*(\Delta^o)$.
\end{enumerate}
\end{property}

{Recall that the function defined by \eqref{P3} is always a continuous path in $\RR^d$ that satisfies $\sum_{x \in \Delta^o} M(t)(x)=1$ for all $t\in \RR_+$. Thus for (c) to be satisfied we need two additional properties: $M(t)(x) \ge 0$ for all $t\in \RR_+$ and $x \in \Delta^o$; and the probability measure $M(t)$ is an invariant measure for a suitable transition probability kernel with a communication structure that is consistent with that of $A$. A basic example for which this property holds is where $m = M(t) = \eta(\cdot \mid t) = \pi^*$ and $\MP(t)(x,y) = \pi^*(x)G(\pi^*)_{x,y}$, $(x,y) \in \Delta^o\times \Delta^o$, $t\in \RR_+$, where $\pi^*$ is the fixed point introduced in Assumption \ref{ass1}(3).}

\PZ{Recalling that $\clu$ denotes the collection of measurable maps from $\RR_+$ to $\clp(\Delta^o\times\Delta^o)$, we now define the set
\begin{equation}\label{eq:clumdef}
\clu(m) \doteq \{ \eta \in \clu: \eta \: \text{satisfies} \: \mbox{Property \ref{prop:z1}} \}.
\end{equation}

If, for some $m \in \clp(\Delta^o)$ and $\eta \in \clu(m)$, we have that $M \in C(\RR_+: \clp(\Delta^o))$ solves \eqref{P3}, we say that $M$ \textit{solves} $\clu(m,\eta)$.}
}

For $m \in \clp(\Delta^o)$, define ${I_A}: \clp(\Delta^o) \to [0,\infty]$ as
\begin{equation}\label{def:ratefunction1}
	{I_A(m)} \doteq \inf_{\eta \in \clu(m)} \int_0^{\infty} \exp(-s) \sum_{x \in \Delta^o} \eta_{(1)}(x\mid s) R\left(\eta_{2|1}(\cdot \mid s,x) \| G(M(s))(x, \cdot)\right) ds,
\end{equation}
where $M$ solves $\clu(m,\eta)$.  By the chain rule for relative entropy {(Theorem  \ref{thm:chainrule})} and \eqref{P2}, 
\begin{equation} \label{def:ratefunction2}
    {I_A(m)} = \inf_{\eta \in \clu(m)} \int_0^{\infty} \exp(-s) R\left(\eta( s) \| \eta_{(1)}( s) \otimes G(M(s))\right) ds,
\end{equation}
\AB{where $\eta_{(1)}( s) \otimes G(M(s)) \in \clp(\Delta^o \times \Delta^o)$ is defined as
\[
\eta_{(1)}( s) \otimes G(M(s))(\{x\}\times \{y\}) \doteq  \eta_{(1)}(x \mid s) G(M(s))_{x,y} , \; \; x,y \in \Delta^o.
\]}
{We remark that $A$ in the notation $I_A$ captures the fact that the collection $\clu(m)$ that appears on the right side of \eqref{def:ratefunction1} depends on the choice of $A \in \cla$. 
Viewing the function $\eta_{(1)}$ on the right side of 
\eqref{P3} as a control, $M(\cdot)$ can be regarded as a controlled measure valued path.
The function $I_A$, which will {be} shown to be the rate function associated with a LDP for the sequence $\{L^n, \; n \in \NN\}$, can then be viewed as the minimal cost associated with a suitable collection of controls and the associated controlled measure valued paths.
The exponential discount that arises in the expression for the rate function is a direct consequence of the time interpolation we use. This is explained further in Remark \ref{rem:expdisc}.
}

The following theorem is the main result of this work, which establishes an LDP for  $\{L^{n}, \;  n \in \NN\}$.
\begin{theorem}\label{thm:ldp1}
Fix $A \in \cla$ and let ${I_A} : \clp(\Delta^o) \to  [0,\infty]$ be the  function defined in \eqref{def:ratefunction1}. 
Suppose that Assumption \ref{assu:lip} is satisfied.
Then ${I_A}$ is a rate function and the sequence $\{L^{n+1},\; n \in \NN_0\}$ defined in \eqref{eq:occmzr} satisfies the LDP upper bound with rate function ${I_A}$, namely, for each closed set $F \subseteq \clp (\Delta^o)$,
\[
\limsup_{n\to\infty}n^{-1} \log P(L^{n+1} \in F) \le -\inf_{m \in F}{I_A}(m).
\]
Suppose in addition that Assumption \ref{ass1} is satisfied. Then, the LDP lower bound holds as well, with the  rate function ${I_A}$ and the adjacency matrix $A$ given as in Assumption \ref{ass1}, namely,
for each open set $G \subseteq \clp(\Delta^o)$,
\[
\liminf_{n\to\infty} n^{-1} \log P(L^{n+1} \in G) \ge - \inf_{m \in G}{I_A}(m).
\]
\end{theorem}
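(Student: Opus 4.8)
\emph{Overview.} The plan is to establish the equivalent Laplace principle by the weak-convergence (stochastic control) method \cite{buddupbook, dupell}, proving the Laplace upper and lower bounds separately. Since $\clp(\Delta^o)$ is compact, the rate-function property of $I$ reduces to lower semicontinuity, established in Section~\ref{sec:levelset} through a compactness argument for the controls appearing in \eqref{def:ratefunction1}; granting this, the two Laplace bounds are equivalent to the two asserted LDP bounds. The common starting point is the variational representation of Section~\ref{sec:controlproc} (cf.\ \eqref{eq:varrep}): for $F \in C_b(\clp(\Delta^o))$, the quantity $-n^{-1}\log\EE[\exp(-nF(L^{n+1}))]$ equals an infimum, over controlled transition kernels $\bar\mu$, of the expectation of a running relative-entropy penalty of $\bar\mu$ against $G$ evaluated at the controlled empirical measures along the controlled chain, plus the terminal value $F(\bar L^{n+1})$. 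The device that organizes both halves is the time change $k \mapsto \tau_k = \sum_{j}(j+2)^{-1} \sim \log k$ forced by the step sizes $(n+2)^{-1}$ in the recursion $L^{n+2}-L^{n+1}=(n+2)^{-1}(\delta_{X_{n+1}}-L^{n+1})$: run backward from the terminal index, this realizes the interpolated controlled empirical measure as a path $\bar L \in C(\RR_+:\clp(\Delta^o))$ (cf.\ \eqref{eq:barlnk}), it generates the discount $e^{-s}$ in \eqref{def:ratefunction1}, and it makes the terminal measure play the role of the initial condition $m$ of the limiting path $M$ of \eqref{P3}.

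\emph{Upper bound (Assumption~\ref{assu:lip}).} Choose near-optimal controls in the representation, with controlled chain $\bar X^n$ and controlled empirical measures, and form a random measure $\Theta^n \in \clm(\Delta^o\times\Delta^o\times\RR_+)$ encoding, under the time change, the controlled transition kernels along the chain, together with the reversed interpolated path $\bar L$ and the terminal point $\bar L^{n+1}$. Since the running penalty is bounded along near-optimal controls, the controlled kernels stay close to $G$ on average, which yields tightness of $(\Theta^n, \bar L, \bar L^{n+1})$ on the relevant product space. Any weak limit $(\eta, M, m)$ is then identified: \eqref{P1} holds because finiteness of the limiting cost forces the controlled kernels, hence $\eta$, to charge only $A_+$ (using the definition of $\cla$ and Assumption~\ref{assu:lip}); \eqref{P2} holds because the one-step-shifted empirical measure differs from $\bar L$ by a single atom, so the two marginals of the limiting occupation measure agree almost everywhere; and \eqref{P3}, together with the Lipschitz bound on $M$ and the existence of the auxiliary $\clt$, follows by passing to the limit in the time-changed, reversed recursion for $\bar L$. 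Lower semicontinuity of relative entropy and Fatou's lemma then give $\liminf_n\bigl(-n^{-1}\log\EE[\exp(-nF(L^{n+1}))]\bigr) \ge \EE[F(m) + J(\eta,M)] \ge \inf_{m'}\{F(m')+I(m')\}$, where $J(\eta,M)$ denotes the discounted integral in \eqref{def:ratefunction1}; this is the Laplace upper bound, hence the claimed LDP upper bound.

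\emph{Lower bound (Assumptions~\ref{assu:lip} and~\ref{ass1}).} Fix $m \in \clp(\Delta^o)$ with $I(m) < \infty$ and a near-optimal $\eta \in \clu(m)$, with $M$ solving $\clu(m,\eta)$. It suffices to exhibit, for each $n$, admissible controls whose reversed interpolated controlled empirical measure converges to $M$ and whose discounted running cost converges to the cost of $\eta$ in \eqref{def:ratefunction1}: the representation then gives $\limsup_n\bigl(-n^{-1}\log\EE[\exp(-nF(L^{n+1}))]\bigr) \le F(m)+I(m)$, and taking $\inf_m$ yields the Laplace lower bound. This is carried out in two stages. First (Section~\ref{sec:lowbd}), by perturbing $\eta$ toward the dynamics of the positive fixed point $\pi^*$ (Assumption~\ref{ass1}\eqref{ass:qsdG}) to keep $M$ in $\clp_+(\Delta^o)$, mollifying in $s$, and discretizing, one reduces to a simple-form near-optimal control: piecewise constant on finitely many intervals, each piece an irreducible kernel on a fixed support whose unique invariant measure is the corresponding value of $\eta_{(1)}$; linearity of $G$ (Assumption~\ref{ass1}\eqref{ass:linear}) and convexity of relative entropy keep the cost controlled through these perturbations, while Assumption~\ref{ass1}\eqref{ass:conv-occ} is used to pass the initial degeneracy $L^1=\delta_{x_0}$. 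Second (Section~\ref{sec:llb}), one realizes the simple-form control along the chain by a dynamic ergodic construction: within each macroscopic block determined by the simple form, the controlled chain is run with the prescribed fixed kernel for a mesoscopic number of steps---long enough, by the ergodic theorem for finite irreducible chains whose mixing rates are bounded below (a uniform lower bound on $\min_z \bar L^n_z$, guaranteed by Assumption~\ref{ass1}\eqref{ass:conv-occ} and the communication structure, makes $G(\bar L^n)_{x,y} \ge \delta_0^A \min_z \bar L^n_z$ on $A_+$), that the block's local empirical measure approaches the required invariant measure $\eta_{(1)}$, yet short enough that $\bar L$ barely moves within the block while tracking $M$ across blocks. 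A careful accounting then shows that the running cost converges to $\int_0^\infty e^{-s}\sum_{x\in\Delta^o}\eta_{(1)}(x\mid s)\,R\bigl(\eta_{2|1}(\cdot\mid s,x)\,\|\,G(M(s))(x,\cdot)\bigr)\,ds$.

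\emph{Main obstacle.} The crux is the second stage of the lower bound (Section~\ref{sec:llb}): since the local-equilibrium marginal $\eta_{(1)}(\cdot\mid s)$ in \eqref{P2} need not equal $M(s)$, one cannot merely follow $M$; instead the correct invariant measure must be synthesized at each instant by ergodic averaging on a mesoscopic scale while the global empirical measure is prevented from drifting away from $M$---a genuine separation-of-timescales argument whose error control, for both the path and the cost, is delicate, and which is precisely what the preliminary reduction to simple-form controls in Section~\ref{sec:lowbd} is designed to make feasible. By contrast, identifying the weak limit in the upper bound is comparatively routine.
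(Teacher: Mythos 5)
Your proposal reproduces the paper's own strategy: the stochastic control/weak-convergence method with the time change $t_k \sim \log k$ generating the discount, tightness plus weak-limit identification for the Laplace upper bound, a three-stage reduction (perturbation toward $\pi^*$, mollification, piecewise-constant discretization) to simple-form controls in Section~\ref{sec:lowbd}, a dynamic ergodic-averaging construction in Section~\ref{sec:llb} for the Laplace lower bound, and compactness of controls for the rate-function property in Section~\ref{sec:levelset}. This is essentially the same proof, accurately summarized at a high level.
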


\begin{proof}
 Large deviation upper and lower bounds are proved in Theorem \ref{thm:laplaceupperbound1} and Theorem \ref{thm:lowbd}
 respectively.
  In Proposition \ref{lem:compactlevelsets} (Section \ref{sec:levelset}) we show that ${I_A}$ is a rate function, namely it has compact sublevel sets.
\end{proof}

\begin{remark}\label{rem:ivstili}
Recall the function $\tilde I$ from \eqref{eq:1230nn}. We now show that ${I_A(m)} \le \tilde I(m)$ for all $m \in \clp(\Delta^o)$.
Without loss of generality suppose that $\tilde I(m)<\infty$ and consider $\lambda \in \cli(m)$ with
$R(\lambda \| m \otimes G(m))<\infty$. {Then, since $G(m)(x,y)  =0$ for all $m$ whenever $(x,y) \in (A_+)^c$,  we must have that $\lambda(x,y) =0$ for all $(x,y) \in (A_+)^c$; otherwise, we would have that $R(\lambda \| m \otimes G(m)) = \infty$. We now claim that $I_A(m) \le R\left(\lambda \| m \otimes G(m)\right)$.  To see this, consider $\eta \in \clu$ defined as
$\eta(\cdot \mid s) \doteq \lambda$ for all $s\in \RR_+$. Then  \eqref{P1} and \eqref{P2} are satisfied.} Furthermore, since
$\eta_{(1)}(s) = \lambda_{(1)} = m$, \eqref{P3} is satisfied with $M(t)=m$, $t\in \RR_+$. This shows that
$\eta \in \clu(m)$ and that $M$ solves $\clu(m, \eta)$. Note that the cost on the right side of \eqref{def:ratefunction1}, with this choice of $\eta \in \clu(m)$ and {$M \in C(\RR_+: \clp(\Delta^o))$, 
is
$$\int_0^{\infty} \exp(-s) R\left(\lambda \| \lambda_{(1)} \otimes G(m)\right) ds = R\left(\lambda \| \lambda_{(1)} \otimes G(m)\right) \int_0^{\infty} \exp(-s) ds
=  R\left(\lambda \| m \otimes G(m)\right).$$}
This proves the claim ${I_A(m)} \le R\left(\lambda \| m \otimes G(m)\right)$, from which the inequality ${I_A(m)} \le \tilde I(m)$ follows on
taking infimum over $\lambda$. In Section \ref{sec:examp}, Example \ref{ex:sec8example1}, we show that when $G(m)$ is independent of $m$ then the reverse inequality holds as well. 
\end{remark}

{\begin{remark} {From Remark \ref{rem:conditions1}(2) recall that, if Assumption \ref{ass1} holds, then there is a unique $A^* \in \cla$ that satisfies this assumption. Furthermore, $I_A(m) \le I_{A^*}(m)$ for all $A \in \cla$ and $m \in \clp(\Delta^o)$. It then follows that
$$
\limsup_{n\to\infty}n^{-1} \log P(L^{n+1} \in F) \le -\inf_{m \in F}I_{A^*}(m) \le
-\inf_{m \in F}I_{A}(m), \mbox{ for all } A \in \cla.
$$
}
For notational convenience, throughout the rest of this work we will omit the dependence of $I_A$ on $A$, and will simply write $I$ instead.
\end{remark}
}

{We now return to  Example \ref{ex:introduceqsdalg1} to note that  Assumption \ref{ass1} is satisfied in this case. Several other examples are discussed in Section \ref{sec:examp}.}

\begin{example}[{\bf Quasi-Stationary Distributions}]\label{ex:operatorexamples}
{Let $\Delta, \Delta^o, P,P^o, \pi^*, G, \{Y_n, \; n \in \NN_0\}$, and $\{L^{n+1}, \; n \in \NN_0\}$ be as in Example \ref{ex:introduceqsdalg1}.  The current work establishes a large deviation principle for the sequence $\{L^{n+1}, \; n \in \NN_0\}$
 under the same irreducibility assumption on $P^o$ made in 
 \cite{benclo}. To see that Assumption \ref{ass1} is satisfied in this setting,} note that Part \ref{ass:linear} of this Assumption is clearly satisfied by $G$.  Part \ref{ass:qsdG} is also satisfied under the above irreducibility assumption (see \cite{qsdbook}).  From  \cite{AFP88},  $L^n$ converges a.s. to the unique QSD $\pi^*$, so Part \ref{ass:conv-occ} holds as well. Finally, for Part \ref{ass:aux}, define $\tplus \doteq \{ (x,y) \in \Delta^o \times \Delta^o : P_{x,y} + P_{x,0} > 0\}$ and $A_{x,y} = \bm{1}_{\{(x,y) \in \tplus\}}$. Clearly $A$ is irreducible and parts \ref{ass:aux0} and \ref{ass:aux1}  of Assumption \ref{ass1} are satisfied with this choice of the adjacency matrix $A$. Thus, the conditions for Theorem \ref{thm:ldp1} are satisfied and one has  a large deviation principle for the empirical measure associated with the self-interacting chain introduced in \cite{AFP88} for the approximation of the QSD of $\{Y_n, \; n \in \NN_0\}$. We remark that the model in \eqref{eq:qsdG} can also be viewed as a type of a vertex-reinforced random walk on $\Delta^o$. In this walk, given that at some instant the walker is at site $x$, it jumps to a site $y$ with probability that depends on the fraction of time the walker has previously visited the site $y$, as given by the formula $P(x,y)+P(x,0) m_y$.

\end{example}

{\section{A Stochastic Control Representation}\label{sec:controlproc}}
Throughout this section and next we fix $A \in \cla$. 
\AB{This section is notationally demanding and for the reader's convenience we have included a table of commonly used notation in Appendix \ref{sec:notationtablepart1}.}  We now introduce a pathwise construction of the collection $\{X_n, L^{n+1}, \; n \in \NN_0\}$, suitable for obtaining a tractable variational representation of the Laplace functionals of interest. For this construction it is useful to identify the state space with the space $\clv^d = \{ \bdelta_x : x \in \Delta^o\}$ introduced in Section \ref{sec:not}. 
{In particular, note that each $K \in \clk(\Delta^o)$  can be associated with a unique  $K^{\clv} \in \clk(\clv^d)$ 
through the identity
\[
K^{\clv}_{\bdelta_x, \bdelta_y} = K_{x,y}, \; x,y \in \Delta^o,
\]
where, as discussed in Section \ref{sec:not}, $\clk(\Delta^o)$ and $\clk(\clv^d)$ denote the space of transition kernels on $\Delta^o$ and $\clv^d$, respectively. 
}
Similarly, define the operator $G^{\clv} : \clp(\Delta^o) \to \clk(\clv^d)$ by 
\begin{equation}\label{eq:ggv}
G^{\clv}(m)_{\bdelta_x, \bdelta_y}  \doteq G(m)_{x,y}, \; x,y \in \Delta^o.
\end{equation}
Let $\{\nu^k(x, m), \; x \in \Delta^o, m\in \clp(\Delta^o), k \in \NN\}$ be iid $\clv^d$-valued random fields such that, for each $x \in \Delta^o$ and $m \in \clp(\Delta^o)$,
\begin{align*}
	 P(\nu^1(x, m) = \bdelta_y) = G^{\clv}(m)_{\bdelta_x, \bdelta_y} = G(m)_{x,y} , \quad  y \in \Delta^o.
\end{align*}
Then, the collection $\{X_n, L^{n+1}, \; n \in \NN_0\}$ has the following distributionally equivalent representation: $(X_0, L^1) = (x_0, \bdelta_{x_0})
$, 
\begin{equation}
\label{eq:defL}
	L^{k+1} = L^k + \frac{1}{k+1}\left[\nu^k(X_{k-1}, L^k) - L^k\right],\quad  \bdelta_{X_k} = \nu^k(X_{k-1}, L^k), \;\;
	k\in \NN.
\end{equation}
{The rest of this section is organized as follows. In Section \ref{sec:contrepx} we present a variational representation (Proposition \ref{prop:varrep0}) for Laplace functionals in terms of certain controlled analogues of $\{X_n, L^{n+1}, \; n \in \NN_0\}$. This representation will play a central role in the proofs. Section \ref{sec:twostep} introduces a two-step controlled chain that will be useful in giving suitable characterization of the limit points of controlled empirical measures. Section \ref{sec:simprep} gives a simpler representation for the cost in the variational representation from Proposition \ref{prop:varrep0} which is more amenable to a weak convergence analysis. Finally Section \ref{sec:timerev} gives an alternative form of the variational representation (Proposition \ref{prop:varrep}) using certain time reversed processes.
}

\AB{The representation in Proposition \ref{prop:varrep0} is more convenient for the proof of the LDP lower bound since it is given explicitly in terms of discrete time controlled sequences which can be constructed using the piecewise constant near optimal control obtained in Section 
\ref{sec:piecewiseconstant}.
On the other hand, the representation in Proposition \ref{prop:varrep} is better suited for the proof of the upper bound since it involves continuous time stochastic processes and random probability measures (on $\clv^d\times \clv^d \times \RR_+$) for which one can easily argue tightness and characterize  the limits on sending $n\to \infty$.}

\subsection{Controls and Controlled Sequences} \label{sec:contrepx} To prove the upper and lower bounds in Theorems \ref{thm:laplaceupperbound1} and \ref{thm:lowbd}, we rely on a certain  stochastic control representation for exponential moments of functionals of $\{L^n,\; n \in \NN\}$ presented below. This representation is given in Proposition \ref{prop:varrep0}, in  terms of certain controlled analogues of $\{\nu^k(X_{k-1}, L^k), L^{k+1}, \; k \in \NN\}$.

For each $n \in \NN$, the controlled stochastic system is a sequence $\{ \bar{L}^{n,k} , \; 1 \le k \le n\}$ of $\clp(\Delta^o)$-valued random variables which is defined recursively in terms of a collection of random probability measures on $\clv^d$, $\{\bar \mu^{n,k},\;  1 \le k \le n\}$,
where for each $1 \le k \le n$, $\bar \mu^{n,k}$ is $ \bar \clf^{n,k} \doteq \sigma(\{\bar L^{n,j}, \; 1\le j \le k\})$ measurable, as follows.
Define $\bar L^{n,1} \doteq \bdelta_{x_0}$,
and, having defined $\{ \bar{L}^{n,j}, 1\le j \le k\}$, 
$\bar L^{n,k+1}$ is defined as
\begin{equation}\label{eq:barlnk}
	\bar L^{n,k+1} \doteq
	\bar L^{n,k} +\frac{1}{k+1}\left[\bar \nu^{n,k}- \bar L^{n,k}\right], \;\;   k \le n,
\end{equation}
where $\bar \nu^{n,k}$ is a $\clv^d$-valued random variable such that
\begin{equation}\label{eq:condlawnu}
P[\bar \nu^{n,k} = \bdelta_x \mid \bar \clf^{n,k}] = \bar \mu^{n,k}(\bdelta_x), \;\; x \in \Delta^o.
\end{equation}
We set $\bar \nu^{n,0} \doteq \bdelta_{x_0}$, and, for each $n \in \NN$ and $0 \le k \le n,$ we let 
$\bar{X}^{n}_{k}$  denote the $\Delta^o$-valued random variable such that  $\bdelta_{\bar{X}^{n}_{k}} = \bar{\nu}^{n,k}$.
We note that the evolution equation \eqref{eq:barlnk} can be viewed as a `controlled' analogue of  \eqref{eq:defL}. In this equation, the superscript $n$ indexes a sequence of systems, and for each $n$, the index $0\le k \le n$ gives the first $n+1$ time steps of the $n$-th system.
For the stochastic control representation we give, it suffices, as discussed below \eqref{eq:varrep}, to consider controlled processes for which, a.s., for each $0 \le k \le n$, 
\begin{equation}\label{eq:prodmzrnumu0}
\bdelta_{\bar \nu^{n,k}} \otimes \bar \mu^{n,k+1}(\bdelta_x, \bdelta_y) = 0, \mbox{ for all } (x,y) \in ( \tplus)^c.
\end{equation}
We denote the collection of all such {\em control} sequences $\{\bar \mu^{n,k}, \;  1 \le k \le n\}$
as $\Theta^n$.

In order to study convergence behavior as $n\to \infty$ it is more convenient to work with processes indexed with a continuous time parameter, and so we consider the  following time interpolation
sequence 
$\{t_k, \; k \in \NN_0\}$ defined by 
\begin{equation}\label{eq:definetimeinterpolation}
t_0 \doteq 0, \; \; t_k = \sum_{j=1}^k (j+1)^{-1},\;\;  k \in \NN.
\end{equation}
The time stepping we use is motivated by the recursive definition of the controlled empirical measures $\bar L^{n,k}$ given in \eqref{eq:barlnk}
which says that the length of the  $k$-th interpolated interval should be of length $1/(k+1)$.
Such a time interpolation is standard in {the} study of stochastic approximation schemes \cite{benSA, BMP12, bor09, kus03}.

For each $n \in \NN$, define the $C(\RR_+ : \clp(\Delta^o))$-valued random variable $\bar{L}^n$   by linear interpolation: 
\begin{equation}\label{eq:411}\bar L^n(t) \doteq 
\bar L^{n, k+1} +(k+2) (t- t_k)  [\bar L^{n, k+2} - \bar L^{n,k+1}], \;\; t\in [t_k,t_{k+1}), \; {0 \le k \le n - 1}.
\end{equation}
The following variational representation follows from \cite[Theorem 4.2.2]{dupell}, \cite[Theorem 4.5]{buddupbook}. 
{\begin{proposition}\label{prop:varrep0}
\AB{For each $F \in  C_b(\clp(\Delta^o))$,
\begin{multline}\label{eq:varrep}
	-n^{-1}\log E \exp[-n F(L^{n+1})]
	\\
        = \inf_{\{\bar \mu^{n,i}\} \in \Theta^n}E \left[F(\bar L^n(t_n) ) + n^{-1} \sum\limits_{k=0}^{n-1} R\left(\bdelta_{\bar \nu^{n,k}}\otimes\bar \mu^{n,k+1} \|\bdelta_{\bar \nu^{n,k}}\otimes G^{\clv}(\bar L^{n,k+1})\right)\right].
\end{multline}}
\end{proposition}}
\AB{Note that $\bdelta_{\bar \nu^{n,k}}\otimes\bar \mu^{n,k+1}$ and $\bdelta_{\bar \nu^{n,k}}\otimes G^{\clv}(\bar L^{n,k+1})$ are random measures on $\clv^d \times \clv^d$ defined by, for $\bdelta_x, \bdelta_y \in  \clv^d$,
\[
\bdelta_{\bar \nu^{n,k}}\otimes\bar \mu^{n,k+1}(\bdelta_x , \bdelta_y) = \bdelta_{\bar \nu^{n,k}}(\bdelta_x) \bar \mu^{n,k+1}(\bdelta_y),
\]
and
\[
\bdelta_{\bar \nu^{n,k}}\otimes G^{\clv}(\bar L^{n,k+1})(\bdelta_x, \bdelta_y) = \bdelta_{\bar \nu^{n,k}}(\bdelta_x) G^{\clv}(\bar L^{n,k+1})(\bdelta_x, \bdelta_y)
\]
}
The fact that in the infimum on the right side of \eqref{eq:varrep} we can restrict, without loss of generality, to sequences
$\{\bar \mu^{n,i}, \; i \in \NN\} $ for which \eqref{eq:prodmzrnumu0} holds a.s. is because, if this property is violated, then the expression on the right side is $\infty$ since $A \in \cla$. 

{\subsection{ Two-step Controlled Chain} \label{sec:twostep} The proof of the LDP upper bound proceeds by characterizing the weak limit points of the controlled empirical measure processes $\bar L^n(\cdot)$ in terms of certain local (time dependent) stationary distributions. For this characterization it will be useful  
to  consider the following collection of $\clp(\Delta^o\times \Delta^o)$-valued random variables. Let  
$\barLP^{n,1} \doteq \bdelta_{(x_0, \bar X^{n}_{1})}$,
and, having defined $\{ \barLP^{n,j}, \; 1\le j \le k\}$, 
define $\barLP^{n,k+1}$ as
\begin{equation}
	\barLP^{n,k+1} \doteq
	\barLP^{n,k} +\frac{1}{k+1}\left[\bar \nu^{n,k}\otimes \bar \nu^{n,k+1}- \barLP^{n,k}\right], \;\;  k \le n,
 \label{eq:bartnk2}
\end{equation}
where,  $\bar \nu^{n,k}\otimes \bar \nu^{n,k+1}$ is the $\clp(\Delta^o\times \Delta^o)$-valued random variable defined, for $(x,y) \in \Delta^o \times \Delta^o$, as
$\bar \nu^{n,k}\otimes \bar \nu^{n,k+1}(x,y) =1$, if $\bar \nu^{n,k}=\bdelta_x$ and $\bar \nu^{n,k+1}=\bdelta_y$; and $0$ otherwise. 

We will also consider the continuous time interpolation of $\{\barLP^{n,k}, \;  1 \le k \le n\}$ defined as follows.}
For each $n \in \NN$, define the  $C(\RR_+ : \clp(\Delta^o\times \Delta^o))$-valued random variable $\barLP^n$ by linear interpolation: 
\begin{equation}\label{eq:411b}
	\barLP^n(t) \doteq 
\barLP^{n, k+1} +(k+2) (t- t_k)  [\barLP^{n, k+2} - \barLP^{n,k+1}],\; \; t\in [t_k,t_{k+1}), \;0 \le  {k \le n - 1}.
\end{equation}
\subsection{Simpler Representation for the Relative Entropy Cost}\label{sec:simprep}
 We will now give a simpler representation for the second term on the right side of \eqref{eq:varrep}.
 
 Consider random measures on $\clv^d \times [0,t_n]$  and
$\clv^d \times \clv^d \times [0,t_n]$
defined as follows: for $A \subseteq \clv^d$, $C \subseteq \clv^d$ and $B \in \clb[0,t_n]$,
\begin{equation}\label{eq:519}
\begin{aligned}
&\bar\Lambda^n(A\times B) \doteq \int_B \bar\Lambda^n(A\mid t) dt, \\
&	\bar \xi^n(A\times C \times B) \doteq \int_B \bar \xi^n(A \times C \mid t) dt,  \;\;
	\bar \Xi^n(A\times C \times B) \doteq \int_B \bar \Xi^n(A \times C \mid t) dt
\end{aligned}
\end{equation}
where, for $k \le n-1$ and $t \in [t_k, t_{k+1})$,
\begin{align}
\bar \Lambda^n(\cdot \mid t) \doteq \bdelta_{\bar \nu^{n,k+1}}(\cdot),\;\; 	\bar \xi^n(\cdot \mid t) \doteq \bdelta_{\bar \nu^{n,k}}\otimes \bar \mu^{n,k+1}(\cdot), \;\;
\bar \Xi^n(\cdot \mid t) \doteq \bdelta_{\bar \nu^{n,k+1}}\otimes \bdelta_{\bar \nu^{n,k+2}}(\cdot).
\label{eq:576new}
\end{align}
From \eqref{eq:prodmzrnumu0}, it follows that if $(x,y) \in (\tplus)^c$, then, for a.e. $t \in \RR_+$,
\begin{equation}\label{eq:anrest}
 \bar \xi^n(\bdelta_x, \bdelta_y \mid t) = 0, \;\; \bar \Xi^n(\bdelta_x, \bdelta_y \mid t) = 0, \; \mbox{a.s.}
 \end{equation}

Using \eqref{eq:411} and the  representation in \eqref{eq:519}, we can now
 rewrite the right side of the identity in \eqref{eq:varrep} as follows.
For $s \in \RR_+$, let
\begin{equation}\label{def:stepinv}
m(s) \doteq \sup\{ k : t_k \leq s\}, \quad
a(s) \doteq t_{m(s)}.
\end{equation}
 For each $n \in \NN$, define the random measure $\bar \zeta^n$ on $\clv^d \times \clv^d \times [0,t_n]$ as follows:  for $A \subseteq \clv^d$, $C \subseteq \clv^d$ and $B \in \clb[0,t_n]$,
\begin{equation}\label{eq:519b}
\begin{aligned}
\bar \zeta^n(A\times C \times B) \doteq \int_B \bar \zeta^n\left(A \times C \mid t, \bar L^n(a(t))\right) dt,
\end{aligned}
\end{equation}
where for $k \le n-1$, $m \in \clp(\Delta^o)$ and $t \in [t_k, t_{k+1})$,
\begin{align}\label{eq:zetadef3}
	\bar \zeta^n\left(\cdot\mid t,m\right) \doteq \bdelta_{\bar \nu^{n,k}}\otimes G^{\clv}(m)(\cdot).
\end{align}

Define $\psi_e: \RR_+ \to  \{2,3,\dots\}$ as
\begin{equation}\label{eq:psiedef1}
\psi_e(t) \doteq \sum\limits_{k=0}^{\infty} (k+2) \bm{1}_{[t_k,t_{k+1})}(t),
\end{equation}
so that 
\begin{equation}
	\begin{aligned}
	&n^{-1}  \sum\limits_{k=0}^{n-1} R\left(\bdelta_{\bar \nu^{n,k}}\otimes\bar \mu^{n,k+1} \|\bdelta_{\bar \nu^{n,k}}\otimes G^{\clv}(\bar L^{n,k+1})\right) \\
	&=   n^{-1} \int_0^{t_n} \psi_e(s) R\left(\bar \xi^n(\cdot \mid s) \|  \bar \zeta^n\left(\cdot \mid  s, \bar L^n(a(s))\right)\right) ds.
	\end{aligned}
	 \label{eq:540}
\end{equation}
Define $\clp(\clv^d \times \RR_+)$
and $\clp(\clv^d \times \clv^d \times \RR_+)$-valued random variables as follows: for $t\in \RR_+$ and $x,y \in \Delta^o$, let
\begin{equation}\label{eq:525}
\begin{aligned}
\lambda^n(\{\bdelta_x\} \times [0,t]) &\doteq 	n^{-1} \int_{0}^{t_n \wedge t} \psi_e(t_n-s) \bar \Lambda^n(\bdelta_x \mid t_n-s) ds\\
\beta^n(\{\bdelta_x\} \times \{\bdelta_y\}\times [0,t])&\doteq n^{-1} \int_{0}^{t_n \wedge t} \psi_e(t_n-s)\bar \xi^n((\bdelta_x, \bdelta_y)  \mid t_n-s) ds\\
\rho^n(\{\bdelta_x\} \times \{\bdelta_y\}\times [0,t]) &\doteq n^{-1} \int_{0}^{t_n \wedge t}  \psi_e(t_n-s) \bar \zeta^n
\left((\bdelta_x, \bdelta_y)  \mid t_n-s, \bar L^n(a(t_n-s))\right) ds.
\end{aligned}
\end{equation}
Note that for the measures $\lambda^n, \beta^n, \rho^n$ we have reserved the last coordinate to denote time.

The fact that the quantities in  \eqref{eq:525} define probability measures on $\clv^d\times \RR_+$ (resp. $\clv^d\times \clv^d \times \RR_+$) follows on observing that, for each $n \in \NN$,
\[
n^{-1} \int_0^{t_n} \psi_e(s) ds=1.
\]
Also, from \eqref{eq:anrest} it follows that if $(x,y) \in (\tplus)^c$, then, for each $t \in \RR_+$, 
\begin{equation}\label{eq:389a}
	\beta^n(\{\bdelta_x\} \times \{\bdelta_y\}\times [0,t]) = 0, \mbox{ a.s. }
\end{equation}
From \eqref{eq:540} and chain rule for relative entropies (Theorem {\ref{thm:chainrule}}), it follows that{
\begin{equation}
	n^{-1}  \sum\limits_{k=0}^{n-1} R\left(\bdelta_{\bar \nu^{n,k}}\otimes\bar \mu^{n,k+1} \|\bdelta_{\bar \nu^{n,k}}\otimes G^{\clv}(\bar L^{n,k+1})\right)
	= R\left(\beta^n \| \rho^n\right).
	\label{eq:541}
\end{equation}
}

\PZ{In view of the identity in \eqref{eq:541} and 
Proposition \ref{prop:varrep0}, we have  the following  result.
\begin{lemma} \label{lemma:variational-repr}
For each $F \in  C_b(\clp(\Delta^o))$,
\begin{equation}\label{eq:148n}
	-n^{-1}\log E \exp[-n F(L^{n+1})]
        = \inf_{\{\bar \mu^{n,i}\} \in \Theta^n}E \left[F(\bar L^n(t_n) ) + R\left(\beta^n \| \rho^n\right)\right],
\end{equation}
where $\{\beta^n, \; n \in \NN\},\{\rho^n, \; n \in \NN\}$ were defined in \eqref{eq:525}.
\end{lemma}
}

\subsection{A Time Reversed Representation}
\label{sec:timerev}
From \eqref{eq:barlnk}, \eqref{eq:bartnk2}, \eqref{eq:411}, and \eqref{eq:576new} it follows that, for $t \in [0,t_n]$, 
\begin{equation}\label{eq:522}
\begin{split}  
	\bar L^n(t) &= \bar L^n(0) + \int_0^t \sum\limits_{v 
	\in \clv^d} (v-\bar L^n(a(s))) \bar \Lambda^n(v\mid s) ds,
	\end{split}
\end{equation}
and, 
\begin{equation}\label{eq:522b}
\begin{split}
 \barLP^n(t) &= \barLP^n(0) + \int_0^t \sum_{v,v' \in \clv^d}(v\otimes v'
 -\barLP^n(a(s))) \bar \Xi^n(v, v'\mid s) ds.
	\end{split}
\end{equation}
The representation of the rate function in \eqref{def:ratefunction1} is given in terms of an optimal control problem associated with a given $m \in \clp(\Delta^o)$ in which the associated control $\eta$ satisfies the admissibility conditions {described in Property \ref{prop:z1}}. {This control problem is more tractable to analyze than the one that would emerge from a direct weak convergence analysis of the controlled process $\bar L^n$,  and which will take the form of an optimal control problem in which the goal is to control a trajectory so that it asymptotically approaches a given $m \in \clp(\Delta^o)$.} 
\AB{The control problem for the rate function $I_A(m)$ in \eqref{def:ratefunction1} involves controlled trajectories with initial state $m$. In the weak convergence proof of the  upper bound (Theorem \ref{thm:laplaceupperbound1}), the term $F(m)$ on the right side of Laplace upper bound arises from the weak limit of $F(\bar L^n(t_n))$. In order to relate $m$, which appears as an initial condition for a dynamical system in the definition of the rate function, with $\bar L^n(t_n)$, which is the state of the controlled empirical measure at a large time instant $t_n$, it is natural to view the dynamics of $\bar L^n$ backwards in time starting from the instant $t_n$ so that for the time reversed system $\bar L^n(t_n)$ becomes the initial condition to closely mirror the dynamical system in the definition of $I_A(m)$ with initial condition $m$.
} Towards that end,  for each $n \in \NN$, define the $C(\RR_+: \clp(\Delta^o))$-valued (resp. $C(\RR_+: \clp(\Delta^o \times \Delta^o))$-valued) random variable $\check \bfL^n$ (resp. $\cbLP^n$) by
\begin{equation} \label{eq:526}
(\check \bfL^n(t),  \cbLP^n(t)) \doteq  \begin{cases} (\bar L^n(t_n-t),  \barLP^n(t_n-t)) & 0 \leq t \le t_n\\
(\bar L^n(0),  \barLP^n(0))& t \ge t_n.\end{cases}
\end{equation}
Also, for each $n \in \NN$, define the $\clm(\clv^d\times \RR_+)$-valued random variable $\check \Lambda^n$ by, for $A \subseteq \clv^d$ and $t \in \RR_+$,
\begin{equation} \label{eq:527}
	\check \Lambda^n(A \times [0,t]) \doteq \int_{t_n-t}^{t_n} \bar \Lambda^n(A\mid s) ds
	= \int_{0}^{t} \check\Lambda^n(A\mid s) ds,
\end{equation}
where $\bar \Lambda^n(A\mid s) \doteq 0$ for $s \le 0$, and $\check\Lambda^n(A\mid s) \doteq \bar \Lambda^n(A\mid t_n-s)$ for $s\in \RR_+$. For each $n \in \NN$, define the quantities $\check \Xi^n$ and $\check \Xi^n(\cdot \mid s)$ similarly.
From \eqref{eq:522} we see that these time-reversed controlled processes satisfy the following evolution equation:
for  $t \in \RR_+$ and $n \ge m(t)$,
\begin{equation} \label{eq:timrev}
\begin{split}
\check \bfL^n (t) 
&= \check \bfL^n(0) - \int_{0}^{t} \sum\limits_{v\in\clv^d} v \check \Lambda^n(v \mid s)ds + \int_{t_n-t}^{t_n} \check \bfL^n(t_n - a(s)) ds,\\
\end{split}
\end{equation}
and, 
\begin{equation} \label{eq:timrevb}
\begin{split}
\cbLP^n (t)
&= \cbLP^n(0) - \int_{0}^{t} \sum_{v,v' \in \clv^d} v\otimes v'\,
\check \Xi^n(v, v' \mid s)ds + \int_{t_n-t}^{t_n} \cbLP^n(t_n - a(s)) ds.\\
\end{split}
\end{equation}
Combining the above with \eqref{eq:varrep} and \eqref{eq:148n} we now have the following key proposition.
\begin{proposition}\label{prop:varrep}
For each $n \in \NN$ and $F \in C_b(\clp(\Delta^o))$
\begin{equation}
	-n^{-1}\log E \exp[-n F(L^{n+1})] = \inf_{\{\bar \mu^{n,i}\} \in \Theta^n}
	E\left[F(\check \bfL^n (0)  ) + R\left(\beta^n \| \rho^n\right)\right].
\end{equation}
\end{proposition}
{\begin{remark}\label{rem:expdisc}
We now motivate the exponential discount that appears in the definition of the rate function.  Essentially, this is due to the specific continuous time interpolation used in our analysis.  To see this,  consider a function $g: \NN \to \RR$ with continuous time interpolation defined as $\bar{g} :  \RR_+ \to \RR$ defined by 
\[
\bar{g}(s) \doteq g(n) , \quad h_n  \le s \le h_{n+1},
\]
where $h_n = \sum_{j=1}^n j^{-1}$ denotes the $n$-th harmonic number. Letting $\tilde{e}(s) = \sup\{ n  : s \ge h_n\}$ and noting that  $h_n \approx \log n$ and $\tilde{e}(s) \approx e^s$, one can consider a sequence of time-reversals of $\bar{g}$ defined as
\[
\check{g}^n(s) \doteq \bar{g}(\log n - s), \; \; s \in [0, \log n].
\]
Then, under suitable  conditions on $g$, 
\begin{multline}
    \frac{1}{n} \sum\limits_{i=1}^n g(i) \approx  \frac{1}{n} \int_0^{h_n} \tilde{e}(s) \bar{g}(s)ds 
    \approx \frac{1}{n} \int_0^{\log n} e^{s} \bar{g}(s)ds \\
    \approx \frac{1}{n} \int_0^{\log n} e^{\log n - s} \check{g}^n(s) ds 
    \approx \int_0^{\log n}  e^{-s} \check{g}^n(s) ds,
\end{multline}
 This leads to   an asymptotic expression of the form
\[
\lim\limits_{n\to\infty} \frac{1}{n} \sum\limits_{i=1}^{n} g(i) = \int_0^{\infty} e^{-s} \check{g}^* (s) ds, 
\]
where $g^*$ is formally the limit of the sequence $\{\check{g}^n, \; n \in \NN\}$.
Note that  the second term in the cost on the right side of equation \eqref{eq:varrep} takes the form of a normalized sum as in the first line of the previous display. Thus, after considering limits of suitably interpolated and time reversed relative entropy terms (see e.g. \eqref{eq:540}) one arrives at an  approximation of such sums by infinite horizon discounted costs as in the definition of the rate function.

\end{remark}

}
\section{Laplace Upper Bound}
\label{sec:lub}
Recall that we fix $A \in \cla$ and suppress $A$ in the notation $I_A$ for the rate function.
The main result of the section is the following theorem, which gives the large deviations upper bound.
\begin{theorem}\label{thm:laplaceupperbound1}
	Suppose that Assumption \ref{assu:lip} is satisfied. Then,
	for every $F \in C_b(\clp(\Delta^o))$,
	\begin{equation*}
	\liminf_{n\to \infty}-n^{-1} \log E \exp[-n F(L^{n+1})] \ge \inf_{m \in \clp(\Delta^o)} [F(m) + I(m)].
	\end{equation*}
\end{theorem}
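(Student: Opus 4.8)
The plan is to run the weak-convergence argument for Laplace upper bounds, starting from the stochastic control representation \eqref{eq:varrep} (in the time-reversed form obtained by combining it with \eqref{eq:541}). Fix $F \in C_b(\clp(\Delta^o))$ and, for each $n$, pick a control sequence $\{\bar\mu^{n,i}\}\in\Theta^n$ that attains the infimum in that representation up to an additive $1/n$, so that
\[
-n^{-1}\log E\exp[-nF(L^{n+1})] \ge E\bigl[F(\check\bfL^n(0)) + R(\beta^n\|\theta^n)\bigr] - n^{-1}.
\]
The time-reversed processes $\check\bfL^n,\check\bfclt^n$ of \eqref{eq:526} satisfy the evolution equations \eqref{eq:timrev}, \eqref{eq:timrevb}; since their right-hand drift terms have $\|\cdot\|$-norm at most $1$ and the processes take values in probability simplices, for each $T$ the restrictions to $[0,T]$ are, for $n$ large, uniformly Lipschitz with constant $\le 2$, so $\{\check\bfL^n\}$ and $\{\check\bfclt^n\}$ are tight in $C(\RR_+:\clp(\Delta^o))$ and $C(\RR_+:\clp(\Delta^o\times\Delta^o))$. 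The random measures $\beta^n,\theta^n,\gamma^n$ of \eqref{eq:525} take values in spaces of probability measures over $\clv^d\times\clv^d\times\RR_+$ (resp. $\clv^d\times\RR_+$) whose $\RR_+$-marginal is the \emph{deterministic} measure with density $n^{-1}\psi_e(t_n-\cdot)\one_{[0,t_n]}$, which converges to $e^{-s}\,ds$; as $\clv^d$ is finite this yields tightness of $\{\beta^n\},\{\theta^n\},\{\gamma^n\}$. Passing to a subsequence that realizes $\liminf_n\bigl(-n^{-1}\log E\exp[-nF(L^{n+1})]\bigr)$ and along which $(\check\bfL^n,\check\bfclt^n,\beta^n,\theta^n,\gamma^n)$ converges in distribution, and invoking the Skorokhod representation theorem, I may assume that these converge almost surely on a single probability space to a limit $(\check\bfL,\check\bfclt,\beta,\theta,\gamma)$.

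The next step is to identify the limit. Set $m\doteq\check\bfL(0)$ and let $\eta\in\clu$ be the disintegration of $\beta$ with respect to its $\RR_+$-marginal $e^{-s}\,ds$. Property \eqref{P1} for $\eta$ passes to the limit from \eqref{eq:389a}. For \eqref{P2} it suffices to check that both marginals $\beta_{(1)}$ and $\beta_{(2)}$ coincide with $\gamma$: the relation $\beta_{(1)}=\gamma$ holds because $\beta^n_{(1)}$ and $\gamma^n$ differ only through a one-step index shift of the $\bar\nu^{n,\cdot}$'s, whose weighted contribution is $O(n^{-1}\log n)$ by a deterministic summation estimate using $\psi_e(t_k)(t_{k+1}-t_k)=1$ and the smoothness of the interpolation, while $\beta^n_{(2)}-\gamma^n$ is $n^{-1}$ times a sum of at most $n$ bounded martingale differences $\delta_{\bar\nu^{n,k+1}}-\bar\mu^{n,k+1}$ (a martingale difference by \eqref{eq:condlawnu}) and so tends to $0$ in $L^2$. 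Passing to the limit in \eqref{eq:timrev}, using that the mesh vanishes so $a(t_n-s)$ approaches $t_n-s$ and hence $\bar L^n(a(t_n-s))\to\check\bfL(s)$, and that the weight $n^{-1}\psi_e(t_n-\cdot)$ converts the $\RR_+$-marginal $e^{-s}\,ds$ of $\gamma$ back into Lebesgue measure, gives
\[
\check\bfL(t) = m - \int_0^t \eta_{(1)}(s)\,ds + \int_0^t \check\bfL(s)\,ds, \quad t\in\RR_+,
\]
which is \eqref{P3} with $M=\check\bfL$; this linear equation has a unique solution, so $\check\bfL$ is it, and $\check\bfL\in C(\RR_+:\clp(\Delta^o))$ by construction. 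Taking $\clt\doteq\check\bfclt$: it is $2$-Lipschitz as a limit of the $2$-Lipschitz $\check\bfclt^n$, it vanishes off $\tplus$ by \eqref{eq:prodmzrnumu0}, and its two marginals equal $M$ because $(\bar\clt^{n,k})_{(1)}=\bar L^{n,k}$ exactly and $(\bar\clt^{n,k})_{(2)}$ differs from $\bar L^{n,k}$ by a single atom of mass $O(1/k)$. Hence $\eta\in\clu(m)$ and $\check\bfL$ solves $\clu(m,\eta)$.

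For the cost, observe that the $\RR_+$-marginals of $\beta$ and of $\theta$ both equal $e^{-s}\,ds$, and that $\theta$ disintegrates as $\theta(\{e_x\}\times\{e_y\}\mid s)=\gamma_{(1)}(e_x\mid s)\,G(M(s))_{x,y}=\eta_{(1)}(x\mid s)\,G(M(s))_{x,y}$; the kernel $G(M(s))$ appears here via the Lipschitz continuity of $G$ (Assumption \ref{assu:lip}) together with $\bar L^n(a(t_n-s))\to M(s)$, and $\gamma_{(1)}=\eta_{(1)}$ was obtained above. The chain rule for relative entropy then gives
\[
R(\beta\|\theta) = \int_0^{\infty} e^{-s}\, R\!\left(\eta(\cdot\mid s)\,\|\,\eta_{(1)}(\cdot\mid s)\otimes G(M(s))\right) ds \ge I(M(0)) = I(\check\bfL(0)),
\]
the inequality by \eqref{def:ratefunction2} since $\eta\in\clu(\check\bfL(0))$. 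Finally, by continuity of $F$, joint lower semicontinuity of relative entropy under weak convergence, and Fatou's lemma (the integrand being bounded below by $-\|F\|_\infty$),
\[
\liminf_{n\to\infty}\Bigl(-n^{-1}\log E\exp[-nF(L^{n+1})]\Bigr) \ge E\bigl[F(\check\bfL(0)) + R(\beta\|\theta)\bigr] \ge E\bigl[F(\check\bfL(0)) + I(\check\bfL(0))\bigr] \ge \inf_{m\in\clp(\Delta^o)}[F(m)+I(m)],
\]
which is the claim.

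I expect the principal obstacle to be the faithful identification of the continuous-time limit: the argument must reconcile three separate time features --- the interpolation scale $t_k\sim\log k$, the time reversal, and the weight $\psi_e$ that behaves like $e^{s}$ relative to the horizon $t_n$ --- so that the infinite-horizon discount $e^{-s}$ emerges and the limit genuinely satisfies \eqref{P1}--\eqref{P3}. Within this, the point specific to the self-interacting setting is the local-equilibrium identity \eqref{P2}: one has to show that the time-averaged realized increments $\delta_{\bar\nu^{n,k+1}}$ may be replaced in the limit by the controls $\bar\mu^{n,k+1}$, which is where the martingale estimate is used and where the argument is not purely deterministic bookkeeping; this is considerably easier here than in the lower bound, where the analogue of \eqref{P2} must be actively enforced rather than merely verified.
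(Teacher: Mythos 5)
Your proposal is correct and follows essentially the same approach as the paper's own proof of Theorem \ref{thm:laplaceupperbound1}: pass to the time-reversed control representation, prove tightness of $(\check\bfL^n,\check\bfclt^n,\gamma^n,\beta^n,\theta^n)$, identify the limit as giving an admissible $\eta\in\clu(\check\bfL^*(0))$ with the correct drift and cost structure (the martingale estimate for $\beta^n_{(2)}-\gamma^n$ and the Lipschitz/interpolation bounds play the same roles here as in Lemmas \ref{lem:lemtight} and \ref{lemchar}), and finish with lower semicontinuity of relative entropy, Fatou, and the variational formula \eqref{def:ratefunction2}. The only spot you treat more lightly than the paper is the convergence of $\theta^n$, where the paper must additionally handle the one-index offset between $\delta_{\bar\nu^{n,m(t_n-s)}}$ (in $\bar\zeta^n$) and $\delta_{\bar\nu^{n,m(t_n-s)+1}}$ (in $\bar\Lambda^n$) via a telescoping/Abel-summation estimate (see \eqref{eq:1126n}), but this is a detail within the same route rather than a different approach.
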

Assumption \ref{assu:lip} will be taken to hold for the rest of this section. 

{The section is organized as follows. In Section \ref{sec:prelimresults1} we study tightness properties of the controls, controlled empirical measures and other related objects. We also give a useful characterization of the weak limit points of these quantities. Using this characterization we then complete the proof of the Laplace upper bound in Section \ref{sec:laplaceupperbound}.}
\subsection{Tightness and Weak Convergence}\label{sec:prelimresults1}
A key step in the proof of Theorem \ref{thm:laplaceupperbound1} will be establishing the tightness of suitable controlled quantities and identifying their weak limit points. In preparation for that we 
first establish an elementary property of the function $\psi_e$ introduced in \eqref{eq:psiedef1}.
Recall the following estimate for the harmonic series (cf. \cite{BudWat3}).
For any $n\ge 2$
\begin{equation}\label{eq:eulermaschestimate}
\newgamma +\frac{1}{2(n+1)} < \sum\limits_{k=1}^n k^{-1} - \log n < \newgamma + \frac{1}{2(n-1)},
\end{equation}
where $\newgamma \approx 0.57721$ is the Euler-Mascheroni constant. Recall the map $m: \RR_+ \to \NN_0$ (resp. $\psi_e : \RR_+ \to \{2,3,\ldots\}$) from \eqref{def:stepinv} (resp. \eqref{eq:psiedef1}). 
 {Recalling the definition of $\{t_k, \; k \in \NN_0\}$ in \eqref{eq:definetimeinterpolation}}, by \eqref{eq:eulermaschestimate} and the observation that $t_n -s \leq t_{m(t_n-s)+1}$, we see that for all $n \in \NN$ and $s \in \RR_+$,
\begin{equation}\label{eq:harmonest}
\log(n+1 ) + \frac{1}{2(n+2)} - (s+1) \le t_n-s - \newgamma \le t_{m(t_n-s)+1} - \newgamma
\le \log(m(t_n-s)+2) -1 + \frac{1}{2(m(t_n-s)+1)}.
\end{equation}
As a consequence of this inequality we have the following lemma.
\begin{lemma}\label{lem:mtpsieasymptotics} 
For each $t \in \RR_+$, as $n \to \infty$, $n^{-1} m(t_n-t) \to \exp(-t).$
Additionally, for each $t \in \RR_+$, as $n \to \infty$,
\begin{equation}\label{eq:532to0}
\sup_{s\in [0,t]}\left| n^{-1} \psi_e(t_n-s) - \exp(-s) \right| \to 0.
\end{equation}

\end{lemma}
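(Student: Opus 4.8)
The plan is to reduce both assertions to the harmonic-series estimate \eqref{eq:eulermaschestimate} (equivalently \eqref{eq:harmonest}), which is already in hand. The first step is bookkeeping: writing $H_k = \sum_{i=1}^k i^{-1}$ one has $t_k = \sum_{j=1}^k (j+1)^{-1} = H_{k+1}-1$, and, directly from \eqref{def:stepinv}, $t_{m(u)} \le u < t_{m(u)+1}$ for all $u \ge 0$; in particular $m(u) \to \infty$ as $u \to \infty$, so $m(t_n-t) \to \infty$ for each fixed $t \ge 0$ because $t_n \to \infty$. I would also record the identity $\psi_e(u) = m(u) + 2$ for all $u \ge 0$, which is immediate from \eqref{eq:psiedef1} and \eqref{def:stepinv}.

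For the first limit I would obtain matching bounds on $n^{-1}m(t_n-t)$. The lower bound comes straight from \eqref{eq:harmonest} with $s = t$: after rearranging and exponentiating it reads $\frac{n+1}{m(t_n-t)+2} \le \exp\!\big(t + \tfrac{1}{2(m(t_n-t)+1)} - \tfrac{1}{2(n+2)}\big)$, whose right-hand side tends to $e^{t}$ as $n \to \infty$ (using $m(t_n-t)\to\infty$), so $\liminf_n n^{-1}m(t_n-t) \ge e^{-t}$. For the upper bound I would use $t_{m(t_n-t)} \le t_n - t$, i.e.\ $H_{m(t_n-t)+1} \le H_{n+1} - t$, and apply \eqref{eq:eulermaschestimate} to each side (legitimate once $m(t_n-t)+1 \ge 2$, which holds for $n$ large) to get $\log(m(t_n-t)+1) < \log(n+1) + \tfrac{1}{2n} - t$, i.e.\ $\frac{m(t_n-t)+1}{n+1} < e^{1/(2n)-t} \to e^{-t}$, so $\limsup_n n^{-1}m(t_n-t) \le e^{-t}$. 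The two bounds give $n^{-1}m(t_n-t) \to e^{-t}$.

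For \eqref{eq:532to0} I would use $\psi_e(u) = m(u)+2$ to write $n^{-1}\psi_e(t_n-s) = n^{-1}m(t_n-s) + 2n^{-1}$, so it suffices to upgrade the pointwise convergence $n^{-1}m(t_n-s)\to e^{-s}$ just established to uniform convergence on $[0,t]$. This follows from a standard Dini/P\'olya-type argument: for $n$ large enough that $t_n \ge t$, the map $s \mapsto n^{-1}m(t_n-s)$ is non-increasing on $[0,t]$ and the limit $s\mapsto e^{-s}$ is continuous and non-increasing, so one fixes $\veps > 0$, chooses a finite partition of $[0,t]$ on which $e^{-s}$ oscillates by less than $\veps$, applies the pointwise statement at the finitely many partition points, and interpolates between consecutive points using monotonicity of both functions, concluding $\sup_{s\in[0,t]}|n^{-1}m(t_n-s) - e^{-s}| \le 2\veps$ for all large $n$. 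Since $2n^{-1} \to 0$, this yields \eqref{eq:532to0}.

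I do not anticipate a genuine obstacle here: once \eqref{eq:harmonest} is available, the content is entirely the index bookkeeping in the Euler--Maclaurin bounds and the routine monotone-to-uniform upgrade. The step requiring the most care is making sure the inequalities from \eqref{eq:eulermaschestimate} are applied at the correct indices ($m(t_n-t)+1$ and $n+1$) and only where those indices are at least $2$; everything else is direct.
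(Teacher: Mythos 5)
Your proof is correct, and it takes a genuinely different route from the paper's on the second (uniform) assertion. The paper proves \eqref{eq:532to0} first and directly: it bounds $n^{-1}\psi_e(t_n-s) - e^{-s}$ on both sides by explicit quantities, then shows these bounds tend to $0$ \emph{uniformly in $s \in [0,t]$} by using the crucial intermediate estimate $k_{s,n}+2 \ge (n+1)e^{-t-1}$ (see \eqref{eq:288}), which gives a uniform lower bound on $m(t_n-s)$ over $s \in [0,t]$ and hence forces the error term $\sup_{s\in[0,t]}\bigl|e^{-1/(2k_{s,n}+3)}-1\bigr|$ to vanish; the first (pointwise) assertion is then extracted afterwards via $\psi_e - m \equiv 2$. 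You reverse the order: you establish the pointwise limit $n^{-1}m(t_n-t)\to e^{-t}$ via matching one-sided applications of \eqref{eq:eulermaschestimate} at the indices $m(t_n-t)+1$ and $n+1$, and then upgrade to uniformity on $[0,t]$ by a P\'olya-type argument exploiting that $s\mapsto n^{-1}m(t_n-s)$ is non-increasing and the limit $e^{-s}$ is continuous and monotone; $\psi_e(u)=m(u)+2$ then gives \eqref{eq:532to0}. Both are sound. What your route buys is modularity — the pointwise limit is isolated as a clean step, and the monotone-to-uniform upgrade is a reusable off-the-shelf argument needing no extra estimates. What the paper's route buys is explicitness — it produces quantitative error bounds (\eqref{eq:260}, \eqref{eq:280}, \eqref{eq:305}) that could in principle be reused elsewhere, at the cost of more careful index bookkeeping. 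The only care needed in your version, which you correctly flagged, is ensuring \eqref{eq:eulermaschestimate} is applied only where the relevant index is at least $2$, which holds for $n$ large.
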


\begin{proof}
The first statement in the lemma is immediate from the	second on observing that for all $t \in \RR_+$,
$m(t_n-t) - \psi_e(t_n-t)=2$. We now prove the second statement. For  $s \in \RR_+$ and $n \in \NN$, let $k_{s,n} \doteq m(t_n-s)$, so that
\begin{equation}\label{eq:128nn}
n^{-1} \psi_e(t_n-s) =  n^{-1}( k_{s,n}+2).
\end{equation}
From \eqref{eq:harmonest},  for all $s \in \RR_+$ and $n \in \NN$,
\begin{multline}\label{eq:241eq1}
\newgamma + \log(n+1) + (2n+4)^{-1} - (s+1)\leq t_n - s \le t_{k_{s,n}+1} \leq \log(k_{s,n}+2)-1  + \newgamma + (2k_{s,n}+2)^{-1},
\end{multline}
from which it follows that $e^{-s} \leq (n+1)^{-1}(k_{s,n}+2)e^{\frac{1}{2k_{s,n}+2}}$,
and therefore that
\begin{equation}\label{eq:260}
(e^{-\frac{1}{2k_{s,n}+3}} - 1) e^{-s}
  \leq  n^{-1}(k_{s,n}+2) - e^{-s}.
\end{equation}
Next, using the estimate
$$
\newgamma + \log(k_{s,n}+1) \le \newgamma + \log(k_{s,n}+1) + (2k_{s,n}+4)^{-1} \le t_{k_{s,n}}+1
\le t_n-s+1 \le \newgamma +\log(n+1) + \frac{1}{2n} - s,$$
we have that $\log\left(\frac{k_{s,n}+1}{n+1}\right) \leq (2n)^{-1} - s$, which, along with the fact that $k_{s,n} \le n$, ensures that
$n^{-1}k_{s,n} \le e^{-s}e^{\frac{1}{2n}}$
Consequently,
\begin{equation}\label{eq:280}
n^{-1} (k_{s,n}+2) - e^{-s} \leq e^{-s}(e^{\frac{1}{2n}} - 1) + 2n^{-1} \leq e^{\frac{1}{2n}} + n^{-1}(2-n).
\end{equation}
Once more using \eqref{eq:241eq1}, we see that $\log(n+1) - s  \leq \log(k_{s,n}+2) + (2k_{s,n}+2)^{-1}$,
so, for fixed $t \in \RR_+$ and for each  $s \in [0,t]$ and all $n \in \NN$,
\begin{equation}\label{eq:288}
(n+1) e^{-t-1 } \leq (n+1)e^{-s-1} \leq k_{s,n}+2 .
\end{equation}
Using \eqref{eq:288}, we see that for each $n \in \NN$,
\[
\sup\nolimits_{s\in[0,t]} \left| e^{-\frac{1}{2k_{s,n}+3}} - 1\right| 
\leq 1 - \exp\left( - ( 2(n+1)e^{-t-1} -1)^{-1}\right)
\]
which shows that, as $n\to\infty$,
\begin{equation}\label{eq:296}
\sup\nolimits_{s\in[0,t]} \left| e^{-\frac{1}{2k_{s,n}+3}} - 1\right| \to 0.
\end{equation}
Combining \eqref{eq:260} and \eqref{eq:280}, we see that for each $s \geq 0$ and $n \in \NN$,
$$
\left(e^{-\frac{1}{2k_{s,n}+3}} - 1\right)e^{-s} \leq \frac{k_{s,n}+2}{n} - e^{-s} \leq e^{\frac{1}{2n}} + \frac{2-n}{n},
$$
so, from \eqref{eq:128nn}, for each $n \in \NN$,
\begin{equation}\label{eq:305}
\sup\nolimits_{s\in[0,t]} \left| n^{-1} \psi_e(t_n - s) - e^{-s}\right| 
 \leq  \max\left\{ \sup\nolimits_{s\in[0,t]}  (1-e^{-\frac{1}{2k_{s,n}+3}}), e^{\frac{1}{2n}} +n^{-1}(2-n)\right\}.
\end{equation}
Combining \eqref{eq:296} and \eqref{eq:305}, we obtain \eqref{eq:532to0}.
\end{proof}

The next lemma shows that the  sequences of various quantities, introduced in Section \ref{sec:controlproc},  associated with a sequence  of controls $\{\bar \mu^{n,i}\} \in \Theta^n$, is tight.
\begin{lemma}\label{lem:lemtight}
Let  for $n \in \NN$, $\{\bar \mu^{n,i}\} \in \Theta^n$.
The collection $\{(\check\bfL^n, \cbLP^n, \check \Lambda^n, \lambda^n, \beta^n, \rho^n), \; n \in \NN\}$,
associated with the sequence of controls $\{\bar \mu^{n,i},  \; n\in \NN\}$, as defined in Section \ref{sec:controlproc},
is tight in $C(\RR_+:\clp(\Delta^o))\times C(\RR_+:\clp(\Delta^o\times \Delta^o)) \times \clm(\clv^d\times \RR_+) \times \clp(\clv^d \times\RR_+) 
\times (\clp(\clv^d \times \clv^d\times\RR_+) )^2$.
\end{lemma}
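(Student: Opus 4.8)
The plan is to establish tightness of each component of the tuple separately, since tightness of a finite product is equivalent to tightness of each marginal. The measure-valued components $\gamma^n$, $\beta^n$, $\theta^n$ are genuinely probability-measure-valued on the product spaces $\clv^d\times\RR_+$, $\clv^d\times\clv^d\times\RR_+$ (as noted below \eqref{eq:525}, using $n^{-1}\int_0^{t_n}\psi_e(s)\,ds=1$), so for these it suffices to exhibit a tightness function, i.e.\ to control the total mass put on $\clv^d\times[T,\infty)$ (resp.\ on the product with $[T,\infty)$) uniformly in $n$ as $T\to\infty$. This is where the exponential discounting enters: by Lemma~\ref{lem:mtpsieasymptotics} and the harmonic-series estimate \eqref{eq:harmonest}, $n^{-1}\psi_e(t_n-s)$ is dominated by a constant multiple of $e^{-s}$ for $s$ in any finite window, and more crudely $n^{-1}\int_{\{s:\,t_n-s\le t_n-T\}}\psi_e(t_n-s)\,ds = n^{-1}\int_0^{t_n-T}\psi_e(u)\,du \to e^{-T}$ as $n\to\infty$; hence the mass these random measures assign to the time-tail $[T,\infty)$ is at most $e^{-T}$ up to $o(1)$, uniformly in $n$, giving the required tightness function $(v,t)\mapsto t$ (or rather, a function growing to infinity in the time variable). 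The spatial variable lives in the finite set $\clv^d$ (or $\clv^d\times\clv^d$), which is automatically compact, so no further work is needed there.

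For $\check\Lambda^n$, which is only $\clm(\clv^d\times\RR_+)$-valued (locally finite, vague topology), tightness reduces to showing that for each fixed $T$, $\sup_n \EE[\check\Lambda^n(\clv^d\times[0,T])] < \infty$; this follows from \eqref{eq:527}, since $\check\Lambda^n(\clv^d\times[0,T]) = \int_{t_n-T}^{t_n} ds \le T$ deterministically (each $\bar\Lambda^n(\cdot\mid s)$ is a probability measure on $\clv^d$), so the bound is in fact uniform and deterministic. For the continuous-path components $\check\bfL^n$ and $\check\bfclt^n$, I would use the standard criterion for tightness in $C(\RR_+:S)$ with $S$ compact (here $\clp(\Delta^o)$, $\clp(\Delta^o\times\Delta^o)$): it suffices to verify a uniform modulus-of-continuity / equicontinuity bound. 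From the evolution equations \eqref{eq:timrev} and \eqref{eq:timrevb}, for $t>s$,
\[
\|\check\bfL^n(t)-\check\bfL^n(s)\| \le \int_s^t \Big\|\sum_{v\in\clv^d} v\,\check\Lambda^n(v\mid u)\Big\|\,du + \int_{t_n-t}^{t_n-s}\|\check\bfL^n(t_n-a(u))\|\,du \le C\,(t-s)
\]
for a deterministic constant $C$ (each integrand is bounded by a fixed constant, since the $\check\Lambda^n(\cdot\mid u)$ are probability measures on the finite set $\clv^d$ and $\check\bfL^n$ takes values in $\clp(\Delta^o)$, a bounded set); the same Lipschitz-in-time estimate holds for $\check\bfclt^n$ via \eqref{eq:timrevb}. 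A uniform Lipschitz bound immediately yields equicontinuity, and combined with the compactness of the state spaces and the fixed initial value $\check\bfL^n(0)=\bar L^n(t_n)\in\clp(\Delta^o)$ (bounded), tightness in $C(\RR_+:\clp(\Delta^o))$ (resp.\ $C(\RR_+:\clp(\Delta^o\times\Delta^o))$) follows from the Arzel\`a--Ascoli theorem.

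I expect the only mildly delicate point to be the time-tail bound for the probability-measure components $\gamma^n,\beta^n,\theta^n$: one must be careful that the bound on $n^{-1}\psi_e(t_n-s)$ coming from \eqref{eq:harmonest} is uniform not just on compact $s$-intervals but controls the full tail $\int_0^{t_n-T}\psi_e(u)\,du$, which is handled by the exact identity $n^{-1}\int_0^{t_n}\psi_e(s)\,ds=1$ together with $n^{-1}\int_{t_n-T}^{t_n}\psi_e(s)\,ds \to 1-e^{-T}$ (a consequence of Lemma~\ref{lem:mtpsieasymptotics}, or directly of \eqref{eq:harmonest}); everything else is routine. In fact all the estimates above are pathwise and deterministic, so tightness is obtained without invoking any probabilistic compactness beyond the trivial compactness of the finite spatial spaces, which makes the argument quite robust.
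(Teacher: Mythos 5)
Your proposal is correct and follows essentially the same approach as the paper: a deterministic Lipschitz-in-time bound plus compactness of the state space for the two $C(\RR_+;\cdot)$-valued components, the deterministic bound $\check\Lambda^n(\clv^d\times[0,T])\le T$ for vague tightness of $\check\Lambda^n$, and the tail estimate for the common time-marginal of $\gamma^n,\beta^n,\theta^n$ via $n^{-1}\int_0^{t_n-T}\psi_e(u)\,du = n^{-1}m(t_n-T)\to e^{-T}$ (Lemma~\ref{lem:mtpsieasymptotics}). The paper additionally observes $\gamma^n_{(2)}=\beta^n_{(3)}=\theta^n_{(3)}$ so that only one time-marginal need be checked, which you use implicitly; otherwise the two arguments are the same.
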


\begin{proof}
We begin by showing that $\{ \check \bfL^n , \; n \in \NN\}$ is tight. Since $\clp(\Delta^o)$ is compact, it suffices to show that for some $C \in (0,\infty)$,  and for all $n \in \NN$ and $s,t \in \RR_+$,
$\| \check \bfL^n (t) - \check \bfL^n(s) \| \leq C |t - s|$, a.s.
However, this is immediate from \eqref{eq:timrev} (or equivalently \eqref{eq:522}), on using the fact that
$\|v-\bar L^n(a(s))\| \le 2$ for all $s\in \RR_+$ and $v \in \clv^d$.
The tightness of $\{ \cbLP^n , \; n \in \NN\}$ is argued similarly.

The tightness of $\{ \check \Lambda^n , \; n \in \NN\}$  in $\clm(\clv^d \times \RR_+)$
under the vague topology is immediate on observing that  for each $k \in \NN$,
$
\sup_{n \in\NN} \check \Lambda^n(\clv^d \times [0,k]) = k$.
Next, since $\clv^d$ is compact, the sequences $\{\lambda^n_{(1)}, \; n \in \NN\}$,  $\{\beta^n_{(1)},\;  n \in \NN\}$, $\{\beta^n_{(2)},\;  n \in \NN\}$, $\{\rho^n_{(1)}, \; n \in \NN\}$ and  $\{\rho^n_{(2)}, \; n \in \NN\}$, are obviously tight. Also, for each $n \in \NN$, 
$
\lambda^n_{(2)} = \beta^n_{(3)} = \rho^n_{(3)}$,
so to complete the proof it suffices to show that the sequence $\{ \lambda^n_{(2)} ,\;  n \in \NN\}$ is tight. Observe that, for each $n \in \NN$,  if $n \ge m(t)$, then, since $ t_n - t \le t_{m(t_n-t)+1}$,
\begin{multline*}
\lambda^n_{(2)}([0,t]) = n^{-1} \int_0^t \psi_e(t_n-s)ds = n^{-1} \int_{t_n-t}^{t_n} \psi_e(s)ds\\
 \geq n^{-1} \sum\limits_{k=m(t_n-t)+1}^{n-1} \int_{t_k}^{t_{k+1}} \psi_e(s)ds = 1 -n^{-1}(m(t_n-t) +1).
\end{multline*}
Fix $\veps > 0$ and  $t > \log (3 \veps^{-1})$. Then, from Lemma \ref{lem:mtpsieasymptotics},   we can find some  $n_0  > 2\veps^{-1}$ such that $n_0 \ge m(t)$ and
\[
\sup_{n\ge n_0}| n^{-1}m(t_{n}-t)- e^{-t}| \leq 2^{-1}\veps.
\]
Thus, 
$
\inf\nolimits_{n\geq n_0}\lambda^n_{(2)}([0,t]) \geq 1- \veps$.
Since $\veps>0$ is arbitrary, the desired tightness follows.
\end{proof}
The next lemma provides a useful characterization of  the weak limit points of the tight collection in Lemma \ref{lem:lemtight}. {Recall that $\beta^n$ and $\rho^n$ are random variables with values in 
$\clp(\clv^d\times \clv^d \times \RR_+)$.}
\begin{lemma}\label{lemchar}
	Let the sequence $\{(\check\bfL^n, \cbLP^n, \check \Lambda^n, \lambda^n, \beta^n, \rho^n), \; n \in \NN\}$ be as in Lemma \ref{lem:lemtight} and let
	$(\check\bfL^*, \cbLP^*, \check \Lambda^*, \lambda^*, \beta^*, \rho^*)$
be a weak limit point of the sequence.
Then, the following hold a.s.
\begin{enumerate}[label = (\alph*)]
	\item \label{lem:chara} The measure $\check \Lambda^*$ can be disintegrated as
	$\check \Lambda^*(dv, ds) = \check \Lambda^*(dv\mid s) ds$.
\item \label{lem:charb} For  $t\in\RR_+$
	\begin{equation}\label{eq:eq617}
		\check \bfL^*(t) = \check \bfL^*(0) - \int_0^t \sum\limits_{v\in \clv^d} v \check \Lambda^*(v\mid s) ds + \int_0^t \check \bfL^*(s) ds.
	\end{equation}
	\item \label{lem:charc} $\beta^*_{(1,3)} = \beta^*_{(2,3)} = \rho^*_{(1,3)}=\lambda^*$.
\item \label{lem:chard} For $t \in \RR_+$ and $x \in \Delta^o$,
$
\lambda^*( \{\bdelta_x\}\times [0,t]) = \int_0^t \exp(-s) \check \Lambda^*(\bdelta_x\mid s) ds$.
\item \label{lem:chare} For all $t \in \RR_+$ and $(x,y) \in (\tplus)^c$,
$
\beta^*(\{\bdelta_x\}\times \{\bdelta_y\} \times [0,t]) = 0$.

\item \label{lem:charf} For $t \in \RR_+$ and $x ,y\in \Delta^o$,
\[
\rho^*( \{\bdelta_x\}\times \{\bdelta_y\} \times [0,t]) = \int_0^t \exp(-s) \check \Lambda^*(\bdelta_x \mid s) G^{\clv}(\check \bfL^*(s))(\bdelta_x, \bdelta_y) ds.
\]
\item \label{lem:charg} For $t \in \RR_+$, $(\cbLP^*(t))_{(1)} = (\cbLP^*(t))_{(2)} = \check\bfL^*(t)$. Furthermore, for all $t \in \RR_+$ and $(x,y) \in (A_+)^c$, $\cbLP^*(t)(x,y) =0$, and for all $s,t \in \RR_+$, $\|\cbLP^*(t)- \cbLP^*(s)\| \le 2 |t-s|$.
\end{enumerate}
\end{lemma}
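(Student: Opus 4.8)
The plan is to reduce everything to an almost sure statement via the Skorokhod representation theorem: along a subsequence realizing the weak limit, put all six processes on one probability space so that $(\check\bfL^n,\check\bfclt^n,\check\Lambda^n,\gamma^n,\beta^n,\theta^n)\to(\check\bfL^*,\check\bfclt^*,\check\Lambda^*,\gamma^*,\beta^*,\theta^*)$ a.s.\ (all the state spaces being Polish), and verify (a)--(g) for this realization; since each of (a)--(g) is an event whose probability depends only on the joint law, this suffices. The recurring analytic device is the elementary fact: if $S$ is finite, $\mu_n\to\mu$ vaguely on $S\times[0,T]$ with $\sup_n\mu_n(S\times[0,T])<\infty$, and $w_n\to w$ uniformly on $[0,T]$ with $w$ continuous, then $\int w_n\,d\mu_n\to\int w\,d\mu$. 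In the applications $w_n(s)=n^{-1}\psi_e(t_n-s)$, which tends to $e^{-s}$ uniformly on compacts by Lemma \ref{lem:mtpsieasymptotics} (possibly multiplied by a continuous function of $s$), and $\mu_n$ is one of $\check\Lambda^n,\gamma^n,\beta^n,\theta^n$ restricted to a compact time window, of mass uniformly bounded as in the proof of Lemma \ref{lem:lemtight}.

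\textbf{Parts (a), (b), (d).} For (a), the time marginal of $\check\Lambda^n$ on $[0,t]$ equals $\min(t,t_n)$ by \eqref{eq:527}, which converges to Lebesgue measure, so the time marginal of $\check\Lambda^*$ is Lebesgue and the disintegration exists (with $\check\Lambda^*(\cdot\mid s)$ a probability measure a.e., no mass being lost). Part (b) is obtained by passing to the limit in \eqref{eq:timrev}: the first integral converges since $\check\Lambda^n\to\check\Lambda^*$, and the last integral is handled using $\check\bfL^n(t_n-a(s))=\bar L^n(a(s))$, the uniform bound $\|\bar L^n(a(s))-\bar L^n(s)\|\le 2(m(s)+2)^{-1}$, $m(t_n-t)\to\infty$, and the change of variables $u=t_n-s$, which reduce it to $\int_0^t\check\bfL^n(u)\,du+o(1)\to\int_0^t\check\bfL^*(u)\,du$. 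For (d), write $\gamma^n(\{e_x\}\times[0,t])=n^{-1}\int_0^{t\wedge t_n}\psi_e(t_n-s)\check\Lambda^n(e_x\mid s)\,ds$ using $\check\Lambda^n(e_x\mid s)=\bar\Lambda^n(e_x\mid t_n-s)$, and apply the device above (right-continuity in $t$ of both sides then gives the identity for every $t$).

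\textbf{Parts (c), (f).} Unwinding \eqref{eq:525}--\eqref{eq:576new} gives the exact pre-limit identity $\beta^n_{(1,3)}=\theta^n_{(1,3)}$, since summing over $y$ uses only that $\bar\mu^{n,k+1}$ and each row of $G^\clv$ are probability vectors; hence $\beta^*_{(1,3)}=\theta^*_{(1,3)}$. The identities $\beta^*_{(1,3)}=\gamma^*$ and $\beta^*_{(2,3)}=\gamma^*$ are the subtle ones: $\beta^n$ and $\theta^n$ are built from $\delta_{\bar\nu^{n,k}}$ while $\gamma^n$ uses $\delta_{\bar\nu^{n,k+1}}$, and $\beta^n_{(2,3)}$ uses the control $\bar\mu^{n,k+1}$ rather than $\delta_{\bar\nu^{n,k+1}}$. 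For the index shift, against a test function $g$ the discrepancy is (using $(k+2)(t_{k+1}-t_k)=1$) at most $n^{-1}\sum_k|g(t_n-t_k)-g(t_n-t_{k-1})|$ over the $O(n)$ relevant indices, which vanishes since $t_k-t_{k-1}\to 0$; for the control, $\delta_{\bar\nu^{n,k+1}}(e_y)-\bar\mu^{n,k+1}(e_y)$ is a martingale difference for $\{\bar\clf^{n,k+1}\}$ by \eqref{eq:condlawnu}, so $n^{-1}\sum_k g(t_n-t_k)\big(\delta_{\bar\nu^{n,k+1}}(e_y)-\bar\mu^{n,k+1}(e_y)\big)$ is an $L^2$-bounded martingale with $O(n^{-1})$ increments and $O(n)$ terms, hence $\to 0$ in $L^2$. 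Part (f) then follows by writing $\theta^n(\{e_x\}\times\{e_y\}\times[0,t])=n^{-1}\int_0^{t\wedge t_n}\psi_e(t_n-s)\,\delta_{\bar\nu^{n,k}}(e_x)\,G^\clv(\bar L^n(a(t_n-s)))(e_x,e_y)\,ds$ with $k=m(t_n-s)$, replacing $\delta_{\bar\nu^{n,k}}(e_x)$ by $\check\Lambda^n(e_x\mid s)$ (index-shift error $\to 0$ as above, since $G^\clv\le 1$) and $\bar L^n(a(t_n-s))$ by $\check\bfL^*(s)$ (via $|a(s)-s|\to 0$, $\|\check\bfL^n-\check\bfL^*\|_\infty\to 0$, and Lipschitz continuity of $G$ from Assumption \ref{assu:lip}), then applying the device with $\mu_n=\check\Lambda^n$. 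I expect this cluster --- especially (f), which identifies the limiting reference measure $\theta^*$ in terms of $\check\bfL^*$ and $G$, and hence links $R(\beta^*\|\theta^*)$ to the rate function $I$ in the upper-bound proof --- to be the main obstacle, the reconciliation of the three index-shifted/controlled empirical measures being the crux.

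\textbf{Parts (e), (g).} Part (e) is immediate from \eqref{eq:389a} and the upper semicontinuity of $\mu\mapsto\mu(\{e_x\}\times\{e_y\}\times[0,t])$ under vague convergence (at times $t$ that are continuity points of $\beta^*$, then all $t$ by monotonicity and right-continuity). For (g): the bound $\|\check\bfclt^n(t)-\check\bfclt^n(s)\|\le 2|t-s|$ follows from \eqref{eq:timrevb} since $\sum_{(x,y)}|\mathbf{1}_{\{(x,y)=(a,b)\}}-\check\bfclt^n(\cdot)(x,y)|\le 2$ for each fixed $(e_a,e_b)$, and passes to the limit; the constraint $\check\bfclt^n(t)(x,y)=0$ on $(A_+)^c$ holds because \eqref{eq:prodmzrnumu0} forces $(\bar X^{n,k},\bar X^{n,k+1})\in\tplus$ a.s.\ for all $k$, so $\bar\nu^{n,k}\otimes\bar\nu^{n,k+1}$ charges only $\tplus$ and an induction on \eqref{eq:bartnk2} gives $\bar\clt^{n,k}(x,y)=0$ on $(\tplus)^c$; and the marginal identities follow from the closed forms $(k+1)(\bar\clt^{n,k+1})_{(1)}=\sum_{j=0}^{k}\bar\nu^{n,j}=(k+1)\bar L^{n,k+1}$ and $(k+1)(\bar\clt^{n,k+1})_{(2)}=\sum_{j=1}^{k+1}\bar\nu^{n,j}$ (read off from \eqref{eq:barlnk}--\eqref{eq:bartnk2}), which give $(\bar\clt^{n,k+1})_{(1)}=\bar L^{n,k+1}$ exactly and $\|(\bar\clt^{n,k+1})_{(2)}-\bar L^{n,k+1}\|=(k+1)^{-1}\|\bar\nu^{n,k+1}-\bar\nu^{n,0}\|\le 2(k+1)^{-1}\to 0$, so after interpolation and time reversal both marginals of $\check\bfclt^*(t)$ equal $\check\bfL^*(t)$.
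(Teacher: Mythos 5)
Your argument follows the same route as the paper's: Skorokhod representation to reduce to a.s.\ convergence, the ``vague convergence against uniformly converging weights'' device for the $\psi_e$-limits (using Lemma \ref{lem:mtpsieasymptotics}), telescoping for $\beta^*_{(1,3)}=\gamma^*$, the martingale--$L^2$ argument for $\beta^*_{(2,3)}=\gamma^*$, upper semicontinuity for (e), and the explicit formulas $(k+1)\bar L^{n,k+1}=\sum_{j=0}^k\bar\nu^{n,j}$, $(k+1)(\bar\clt^{n,k+1})_{(2)}=\sum_{j=1}^{k+1}\bar\nu^{n,j}$ for (g); all of these match the paper. The one step you should tighten is the index shift in (f). You propose to first swap $\delta_{\bar\nu^{n,m(t_n-s)}}(e_x)$ for $\check\Lambda^n(e_x\mid s)=\delta_{\bar\nu^{n,m(t_n-s)+1}}(e_x)$ and justify the resulting error by ``as above, since $G^\clv\le 1$.'' Bounding $G^\clv$ by $1$ does not by itself make the error vanish: without the Abel-summation cancellation you only get $n^{-1}\int_0^t\psi_e(t_n-s)\,ds\le 1$. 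The cancellation requires that the $n$-dependent weight $G^\clv(\bar L^n(a(t_n-s)))(e_x,e_y)$ be slowly varying, i.e.\ that $\sum_k |G^\clv(\bar L^{n,k+1})-G^\clv(\bar L^{n,k})|\le 2L_G\sum_k(k+1)^{-1}=O(1)$ over the relevant window, which holds because $G$ is Lipschitz (Assumption \ref{assu:lip}) and $\bar L^{n,k}$ has increments of size $O(k^{-1})$ --- not merely because $G^\clv\le 1$. The paper avoids this by doing the two replacements in the opposite order: it first replaces $\bar L^n(a(t_n-s))$ by $\check\bfL^*(s)$ (using \eqref{eq:tanbd}, uniform convergence of $\check\bfL^n$, and Lipschitz continuity of $G$), so that the remaining index shift is taken against the \emph{fixed} continuous test function $h(s)=e^{-s}G^\clv(\check\bfL^*(s))(e_x,e_y)$, handled cleanly in \eqref{eq:1126n}. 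Either order works once you supply the small-increment estimate; just make it explicit.
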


\begin{proof}
Fix a weakly convergent subsequence of $\{(\check\bfL^n, \check \Lambda^n, \lambda^n, \beta^n, \rho^n), \; n \in \NN\}$ and relabel it as $\{n\}$. We now prove the various statements in the lemma for the limit $(\check\bfL^*, \check \Lambda^*, \lambda^*, \beta^*, \rho^*)$ of this sequence.
\begin{enumerate}[label = (\alph*)]
\item This is immediate on noting that for each $n \in \NN$ and $t \in \RR_+$,  
$\sum_{x \in \Delta^o}\check \Lambda^n(\bdelta_x \times [0,t]) = t.$
\item By  appealing to Skorohod's representation theorem, we  assume without loss of generality  that $\{(\check \bfL^n, \check \Lambda^n), \; n \in\NN\}$ converges almost surely to $(\check \bfL^*, \check \Lambda^*)$. For  $t \in \RR_+$ and $m(t) \le n$, recall the evolution equation \eqref{eq:timrev}.
Also note that
{\[
\int_{t_n -t}^{t_n} \check{\bfL}^n(t_n - s)ds = \int_0^{t} \check{\bfL}^n(s)ds,
\]
so}
\begin{equation}\label{eq:weaklimitleqn1}
\int_{t_n-t}^{t_n} \check \bfL^n(t_n -a(s))ds
= \Bigg(\int_{t_n-t}^{t_n} \check \bfL^n(t_n -a(s))ds 
- \int_{t_n-t}^{t_n} \check \bfL^n(t_n -s)ds\Bigg)
+ \int_{0}^t \check \bfL^n(s)ds,
\end{equation}
and, for each $t \in \RR_+$, 
\begin{align}\label{eq:weaklimitleqn2}
\left\| \int_{t_n-t}^{t_n} \check \bfL^n(t_n -a(s))ds  - \int_{t_n-t}^{t_n} \check \bfL^n(t_n -s)ds\right\|
&\le t \sup_{s \in[ t_n- t, t_n]} \|\check \bfL^n(t_n -a(s)) - \check \bfL^n(t_n -s)\|\nonumber\\
&=t \sup_{s \in[ t_n- t, t_n]} \| \bar L^n( a(s)) -  \bar L^n(s)\|.
\end{align}
{ where the last equality follows on noting from \eqref{eq:526}  that  that $\check{\bfL}^n(t_n-u)
= \bar{L}^n(u)$ for $u \in [0, t_n]$.}
As in the proof of Lemma \ref{lem:lemtight},  for all $n \in \NN$ satisfying $t_n \geq t$ and $s \in [t_n-t,t_n]$, 
\begin{equation}\label{eq:weaklimitleqn3}
\| \bar{L}^n(a(s)) - \bar{L}^n(s)\| 
\leq  2 (m(t_n-t) +2)^{-1}.
\end{equation}
Combining \eqref{eq:weaklimitleqn1},  \eqref{eq:weaklimitleqn2}, and  \eqref{eq:weaklimitleqn3} with \eqref{eq:timrev}, and using the almost sure convergence of $\{(\check \bfL^n, \check\Lambda^n), \; n\in\NN\}$ to $(\check \bfL^*, \check \Lambda^*)$ we see that, as $n \to \infty$, for each $t\in\RR_+$,
\[
\check \bfL^n(t) \to \check \bfL^*(0) - \int\limits_{0}^t \sum\limits_{v\in\clv^d} v \check \Lambda^*(v \mid s)ds + \int_0^t \check \bfL^*(s)ds,
\]
almost surely. The result follows.

\item
As in part (b), we assume that the convergence of $\{(\lambda^n,\beta^n,\rho^n), \; n \in \NN\}$ holds in the almost sure sense. Observe that, for each $n \in \NN$,
$
\beta^n_{(1,3)} = \rho^n_{(1,3)}$,
so the identity $\beta^*_{(1,3)} = \rho^*_{(1,3)}$ follows. Now we show that  $\beta^*_{(1,3)} = \lambda^*$. Towards this end, fix $x \in \Delta^o$ and $t \in \RR_+$, and observe that, for each $n \geq m(t)$,
\begin{equation*}
\begin{split}
\left| \beta^n_{(1,3)} ( \{\bdelta_x \} \times [0,t] )   - \lambda^n ( \{\bdelta_x \} \times [0,t] )\right| &\leq n^{-1}  + n^{-1}\left|\sum\limits_{k=m(t_n-t) }^{n-1}\left( \bdelta_{\bar\nu^{n,k}}(\bdelta_x) -   \bdelta_{\bar\nu^{n,k+1}}(\bdelta_x)\right)\right|
\leq 2n^{-1}.
\end{split}
\end{equation*}
The desired identity follows  on letting $n \to \infty$. Now we show that $ \beta^*_{(2,3)} = \lambda^*$. Once more, fix $t \in \RR_+$ and $x \in \Delta^o$, and observe that, for each $n \geq m(t)$,
\begin{equation}\label{eq:595a}
\begin{split}
 &\lambda^n( \{\bdelta_x\} \times [0,t])  - \beta^n_{(2,3)} (\{\bdelta_x\} \times [0,t])\\
 &=  n^{-1} \int_0^{t} \psi_e(t_n -s ) \left(  \bar \Lambda^n(\bdelta_x \mid t_n - s) - \bar \xi^n_{(2)}( \bdelta_x \mid t_n - s)\right) ds\\
&= n^{-1} \int_{t_n-t}^{t_n} \psi_e(s) \left(   \bar \Lambda^n ( \bdelta_x \mid s) - \bar \xi^n_{(2)}(\bdelta_x \mid s) \right) ds.
\end{split}
\end{equation}
Additionally,  for $1 \leq l \leq m \le n$, 
\begin{equation}
\begin{split}
n^{-1} \int_{t_l}^{t_{m+1}} \psi_e(s) \left( \bar \Lambda^n (\bdelta_x \mid s) -  \bar \xi^n_{(2)}(\bdelta_x \mid s) \right) ds &= n^{-1} \sum\limits_{k=l}^{m} \left(  \bdelta_{ \bar \nu^{n,k+1}}(\bdelta_x) - \bar \mu^{n, k+1} (\bdelta_x)  \right)
\label{eq:602a}
\end{split}
\end{equation}
so, recalling \eqref{eq:condlawnu} and using the martingale-difference property, we see that
\begin{equation}\label{eq:608a}
\EE \left( n^{-1} \sum\limits_{k=l}^m \left( \bdelta_{\bar \nu^{n,k+1}}(\bdelta_x) - \bar \mu^{n,k+1}(\bdelta_x)\right)\right)^2 \leq n^{-2} (m - l + 1).
\end{equation}
Combining  \eqref{eq:595a}, \eqref{eq:602a}, and  \eqref{eq:608a}, we see that for some $C_1 \in (0,\infty)$, and all
$n\ge m(t)$,
\[
\EE \left( \left|\beta^n_{(2,3)} ( \{\bdelta_x\} \times [0,t])  - \lambda^n (\{\bdelta_x\} \times [0,t])  \right|^2\right) \leq C_1 n^{-1}.
\]
The statement in (c) follows on letting $n \to \infty$.

\item \label{item:gammanlimit} Once more we assume, without loss of generality, that  the sequence 
$
\{(\check\bfL^n, \check \Lambda^n, \lambda^n, \beta^n, \rho^n), \; n \in \NN\}$,
converges almost surely to $(\check\bfL^*, \check \Lambda^*, \lambda^*, \beta^*, \rho^*)$. Fix $x \in \Delta^o$ and $t \in \RR_+$, and observe that, for each $n \geq m(t)$, 
\begin{multline}\label{eq:629triangle}
\left |n^{-1}\int_0^{t} \psi_e(t_n-s) \check \Lambda^n(\bdelta_x\mid s) ds - \int_0^{t} \exp(-s) \check \Lambda^*(\bdelta_x\mid s) ds\right |\\
\leq \left | n^{-1}\int_0^{t} \psi_e(t_n-s) \check \Lambda^n(\bdelta_x\mid s) ds -  \int_0^t \exp(-s)\check \Lambda^n(\bdelta_x \mid s)ds\right |\\ 
 + \left |  \int_0^t \exp(-s) \check \Lambda^n(\bdelta_x \mid s)ds -  \int_0^{t} \exp(-s)\check \Lambda^*(\bdelta_x\mid s)ds \right |.
\end{multline}
Next, note that
\begin{multline}
\label{eq:988}
\left |n^{-1}\int_0^{t} \psi_e(t_n-s) \check \Lambda^n(\bdelta_x\mid s) ds -  \int_0^t \exp(-s) \check \Lambda^n(\bdelta_x \mid s)ds\right |\\
\leq \int_0^t \left| n^{-1}\psi_e(t_n-s) - \exp(-s)\right| \left | \check \Lambda^n (\bdelta_x \mid s)\right |ds
\leq  t \sup_{s\in[0,t]} \left| n^{-1} \psi_e(t_n-s) - \exp(-s)\right|,
\end{multline}
and, by convergence of $\check \Lambda^n$ to $\check \Lambda^*$, as $n\to \infty$,
\begin{equation}
\label{eq:989}
\left |  \int_0^t \exp(-s) \check \Lambda^n(\{\bdelta_x\} \times ds) -  \int_0^{t}  \exp(-s) \check \Lambda^*(\{\bdelta_x\}\times ds) \right |
\to 0.
\end{equation}
The displays in \eqref{eq:629triangle},\eqref{eq:988},\eqref{eq:989}, together with Lemma \ref{lem:mtpsieasymptotics} show that, as $n \to \infty$,
\[
n^{-1}\int_0^{t} \psi_e(t_n-s) \check \Lambda^n(\bdelta_x\mid s) ds \to \int_0^{t} \exp(-s) \check \Lambda^*(\bdelta_x\mid s) ds.
\]
On recalling the definition of $\lambda^n$ we now have the statement in (d).

\item This result follows immediately from the observation in \eqref{eq:389a}.

\item Once again, we assume that the a.e. convergence as in (d) holds. Fix  $t \in \RR_+$, $n \geq m(t)$, and $x,y \in\Delta^o$, and observe that,
\begin{equation}\label{eq:752a}
\begin{split}
&\left| \rho^n( \{\bdelta_x\} \times \{\bdelta_y\} \times [0,t]) - \int_0^t \exp(-s) \check \Lambda^*( \bdelta_x \mid s ) G^{\clv} ( \check \bfL^*(s))( \bdelta_x, \bdelta_y) ds\right|\\
&= \Bigg| n^{-1} \int_0^{t} \psi_e(t_n-s) \bar \zeta^n\left((\bdelta_x,\bdelta_y) \mid t_n - s, \bar L^n( a(t_n-s))\right)ds\\
& \quad - \int_0^t \exp(-s) \check \Lambda^*(\bdelta_x \mid s) G^{\clv}( \check \bfL^*(s))(\bdelta_x,\bdelta_y) ds\Bigg|\\
&\leq \left| \int_0^t  \left( n^{-1} \psi_e(t_n-s) - \exp(-s)\right)  \bar \zeta^n\left((\bdelta_x,\bdelta_y) \mid t_n - s, \bar L^n( a(t_n-s))\right)ds\right|\\
& \quad + \left| \int_0^t \exp(-s)  \left( \bar \zeta^n\left((\bdelta_x,\bdelta_y) \mid t_n - s, \bar L^n( a(t_n-s))\right) - \check \Lambda^*(\bdelta_x \mid s) G^{\clv}( \check \bfL^* (s))(\bdelta_x, \bdelta_y)\right) ds\right|.
\end{split}
\end{equation}
From Lemma \ref{lem:mtpsieasymptotics}, 
\begin{equation}\label{eq:763a}
 \left| \int_0^t  \left( n^{-1} \psi_e(t_n-s) - \exp(-s)\right)  \bar \zeta^n\left((\bdelta_x,\bdelta_y) \mid t_n - s, \bar L^n( a(t_n-s))\right)ds\right| \to 0,
\end{equation}
as $n \to \infty$. Additionally,  for each $s \in [0,t]$,
\begin{equation*}
\begin{split}
&  \bar \zeta^n((\bdelta_x,\bdelta_y) \mid t_n - s, \bar L^n( a(t_n-s))) - \check \Lambda^*(\bdelta_x \mid s) G^{\clv}( \check \bfL^* (s))(\bdelta_x, \bdelta_y)\\
& \quad =  \bdelta_{\bar \nu^{n, m(t_n-s)}} \otimes G^{\clv} (\bar L^n (a(t_n-s))) ( \bdelta_x, \bdelta_y) -  \check \Lambda^*(\bdelta_x \mid s) G^{\clv} ( \check \bfL^*(s))(\bdelta_x,\bdelta_y)\\
&\quad =    \bdelta_{\bar \nu^{n, m(t_n-s)}} \otimes G^{\clv} (\bar L^n (a(t_n-s))) ( \bdelta_x, \bdelta_y) -  \bdelta_{\bar \nu^{n,m(t_n-s)}}\otimes G^{\clv} ( \check \bfL^*(s))(\bdelta_x,\bdelta_y)  \\
&\qquad +    \bdelta_{\bar \nu^{n,m(t_n-s)}}\otimes G^{\clv} ( \check \bfL^*(s))(\bdelta_x,\bdelta_y) -  \check \Lambda^*(\bdelta_x \mid s) G^{\clv} ( \check \bfL^*(s))(\bdelta_x,\bdelta_y),
\end{split}
\end{equation*}
and
\begin{equation}\label{eq:776a}
\begin{split}
& \left|  \bdelta_{\bar \nu^{n, m(t_n-s)}} \otimes G^{\clv} (\bar L^n (a(t_n-s))) ( \bdelta_x, \bdelta_y) -  \bdelta_{\bar \nu^{n,m(t_n-s)}}\otimes G^{\clv} ( \check \bfL^*(s))(\bdelta_x,\bdelta_y) \right|\\
&\quad \leq \left| G^{\clv} ( \bar L^n (a(t_n-s)))(\bdelta_x,\bdelta_y) - G^{\clv} ( \check \bfL^* (s))(\bdelta_x,\bdelta_y)\right|\\
&\quad \leq \left| G^{\clv} ( \bar L^n(a(t_n-s))) (\bdelta_x, \bdelta_y)  - G^{\clv} ( \bar L^n(t_n-s))(\bdelta_x, \bdelta_y)\right|  \\
&\qquad + \left|G^{\clv} (\check \bfL^n(s) (\bdelta_x, \bdelta_y) - G^{\clv}( \check \bfL^*(s))(\bdelta_x,\bdelta_y)\right|.
\end{split}
\end{equation}
Since by Assumption \ref{assu:lip} $G^{\clv}$ is a Lipschitz map and recalling from \eqref{eq:weaklimitleqn3} that
\begin{equation}\label{eq:tanbd}
\| \bar L^n(a(t_n-s)) - \bar L^n(t_n-s)\| \leq 2(m (t_n-t) + 2)^{-1}, \quad s \in [0,t],
\end{equation}
and that $\check \bfL^n \to \check \bfL^*$ almost surely as $n \to \infty$, it follows from \eqref{eq:776a} that
\begin{equation}\label{eq:787a}
 \left|  \bdelta_{\bar \nu^{n, m(t_n-s)}} \otimes G^{\clv} (\bar L^n (a(t_n-s)))( \bdelta_x, \bdelta_y) -  \bdelta_{\bar \nu^{n,m(t_n-s)}}\otimes G^{\clv} ( \check \bfL^*(s))(\bdelta_x,\bdelta_y) \right| \to 0,
\end{equation}
as $n \to \infty$. Now, observe that, for each $s \in [0,t]$,
\begin{equation}\label{eq:791a}
\begin{split}
&   \bdelta_{\bar \nu^{n,m(t_n-s)+1}}\otimes G^{\clv} ( \check \bfL^*(s))(\bdelta_x,\bdelta_y) -  \check \Lambda^*(\bdelta_x \mid s) G^{\clv} ( \check \bfL^*(s))(\bdelta_x,\bdelta_y) \\
&\quad =   \bar \Lambda^n ( \bdelta_x \mid t_n -s)  G^{\clv} ( \check \bfL^*(s))(\bdelta_x ,\bdelta_y) -  \check \Lambda^*(\bdelta_x \mid s) G^{\clv} ( \check \bfL^*(s))(\bdelta_x,\bdelta_y)\\
&\quad = \check \Lambda^n ( \bdelta_x \mid s)  G^{\clv} ( \check \bfL^*(s))(\bdelta_x ,\bdelta_y) -  \check \Lambda^*(\bdelta_x \mid s) G^{\clv} ( \check \bfL^*(s))(\bdelta_x,\bdelta_y)
\end{split}
\end{equation}
Letting
$$h(s)\doteq \exp(-s)G^{\clv} ( \check \bfL^*(s))(\bdelta_x,\bdelta_y), \; s \in \RR_+,$$
we see that 
\begin{align*}
&\int_0^t \exp(-s)\left(\bdelta_{\bar \nu^{n,m(t_n-s)+1}}\otimes G^{\clv} ( \check \bfL^*(s))(\bdelta_x,\bdelta_y) -  \check \Lambda^*(\bdelta_x \mid s) G^{\clv} ( \check \bfL^*(s))(\bdelta_x,\bdelta_y)\right) ds\\
&\quad = \int_0^t h(s) \check \Lambda^n ( \bdelta_x \mid s) ds - \int_0^t h(s) \check \Lambda^* ( \bdelta_x \mid s) ds
 = \int_0^t h(s) \check \Lambda^n ( \bdelta_x \times ds) - \int_0^t h(s) \check \Lambda^* ( \bdelta_x \times ds)
\end{align*}
which converges to $0$ a.s. since $\check \Lambda^n \to \check \Lambda^*$  a.s. in $\clm(\clv^d\times \RR_+)$
and $h$ is a continuous and bounded function. 
To complete the proof of (f) it now suffices to show that for all $x \in \Delta^o$ and $t \in \RR_+$,
\begin{equation}\label{eq:1126n}
	\left | \int_0^t h(s)\bdelta_{\bar \nu^{n,m(t_n-s)+1}}(\bdelta_x) ds -
	\int_0^t h(s)\bdelta_{\bar \nu^{n,m(t_n-s)}}(\bdelta_x) ds \right| \to 0, \; \mbox{ a.s. }
\end{equation}
Fix {$x \in \Delta^o$}, $t \in \RR_+$ and $\veps>0$, and let $\kappa>0$ be such that $|h(s)-h(s')| \le \veps$ whenever $|s-s'|\le \kappa$ and $s,s' \in [0,t]$.
Let, for {$n \in \NN$ and  $k \le n$,} $\sigma^n(k) \doteq  \bdelta_{\bar \nu^{n,k}}(\bdelta_x)$ and choose $n_0 \in \NN$
such that $m(t_{n_0}-t)^{-1} < \kappa$. Then, for $n \ge n_0$, the quantity on the left side of \eqref{eq:1126n} can be written as 
\begin{align*}
	&\left| \int_{t_n-t}^{t_n} h(t_n-s) (\sigma^n(m(s)+1) - \sigma^n(m(s))) ds \right|\\
	&\le \left| \sum_{k= m(t_n-t)}^{n-1} (\sigma^n(k+1) -\sigma^n(k)) \int_{t_k}^{t_{k+1}} h(t_n-s) ds\right| + m(t_n-t)^{-1} \|h\|_{t,\infty}\\
	&\le \left| \sum_{k= m(t_n-t)}^{n-1} \left(\frac{1}{k+2} \sigma^n(k+1) h(t_n - t_{k+1})
	- \frac{1}{k+2} \sigma^n(k) h(t_n - t_{k})\right)\right|
	+ 2\veps t + m(t_n-t)^{-1} \|h\|_{t,\infty},
\end{align*}
where $\|h\|_{t,\infty} \doteq \sup_{0\le s \le t}|h(s)|$.
The last expression can be bounded above by 
\[
2\veps t + 2m(t_n-t)^{-1} \|h\|_{t,\infty} + \|h\|_{t,\infty} \sum_{k= m(t_n-t)}^{n-1} \left(\frac{1}{k+1}-\frac{1}{k+2}\right) \le  2\veps t + 3 m(t_n-t)^{-1} \|h\|_{t,\infty}.
\]
Taking the limit as $n\to \infty$, we now have that
$$
	\limsup_{n\to \infty}\left | \int_0^t h(s)\bdelta_{\bar \nu^{n,m(t_n-s)+1}}(\bdelta_x) ds -
	\int_0^t h(s)\bdelta_{\bar \nu^{n,m(t_n-s)}}(\bdelta_x) ds \right| \le 2 \veps t.$$
	Since $\veps>0$ is arbitrary, the statement in \eqref{eq:1126n} follows.
\item The first statement is immediate on noting that for $t \in \RR_+$,
$\check\bfL^n(t) = (\cbLP^n(t))_{(1)}$, and for $k \le n$, 
\[
\| (\barLP^{n, k+1})_{(2)} - (\barLP^{n, k+1})_{(1)}\| \le 2(k+1)^{-1}.
\]
The second statement follows from the fact that whenever $(x,y) \in (A_+)^c$, $\barLP^{n, k+1}(x,y)=0$ a.e. for all $n \in \NN$ and $k \le n$. The final statement follows immediately from \eqref{eq:timrevb} and the fact that $\cbLP^n$ converges a.s. to $\cbLP^*$ as $n \to \infty$.
\end{enumerate}

\end{proof}

\subsection{Proof of Laplace Upper Bound}\label{sec:laplaceupperbound}
{In this subsection we will use the tightness and characterization results from the previous section to complete the proof of the Laplace upper bound, namely Theorem \ref{thm:laplaceupperbound1}.}

\begin{proof}[Proof of Theorem \ref{thm:laplaceupperbound1}]
	Fix
	$F \in C_b(\clp(\Delta^o))$ and $\veps>0$. From the variational representation in  \eqref{eq:varrep}, for each $n \in \NN$ we can find $\{\bar \mu^{n,i}\} \in \Theta^n$ such that
	{\begin{multline}\label{eq:lub1739}
		-n^{-1} \log E \exp[-n F(L^{n+1})]\\
		\ge E \left[F(\bar L^{n}(t_n)) + n^{-1} \sum\limits_{k=0}^{n-1} R\left(\bdelta_{\bar \nu^{n,k}}\otimes\bar \mu^{n,k+1} \|\bdelta_{\bar \nu^{n,k}}\otimes G^{\clv}(\bar L^{n,k+1})\right)\right] - \veps, 
	\end{multline}}
where the sequence $\{\bar L^{n, k+1}, \; k \le n\}$ is defined by \eqref{eq:barlnk}.

For each $n \in \NN$, define the $\clp(\Delta^o)$-valued continuous process $\bar L^n$ and random measures $\bar \Lambda^n$ on $\clv^d \times [0, t_n]$ according to \eqref{eq:411} and
\eqref{eq:519}  respectively. 
Also define, for each $n \in \NN$, $\lambda^n$, $\beta^n$, $\rho^n$, $\check \bfL^n$, $\cbLP^n$, and $\check \Lambda^n$ as in \eqref{eq:525},
\eqref{eq:526}, and \eqref{eq:527}. 
Recalling the identity in  \eqref{eq:541} we have that
\begin{equation}\label{eq:910}
		-n^{-1} \log E \exp[-n F(L^{n+1})] \ge E \left[F(\check \bfL^n(0)) +  R(\beta^n\| \rho^n)\right] - \veps.
\end{equation}
From Lemma \ref{lem:lemtight},
the collection $
\{ (\check\bfL^n, \cbLP^n, \check \Lambda^n, \lambda^n, \beta^n, \rho^n), \; n \in \NN\}$ is tight in $C(\RR_+:\clp(\Delta^o))\times C(\RR_+:\clp(\Delta^o\times \Delta^o))\times \clm(\clv^d\times \RR_+) \times (\clp(\clv^d \times \RR_+)) \times (\clp(\clv^d \times \clv^d \times \RR_+))^2 $.

Let $(\check\bfL^*, \cbLP^*, \check \Lambda^*, \lambda^*, \beta^*, \rho^*)$
be a weak limit point of the above sequence and suppose without loss of generality that the convergence holds along the full sequence and in the almost sure sense.

From parts \ref{lem:charc} and \ref{lem:chard} of Lemma \ref{lemchar}, the third marginal of $\beta^*$, namely $\beta^*_{(3)}(ds)$, equals $e^{-s} ds$ a.s.
Disintegrate $\beta^*$ as 
\begin{equation}\label{eq:851}\beta^*(\{\bdelta_x\}\times \{\bdelta_y\}\times [0,t]) =
\int_0^t \exp(-s) \check \eta^{*}(\{\bdelta_x\}\times \{\bdelta_y\}, s)  ds,
\end{equation}
where $s\mapsto \check \eta^{*}(\cdot, s)$ is a measurable map from
$\RR^+$ to $\clp(\clv^d\times \clv^d)$.
Let, for $s\in \RR_+$, $\eta^*(\cdot, s) \in \clp(\Delta^o\times \Delta^o)$ be defined as
\begin{equation}\label{eq:etastartoetastarcheck}
\eta^*(\{x\}\times \{y\}, s) \doteq  \check\eta^{*}(\{\bdelta_x\}\times \{\bdelta_y\}, s), \;\; x,y \in \Delta^o,
\end{equation}
and  write
$
\eta^*(x,y\mid s) \doteq \eta^*(\{x\}\times \{y\}, s)$,  and, for $i=1,2$, $\eta^*_{(i)}(\cdot \mid s) \doteq \eta^*_{(i)}(\cdot , s)$.

From Lemma \ref{lemchar}(c),
$\eta^*_{(1)}(s) = \eta^*_{(2)}(s)$ for a.e. $s \in \RR_+$. In particular $\eta^*$ satisfies 
Property \ref{prop:z1}(b) in Section \eqref{sec:statres} (with $\eta$ replaced by $\eta^*$). 

Moreover, if $(x,y) \in (\tplus)^c$, then by part (e) of Lemma \ref{lemchar}, a.s., 
\[ \beta^*(\{\bdelta_x\} \times \{\bdelta_y\} \times [0,t]) = 0, \quad  t \in \RR_+,
\]
which shows that Property \ref{prop:z1}(a)
 holds with $(\beta, \eta)$ replaced by $(\beta^*, \eta^*)$. 

Next, note that, from Lemma \ref{lemchar}(c) and (d), for 
$x \in \Delta^o$ and $t\in \RR_+$,
\begin{align*}
\int_0^t \exp(-s) \check \Lambda^*(\bdelta_x \mid s) ds
= \lambda^*(\{\bdelta_x\}\times [0,t]) = \beta^*_{(1,3)}(\{\bdelta_x\}\times [0,t])
= \int_0^t \exp(-s) \eta^*_{(1)}(x \mid s) ds,
\end{align*}
where the last equality is from \eqref{eq:851} and \eqref{eq:etastartoetastarcheck}.
This shows that
\begin{equation}\label{eq:xnew3}
\eta^*_{(1)}(x \mid s) = \check \Lambda^*(\bdelta_x \mid s) \mbox{ for all } x \in \Delta^o \mbox{ and a.e. } s \in \RR_+
\end{equation}
and so $\sum_{v \in \clv^d} v \check \Lambda^*(v \mid s)(\cdot) = \eta^*_{(1)}(\cdot, s)$. In particular, from parts \ref{lem:chara}, \ref{lem:charb}, and \ref{lem:charg} of  Lemma \ref{lemchar},  it follows that Property \ref{prop:z1}(c) 
also holds (with $\eta$ replaced by $\eta^*$, $M$ replaced by $\check\bfL^*$, and $\MP$ replaced by $\cbLP^*$).

Since all parts of Property \ref{prop:z1}  hold a.s., it follows that  $\eta^* \in \clu(\check \bfL^*(0))$ a.s. Furthermore, $\check \bfL^*$ solves $\clu( \check \bfL^*(0), \eta^*)$ a.s.

Note, from \eqref{eq:851}, and parts \ref{lem:chard} and \ref{lem:charf} of Lemma \ref{lemchar}, that
\begin{equation}\label{eq:615}
	 (\lambda^*, \beta^*,\rho^*) = \left(\exp(-s) \check \Lambda^*(\cdot\mid s) ds, \; \exp(-s) \check \eta^{*}(\cdot\mid s) ds, \;  \exp(-s) \check \Lambda^*(\cdot\mid  s) \otimes G^{\clv}(\check \bfL^*(s))( \cdot, \cdot) ds \right).
\end{equation}
For $s\in \RR_+$, disintegrate
\begin{equation}\label{eq:920}
\eta^*(x,y \mid s) = \eta^*_{(1)}(x \mid s) \eta^*_{2|1}(y \mid s,x), \;\; (x,y) \in \Delta^o \times \Delta^o.
\end{equation}
Then, using \eqref{eq:910},
\begin{multline*}
		\veps + \liminf_{n\to \infty}-n^{-1} \log E \exp[-n F(L^{n+1})] \\
  \ge \liminf_{n\to \infty} E \left[F(\check \bfL^n(0)) +  R(\beta^n\| \theta^n)\right]
\ge \left[F(\check \bfL^*(0)) +  R(\beta^*\| \theta^*)\right]\\
= \left[F(\check \bfL^*(0)) +  R\left(\exp(-s)  \check \eta^{*}(\cdot\mid s)  ds\big\| \exp(-s) \check \Lambda^*( \cdot \mid s) \otimes G^{\clv}(\check \bfL^*(s))(\cdot, \cdot ) ds \right)\right]\\
=  \left[F(\check \bfL^*(0)) +  \int_0^\infty \exp(-s) R\left(  \eta^*( s)  
 		\big\|  \eta^*_{(1)} ( s) \otimes G(\check \bfL^*(s)) \right) ds\right]\\
     =  \left[F(\check \bfL^*(0)) +  \int_0^\infty \exp(-s) \sum_{x \in \Delta^o}  \eta^*_{(1)} (x \mid s) R\left(  \eta^*_{2|1} ( \cdot \mid s, x) \big\| G(\check \bfL^*(s))(x,\cdot)  \right) ds\right]\\
	\ge \left[F(\check \bfL^*(0)) + I(\check \bfL^*(0))\right]
		\ge \inf_{m \in \clp(\Delta^o)}\left[F(m) + I(m)\right],
\end{multline*}

where the second inequality uses Fatou's lemma and the lower semicontinuity of relative entropy and the third line uses  \eqref{eq:615}. The fifth line again uses  the chain rule for relative entropies and  the disintegration in \eqref{eq:920}. The last line uses the fact that $\check \bfL^*$ solves $\clu(\check\bfL^*(0), \eta^*)$, and the expression of the rate function $I$ given in \eqref{def:ratefunction1}. {To see the equality on the  fourth line, note first that  using the chain rule for relative entropies (Theorem \ref{thm:chainrule}),
\begin{align*}
 &R\left(\exp(-s)  \check \eta^{*}(\cdot\mid s)  ds\big\| \exp(-s) \check \Lambda^*( \cdot \mid s) \otimes G^{\clv}(\check \bfL^*(s))(\cdot, \cdot ) ds 
 \right)\\
 &= \int_0^\infty \exp(-s) R\left( \check \eta^{*}(\cdot\mid s) \big\|
 \check \Lambda^*( \cdot \mid s) \otimes G^{\clv}(\check \bfL^*(s))(\cdot, \cdot )
 \right) ds.
\end{align*}
The claimed equality now follows on noting the relationship between $(\check\eta^*, \check \Lambda^*, G^{\clv})$ and 
$(\eta^*, \eta^*_{(1)}, G)$ noted in \eqref{eq:etastartoetastarcheck}, \eqref{eq:xnew3}, and \eqref{eq:ggv}.} 

The result follows on letting $\veps \to 0$.
\end{proof}

\section{Laplace Lower Bound}\label{sec:lowbd}
We now proceed to the   large deviation  lower bound.
The main result in this direction is the following.
\begin{theorem}\label{thm:lowbd}
	Suppose that Assumption  \ref{ass1} is satisfied.
	Then, for every $F \in C_b(\clp(\Delta^o))$,
	\begin{equation*}
	\limsup_{n\to \infty}-n^{-1} \log E \exp[-n F(L^{n+1})] \le \inf_{m \in \clp(\Delta^o)} [F(m) + I(m)],
	\end{equation*}
 where $I$ is defined using the matrix $A$ in Assumption  \ref{ass1}.
\end{theorem}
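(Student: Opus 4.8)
The plan is to exploit the variational representation \eqref{eq:varrep} in the reverse direction: fix $F \in C_b(\clp(\Delta^o))$, pick $m^* \in \clp(\Delta^o)$ that nearly achieves the infimum of $F + I$, choose a near-optimal control $\eta \in \clu(m^*)$ in \eqref{def:ratefunction1} with associated trajectory $M$ solving $\clu(m^*,\eta)$, and then exhibit a sequence of admissible controls $\{\bar\mu^{n,i}\} \in \Theta^n$ whose controlled empirical measures $\check\bfL^n$ converge to $M$ (so that $F(\check\bfL^n(0)) \to F(m^*) = F(M(0))$) and whose normalized relative-entropy cost $R(\beta^n \| \theta^n)$ converges to $\int_0^\infty e^{-s} R(\eta(\cdot\mid s) \| \eta_{(1)}(\cdot\mid s)\otimes G(M(s)))\,ds$, the cost of $\eta$. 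Since $-n^{-1}\log E\exp[-nF(L^{n+1})]$ is bounded above by the expected value inside the infimum for \emph{any} admissible control, this yields $\limsup_n -n^{-1}\log E\exp[-nF(L^{n+1})] \le F(m^*) + I(m^*) + (\text{error}) \le \inf_m[F(m)+I(m)] + (\text{error})$, and one lets the errors vanish.

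The obstruction to doing this directly is that the controls appearing in \eqref{eq:varrep} must be genuine transition-kernel-type objects (products $\delta_{\bar\nu^{n,k}}\otimes\bar\mu^{n,k+1}$ restricted to $A_+$), whereas an arbitrary near-optimal $\eta$ is only required to satisfy the \emph{averaged} local-equilibrium constraint \eqref{P2}; there is no reason a crude discretization of $\eta$ is realizable as the empirical behavior of a controlled chain with the right cost. So the first phase (the content of Section \ref{sec:lowbd}) is a reduction: by a sequence of perturbation, mollification, and discretization steps one replaces $\eta$ by a ``simple form'' near-optimal control — piecewise constant in time, with each piece an irreducible kernel on $A_+$ satisfying \eqref{P2} \emph{exactly}, with $M$ piecewise linear and staying in $\clp_+(\Delta^o)$ — for which Assumption \ref{ass1}\eqref{ass:aux1} gives uniform lower bounds $G(m)_{x,y}\ge \delta_0^A\min_z m_z$ on the allowed transitions, and the fixed point $\pi^*$ of Assumption \ref{ass1}\eqref{ass:qsdG} together with the linearity in Assumption \ref{ass1}\eqref{ass:linear} lets one connect arbitrary initial data to the interior and keep relative entropies finite. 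One must check at each step that the cost changes by at most a controlled amount, using lower semicontinuity / continuity of relative entropy and the Lipschitz continuity of $G$.

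The second phase (Section \ref{sec:llb}) is the actual construction of $\{\bar\mu^{n,i}\}$ matching this simple-form $\eta$, and this is where the main difficulty lies. On each time interval $[t_k, t_{k+1})$ where $\eta$ equals a fixed kernel $q$ with invariant measure $\mu$, one wants the controlled chain to run under (a version of) $q$ long enough that, by the ergodic theorem for the finite irreducible chain, the time-averaged occupation of $(x,y)$-pairs is close to $\mu(x)q(x,y)$ and the empirical measure $\bar L^{n,k}$ is close to the relevant point of $M$ — and simultaneously the running cost $n^{-1}\sum R(\delta_{\bar\nu^{n,k}}\otimes\bar\mu^{n,k+1}\|\cdots)$ tracks the integral against $e^{-s}$, which forces the control at stage $k$ to be compared against $G^{\clv}(\bar L^{n,k+1})$ rather than $G^{\clv}(M(s))$, so one needs $\|\bar L^{n,k+1}-M(s)\|\to 0$ uniformly and then invokes the Lipschitz property of $G$. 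The hard part is that the target local equilibria are \emph{time-varying}: as $M(s)$ moves, $G(M(s))$ and hence the admissible kernels change, so one cannot simply freeze one ergodic average; instead one must chain together many short ergodic-averaging windows, each time re-centering on the current empirical measure and re-equilibrating, and control the accumulation of errors across all windows as $n\to\infty$ while keeping the discounting weight $\psi_e(t_n-s)\sim n e^{-s}$ correctly accounted for (cf. Lemma \ref{lem:mtpsieasymptotics}). Assumption \ref{ass1}\eqref{ass:conv-occ} is what guarantees the (uncontrolled, or suitably initialized) chain charges every state so that the construction can get started from the actual initial condition $\delta_{x_0}$. Once the construction is in place, tightness and the characterization of limit points (exactly as in Lemma \ref{lemchar}, but now run to identify the limit as the prescribed $(M,\eta)$) finish the argument, with a uniform-integrability / truncation argument needed to pass the infinite-horizon discounted cost to the limit.
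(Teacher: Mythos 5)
Your strategy matches the paper's in its essential architecture: choose a near-optimal $(m^0,\eta^0)$ in the rate function, reduce $\eta^0$ by perturbation (mixing with the fixed point $\pi^*$, using linearity of $G$ to keep costs controlled), mollification, and time-discretization to a piecewise-constant control whose pieces are irreducible kernels on $A_+$ with matching marginals and with a uniform lower bound $\delta$ on all charged pairs; then explicitly build admissible controls $\{\bar\mu^{n,k}\}$ that run uncontrolled until the empirical measure charges $\Delta^o$ (the role of Assumption \ref{ass1}\eqref{ass:conv-occ}), then run under the irreducible kernel $Q$ with invariant $q=M^1(T)$ to equilibrate near $q$, then successively run under the piecewise kernels $\beta^j_{2|1}$, invoking the ergodic theorem on each window to make $\hat M^n$ shadow $\hat M$ and to control the running relative-entropy cost, with fall-back to zero-cost uncontrolled dynamics on the small-probability bad events. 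This is exactly Sections \ref{sec:lowbd}--\ref{sec:llb} of the paper.

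However, your final paragraph misstates how the argument closes, and as written that step would fail. You propose to conclude via ``tightness and the characterization of limit points (exactly as in Lemma \ref{lemchar}) ... with a uniform-integrability / truncation argument'' to pass the cost to the limit. But relative entropy is only \emph{lower} semicontinuous under weak convergence: tightness plus Lemma \ref{lemchar} plus Fatou gives $\liminf_n E\,R(\beta^n\|\theta^n) \ge R(\beta^*\|\theta^*)$, which is the inequality needed for the upper bound (Theorem \ref{thm:laplaceupperbound1}), not the reverse inequality needed here. Uniform integrability of the costs does not rescue this, since UI $+$ weak convergence controls expectations of bounded continuous functionals, not of the entropy functional, which is only LSC. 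The paper instead concludes (Section \ref{sec:pflowbd}) by \emph{direct quantitative} estimates on the explicitly constructed controls: Corollary \ref{cor:1101} bounds $P(\sup_t\|\hat M^n(t)-\hat M(t)\|\ge d_3\veps_0)\le d_4\veps_1$ so that $E\,F(\bar L^n(t_n))\le F(m^3)+\veps$ via the Lipschitz property of $F$, and Lemma \ref{lem:costest} gives an explicit upper bound on $\limsup_n E\bigl[n^{-1}\sum_k R(\cdots)\bigr]$ in terms of the discounted cost of $\hat\eta$ on $[0,T]$ plus controllable error terms, using \eqref{eq:236nb} (a quantitative ergodic estimate), the uniform positivity bounds from Assumption \ref{ass1}\eqref{ass:aux1}, and the finite horizon $T$ fixed in \eqref{eq:sizeofT} (which plays the role you assign to a ``truncation argument,'' but applied up front in the reduction, not at the end). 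Replacing your last paragraph by this direct-estimate route would make the proposal match the paper's proof.
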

The proof of the above theorem is completed in this section and the next, and in both of these sections Assumption \ref{ass1} is assumed to hold throughout and the matrix $A$ is as in this assumption.

{\subsection{Overview}\label{sec:overview5.1} We begin with an overview of this section. The starting point is to select a $m^0$ that is $\varepsilon$-optimal for the right side in the above display, where $\varepsilon >0$ is a fixed small parameter. Next, we select a control $\eta^0 \in \clu(m^0)$ which is $\varepsilon$-optimal for the control problem characterizing $I(m^0)$ through the right side of 
\eqref{def:ratefunction1} (with $m$ there replaced by $m^0$). 
In order to prove the lower bound, we will use the variational representation in Proposition \ref{prop:varrep0}. For this, the basic idea is to construct controlled sequences such that the corresponding $ \bar L^n (t_n)$ is  close to $m^0$ and the relative entropy cost
on the right side of \eqref{eq:varrep} is close to the integral on the right side of \eqref{def:ratefunction1} (with $(m,\eta)$ replaced by $(m^0,\eta^0)$). However, without any a priori guarantees on the \AB{smoothness in time}  of the near optimal control $\eta^0$, constructing controlled sequences, \AB{which are discrete time stochastic processes} with the desired properties, is not straightforward. Addressing this is the main goal of this section. We proceed by providing a series of approximations to replace $(M^0, \eta^0, m^0)$ with quantities that have better regularity properties.  \AB{The eventual goal of this section is to replace $\eta^0$ with a piecewise constant control which is easier to approximate by constructing  stochastic control sequences. }

The first issue to deal with is the possible blowup of the relative entropy costs in \eqref{def:ratefunction1} when the measure in the second coordinate of the relative entropy places small measure on certain sets. \PZ{This step is needed in order to control the errors in the relative entropy costs when controls and state trajectories are replaced by their approximations. } This issue is addressed in Section \ref{subsec:non-degen} by a perturbation argument and by using properties of the fixed point $\pi^*$ in Assumption \ref{ass1}(3). This leads to an approximation 
$(M^1, \eta^1, m^1)$ for $(M^0, \eta^0, m^0)$ for which $m^1$ and the associated cost (as given by the integral on right side of \eqref{def:ratefunction1}) are  close to $m^0$ and the cost for
$(M^0, \eta^0, m^0)$, respectively (see \eqref{eq:1599zz}). Furthermore, \AB{for this approximation one has uniform positivity of the measure in the second argument of relative entropy and, consequently, uniform bounds  on relative entropy costs (see  \eqref{eq:relentetarhom1}, \eqref{eq:1243}).} \PZ{To be more precise, the objective here is to ensure a uniform bound (over $s$) of the form
\begin{equation}\label{eq:nondegNb}
   	R\left(\eta^1( s) \| \eta^{1}_{(1)}( s)\otimes  G(M^{1}(s)) \right)  \le  C, 
\end{equation}
which is done by establishing a uniform lower bound, for the measure in the second argument of the relative entropy, of the form
\begin{equation}\label{eq:nondegN}
    \inf_{s \in \RR^+} \inf_{(x,y) \in A_+} \eta^{1}_{(1)}(x \mid s) G(M^{1}(s)) (x,y) \ge \varepsilon,
\end{equation}
for some constants $C,\varepsilon \in (0,\infty)$; see \eqref{eq:relentetarhom1} and \eqref{eq:1243} for the precise bounds. The key idea in constructing $\eta^1$ (and the corresponding $M^1,m^1$) is to slightly perturb each $\eta^0(s)$ in the direction of  $\pi^*G(\pi^*)$, namely to set
\begin{equation*}
	\eta^{\kappa}(x,y \mid s) = (1-\kappa)\eta^{0}(x,y \mid s) + \kappa \pi^{*}_x G(\pi^*)_{x,y}
\end{equation*}
for some small $\kappa \in (0,1)$; see \eqref{eq:917m}. This is is the only place in the proof where the linearity assumption on $G$ (Assumption \ref{ass1}(1)) is invoked}.

Since the controlled empirical measures in the variational representation of Proposition \ref{prop:varrep0} and the measure valued trajectories $M(\cdot)$ in the definition of the rate function are asymptotically related through a time-reversal operation, in Section \ref{sec:fixedt} we also introduce the time reversal of the quantities $(M^1, \eta^1)$, denoted as $(\hat M^1, \hat \eta^1)$ and the associated cost given by the right side of \eqref{eq:sizeofT}. \PZ{These are given by the following formulas for a fixed sufficiently large $T$
\begin{equation*}
\hat M^1(t) \doteq M^1(T-t), \; \hat \eta^1(t) = \hat\eta^1(\cdot \mid t) \doteq \eta^1(\cdot \mid T-t).
\end{equation*}}
\PZ{The exact choice of $T$ is given in \eqref{eq:sizeofT}.} 
\AB{We remark that the role of this time change is to undo the time reversal that led to the expression for the rate function  from weak convergence of controlled sequences denoted by 
$\check{}$ in Section 
\ref{sec:timerev} (cf. Lemma \ref{lemchar}). For this reason we denote these time reversals by a different notation, namely 
 $\hat{}$.}
It is these time reversed controls that will be eventually used to construct the controlled chains with desired asymptotic properties.

However, it is not clear how to use a control given as a function of a continuous parameter $t \in \RR_+$ to construct a discrete time controlled sequence, and \AB{one would need to construct some  time discretization of this continuous parameter control. However, for such a discretization to well approximate the continuous parameter control, one needs the control to have continuous dependence on $t$,} which is not guaranteed a priori. This is the objective of Step 2 given in Section \ref{sec:step2cty} where we approximate the control $\hat \eta^1$ from Step 1 by a continuous control by using a   \PZ{time mollification of the form
\begin{equation*}
	\hat\eta^{1,\kappa}(s) \doteq
	 \kappa^{-1}\int_{s}^{\kappa +s} \hat\eta^1(u) du, \; s \in [0,T].
\end{equation*}}
The \AB{nondegeneracy estimates for the relative entropy terms from the previous step (of the form in \eqref{eq:nondegNb}-\eqref{eq:nondegN})} ensure that the cost associated with this approximation is  close to the cost from Step 1. This step culminates in the construction of  $(\hat M^2, \hat \eta^2 , m^2)$  that \PZ{gives a good approximation to} $(\hat M^1, \hat \eta^1 , m^1)$
(see \eqref{eq:5eps}) and has the additional property that $\hat \eta^2$ is continuous in $t$.

This time continuity is exploited in Section \ref{sec:piecewiseconstant} for Step 3 of the approximation where the  control $\hat \eta^2$ is approximated by a piecewise constant control \PZ{of the form
\begin{equation}
	\hat \eta^{2,\kappa} (\cdot \mid s) \doteq \sum\limits_{j=0}^{\lfloor T \kappa^{-1}\rfloor - 1} \hat \eta^2 (\cdot \mid j\kappa) \bm{1}_{[j\kappa, (j+1)\kappa)}(s) + 
 \hat \eta^2 (\cdot \mid \lfloor T \kappa^{-1}\rfloor\kappa) \bm{1}_{[\lfloor T \kappa^{-1}\rfloor\kappa, T]}(s)
 ,\;\; s \in [0,T].
\end{equation}} 
This step culminates with the construction of $(\hat M^3, \hat \eta^3 , m^3)$  that well approximates $(\hat M^2, \hat \eta^2 , m^2)$ \PZ{for small enough $\kappa$} (see \eqref{eq:601fin}) and with the property that $\hat \eta^3$ is piecewise constant.

The construction of such a piecewise near optimal control is the main objective of this section since such a control can be `approximately replicated' in a natural fashion for the controlled self interacting chains and used to obtain the desired lower bound. Details on how this is carried out will be given at the start of Section \ref{sec:llb}.

We now proceed to implement the above outline. We begin in the following subsection with the selection of a near-optimal control and trajectory and then in successive sections, by a series of approximations, we  modify  these quantities, culminating in  Section \ref{sec:piecewiseconstant} in the final form of the simple form piecewise constant controls  that will be used in the lower bound proof.  
}

\subsection{Near-Optimal Control}\label{sec:prelimest}
Fix $F \in C_b(\clp(\Delta^o))$ and $\veps \in (0,1)$.
In order to prove Theorem \ref{thm:lowbd} we can assume without loss of generality that $F$ is Lipschitz (see \cite[Corollary 1.10]{buddupbook}), i.e., for some $F_{\mbox{\tiny{lip}}} \in (0,\infty)$,
\[
|F(m) - F(\tilde m)| \le F_{\mbox{\tiny{lip}}} \|m - \tilde m\|, \;  m, \tilde m \in \clp(\Delta^o).
\]
Choose $m^0 \in \clp(\Delta^o)$ such that 
\begin{equation}
\label{eq:300}
	F(m^0) + I(m^0) \le  \inf_{m \in \clp(\Delta^o)} [F(m) + I(m)] + \veps.
\end{equation}

Recall the definition of the rate function from \eqref{def:ratefunction1} given in terms of $\eta \in \clu$.
In proofs it will sometimes be convenient to work with analogues of $\eta$ that are probability measures on $\clp(\clv^d \times \clv^d)$. In particular,  for $\eta \in \clu$, we define $\eta^{\clv} : \RR_+  \to \clp(\clv^d \times \clv^d)$ as 
\begin{equation}\label{eq:etacalv1}
\eta^{\clv}(s)(\bdelta_x, \bdelta_y) \doteq \eta(x,y \mid s) , \; x,y \in \Delta^o.
\end{equation}
Observe that the map defined in \eqref{def:ratefunction1} can be rewritten as
\begin{equation}\label{eq:ratefunction2}
I(m) = \inf\limits_{\eta \in \clu(m)} \int_0^{\infty} \exp(-s)  \sum\limits_{x \in \Delta^o}  \eta^{\clv}_{(1)}(\bdelta_x  \mid s) R\left( \eta^{\clv}_{2|1} (\cdot \mid s, \bdelta_x ) \|  G^{\clv}(M(s))(\bdelta_x , \cdot)\right) ds,
\end{equation}
where $\eta^{\clv}(s)(\bdelta_x, \bdelta_y) = \eta^{\clv}_{(1)}(\bdelta_x  \mid s)  \eta^{\clv}_{2|1} (\cdot \mid s, \bdelta_x )$.
Also note that the relative entropy in \eqref{def:ratefunction1} is computed for probability measures on $\Delta^o$ while the relative entropy in \eqref{eq:ratefunction2} is computed for probability measures on $\clv^d$.

We choose $\eta^0 \in \clu(m^0)$ such that, with $\eta^{0,\clv}$ defined by the right side of \eqref{eq:etacalv1} (with $\eta$ replaced with $\eta^0$),

\begin{equation}
\label{eq:301}
	\int_0^{\infty} \exp(-s) \sum_{x \in \Delta^o}   \eta^{0,\clv}_{(1)}(\bdelta_x  \mid s) R\left( \eta^{0,\clv}_{2|1} (\cdot \mid s, \bdelta_x ) \|  G^{\clv}(M^0(s))(\bdelta_x , \cdot)\right)
 \le I(m^0) + \veps,
\end{equation}
where $M^0$ solves $\clu(m^0,\eta^0)$. From the definition of $\clu(m_0)$, there is a $\MP^0 \in C([0, \infty): \clp(\Delta^o\times \Delta^o))$ such that, for each $t \in \RR_+$,  
\begin{equation}\label{eq:xnew1}
(\MP^0(t))_{(1)} = (\MP^0(t))_{(2)} = M^0(t) \mbox{ and } \MP^0(t)(x,y) = 0 \mbox{ whenever } (x,y) \in (A_+)^c.
\end{equation}

We now modify $M^0$ and $\eta^0$ to construct a more tractable near-optimal trajectory. 
{The key facts used in the next section are equations \eqref{eq:301} and \eqref{eq:xnew1}.}

\subsection{Step 1: Ensuring Nondegeneracy}
\label{subsec:non-degen} 
Our first approximation step ensures that the probability measure  that appears in the second argument of the relative entropy terms  of the form in \eqref{eq:301} are suitably nondegenerate. \AB{Specifically, we construct approximations $M^1, \eta^1$ to $M^0, \eta^0$ that satisfy \eqref{eq:1243} in the lemma below. This ensures that the relative entropies that appear in 
\eqref{eq:358} have uniform upper bounds, which will be needed in the next step of the approximation.}
Let, for each $z \in \Delta^o$,
$\Delta_{+}(z) \doteq \{y \in \Delta^o : A_{z,y}  > 0\}$,
and recall the constant $\delta_0^A \in (0,\infty)$ from Assumption \ref{ass1}\eqref{ass:aux1}.

\AB{\begin{lemma}\label{lem:rev2A}
There exist $(m^1, \eta^1, M^1, \MP^1)$ with $m^1 \in \clp(\Delta^0)$, 
$\eta^1 \in \clu(m^1)$, $M^1$ solving $\clu(m^1, \eta^1)$ and $\MP^1 \in C(\RR_+: \clp(\Delta^0\times \Delta^0))$ such that for all $t \in \RR_+$,
$(\MP^1(t))_{(1)} = (\MP^1(t))_{(2)} = M^1(t)$, $\mbox{supp}(\MP^1(t)) = A_+$, and for some $\delta, \delta_0^{M_1} \in (0,\infty)$,
 \begin{multline}
  F(m^{1}) + \int_0^{\infty} \exp(-s) \sum_{x \in \clv^d} \eta^{1}_{(1)}(x\mid s) R\left(\eta^{1}_{2|1}(\cdot \mid s,x) \| G(M^{1}(s))(x, \cdot)\right) ds\\
  \le \inf_{m \in \clp(\Delta^o)} [F(m) + I(m)] + 3 \veps,\label{eq:358}
  \end{multline}
  and for all $s \in \RR_+$,
  
  \begin{equation}\label{eq:relentetarhom1}
	R\left(\eta^{1}(s) \| \eta^{1}_{(1)}( s)\otimes G(M^{1}(s)) \right)  \le  \left |\log \delta_0^{M_1}\right|,
\end{equation}
and
\begin{equation}\label{eq:1243}
\inf\limits_{x\in \Delta^o} M^1(s)(x) \geq \delta, \quad
\inf\limits_{(x,y) \in \tplus} \eta^1(x,y \mid s) \geq  \delta, \; \mbox{ and }
\text{supp}( \eta^1(\cdot \mid s)) =\tplus.
\end{equation}

\end{lemma}}
\begin{proof}
Note that from  Assumption \ref{ass1}\eqref{ass:qsdG}, there is a   $\pi^* \in \clp_+(\Delta^o)$ satisfying
\begin{equation}\label{eq:defnqsd}
	\sum_{x \in \Delta^o} \pi^*_x G(\pi^*)_{x,y} =  \pi^*_y, \; y \in \Delta^o.
\end{equation}
\PZ{To ensure notational consistency, it}  will be helpful to consider the  measure $ \pi^{\clv,*} \in \mathcal{P}(\clv^d)$ defined by
$
\pi^{\clv, *}_{\bdelta_x} \doteq \pi^*_x$, $x \in \Delta^o$,
so that
\[
\sum\limits_{v \in \clv^d} \pi^{\clv,*}_v G^{\clv}(\pi^*)_{v,u} = \pi^{\clv,*}_u,  \; u \in \clv^d.
\]
Let
\begin{equation}
\delta_0^{\pi^*} \doteq \inf\limits_{x\in\Delta^o}\pi^*_x, \; \; 
\delta_0^{G,\pi^*} \doteq  \inf\limits_{(x,y)\in \tplus}\pi^*_x G(\pi^*)_{x,y},
\end{equation}
and note that, {from Assumption \ref{ass1}.2(b),}
\begin{equation}
\label{eq:dGpi}
    \delta_0^{G,\pi^*} \geq\delta_0^A (\delta_0^{\pi^*})^2 > 0.
\end{equation}
 
Let, for $x,y \in \Delta^o$ and $s\in \RR_+$,
\[
M^*(s) \doteq \pi^*, \quad \eta^*( x,y \mid s) \doteq  \pi^*_x G(\pi^*)_{x,y},
\]
and observe that $\eta^* \in \clu(\pi^*)$ and
$M^*$ solves $\clu(\pi^*,\eta^*)$. 
 Define, for $\kappa \in (0,1)$ and $t \in \RR_+$,
\begin{align}\label{eq:mkappacomb}
	M^{\kappa}(t) \doteq (1-\kappa) M^0(t) + \kappa M^*(t),\;
	\eta^{\kappa}(\cdot \mid t) &\doteq
	(1-\kappa)\eta^{0}(\cdot \mid t) + \kappa \eta^{*}(\cdot \mid t), \;
	m^{\kappa} \doteq (1-\kappa) m^0 + \kappa \pi^*,
\end{align}
and observe that with
\[
\MP^*(t) \doteq \pi^* G(\pi^*), \; \MP^{\kappa}(t) \doteq (1-\kappa)\MP^0(t) + \kappa \MP^*(t), \;\; t \in \RR_+,
\]
we have that
$(\MP^{\kappa}(t))_{(1)} = (\MP^{\kappa}(t))_{(2)} = M^{\kappa}(t)$. 
Also note that, since $M^0(t) \in \clp^*(\Delta^o)$ and $M^*(t) \in \clp^*(\Delta^o)$, we have that $M^{\kappa}(t) \in \clp^*(\Delta^o)$ for every $t \in \RR_+$, in fact we have that $\text{supp}(\MP^{\kappa}(t)) = A_+$ for each $t \in \RR_+.$
From these observations we see that
$\eta^{\kappa} \in \clu(m^{\kappa})$ and $M^{\kappa}$ solves $\clu(m^{\kappa},\eta^{\kappa})$. 

 For each $t \in \RR_+$ and $x,y\in \Delta^o$,  define
\begin{equation}\label{eq:taukdef}
	\tau^{\kappa}(x,y \mid t) = \frac{ \kappa(1 - \kappa) \eta^0(x,y \mid t) +  \kappa \pi^*_x G(\pi^*)_{x,y}}{2 \kappa(1 - \kappa) + \kappa^2},
\end{equation}
and note that for each $x,y \in \Delta^o$ and $ t \in \RR_+$,
\begin{equation}\label{eq:917m}
\begin{aligned}
	\eta^{\kappa}(x,y \mid t) &= (1-\kappa)\eta^{0}(x,y \mid t) + \kappa \pi^{*}_x G(\pi^*)_{x,y}\\
	&= (1-\kappa)^2\eta^{0}(x,y \mid t) + (2\kappa(1-\kappa)+\kappa^2) \tau^{\kappa}(x,y \mid t).
\end{aligned}
\end{equation}
Also, observe that
\begin{equation}
\eta^{\kappa}_{(1)}(x\mid s) = (1-\kappa) \eta^0_{(1)}(x\mid s) + \kappa \pi^*_x, \label{eq:etakb1}
\end{equation}
and, by {the linearity property from} 
\PZ{Assumption \ref{ass1}\eqref{ass:linear},
\begin{equation}\label{eq:gmks}
\begin{aligned}
	G(M^{\kappa}(s))_{x,y} = G((1-\kappa)M^{0}(s) + \kappa  \pi^*)_{x,y}  
	= (1-\kappa)G(M^{0}(s))_{x,y} + \kappa G(\pi^*)_{x,y}.
\end{aligned}
\end{equation}}
Thus, from the previous two displays, 
\begin{equation}\label{eq:916m}
	\begin{aligned}
	\eta^{\kappa}_{(1)}(x\mid s)G(M^{\kappa}(s))_{x,y} &=
	(1-\kappa)^2 \eta^0_{(1)}(x\mid s)G(M^{0}(s))_{x,y} + \kappa(1-\kappa)\eta^0_{(1)}(x\mid s)G(\pi^*)_{x,y}\\
	&\quad + \kappa(1-\kappa) \pi^*_xG(M^{0}(s))_{x,y}  + \kappa^2 \pi^*_xG(\pi^*)_{x,y}\\
	&= (1-\kappa)^2 \eta^0_{(1)}(x\mid s)G(M^{0}(s))_{x,y} + 
	\left(2\kappa(1-\kappa)+\kappa^2\right)\sigma^{\kappa}(x,y \mid s),
	\end{aligned}
\end{equation}
where for $x,y \in \Delta^o$ and $t\in \RR_+$,
\begin{equation}\label{eq:sigmkdef}
	\begin{aligned}
	\sigma^{\kappa}(x,y \mid t) &\doteq \frac{  \kappa(1-\kappa) \eta^0_{(1)}(x\mid s)G(\pi^*)_{x,y} +
\kappa(1-\kappa) \pi^*_xG(M^{0}(s))_{x,y} + \kappa^2 \pi^{*}_x G(\pi^*)_{x,y} }{2\kappa(1-\kappa)+\kappa^2}.
\end{aligned}
\end{equation}
\AB{Using the convexity of the map $(P,Q) \mapsto R(P\|Q)$ 
(cf. \cite[Lemma 2.4(b)]{buddupbook})}, and combining \eqref{eq:917m} and \eqref{eq:916m}, we see that {\begin{equation}
\label{eq:545}
\begin{aligned}
 &\int_0^{\infty} \exp(-s) \sum_{x \in \Delta^o} \eta^{\kappa}_{(1)}(x\mid s) R\left(\eta^{\kappa}_{2|1}(\cdot \mid s,x) \| G(M^{\kappa}(s))(x, \cdot)\right) ds\\
 &\quad =  \int_0^{\infty} \exp(-s)  R\left(\eta^{\kappa}( s) \| \eta^{\kappa}_{(1)}( s)\otimes G(M^{\kappa}(s))\right)  ds\\
  &\quad \le (1-\kappa)^2 \int_0^{\infty} \exp(-s)  R\left(\eta^{0}( s) \| \eta^{0}_{(1)}( s)\otimes G(M^{0}(s))\right) ds\\
  &\qquad + \left(2\kappa(1-\kappa) +\kappa^2\right) \int_0^{\infty} \exp(-s)  R\left(\tau^{\kappa}(\cdot\mid s) \| \sigma^{\kappa}(\cdot\mid s)\right) ds\\
    &{\quad = (1-\kappa)^2 \int_0^{\infty} \exp(-s) \sum\limits_{x \in \Delta^o} \eta^0_{(1)}(x \mid s) R\left(\eta^{0}_{2 \mid 1}(\cdot\mid s, x) \| G(M^{0}(s))(x, \cdot)\right) ds}\\
  &\qquad {+ \left(2\kappa(1-\kappa) +\kappa^2\right) \int_0^{\infty} \exp(-s)  R\left(\tau^{\kappa}(\cdot\mid s) \| \sigma^{\kappa}(\cdot\mid s)\right) ds,}
 \end{aligned}
  \end{equation}
  }
  {where we have  used the chain rule for relative entropies (Theorem \ref{thm:chainrule}) to obtain the first and second equalities}.
 Observe from Assumption \ref{ass1}\eqref{ass:222a} that, for each $s \in \RR_+$,
$\text{supp}(\sigma^{\kappa}( \cdot \mid s)) =  \tplus$,
and for all $t \in \RR_+$  and $(x,y)  \in \tplus$,
\[
| \log \sigma^{\kappa}( x,y \mid t) | \leq \left| \log \left(\frac{\kappa^2}{2\kappa(1-\kappa) + \kappa^2}  \pi^*_x G(\pi^*)_{x,y}\right)\right|.
\]
Combining the observation  in the previous display
with \eqref{eq:taukdef}, and \eqref{eq:sigmkdef},
 it follows that, for $s\in \RR_+$ and $\kappa \in (0,1/2)$
 \begin{align*}
 	R\left(\tau^{\kappa}(\cdot\mid s) \| \sigma^{\kappa}(\cdot\mid s)\right)
	&\le \sum_{(x,y)  \in \tplus} \tau^{\kappa}(x,y\mid s) |\log \sigma^{\kappa}(x,y\mid s)|\\
	&\le  d\left|\log \left(\frac{\kappa^2}{2\kappa(1-\kappa)+\kappa^2}\right)\right| + \sum_{(x,y) \in \tplus}
 \tau^{\kappa}(x,y\mid s) |\log (\pi^{*}_x G(\pi^*)_{x,y})| \\
	&\le  d\left(\left|\log \left(\frac{\kappa}{2}\right)\right| + \left|\log \delta_0^{G, \pi^*}\right|\right).
 \end{align*}
 Thus,
 \begin{align}\label{eq:1599zz}
	\left(2\kappa(1-\kappa) +\kappa^2\right) \int_0^{\infty} \exp(-s)  R\left(\tau^{\kappa}(\cdot\mid s) \| \sigma^{\kappa}(\cdot\mid s)\right) ds
	\le  \left(2\kappa(1-\kappa) +\kappa^2\right)  d\left(\left|\log \left(\frac{\kappa}{2}\right)\right| +\left|\log \delta_0^{G, \pi^*}\right|\right).
\end{align}
Choose $\kappa_1 \in (0,1/2)$  such that 
\begin{equation}\label{eq:1172}
\| m^0 - m^{\kappa_1}\|  \le  \frac{\min\{(F_{\mbox{\tiny{lip}}})^{-1} , 1\}\veps}{2}, \;\;
\left(2\kappa_1(1-\kappa_1) +(\kappa_1)^2\right)  d\left(\left|\log \left(\frac{\kappa_1}{2}\right)\right| + \left|\log \delta_0^{G, \pi^*}\right|\right) \le \veps/2.
\end{equation}
For  convenience, write $ (m^1, \eta^1,  M^1, \MP^1) \doteq (m^{\kappa_1}, \eta^{\kappa_1}, M^{\kappa_1}, \MP^{\kappa_1})$.
 Then, 
  \begin{multline}
  F(m^{1}) + \int_0^{\infty} \exp(-s) \sum_{x \in \clv^d} \eta^{1}_{(1)}(x\mid s) R\left(\eta^{1}_{2|1}(\cdot \mid s,x) \| G(M^{1}(s))(x, \cdot)\right) ds\\
  \le F(m^0) + \int_0^{\infty} \exp(-s) \sum_{x \in \clv^d} \eta^{0}_{(1)}(x\mid s) R\left(\eta^{0}_{2|1}(\cdot \mid s,x) \| G(M^{0}(s))(x, \cdot)\right) ds+ \veps\\
  \le \inf_{m \in \clp(\Delta^o)} [F(m) + I(m)] + 3 \veps,\label{eq:358b}
  \end{multline}
where we have used the Lipschitz property of $F$, \eqref{eq:545}, \eqref{eq:1599zz}, and \eqref{eq:1172} for the first inequality, and the displays in \eqref{eq:300} and \eqref{eq:301} for the second inequality.
This proves \eqref{eq:358b}.

Note that, for each $s \in \RR_+$, 
$
 \text{supp}( \eta^1_{(1)}( s) \otimes G(M^1(s))) = \tplus,
$
and that, {from  the first equality in \eqref{eq:916m} and \eqref{eq:dGpi}}, with  $\delta_0^{M_1} \doteq \kappa^2_1 \delta_0^{G,\pi^*}$,
\begin{equation}\label{eq:rhoboundlower}
\begin{split}
\inf_{s \in \RR_+}\inf_{(x,y) \in \tplus}  G(M^1(s))(x, y)  \ge \inf_{s \in \RR_+} \inf_{(x,y) \in \tplus} \eta^{1}_{(1)}(x\mid s) G(M^1(s))(x, y) \ge \delta_0^{M_1} > 0, 
\end{split}
\end{equation}
which implies that for each $s \in \RR_+$ \eqref{eq:relentetarhom1} is satisfied.

Also, from \eqref{eq:dGpi} it follows that $\kappa_1 \delta_0^{G,\pi^*} \le \kappa_1 \delta^{\pi^*}_0$ and so, for each $s \in \RR_+$, with 
 $\delta \doteq \kappa_1 \delta_0^{G,\pi^*}$, \eqref{eq:1243} is satisfied.
 This completes the proof of the lemma.
\end{proof}
\subsection{Time Reversal}\label{sec:fixedt}
Recall from the proof of the upper bound that the trajectories in the variational problem in the Laplace upper bound  are 
related to the limit controlled trajectories by  time reversal (see e.g., \eqref{eq:526} and last display in Section \ref{sec:laplaceupperbound}). Thus, we now introduce a time reversal of $M^1$, which, after further approximations, will be used to construct suitable controlled
processes in the proof of the lower bound.
Fix $T \in (0,\infty)$  large enough so that 
\begin{equation}\label{eq:sizeofT}
\exp(-T+1)\max\left\{\left|\log \left(\frac{\delta \delta_0^A}{8}\right)\right|,  \left|\log \delta_0^{M_1}\right|\right\} \le \veps,
\end{equation}
\PZ{where $\delta$ and $\delta_0^{M_1}$ are as in the statement of Lemma \ref{lem:rev2A} and $\delta_0^A$ is as in Assumption \ref{ass1}\eqref{ass:aux1}.}
Throughout this section and the next,  \PZ{these values of $T$, $\delta$, and $\delta_0^{M_1}$ are fixed.}  Define, for $t \in [0,T]$,
\begin{equation}\label{eq:mhatetahatdef1A}
\hat M^1(t) \doteq M^1(T-t), \; \hat \eta^1(t) = \hat\eta^1(\cdot \mid t) \doteq \eta^1(\cdot \mid T-t),
\end{equation}
\AB{where $M^1$ and $\eta^1$ are as in the statement of Lemma \ref{lem:rev2A}. Throughout this section, these definitions of $M^1, \eta^1, \hat{M}^1$, and $\hat{\eta}^1$ are fixed.} Note that, since $M^1$ solves $\clu(m^1, \eta^1)$, 
\begin{equation}\label{eq:hatmit}
\begin{split}
 \hat{M}^1(t) = M^1(T) + \int_0^t \hat\eta_{(1)}^1(s) ds - \int_0^t \hat M^{1}(s) ds, \; t \in [0,T],
 \end{split}
\end{equation}
where $\hat\eta_{(1)}^1(s) = \eta_{(1)}^1(T-s)$.
Recalling the non-negativity of relative entropy, note that,
\begin{multline}\label{eq:firscostapp}
\int_0^{\infty} \exp(-s) \sum_{x \in \Delta^o} \eta^{1}_{(1)}(x \mid s)R\left(\eta^{1}_{2|1}(\cdot \mid s,x) \| G(M^{1}(s))(x,\cdot)\right) ds \\
\ge \exp(-T) \int_0^T \exp(s) \sum_{x \in \Delta^o} \hat \eta^{1}_{(1)}(x \mid s) R\left(\hat\eta^{1}_{2|1}(\cdot \mid s,x) \big\| G(\hat M^{1}(s)) (x,\cdot)\right) ds,
\end{multline}
where $\hat\eta^{1}_{2|1} (\cdot \mid s,x)= \eta^{1}_{2|1} (\cdot \mid T-s,x)$.  

Finally, disintegrating $\MP^1(T)$ as 
\begin{equation}\label{eq:Qdef1}
\MP^1(T)(x,y) = M^1(T)(x) Q(x,y), \; \;  x,y \in \Delta^o,
\end{equation}
 we have that $M^1(T)$ is a stationary distribution of the Markov chain with transition probability kernel $Q$. Also, on recalling that $\text{supp}(M^1(T)) = \Delta^o$ and $\text{supp}(\MP^1(T)) = A_+$, we see that  $Q(x,y) = 0$ if and only if $(x,y) \in (A_+)^c$, from which it follows that the kernel $Q$ is irreducible and has unique stationary distribution $M^1(T)$.

\subsection{Step 2: Continuity of Control}
\label{sec:step2cty}

Our next step mollifies the control $\hat \eta^1$ in a suitable manner so that it can be discretized at a later step. \AB{This  step provides an estimate of the form in \eqref{eq:220} for the mollified approximation $\hat \eta^2$ of $\hat \eta^1$  which allows us to control the discretization error at the next step of the approximation.}

\AB{\begin{lemma}\label{lem:rev2B}
There exist $(m^2, \hat \eta^2, \hat M^2)$ with $m^2 \in \clp(\Delta^0)$, 
$\hat \eta^2 \in \clu$, $\hat M^2 \in C([0,T]: \clp(\Delta^0))$  such that
\begin{multline}
  F(m^{2}) + e^{-T}\int_0^{T} \exp(s) \sum_{x \in \Delta^o} \hat \eta^{2}_{(1)}(x \mid s) R\left(\hat\eta^{2}_{(2)}(\cdot \mid s,x) \big\| G(\hat M^{2}(s)) (x,\cdot)\right) ds\\
 \le \inf_{m \in \clp(\Delta^o)} [F(m) + I(m)] + (4 + L_G) \veps,\label{eq:5eps}
  \end{multline}
  \begin{equation}\label{eq:1233aa}
	\hat M^2(t) =  M_1(T) + \int_0^t \hat\eta_{(1)}^2(s) ds - \int_0^t \hat M^{2}(s) ds, \; t \in [0,T],\;\;
 \hat M^2(T) = m_2,
\end{equation}
\begin{equation}\label{eq:220}
	\|\hat \eta^2(s) - \hat \eta^2(t)\|  \le C_1 |s-t|, \; \; s, t \in [0,T],
\end{equation}
and with $\delta$ as in Lemma \ref{lem:rev2A},
\begin{equation} \label{eq:219aa}
	\inf\limits_{s\in [0,T]} \inf\limits_{(x,y) \in \tplus} \hat\eta^{2}( x,y \mid s) \ge \delta, \;\; \inf_{s \in [0,T], x \in \Delta^o}\hat M^{2}(s)(x) \ge \delta /2.
\end{equation}

\end{lemma}}

\begin{proof}
For $\kappa >0$, define
\begin{equation}\label{eq:etahat1kappadef}
	\hat\eta^{1,\kappa}(s) \doteq
	 \kappa^{-1}\int_{s}^{\kappa +s} \hat\eta^1(u) du, \; s \in [0,T],
\end{equation}
where $\hat \eta^1(u) \doteq \hat \eta^1(T)$ for $u\ge T$.
Also, define for $t \in [0,T]$,
\begin{equation}\label{eq:1153}
	\hat M^{1,\kappa}(t) \doteq M_1(T) + \int_0^t \hat\eta_{(1)}^{1,\kappa}(s) ds - \int_0^t \hat M^{1,\kappa}(s) ds.
\end{equation}
Note that there is a unique $\hat M^{1,\kappa} \in C([0,T]: \RR^d)$ that solves \eqref{eq:1153}, and that this $\hat M^{1,\kappa}$ satisfies, for each $s \in [0,T]$, 
$
\sum\limits_{x \in \Delta^o}\hat M^{1,\kappa}(s)(x) = 1$.
We now show that for $\kappa$ sufficiently small we have 
$
\inf_{s \in [0,T]} \inf_{x\in\Delta^o}\hat M^{1,\kappa}(s)(x) > 0$,
namely that the solution to \eqref{eq:1153} in fact belongs to $C([0,T]: \clp_+(\Delta^o))$.
We can write, for $t \in [0,T]$,
\begin{equation}\label{eq:1134}
\hat M^{1,\kappa}(t) 
= M_1(T) + \int_0^{t}  \hat\eta^1_{(1)}(s)  ds 
- \int_0^t \hat M^{1,\kappa}(s) ds + \clr_1^{\kappa}(t),
\end{equation}
where 
\[
\clr_1^{\kappa}(t) \doteq \int_0^{t+\kappa}\hat\eta_{(1)}^1(u)\kappa^{-1}\int_{(u-\kappa)^+}^{u\wedge t} dsdu - \int_0^t\hat\eta^1_{(1)}(u)du.
\]
Observe that, for each $t \in [0,T]$,
$\| \clr^{\kappa}_1(t)\| \le 3\kappa$. Combining this estimate with \eqref{eq:hatmit}
and \eqref{eq:1134}, we have,  for $t \in [0,T]$,
\begin{equation*}
\|  \hat{M}^{1,\kappa}(t) - \hat{M}^1(t)\| \le 3\kappa +  \int_0^t \|  \hat{M}^{1,\kappa}(s) - \hat{M}^1(s)\| ds,
\end{equation*}
from which we see, by an application of Gr\"{o}nwall's lemma, that
\begin{equation}\label{eq:mkappam1diffbound}
\sup_{t \in [0,T]} \|\hat M^{1,\kappa}(t) - \hat M^1(t)\| \le 3\kappa \exp(T).
\end{equation}
Recall the definition of $\delta$ from above \eqref{eq:1243}, and let
\begin{equation}\label{eq:c1defn}
c_1 \doteq  2 \left(\delta^2 \delta_0^A\right)^{-1}.
\end{equation}
Assume that $\kappa$ is small enough so that 
\begin{equation}\label{eq:ledelt2}
3\kappa \exp(T) \le  \min\left\{ \frac{\veps}{2} \min\{1, F_{\text{lip}}^{-1}\}, \frac{\veps}{2c_1}, \frac{\delta}{2}\right\},
\end{equation}
and
\begin{equation}\label{eq:1417aa}
2c_1L_G\kappa + \kappa (e^{1 - T} + 1) \left| \log \delta^{M_1}_0\right| < \frac{\veps}{2}.
\end{equation}
This, in particular, in view of \eqref{eq:1243} and \eqref{eq:mkappam1diffbound}, ensures that $\hat M^{1,\kappa} \in C([0,T]: \clp_+(\Delta^o))$, and in fact
\begin{equation}\label{eq:945n}
\inf_{s \in [0,T]}\inf_{ x \in \Delta^o}\hat M^{1,\kappa}(s)(x) \ge \delta /2. 
\end{equation}
{Next, recalling the definition of $\hat{\eta}^{1,\kappa}$ from \eqref{eq:etahat1kappadef}, we write }
\begin{align}\label{eq:1340aa}
&\int_0^T \exp(s) R\left(\hat \eta^{1, \kappa}( s) \big\| \hat \eta_{(1)}^{1, \kappa}( s) \otimes G(\hat M^{1, \kappa}(s))\right)ds \nonumber\\
&= \int_0^T \exp(s) R\left(\kappa^{-1} \int_s^{\kappa + s} \hat \eta^{1}( u) du \big\| \kappa^{-1} \int_s^{\kappa + s}\hat \eta_{(1)}^{1}( u) \otimes G(\hat M^{1}(u)) du\right)ds + \clr_1,
\end{align}

where 
\begin{align}\label{eq:1341a}
\clr_1 & = 
 \int_0^T \exp(s) R\left(\kappa^{-1} \int_s^{\kappa + s} \hat \eta^{1}( u) du \big\| \kappa^{-1} \int_s^{\kappa + s}\hat \eta_{(1)}^{1}( u) \otimes G(\hat M^{1, \kappa}(s)) du\right) ds \nonumber\\
&\quad - \int_0^T \exp(s) R\left(\kappa^{-1} \int_s^{\kappa + s} \hat \eta^{1}( u) du \big\| \kappa^{-1} \int_s^{\kappa + s}\hat \eta_{(1)}^{1}( u) \otimes G(\hat M^{1}(u)) du\right) ds .
\end{align}

Observe from \eqref{eq:1243}, \eqref{eq:c1defn} and \eqref{eq:945n} that, for $s \in [0,T]$ and $(x,y) \in \tplus$,
\begin{align}\label{eq:1351a}
&\left| \log \left(\kappa^{-1} \int_s^{\kappa + s}\hat \eta_{(1)}^{1}(x \mid u)  G(\hat M^{1, \kappa}(s))(x, y) du\right)- 
\log \left(\kappa^{-1} \int_s^{\kappa + s}\hat \eta_{(1)}^{1}(x \mid u)G(\hat M^{1}(u))(x, y) du\right) \right| \nonumber \\
& \le c_1   \kappa^{-1} \left|  \int_s^{\kappa + s}\hat \eta_{(1)}^{1}(x \mid u)  G(\hat M^{1, \kappa}(s))(x, y) du - 
 \int_s^{\kappa + s}\hat \eta_{(1)}^{1}(x \mid u)G(\hat M^{1}(u))(x, y) du\right|.
\end{align}
{Also,  for each $s \in [0,T]$,
\begin{align}\label{eq:1370a}
& \kappa^{-1}\sum\limits_{(x,y) \in \tplus}   \left| \int_s^{\kappa + s}\hat \eta_{(1)}^{1}(x \mid u)  G(\hat M^{1, \kappa}(s))(x, y) du - 
\int_s^{\kappa + s}\hat \eta_{(1)}^{1}(x \mid u)G(\hat M^{1}(u))(x, y) du\right| \nonumber \\
& \le \kappa^{-1} \sum\limits_{(x,y) \in \tplus}\int_s^{\kappa + s}\hat \eta_{(1)}^{1}(x \mid u) \left| G(\hat M^{1, \kappa}(s))(x, y) - G(\hat{M}^1(s))(x,y)\right| du\\
&\quad +  \kappa^{-1} \sum\limits_{(x,y) \in \tplus}\int_s^{\kappa + s}\hat \eta_{(1)}^{1}(x \mid u) \left| G(\hat{M}^1(s))(x,y) - G(\hat{M}^1(u))(x,y)\right| du
\\
&\quad \leq    L_G \left( \| \hat M^{1,\kappa}(s) - \hat M^{1}(s)\| + \kappa^{-1} \int_s^{\kappa + s} \| \hat M^{1}(s) - \hat M^1(u)\|du \right) \; \le  L_G \left(  \frac{\veps}{2 c_1}  + 2\kappa \right),
\end{align}
where the second inequality follows from Assumption \ref{assu:lip} and the fact that $\hat{\eta}^{1}_{(1)}(x \mid u) \le 1$ for all $x \in \Delta^o$ and $u \in \RR_+$, and the last inequality follows from \eqref{eq:hatmit}, \eqref{eq:mkappam1diffbound},  and \eqref{eq:ledelt2}, and the observations that $\|\hat \eta^1_{(1)}\|=1$, $\|\hat M^1(s)\|=1$.}
Combining  \eqref{eq:1341a}, \eqref{eq:1351a}, and \eqref{eq:1370a} we see that 
\begin{align}\label{eq:1383aa}
e^{-T}\clr_1 \le e^{-T} \int_0^T L_G \exp(s)\left( \frac{\veps}{2}  + 2\kappa c_1 \right) ds \leq   L_G \left( \frac{\veps}{2} + 2 c_1 \kappa \right) .
\end{align}
For $u \in\RR_+$, let

\begin{equation}\label{eq:r2udef}
\clr_2(u) \doteq R\left( \hat \eta^{1}( u) \big\| \hat \eta_{(1)}^{1}( u) \otimes G(\hat M^{1}(u)) \right).
\end{equation}

Using \eqref{eq:relentetarhom1}, we see that, for each $u\in \RR_+$,
$
\clr_2(u) \leq  \left| \log \delta_0^{M_1}\right| .$
Using the convexity of relative entropy and \eqref{eq:r2udef}, we now have that
\begin{align}\label{eq:1396aa}
&\int_0^T \exp(s) R\left(\kappa^{-1} \int_s^{\kappa + s} \hat \eta^{1}( u) du \big\| \kappa^{-1} \int_s^{\kappa + s}\hat \eta_{(1)}^{1}( u) \otimes G(\hat M^{1}(u)) du\right) ds \nonumber \\
&\quad \le \int_0^T \exp(s) \kappa^{-1} \int_s^{\kappa + s}  R\left( \hat \eta^{1}( u) \big\| \hat \eta_{(1)}^{1}( u) \otimes G(\hat M^{1}(u)) \right) du \, ds \nonumber \\
&\quad = \int_0^T \exp(s) \kappa^{-1} \int_s^{\kappa + s} \clr_2(u) du\, ds,
\end{align}
Next, on recalling that $\kappa^{-1}(1 - e^{-\kappa}) \leq 1$, it is easily checked that  
\begin{multline}\label{eq:1401aa}
\int_0^T \exp(s) \kappa^{-1} \int_s^{\kappa + s} \clr_2(u) du\, ds
= \kappa^{-1}\int_0^{T+\kappa} \clr_2(u) \int_{(u-\kappa)^+}^{u\wedge T} \exp(s)  ds \, du\\
\le \kappa^{-1}\int_0^{\kappa} (\exp(u) - 1)\clr_2(u)   \, du + 
\kappa^{-1}e^T(1- e^{-\kappa})\int_T^{T+\kappa} \clr_2(u) du
+ \kappa^{-1} (1- e^{-\kappa})\int_\kappa^T  \exp(u) \clr_2(u) du\\
\le \kappa ( e + e^T) \left| \log \delta^{M_1}_0\right|+ \int_0^T \exp(u) R\left( \hat \eta^{1}( u) \big\| \hat \eta_{(1)}^{1}( u) \otimes G(\hat M^{1}(u)) \right) du.
\end{multline}
Combining the estimates in \eqref{eq:1340aa}, \eqref{eq:1383aa}, \eqref{eq:1396aa}, and \eqref{eq:1401aa},  we have 
\begin{equation*}
\begin{split}
&e^{-T}\int_0^T \exp(s) R\left(\hat \eta^{1, \kappa}( s) \big\| \hat \eta_{(1)}^{1, \kappa}( s) \otimes G(\hat M^{1, \kappa}(s))\right)\\
&\quad \le L_G \left(\frac{\veps}{2} + 2c_1\kappa \right) + \kappa (e^{1 - T} + 1) \left| \log \delta^{M_1}_0\right|\\
&\qquad + e^{-T} \int_0^T \exp(u) R\left( \hat \eta^{1}( u) \big\| \hat \eta_{(1)}^{1}( u) \otimes G(\hat M^{1}(u)) \right) du.
\end{split}
\end{equation*}
Now denote by $\kappa_2$ the constant $\kappa$ that satisfies \eqref{eq:ledelt2} and \eqref{eq:1417aa}.
Let
\[
(\hat\eta^{2}, \hat M^{2}, m^2) \doteq (\hat\eta^{1,\kappa_2}, \hat M^{1,\kappa_2}, \hat M^{1,\kappa_2}(T)).
\]
Note from \eqref{eq:mkappam1diffbound} and \eqref{eq:ledelt2} that 
\[
|F(m^2) - F(m^1)| = |F(\hat M^{1,\kappa_2}(T)) - F(\hat M^{1}(T))| \le \veps/2.
\]
Thus,  
 \begin{multline}
  F(m^{2}) + e^{-T}\int_0^{T} \exp(s) \sum_{x \in \Delta^o} \hat \eta^{2}_{(1)}(x \mid s) R\left(\hat\eta^{2}_{(2)}(\cdot \mid s,x) \big\| G(\hat M^{2}(s)) (x,\cdot)\right) ds\\
  \le L_G \veps/2   + \veps/2 + F(m^1)  + e^{-T} \int_0^T \exp(u) R\left( \hat \eta^{1}( u) \big\| \hat \eta_{(1)}^{1}( u) \otimes G(\hat M^{1}(u)) \right) du\\
  \le \inf_{m \in \clp(\Delta^o)} [F(m) + I(m)] + (4 + L_G) \veps,\label{eq:5epsb}
  \end{multline}
  where for the last inequality we have used \eqref{eq:358} and \eqref{eq:firscostapp}. This proves \eqref{eq:5eps}.
  Furthermore,
  from
\eqref{eq:1153} we see that \eqref{eq:1233aa} is satisfied.

By construction, $\hat \eta^2 \in \mathcal{C}(\RR_+ : \mathcal{P}(\Delta^o \times \Delta^o))$, and we can find  $C_1 \doteq C_1(\kappa_2) \in (0,\infty)$ such that \eqref{eq:220} holds
and, on recalling \eqref{eq:1243} and \eqref{eq:945n}, observe that the estimates in \eqref{eq:219aa} are satisfied. This completes the proof of the lemma.
\end{proof}
    
\subsection{Step 3: Piecewise Constant Approximation}
\label{sec:piecewiseconstant}
Now we carry out the last step in the approximation, which is to replace continuous controls by piecewise constant controls as in the last statement in the next lemma. \PZ{This step  allows us to reduce a possibly uncountable family of measures indexed by time $s \in \RR_+$ to a family of finitely many measures. This makes the construction of near-optimal controls in the next section  a more tractable task.}
\AB{\begin{lemma} \label{lem:rev2C}
There exist $\kappa_3 >0$ and $(m^3, \hat \eta^3, \hat M^3)$ with $m^3 \in \clp(\Delta^0)$, 
$\hat \eta^3 \in \clu$, $\hat M^3 \in C([0,T]: \clp(\Delta^0))$  such that
\begin{multline}
  F(m^{3}) + e^{-T}\int_0^{T} \exp(s) \sum_{x \in \Delta^o} \hat \eta^{3}_{(1)}(x \mid s) R\left(\hat\eta^{3}_{(2)}(\cdot \mid s,x) \big\| G(\hat M^{3}(s)) (x,\cdot)\right) ds\\
  \le  \inf_{m \in \clp(\Delta^o)} [F(m) + I(m)] + (6 + 2L_G) \veps, \label{eq:601fin}
  \end{multline}
  \begin{equation}\label{eq:1233}
	\hat M^3(t) =  M^1(T) + \int_0^t \hat\eta_{(1)}^3(s) ds - \int_0^t \hat M^{3}(s) ds, \; t \in [0,T],\;\;
 \hat M^3(T) = m_3.
\end{equation}
Furthermore, with $\delta$ as in  Lemma \ref{lem:rev2A}
\begin{equation} \label{eq:219}
	\inf\limits_{s\in [0,T]} \inf\limits_{(x,y) \in \tplus} \hat\eta^{3}( x,y \mid s) \ge \delta, \;\; \inf_{s \in [0,T], x \in \Delta^o}\hat M^{3}(s)(x) \ge \delta /4.
\end{equation}
Finally, for each $j \in\{ 0, 1, \ldots, \lfloor T\kappa_3^{-1}\rfloor -1\}$, the map $t \mapsto \hat \eta^3(\cdot \mid t)$ is constant over the interval $[j\kappa_3, (j+1)\kappa_3)$, as well as over the interval 
$[\lfloor T\kappa_3^{-1}\rfloor \kappa_3 , T]$.
\end{lemma}}
\begin{proof}
\AB{Let $\hat{\eta}^2$ and $\hat{M}^2$ be as in the statement of Lemma \ref{lem:rev2B}.} 
 For $\kappa > 0$, define $\hat \eta^{2,\kappa}$ as
\begin{equation}
	\hat \eta^{2,\kappa} (\cdot \mid s) \doteq \sum\limits_{j=0}^{\lfloor T \kappa^{-1}\rfloor - 1} \hat \eta^2 (\cdot \mid j\kappa) \bm{1}_{[j\kappa, (j+1)\kappa)}(s) + 
 \hat \eta^2 (\cdot \mid \lfloor T \kappa^{-1}\rfloor\kappa) \bm{1}_{[\lfloor T \kappa^{-1}\rfloor\kappa, T]}(s)
 ,\;\; s \in [0,T].
\end{equation}
  Let $\hat M^{2,\kappa}$ solve the equation
\begin{equation}\label{eq:1208}
	\hat M^{2,\kappa}(t) =  M^1(T) + \int_0^t \hat\eta_{(1)}^{2,\kappa}(s) ds - \int_0^t \hat M^{2,\kappa}(s) ds, \; t \in \RR_+.
\end{equation}
Then, with $\clr^{\kappa}_2(t) \doteq \int_0^t \hat\eta_{(1)}^{2,\kappa}(s)ds - \int_0^t \hat\eta_{(1)}^2(s)ds$,
we have that, for $t \in [0,T]$,
\begin{equation}\label{eq:1232}
\hat M^{2,\kappa}(t) =  M_1(T) + \int_0^t \hat\eta_{(1)}^2(s) ds - \int_0^t \hat M^{2,\kappa}(s) + \clr_2^{\kappa}(t).
\end{equation}
From \eqref{eq:220} and the definition of $\hat \eta^{2,\kappa}$, 
\[
\sup_{t \in [0,T]}\|\clr_2^{\kappa}(t) \| \le T \left(\sup_{s,t \in [0,T], |s-t| \le \kappa} \|\hat\eta^2(s) - \hat\eta^2(t)\| 
\right)\le C_1\kappa T,
\]
Combining the last estimate,  \eqref{eq:1233aa},  and  \eqref{eq:1232},  we have from Gr\"{o}nwall's lemma that
\begin{equation}\label{eq:221}
	\sup_{t \in [0,T]} \|\hat M^{2,\kappa}(t) - \hat M^2(t)\| \le C_1\kappa T\exp({T}).
\end{equation}
Assume that $\kappa$ is sufficiently small so that
\begin{equation}\label{eq:c1288}
C_1\kappa T\exp({T}) \le \min\left\{ \frac{\veps}{2} \min\{1, F_{\text{lip}}^{-1}\}, \frac{\veps}{4c_1}, \frac{\delta}{4}\right\},
\end{equation}
and 
\begin{equation}\label{eq:c1288b}
2\kappa (2L_G + C_1)(2 | \log c_1| + c_1) + 4 \kappa c_1 L_G \le \veps,
\end{equation}
where $c_1$ is defined in \eqref{eq:c1defn}. Then, it follows from \eqref{eq:221} and \eqref{eq:219aa} that for this choice of $\kappa$,
\begin{equation}
    \label{eq:nonfinit-M2k}
    \inf_{s \in [0,T]}\inf_{ x \in \Delta^o} \hat M^{2,\kappa}(s)(x) \ge \frac{ \delta}{4} , \;\;
   \inf\limits_{s\in [0,T]} \inf\limits_{(x,y) \in \tplus} \hat\eta^{2,\kappa}(x,y \mid s) \ge  \delta.
\end{equation}
This shows that $\hat M^{2,\kappa}$ belongs to  $C([0,T]: \clp_+(\Delta^o))$. 
For $t \in [0,T]$, let $\alpha_{\kappa}(t) \doteq \lfloor t \kappa^{-1}\rfloor\kappa$
and
 write 
\begin{align}\label{eq:1712aa}
&\int_0^T \exp(s) R\left(\hat \eta^{2, \kappa}( s) \big\| \hat \eta_{(1)}^{2, \kappa}( s) \otimes G(\hat M^{2, \kappa}(s))\right)ds \nonumber\\
&\quad = \int_0^T \exp(s) R\left(\hat \eta^{2, \kappa}( s) \big\| \hat \eta_{(1)}^{2, \kappa}( s) \otimes G(\hat M^{2}(\alpha_{\kappa}(s)))\right)ds + \clr_3,
\end{align}
where 
\begin{align}\label{eq:1717aa}
\clr_3 & \doteq 
 \int_0^T \exp(s) R\left(\hat \eta^{2, \kappa}( s) \big\| \hat \eta_{(1)}^{2, \kappa}( s) \otimes G(\hat M^{2, \kappa}(s))\right)ds \nonumber\\
&\quad - \int_0^T \exp(s) R\left(\hat \eta^{2, \kappa}( s) \big\| \hat \eta_{(1)}^{2, \kappa}( s) \otimes  G(\hat M^{2}(\alpha_{\kappa}(s)))\right)ds.
\end{align}
{Recalling the definition of $c_1$ from \eqref{eq:c1defn}, we have, from Assumption \ref{ass1} \eqref{ass:aux} and \eqref{eq:nonfinit-M2k}, that for each $s \in [0,T]$,
\begin{equation}\label{eq:etahatgxyunifbound1}
\inf\limits_{(x,y) \in \tplus} \hat{\eta}^{2,\kappa}_{(1)} (x \mid s) G(\hat{M}^{2,\kappa})(x,y) \ge \frac{\delta^2 \delta_0^A}{4} = \frac{1}{2c_1}.
\end{equation}
Together, \eqref{eq:etahatgxyunifbound1}, \eqref{eq:219aa}, and the mean value theorem ensure that 
\begin{align}\label{eq:boundlogdiffeta2gm2}
 &\left|\log\left(\hat \eta_{(1)}^{2, \kappa}(x \mid s)G(\hat M^{2, \kappa}(s))(x, y) \right)
- \log\left(\hat \eta_{(1)}^{2, \kappa}(x \mid s)G(\hat M^{2}(\alpha_{\kappa}(s)))(x, y)\right)\right|\\
&\quad \le 2c_1\left|\hat \eta_{(1)}^{2, \kappa}(x \mid s)G(\hat M^{2, \kappa}(s))(x, y) 
- \hat \eta_{(1)}^{2, \kappa}(x \mid s)G(\hat M^{2}(\alpha_{\kappa}(s)))(x, y)\right|.
\end{align}
Then, for $(x,y) \in \tplus$ and $s\in [0,T]$,
\begin{align}\label{eq:1723aa}
& \sum\limits_{(x,y) \in \tplus} \left|\log\left(\hat \eta_{(1)}^{2, \kappa}(x \mid s)G(\hat M^{2, \kappa}(s))(x, y) \right)
- \log\left(\hat \eta_{(1)}^{2, \kappa}(x \mid s)G(\hat M^{2}(\alpha_{\kappa}(s)))(x, y)\right)\right| \nonumber\\
&\quad \le 2 c_1 \sum\limits_{(x,y) \in \tplus}  \left|\hat \eta_{(1)}^{2, \kappa}(x \mid s)G(\hat M^{2, \kappa}(s))(x, y) - \hat \eta_{(1)}^{2, \kappa}(x \mid s)G(\hat M^{2}(\alpha_{\kappa}(s)))(x, y)\right| \nonumber\\
&\quad \le 2 L_G c_1\left( \| \hat M^{2,\kappa}(s) - \hat M^2(s)\| +\|  \hat M^2(s) -\hat M^2(\alpha_{\kappa}(s))\|\right)  \le 2L_G c_1\left(\frac{\veps}{4 c_1} + 2\kappa\right)  = L_G \left( \frac{\veps}{2} + 4\kappa c_1 \right) ,
\end{align}
where the first inequality is due to \eqref{eq:boundlogdiffeta2gm2}, the second inequality is due to Lipschitz property in Assumption \ref{assu:lip} and the triangle inequality, and the last inequality is due to  \eqref{eq:1233aa}, \eqref{eq:221}, \eqref{eq:c1288},
  the definition of $\alpha_k$, and the fact that  $\|\hat \eta^2_{(1)}(s)\| = \|\hat M^2(s)\|=1$.}
Combining  \eqref{eq:1717aa}, and \eqref{eq:1723aa}, we have that
\begin{equation}\label{eq:1735aa}
e^{-T} \clr_3 \le e^{-T}\int_0^T L_G \exp(s) \left(\frac{\veps}{2}+ 4\kappa c_1\right)ds \le L_G \left( \frac{\veps}{2} + 4 \kappa c_1 \right).
\end{equation}
Next, using   Assumption \ref{assu:lip}, \eqref{eq:1233aa}, and \eqref{eq:220}, note that, for $s,u \in [0,T]$ such that $|s - u| \le \kappa$,
\begin{align}\label{eq:1748aa}
&\sum\limits_{(x,y) \in \tplus} |\hat \eta_{(1)}^{2}(x \mid s)G(\hat M^{2}(s))(x, y)-
\hat \eta_{(1)}^{2}(x \mid u)G(\hat M^{2}(u))(x, y)| \nonumber \\
&\quad \le L_G \|\hat M^2(s) - \hat M^2(u)\| + \|\hat \eta_{(1)}^{2}(\cdot \mid s) - \hat \eta_{(1)}^{2}(\cdot \mid u)\|  \le \kappa(2 L_G + C_1),
\end{align}
and, using \eqref{eq:nonfinit-M2k}, observe that
\begin{equation}\label{eq:1755aa}
\inf_{s \in [0,T]} \inf\limits_{(x,y) \in \tplus}\hat \eta_{(1)}^{2}(x \mid s)G(\hat M^{2}(s))(x, y) \ge \frac{\delta^2  \delta_0^A }{4}= \frac{c_1^{-1}}{2}.
\end{equation}
Moreover, note that if, for some $\tilde{c} \in (0,1)$,  $a, \tilde a, b, \tilde b \in (\tilde{c}, 1]$, then
\begin{equation}\label{eq:1759aa}
\left|a\log (a/b) - \tilde a \log(\tilde a/\tilde b)\right| \le \left(2 |\log \tilde{c} | + \tilde{c}^{-1}\right)\left(|a-\tilde a| + |b- \tilde b|\right). 
\end{equation}
Using  \eqref{eq:1748aa}, \eqref{eq:1755aa}, and \eqref{eq:1759aa}, we have 
\begin{multline*}
\int_0^T \exp(s) R\left(\hat \eta^{2, \kappa}( s) \big\| \hat \eta_{(1)}^{2, \kappa}( s) \otimes G(\hat M^{2}(\alpha_{\kappa}(s)))\right)ds  \nonumber\\
\le \int_0^T \exp(s) R\left(\hat \eta^{2}( s) \big\| \hat \eta_{(1)}^{2}( s) \otimes G(\hat M^{2}(s))\right)ds\\ +
 2\kappa \left(2L_G + C_1 \right)\left(2 | \log c_1 | + c_1\right)\int_0^T \exp(s) ds.
\end{multline*}
Combining the estimate in the last display with \eqref{eq:1712aa} and \eqref{eq:1735aa}, we have 
\begin{multline}\label{eq:1769aa}
e^{-T}\int_0^T \exp(s) R\left(\hat \eta^{2, \kappa}( s) \big\| \hat \eta_{(1)}^{2, \kappa}( s) \otimes G(\hat M^{2, \kappa}(s))\right)ds  \\
\le e^{-T}\int_0^T \exp(s) R\left(\hat \eta^{2}( s) \big\| \hat \eta_{(1)}^{2}( s) \otimes G(\hat M^{2}(s))\right)ds  \\
+ 2 \kappa(2L_G + C_1 )(2 |\log c_1| + c_1) + L_G \left( \veps/2 + 4\kappa c_1 \right).
\end{multline}
Now, denote by $\kappa_3$  the constant $\kappa$ that satisfies  \eqref{eq:c1288} and  \eqref{eq:c1288b}
and let 
\begin{equation}\label{eq:mhat3defn}
(\hat\eta^{3}, \hat M^{3}, m^3) \doteq (\hat\eta^{2,\kappa_3}, \hat M^{2,\kappa_3}, \hat M^{2,\kappa_3}(T)).
\end{equation}
From \eqref{eq:221} and \eqref{eq:c1288} it follows that
$$\left|F(m^3)-F(m^2)\right| = \left|F(\hat M^{2, \kappa_3}(T)) - F(\hat M^2(T))\right| \le \veps/2.$$
Combining the last display with the estimate in \eqref{eq:1769aa} and recalling our choice of $\kappa_3$, we have 
 \begin{multline}
  F(m^{3}) + e^{-T}\int_0^{T} \exp(s) \sum_{x \in \Delta^o} \hat \eta^{3}_{(1)}(x \mid s) R\left(\hat\eta^{3}_{(2)}(\cdot \mid s,x) \big\| G(\hat M^{3}(s)) (x,\cdot)\right) ds\\
  \le L_G \veps/2 + \veps/2 + \veps + F(m^2)  + e^{-T} \int_0^T e^s  R\left( \hat \eta^{2}( u) \big\| \hat \eta_{(1)}^{2}( u) \otimes G(\hat M^{2}(u) \right) du\\
  \le \inf_{m \in \clp(\Delta^o)} [F(m) + I(m)] + (6 + 2L_G) \veps, \label{eq:601finb}
  \end{multline}
  where for the last inequality we have used \eqref{eq:5eps}. This proves \eqref{eq:601fin}.
Furthermore,
  from
\eqref{eq:1208}, we see that \eqref{eq:1233} is satisfied.
By construction, $\hat \eta^3$ is piecewise constant as in the statement of the lemma.

and, from \eqref{eq:nonfinit-M2k}, \eqref{eq:219} is satisfied.
This completes the proof of the lemma.
\end{proof}

\section{Proof of Laplace Lower Bound}
\label{sec:llb}
In this section we will prove Theorem \ref{thm:lowbd} by constructing a  sequence of controlled processes based on the quantities $\hat \eta^3,\hat M^3$ from Section \ref{sec:piecewiseconstant}.  {The only facts needed from the previous section for this section are those in equations \eqref{eq:601fin}-\eqref{eq:219}.}
{\subsection{Outline}
We begin with a proof outline. We will use the variational representation in Proposition \ref{prop:varrep0}. The idea is to construct a specific controlled sequence for which the expectation on the right side of \eqref{eq:varrep} is \PZ{arbitrarily} close to the left side of \eqref{eq:601fin}. In particular, we want the interpolated controlled process $\bar L^n$ to be such that $\bar L^n(t_n)$ is approximately $m^3$ and the cost in the second term on the right side of \eqref{eq:varrep} is close to the second term on the first line of  \eqref{eq:601fin}.

In Section \ref{sec:constructcontrols} we provide the detailed construction of the controlled chain with the above properties. This section first gives a high level idea for this construction and then proceeds to describe the key steps in the construction in Sections \ref{sec:step1} - \ref{sec:step3}. The precise definition of the controlled chain is given in Section \ref{sec:contchain}. Figure \ref{fig:construction} gives an algorithmic representation 
of the construction. Section \ref{subsec:conv-proc} is devoted to studying the asymptotic behavior of the continuous time, time reversed controlled process $\hat M^n$ defined in \eqref{lem:mbardeflem7}. \AB{The key fact that $\hat M^n(T)$ is close to $\hat M(T) = m^3$ is made precise in Corollary \ref{cor:1101}}, which is the main result of this section. Section \ref{subsec:conv-costs} establishes the key fact on approximation of costs showing that the cost associated with the controlled chain given by the second term on the right side of \eqref{eq:varrep} is asymptotically close to the second term on the first line of \eqref{eq:601fin}. The main result which establishes this fact is Lemma \ref{lem:costest}. Finally in Section \ref{sec:pflowbd} we put together the results from Corollary \ref{cor:1101} and Lemma \ref{lem:costest} to complete the proof of the lower bound. \AB{A table summarizing some notation used throughout this section is provided in Appendix \ref{sec:notationtablepart2}.}

}
\subsection{Construction of Suitable Controls}\label{sec:constructcontrols}
To simplify notation, write
\begin{equation}\label{eq:etahatmhatcq}
\hat \eta \doteq \hat \eta^3, 
\;\; 
 \hat M \doteq \hat M^3,
\;\; c \doteq \kappa_3, \;\; q \doteq M^1(T),
\end{equation}
 where ${M}^1$ is as in Section \ref{subsec:non-degen}, $\hat \eta^3, \hat M^3,$ and $\kappa_3$ are as in Section \ref{sec:piecewiseconstant}, \PZ{and $T$ is as in \eqref{eq:sizeofT}}.

The precise construction of the controlled processes will be given in a recursive fashion in Construction \ref{constr}. An informal outline of this construction is as follows:
\begin{enumerate}[label=Step \arabic*:, ref=Step \arabic*]
\item \label{constoutline:step1} Use the original uncontrolled dynamics until the empirical measure charges each point in $\Delta^o$. This is needed to ensure that the relative entropy costs are well controlled and it can be done due to Assumption \ref{ass1}\eqref{ass:conv-occ}. Note that this incurs zero cost since no control is exercised.
\item \label{constoutline:step2} Once the empirical measure of the original uncontrolled dynamics has charged each point in $\Delta^o$, proceed as follows. Recall the irreducible  transition probability kernel $Q$ as given at the end of Section \ref{sec:fixedt}  and note that $q$ is the unique stationary distribution for $Q$. Until time step $m(t_n-T) -1$, the controlled chain will use the kernel $Q$. Note that this approximately corresponds to evolving according to $Q$ until the interpolated continuous time instant  $t_n-T$. {By the ergodic theorem, the  empirical measure of the controlled process at time step $m(t_n - T)-1$ will, with high probability,  be very close to $q$.}
\item \label{constoutline:step3} Now, over the last $T$ units of interpolated time (i.e., until time $t_n$), we construct the chain successively in a manner that closely shadows the piecewise linear trajectory $\hat M$. Specifically, over each time segment $[lc, (l+1)c]$ over which $\hat M$ is linear and $\hat \eta(\cdot \mid s)$ is constant (in $s$) we will construct the chain, for time instants that correspond to -- in the  continuous time interpolation (as described in Section \ref{sec:controlproc}) --  the interval $[lc, (l+1)c]$, using the transition probability kernel 
$\hat \eta(\cdot \mid lc)$. Using the ergodic theorem again, the empirical measures  that are formed using this construction will be close to the trajectory $\hat M$ with high probability.
\end{enumerate}
In addition to the above steps, over the small probability events where deviations from the ergodic limits occur we will modify controls so that we expend no control cost.
The reader may want to keep the above rough outline in mind in what follows.

Recall the constants $\delta_0^A$ and $\delta$ defined in Part \ref{ass:aux} of Assumption \ref{ass1} and above \eqref{eq:1243}, respectively. Let $l_0 \doteq \lfloor Tc^{-1} \rfloor$ and define
\begin{equation}\label{eq:impconst}
\begin{aligned}
&b_1 \doteq  4+c, \;\; d_1 \doteq e^c(12 + c), \;\; d_2=6, \;\; d_3 \doteq d_1 + l_0 b_1 e^c, \;\; d_4 \doteq 2^{l_0}(3+d_2), \; \;
\delta_1 \doteq \delta \delta^A_0/8,\\
&A_1 \doteq  | \log \delta_1 | + (2 + d_3) \delta_1^{-1}, \; \; B_1 \doteq 2d_4 \delta_1^{-1}, \; \; C_1 \doteq | \log \delta_1| (l_0 + 3)^2.
\end{aligned}
\end{equation}
Fix $\veps_0, \veps_1>0$ sufficiently small so that
\begin{equation}\label{eq:conscho}
\veps_0 < \min\{c, \delta/16\}, \; F_{\mbox{\tiny{lip}}} (d_3 \veps_0 + 2d_4 \veps_1) \le \veps, \; C_1 \veps_1 + (l_0+1) (A_1 \veps_0 + B_1\veps_1) \le \veps.
\end{equation}
\subsubsection{Step 1}
\label{sec:step1}
We now proceed to {\ref{constoutline:step1}} in the above outline. Let $\{Z_i, \; i \in \NN_0\}$ be a sequence of $\Delta^o$-valued random variables defined recursively as follows. Recall $x_0 \in \Delta^o$ as fixed in Section \ref{sec:modeldescription} and let $Z_0\doteq x_0$. Having defined $Z_0,\ldots, Z_n$ for some $n \in \NN_0$, define the conditional law of $Z_{n+1}$ given $Z_0,\dots,Z_n$ by
\[
P\left(Z_{n+1}=x \mid \sigma\{Z_0, \ldots Z_n\}\right) \doteq  G(L^{n+1,Z})(Z_n,x), \; x \in \Delta^o,
\]
where $\{L^{n,Z}, \; n \in \NN\}$ is defined by 
$
L^{n,Z} \doteq \frac{1}{n} \sum_{i=0}^{n-1} \bdelta_{Z_i}$,  $n \in \NN$. 
From Assumption \ref{ass1}(4) and Lemma \ref{lem:posit} in the Appendix, it follows that  there is an $a^*>0$ and $r_1 \in \NN$, such that with
\begin{equation}\label{eq:255nn}
N_1(\omega) \doteq \inf\{k \in \NN: L^{k, Z}(\omega)(x)> a^* \mbox{ for all } x \in \Delta^o \},
\end{equation}
we have
\begin{equation}\label{eq:r1choice2050}
P(N_1 > r_1) \le \veps_1.
\end{equation}
{Later  (see Construction \ref{constr}(i)), we will use the sequence $\{Z_n, L^{n+1,Z}, \; n \in \NN_0\}$ to carry out {\ref{constoutline:step1}} of the outline above.}

\subsubsection{Step 2} Next, we proceed to {\ref{constoutline:step2}} of the outline given at the start of the section.
Recall $q = M^1(T)$
and the irreducible  transition probability kernel $Q$ as given at the end of Section \ref{subsec:non-degen}  which has $q$ as the unique stationary distribution.
Also recall that
\begin{equation}\label{eq:1945aaa}
\left(\tplus\right)^c =   \{ (x,y ) \in \Delta^o \times \Delta^o : Q(x,y) = 0\}.
\end{equation}
\AB{\begin{defn}
Let $Q^{\clv} \in \clk(\clv^d)$ be defined as
\[
Q^{\clv}(\bdelta_x,\bdelta_y) \doteq Q(x,y), \;\; x, y \in \Delta^o.
\]
Let $\{Y_i(x),\;  x \in \Delta^o, i \in \NN_0\}$ be iid $\Delta^o$-valued random vectors, independent of the sequence $\{Z_i, \; i \in \NN_0\}$, such that
$$P(Y_i(x) = y) = Q(x,y), \; x,y \in \Delta^o, \; i \in \NN_0.$$
For each $j \in \NN_0$, let
\[
\clg_j \doteq \sigma\left\{Z_l,  \; l\in \NN_0\right\} \vee \sigma\left\{ Y_i(x), \; x \in \Delta^o, 0\le i \le j\right\},
\]
and, for each $y \in \Delta^o$, define the sequence $\{\bar Y_i^y, \; i \in \NN_0\}$ of $\Delta^o$-valued random variables as $\bar Y^y_0 \doteq y$, and 
\[
P\left(\bar Y_{i+1}^y = z\mid \clg_{i+1} \vee \sigma\{\bar Y^y_j, 0\le j \le i\} \right)\doteq Q(\bar{Y}^y_{i}, z), \;\; z \in \Delta^o,   i \in \NN_0.
\]
\end{defn}}

By using the ergodic theorem for the transition probability matrix $Q$, we can find $k_0 \in \NN$ such that, with  
\[
A_{\veps_0} \doteq \left\{\om: \max_{y \in \Delta^o} \sup_{m\ge k_0} \left\|\frac{1}{m} \sum_{i=0}^{m-1} \bdelta_{\bar Y_i^y(\omega)} - q \right\| \ge \veps_0\right\},
\]
we have
\begin{equation}\label{eq:k0defn}
P(A_{\veps_0}) \le \veps_1, \mbox{ and } k_0 > 4 \epsilon_0^{-1}.
\end{equation}

\subsubsection{Step 3}\label{sec:step3} We now go on to {\ref{constoutline:step3}} of the outline given at the start of the section.
For that we   introduce some notation that is useful in describing the construction.
\AB{\begin{defn}
For each $n \in \NN$ let
\[
m^n_j \doteq m(t_n -T +jc), \;\;j = 0,1,\dots,l_0,
\]
where $l_0$ is defined above \eqref{eq:impconst}. For each $n \in \NN$ and   $j = 0, \dots, l_0$, let 
\begin{equation}\label{eq:clinjdef}
\begin{split}
\cli^{n,j} &\doteq \left\{ i \in \NN_0 : t_i \in [t_{m^n_j+1}, t_{m^n_{j+1}})\right\} = \{ m^n_j+1, m^n_j + 2, \dots, m^n_j + l^{n,j}\},
\end{split}
\end{equation}
where $l^{n,j} \doteq |\mathcal{I}^{n,j}|$ denotes the cardinality of each of these sets. 
\end{defn}}
Note that,
for all $n \in \NN$,
\[
m^n_j \doteq m^n_{j-1} + l^{n,j-1}+1,
\; \; j = 1,\dots, l_0.
\]

For $j = 0,1,\dots, l_0$, we define $\beta^j \in \clk(\Delta^o)$ as
\begin{align}\label{eq:betjdef4343}
\beta^j(x,y) \doteq \hat \eta(x,y \mid cj), \; x,y \in \Delta^o.
\end{align}
Such a $\beta^j$ can be disintegrated as
\begin{equation}\label{eq:betajdisinteg}
\beta^j(x,y) = \beta^j_{(1)} (x) \beta^j_{2|1}(y \mid x), \; x,y \in \Delta^o.\end{equation}
Recall from \eqref{eq:219} (and the fact that $A$ is irreducible from Part \ref{ass:aux} of  Assumption \ref{ass1}) that 
\begin{equation}\label{eq:2013aa}
\inf_{j =0 ,\dots,l_0}\inf_{x\in\Delta^o}\beta^j_{(1)}(x) \ge \delta, \;\; \inf_{j=0,\dots,l_0} \inf_{(x,y)\in A_{+}}\beta^j_{2|1}(y\mid x) \ge \delta.
\end{equation}
Also, by our construction of $\hat \eta$, for each $j = 0,1,\dots,l_0$,
\[
\sum_{x \in \Delta^o} \beta^j_{(1)} (x) \beta^j_{2|1}(y \mid x) =  \beta^j_{(1)} (y), \;\;  y\in \Delta^o.
\]
This is a consequence of the fact that $\eta^0$ introduced in Section \ref{sec:prelimest} belongs to $\clu(m^0)$  (see Property \ref{prop:z1}(b) in Section \ref{sec:statres}).
The above identity, together with \eqref{eq:2013aa}, says that $\beta^j_{(1)}$ is the unique stationary distribution of the Markov chain with an irreducible transition probability function
$\beta^j_{2|1}(\cdot \mid \cdot)$.
\AB{\begin{defn}
Let, for $j = 0,1,\dots,l_0$,  $\{U^j_i, \; i \in \NN_0\}$ be sequences of $\Delta^o$-valued random variables that are mutually independent of one another for different $j$, independent of $\{Z_j, \bar Y^y_j, \; j \in \NN_0, y \in \Delta^o\}$, and are distributed according to $U^j_0 \sim \beta^j_{(1)}$, and 
\[
 P(U^j_i=y \mid \sigma\{U^j_m, \; 0 \le m \le i-1\}) =
\beta^j_{2|1}(y\mid U^j_{i-1}), \;\; i \in \NN.
\]
\end{defn}}
Using the ergodic theorem, we can find $k^* \in \NN$ such that 
\begin{equation}\label{eq:236n}
P\left(\max_{j=0, \ldots , l_0}\,\sup_{m\ge k^*}\left\|\frac{1}{m+1} \sum_{i=0}^m \bdelta_{U^j_i}- \beta^j_{(1)}\right\| \ge \veps_0\right) \le \veps_1 
\end{equation}
and
\begin{equation}\label{eq:236nb}
\max_{j=0, \ldots , l_0}\max_{x \in \Delta^o}
E\left(\left. \left\|\frac{1}{k^*+1} \sum_{i=0}^{k^*} \bdelta_{U^j_i}- \beta^j_{(1)}\right\| \;\;\right|
U^j_0 = x\right) \le \veps_0. 
\end{equation}
 Now, fix $n_0 \in \NN$ large enough so that, for all $n \geq n_0$,
 \begin{equation}\label{eq:2055aa}
 m^n_0> k_0 +  r_1 + \lfloor 4(r_1+1)/\veps_0 \rfloor + 1 , \; \;  2(m^n_0+2)^{-1} \le \veps_0, \; m^n_0 c> k^*.
 \end{equation}
 \subsubsection{Controlled Chain}\label{sec:contchain}
Now, we piece the above main ingredients together to construct the controlled collection $\{\bar \nu^{n,k}, \bar L^{n,k}, \bar \mu^{n,k}, \; n \in \NN\}$ as follows.

\begin{construction} $\,$ Fix $n \ge n_0$.
 \label{constr}
\begin{enumerate}[label = (\roman*)]
    \item Let $x_0 \in \Delta^o$ be as fixed in Section \ref{sec:modeldescription}. Let $\bar X^n_0\doteq x_0$, $\bar{\nu}^{n,0} \doteq \bdelta_{x_0}$, $\bar L^{n,1} \doteq
    \bdelta_{x_0}$.
    Define, for $ k \in\{ 1,\dots, N_1(\om)\wedge r_1\}$,
    \[
    \bar X^n_k(\om) \doteq Z_k(\om), \; \bar \nu^{n,k}(\om) \doteq \bdelta_{\bar X^n_k(\om)}, \;
    \bar L^{n,k+1}(\om) \doteq L^{k+1,Z}(\om).
    \]
    Also, set 
    \[
    \bar \mu^{n, k}(\om)(\bdelta_y) \doteq G^{\clv}(\bar L^{n,k}(\om))(\bdelta_{\bar X^n_{k-1}(\om)}, \bdelta_y),\;
    y \in \Delta^o.
    \]
    On the `low probability' set $\{r_1<N_1(\om)\}$, we once more define $\bar X^n_k, \bar \nu^{n,k},  \bar L^{n,k+1}, \bar \mu^{n, k}$ by the above formulas for all $k>r_1$.
    
    \item On the `high probability' set $\{N_1(\om) \le r_1\}$ the construction proceeds as follows.
    Let $k_0$ be as introduced above \eqref{eq:k0defn} and let
    $
    k_1 \doteq  k_0 + \lfloor 4 (r_1+1)/\veps_0\rfloor + 1
    $. 
    For $ k \in \{N_1(\om)+1, \ldots , N_1(\om) + k_1\}$, define
    $
    \bar X^n_k(\om) \doteq \bar Y^{\bar X^n_{N_1(\om)}(\om)}_{k- N_1(\om)} (\om)$,
    and let 
    \begin{equation}\label{eq:repe}
    \bar \nu^{n,k}(\om) \doteq \bdelta_{\bar X^n_k(\om)}, \; \bar L^{n,k+1}(\om) \doteq \frac{1}{k+1} \sum_{j=0}^{k} \bdelta_{\bar X^n_j(\om)},
    \end{equation}
    and set
    \begin{equation}\label{eq:repeb}
    \bar \mu^{n, k}(\om)(\bdelta_y) \doteq Q(\bar X^n_{k-1}(\om), y), \; y \in \Delta^o.
    \end{equation}
    \item Again, on the set $\{N_1(\om) \le r_1\}$, let 
    $
    k_2(\om) \doteq N_1(\om)+ k_1$,
    and 
define 
\[
\tau^n(\om) \doteq \inf\{k \ge k_2(\om): \|\bar L^{n,k+1}(\om)- q\| > 2\veps_0\}, \;\;  k_3(\om) \doteq \tau^n(\om) \wedge (m^n_0-1).
\]
For $k \in \{k_2(\om)+1, \ldots,  k_3(\om)\}$, let 
$\bar X^n_k(\om) \doteq \bar Y^{\bar X^n_{k_2(\om)}(\om)}_{k- k_2(\om)} (\om)$,
and define $\bar \nu^{n,k}, \bar L^{n,k+1}, \bar \mu^{n,k}$ by \eqref{eq:repe} and \eqref{eq:repeb}.
Let  $\clj_0^n(\om) \doteq \bm{1}_{\{\tau^n(\om) \le m^n_0-1\}}$. Define
\begin{equation}\label{eq:2443aa}
\cld^n_0 \doteq \{N_1 (\om) \le r_1\} \cap \{ \clj_0^n(\om) = 1\},
\end{equation}
which, in view of \eqref{eq:k0defn}, is again a `low probability set'.
 On $\cld^n_0$, for $k\ge k_3(\om)$ let $\bar \mu^{n,k}$ and $\bar X^{n}_k$ be defined so that
\[
\bar \mu^{n, k}(\om)(\bdelta_y) \doteq G^{\clv}(\bar L^{n,k}(\om))(\bdelta_{\bar X^n_{k-1}(\om)}, \bdelta_y), \;\; y \in \Delta^o,
\]
and 
\[
P( \bar X^n_k  = y \mid \bar\clf^{n,k}) = \bar \mu^{n,k}(\bdelta_y), \;\; y \in \Delta^o,
\]
where, as in Section \ref{sec:controlproc}, $\bar \clf^{n,k} = \sigma \{ \bar L^{n,i}, \; 1 \leq i \leq k\}$, and $\bar \nu^{n,k}, \bar L^{n,k+1}$ are defined by \eqref{eq:repe}. This ensures that no cost is incurred on this low probability event.
\item  Now we give the construction on the `high probability' set
 $ \cle_0^n \doteq \{N_1(\om)\le r_1\}\cap \{\clj_0^n(\om)=0\}$.
\begin{itemize}
\item For $k =m_0^n$,  let 
\[
\bar \mu^{n, k}(\om)(\bdelta_y) \doteq 
\beta^0_{(1)}(y), \;\; y \in \Delta^o, 
\]
 define $\bar X^n_k \doteq U^0_0$, and note that, a.e. on $\cle^n_0$,
\[
P(\bar X^n_k = y \mid \bar \clf^{n, k}) =
\beta^0_{(1)}(y), \;\;  y \in \Delta^o,
\]
where $\bar \clf^{n,k} = \sigma \{ \bar L^{n,j}, 1 \le j \le k\}$, and  $\bar \nu^{n,k}, \bar L^{n,k+1}$ are defined by \eqref{eq:repe}.
\item Recall the set $\cli^{n,0}$ introduced in \eqref{eq:clinjdef}. Also, recall that we have chosen $k^*$ so that \eqref{eq:236n} and \eqref{eq:236nb} hold, and let
\begin{equation}\label{eq:taun0def}
\tau^{n,0}(\om) \doteq \inf\left\{m \ge k^*: \left\|\frac{1}{m+1} \sum_{i=0}^m \bdelta_{U^0_i(\om)} - \beta^0_{(1)}\right\| > \veps_0\right\}.
\end{equation}
For $k \in \{m^n_0 + 1, \dots, m^n_0 + (l^{n,0} \wedge \tau^{n,0}(\om))\}$,
 let $\bar X^n_k \doteq U^0_{k-m^n_0}$, and note that, a.e. on $\cle_0^n$,
\[P(\bar X^n_k = y \mid \bar \clf^{n, k}) =
\beta^0_{2|1}(y \mid \bar X^n_{k-1}), \;\; y \in \Delta^o,
\]
where $\bar \clf^{n,k}$ is defined as above.
Also,  for $k \in \{m^n_0 + 1, \dots, m^n_0 + (l^{n,0} \wedge \tau^{n,0}(\om))\}$, define $\bar \nu^{n,k}, \bar L^{n,k+1}$ by \eqref{eq:repe}, and
\[ \bar \mu^{n, k}(\om)(\bdelta_y) \doteq 
\beta^0_{2|1}(y \mid \bar X^n_{k-1}(\om)), \;\; y \in \Delta^o.
\]
If $\{\tau^{n,0}(\om) \le l^{n,0}\}$ occurs, then let, for $k \ge m^n_0 + \tau^{n,0}(\om) +1$,
$\bar \mu^{n,k}(\om)$ and  $\bar X^n_k(\om)$  be defined so that
\[
\bar \mu^{n, k}(\om)(\bdelta_y) \doteq G^{\clv}(\bar L^{n,k}(\om))(\bdelta_{\bar X^n_{k-1}(\om)}, \bdelta_y), \;\; y \in \Delta^o,
\]
and
\[
P(\bar X^n_k = y \mid \bar \clf^{n,k} ) = \bar \mu^{n,k} ( \bdelta_y) , \; \; y \in \Delta^o,
\]
 define $\bar \nu^{n,k}, \bar L^{n,k+1}$ by \eqref{eq:repe}, and let  $\clj_1^n(\om) = \bm{1}_{\{\tau^{n,0}(\om) \le l^{n,0}\}}$. Note that, by \eqref{eq:236n}, $\{\tau^{n,0}(\om) \le l^{n,0}\}$
 is a `low probability' event for large $n$ and so once more we are using uncontrolled (zero cost) dynamics on this event.
\end{itemize}
\item We now recursively extend the construction. Towards this end, suppose that, for some $l \in \{0,\dots, l_0-1 \}$, we have
defined the quantities
\begin{equation*}
\left\{\bar X^n_k(\om), \bar \nu^{n,k}(\om), \bar L^{n,k}(\om), \bar \mu^{n, k}(\om),\;k \in  \cup_{i=0}^l \left( \{m^n_i\} \cup  \mathcal{I}^{n,i}\right)\right\}
\end{equation*}
and  $\{\clj_i^n, \; 0 \leq i \leq l+1\}$. Let
\begin{equation}\label{eq:cale1}
\cle_{l+1}^n \doteq  \cle_{l}^n \cap \{ \clj^n_{l+1}(\om) = 0 \} = \{N_1(\om)\le r_1\}\cap \left(\cap_{i=0}^{l+1}\{\clj_i^n(\om)=0\}\right).
\end{equation}
Then on the `high probability' set $\cle_{l+1}^n$ we proceed as follows.
\begin{itemize}
\item For $k = m^n_{l+1}$, let 
\[
\bar \mu^{n,k}(\om)(\bdelta_y) \doteq \beta^{l+1}_{(1)}(y), \;\; y \in \Delta^o,
\]
define $\bar X^n_k \doteq U^{l+1}_0$, and note that, a.e. on $\cle_{l+1}^n$, 
\[
P(\bar X^n_k = y \mid \bar \clf^{n,k}) = \beta^{l+1}_{(1)}(y), \;\; y \in \Delta^o.
\] 
Also, define $\bar \nu^{n,k}, \bar L^{n,k+1}$ by \eqref{eq:repe}.

\item We now consider $k \in \mathcal{I}^{n,l+1}$. 
 Let
\[
\tau^{n,l+1}(\om) \doteq \inf\left\{m \ge k^*: \left\|\frac{1}{m+1} \sum_{i=0}^m \bdelta_{U^{l+1}_i(\om)} - \beta^{l+1}_{(1)}\right\| > \veps_0\right\}.
\]
For $k \in \{m^n_{l+1}  + 1, \dots, m^n_{l+1} + ( l^{n,l+1} \wedge \tau^{n,l+1}(\om))\}$, let $\bar X^n_{k} = U^{l+1}_{k-m^n_{l+1}}$, and note that, a.e. on $\cle_{l+1}^n$, 
\[
P( \bar X^n_k = y \mid \bar \clf^{n,k}) = \beta^{l+1}_{2|1}( y \mid \bar X^n_{k-1}),\;\; y \in \Delta^o,
\]
where $\bar \clf^{n,k}$ is defined as above. Also,  for $k \in \{m^n_{l+1} + 1, \dots, m^n_{l+1} + (l^{n,l+1} \wedge \tau^{n,l+1}(\om))\}$, define $\bar \nu^{n,k}, \bar L^{n,k+1}$ by \eqref{eq:repe}, and
\[ \bar \mu^{n, k}(\om)(\bdelta_y) \doteq 
\beta^{l+1}_{2|1}(y \mid \bar X^n_{k-1}(\om)), \;\; y \in \Delta^o.
\]
If $\{\tau^{n,l+1}(\om) \le l^{n,l+1}\}$ occurs, then let, for $k \ge m^n_{l+1} + \tau^{n,l+1}(\om) +1$,
$\bar \mu^{n,k}(\om)$ and  $\bar X^n_k(\om)$  be defined so that
\[
\bar \mu^{n, k}(\om)(\bdelta_y) \doteq G^{\clv}(\bar L^{n,k}(\om))(\bdelta_{\bar X^n_{k-1}(\om)}, \bdelta_y), \;\; y \in \Delta^o,
\]
and
\[
P(\bar X^n_k = y \mid \bar \clf^{n,k} ) = \bar \mu^{n,k} ( \bdelta_y) , \; \; y \in \Delta^o,
\]
 define $\bar \nu^{n,k}, \bar L^{n,k+1}$ by \eqref{eq:repe}, and let  $\clj_{l+2}^n(\om) \doteq \bm{1}_{\{\tau^{n,l+1}(\om) \le l^{n,l+1}\}}$.
 \end{itemize}
 
 \item On the event $\{l^{n,l_0} < \tau^{n,l_0}(\om)\}$ and for $k \ge m^n_{l_0} + \tau^{n,l_0}(\om) +1$, define $\bar \mu^{n,k}(\om)$ and  $\bar X^n_k(\om)$ by
\[
\bar \mu^{n, k}(\om)(\bdelta_y) \doteq G^{\clv}(\bar L^{n,k}(\om))(\bdelta_{\bar X^n_{k-1}(\om)}, \bdelta_y), \;\; y \in \Delta^o,
\]
and
\[
P(\bar X^n_k = y \mid \bar \clf^{n,k} ) = \bar \mu^{n,k} ( \bdelta_y) , \; \; y \in \Delta^o,
\]
and  define $\bar \nu^{n,k}, \bar L^{n,k+1}$ by \eqref{eq:repe}

 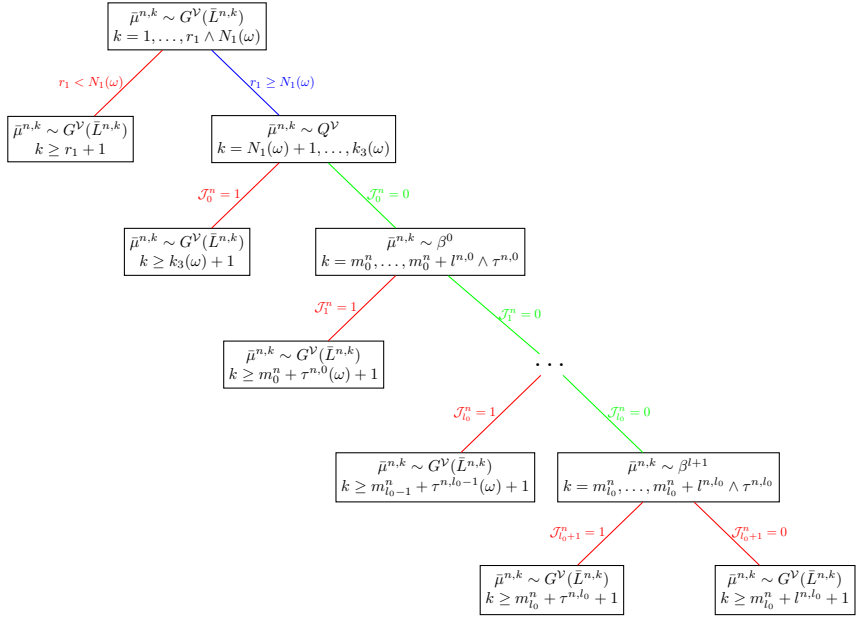
\begin{figure}
\centering
\begin{tikzpicture}
\node [draw,scale = .6,align = center] {$\Bar \mu^{n,k} \sim G^\mathcal{V}(\bar L^{n,k})$ \\ $k = 1,\dots, r_1 \wedge N_1(\om)$} [sibling distance = 2.5cm]
    child {node [black,draw,scale = .6,align = center,xshift=-0.5cm] {$\Bar \mu^{n,k} \sim G^\mathcal{V}(\bar L^{n,k})$ \\ $k \ge r_1 +1$}
    edge from parent [red] node [left,scale = .5] {$r_1 < N_1(\om)$}}
    child {node [draw,scale = .6,align = center,xshift=0.5cm] {$\Bar \mu^{n,k} \sim Q^\clv
    $ \\ $k = N_1(\om) +1,\dots,k_3(\om)$}
    child {node [black,draw,scale = .6,align = center,xshift = -0.5cm] {$\Bar \mu^{n,k} \sim G^\mathcal{V}(\bar L^{n,k})$ \\ $k \ge k_3(\om)+1$}
    edge from parent [red] node [left,scale = .5] {$\mathcal{J}_0^n = 1$}}
    child {node [black,draw,scale = .6,align = center, xshift = --0.5cm] {$\bar \mu^{n,k} \sim \beta^0$ \\ $k =
    m^n_0,\dots,m^n_0 + l^{n,0} \wedge \tau^{n,0}$}
    child {node [black,draw,scale = .6,align = center,xshift = -0.5cm] {$\Bar \mu^{n,k} \sim G^\mathcal{V}(\bar L^{n,k})$ \\ $k \ge m^n_0 + \tau^{n,0}(\om) +1$}
    edge from parent [red] node [left,scale = .5] {$\mathcal{J}_1^n = 1$}}
    child {node [xshift = 0.5cm] {\dots}
    child {node [black,draw,scale = .6,align = center,xshift=-0.5cm] {$\Bar \mu^{n,k} \sim G^\mathcal{V}(\bar L^{n,k})$ \\ $k \ge m^n_{l_0-1}+\tau^{n,l_0-1}(\om) +1$}
    edge from parent [red] node [left,scale = .5] {$\mathcal{J}_{l_0}^n = 1$}}
    child {node [black,draw,scale = .6,align = center,xshift=0.5cm] {$\bar \mu^{n,k} \sim \beta^{l+1}$ \\ $k =
    m^n_{l_0},\dots,m^n_{l_0} + l^{n,l_0} \wedge \tau^{n,l_0}$}
    child {node [black,draw,scale = .6,align = center,xshift=-0.5cm] {$\Bar \mu^{n,k} \sim G^\mathcal{V}(\bar L^{n,k})$ \\ $k \ge m^n_{l_0}+ \tau^{n,l_0} +1$}    edge from parent [red] node [left,scale = .5] {$\clj^n_{l_0+1}=1$}}
    child {node [black,draw,scale = .6,align = center,xshift=0.5cm] {$\Bar \mu^{n,k} \sim G^\mathcal{V}(\bar L^{n,k})$ \\ $k \ge m^n_{l_0}+ l^{n,l_0} +1$}    edge from parent [red] node [right,scale = .5] {$\clj^n_{l_0+1}=0$}}
    edge from parent [green] node [right,scale = .5] {$\mathcal{J}_{l_0}^n = 0$}}
    edge from parent [green] node [right,scale = .5] {$\mathcal{J}_1^n = 0$}}
    edge from parent [green] node [right,scale = .5] {$\mathcal{J}_0^n = 0$}}
    edge from parent [blue] node [right,scale = .5] {$r_1 \geq N_1(\om)$}};
 
\end{tikzpicture}
\caption{An algorithmic representation of Construction \ref{constr} of the controls. From the root of the tree to the bottom: red edges (resp. blue, green) correspond to `low probability' (resp. `high probability') events and lead to uncontrolled dynamics based on  $G$ (resp. the controls based on the kernel $Q$, the kernel $\beta_{2|1}$).} \label{fig:construction}
\end{figure}

\end{enumerate}
\end{construction}

\subsection{Convergence of Controlled Processes} \label{subsec:conv-proc}
{The main result of this section is Corollary \ref{cor:1101} which gives the convergence of $\hat M^n$ to $\hat M$.} \PZ{This says that the controlled process we have constructed closely mirrors the behavior of the near-optimal trajectory that was identified in Section \ref{sec:lowbd}.} 
Let, for  $n \in \NN$, $\{\bar L^{n, k+1}, \bar \mu^{n, k+1}, \bar \nu^{n,k}, \; k\in \NN_0\}$ be defined as in Construction \ref{constr}.
Using these, for $n \in \NN$ and $t\in [0,T]$, define $\bar L^n(t)$ as in \eqref{eq:411},
$\{\bar \Lambda^n, \bar \xi^n, \bar \zeta^n\}$ as in \eqref{eq:519} -- \eqref{eq:anrest}, \eqref{eq:519b}, and $\{\lambda^n, \beta^n, \rho^n\}$ as in \eqref{eq:525}.
Also, define $\check \bfL^n$ and $\check \Lambda^n$ as in \eqref{eq:526} and \eqref{eq:527},
and for $t \in [0,T]$, define $\sigma_{n,t}\doteq t_n-T+t$,
\begin{equation}\label{lem:mbardeflem7}
\hat M^n(t) \doteq \bar L^n(\sigma_{n,t}) = \bar L^n(t_n-T+t).
\end{equation}

For notational convenience, write $\cle^n \doteq \cle^n_{l_0+1}$, where $\cle^n_{l_0+1}$ is defined as in \eqref{eq:cale1}. Also, note that, for each $n \in \NN$,
\begin{equation}\label{eq:501n}
B^n \doteq \cup_{l=1}^{l_0+1} \{\clj_l^n=1\}\subseteq
\left\{\max_{j=0, \ldots , l_0}\,\sup_{m\ge k^*}\left\|\frac{1}{m+1} \sum_{i=0}^m \bdelta_{U^j_i}- \beta^j_{(1)}\right\| > \veps_0\right\},
\end{equation}
and thus, from \eqref{eq:236n},  for all $n \in \NN$,
\begin{equation}\label{eq:236nn}
P(B^n) \le \veps_1.
\end{equation}
The following lemma gives an estimate on the distance between $\hat M^n(0)$ and $q$. 
\begin{lemma}
\label{lem:initest}
For all $n \ge n_0$,
$$P(\|\hat M^n(0) - q\| \ge 3 \veps_0) \le 3 \veps_1.$$
\end{lemma}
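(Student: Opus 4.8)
The plan is to unwind the definition of $\hat M^n(0)$ and show it is close to the empirical measure of the $\bar Y$-chain at a suitable time, then invoke the ergodic estimate \eqref{eq:k0defn}. By \eqref{lem:mbardeflem7}, $\hat M^n(0) = \bar L^n(t_n - T) = \bar L^{n, m^n_0 + 1}$ (up to the linear interpolation correction, which is $O((m^n_0)^{-1}) \le \veps_0$ by \eqref{eq:2055aa}). So it suffices to control $\|\bar L^{n, m^n_0 + 1} - q\|$. We work on the `high probability' set $\{N_1 \le r_1\} \cap \{\clj^n_0 = 0\}$, whose complement has probability at most $\veps_1 + P(\clj^n_0 = 1)$; by Construction \ref{constr}(iii), $\{\clj^n_0 = 1\} \subseteq \{N_1 \le r_1\} \cap \{\tau^n \le m^n_0 - 1\}$, and on $\{N_1 \le r_1\}$ the chain from time $k_2 = N_1 + k_1$ onward follows the kernel $Q$, so $\tau^n \le m^n_0 - 1$ forces the empirical average $\frac{1}{m+1}\sum_{i=0}^m \delta_{\bar Y^y_i}$ to deviate from $q$ by more than (essentially) $\veps_0$ for some $m \ge k_0$; by \eqref{eq:k0defn} this has probability at most $\veps_1$. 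Hence the bad set has probability at most $3\veps_1$, and on its complement $\clj^n_0 = 0$, i.e. $\tau^n > m^n_0 - 1$, which by the definition of $\tau^n$ in Construction \ref{constr}(iii) directly gives $\|\bar L^{n, m^n_0} - q\| \le 2\veps_0$.

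The remaining task is to account for the gap between $\bar L^{n, m^n_0}$ and $\bar L^{n, m^n_0 + 1}$ (one reweighting step, size $O((m^n_0)^{-1}) \le \veps_0/2$ by \eqref{eq:2055aa}) and the interpolation correction mentioned above; adding these up gives $\|\hat M^n(0) - q\| \le 2\veps_0 + \veps_0 = 3\veps_0$ on the good set. One must also make sure the definition of $A_{\veps_0}$ in \eqref{eq:k0defn}, which is phrased for the reference chains $\{\bar Y^y_i\}$ started from a deterministic $y$, transfers to $\bar L^{n, m^n_0}$: this works because on $\{N_1 \le r_1, \clj^n_0 = 0\}$ the states $\bar X^n_k$ for $k \in \{k_2, \ldots, m^n_0 - 1\}$ coincide with $\bar Y^{\bar X^n_{k_2}}_{k - k_2}$, and $\bar X^n_{k_2}$ is $\Delta^o$-valued, so one uses the union over starting states $y \in \Delta^o$ built into $A_{\veps_0}$; the number of ``pre-$Q$'' steps $k_2 = N_1 + k_1 \le r_1 + k_1$ contributes at most $O(r_1/m^n_0) \le \veps_0$ to the difference between $\bar L^{n, m^n_0}$ and the $\bar Y$-empirical average (this is exactly why $k_1$ was chosen as $k_0 + \lfloor 4(r_1+1)/\veps_0\rfloor + 1$ and $m^n_0 > k_0 + r_1 + \lfloor 4(r_1+1)/\veps_0\rfloor + 1$ in \eqref{eq:2055aa}).

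The main obstacle is bookkeeping: carefully tracking which empirical measure ($\bar L^{n,k}$ for various $k$, versus the reference $\bar Y$-average over $m - k_2$ terms) appears at each stage and bounding each discrepancy by a fixed multiple of $\veps_0$, while ensuring the event on which the ergodic bound \eqref{eq:k0defn} applies is correctly identified (in particular that $\tau^n$'s threshold $2\veps_0$ rather than $\veps_0$ absorbs the pre-$Q$ burn-in error). No deep idea is needed beyond the ergodic theorem already invoked in \eqref{eq:k0defn}; the constants in \eqref{eq:2055aa} were engineered precisely so that each of these elementary estimates closes with room to spare.
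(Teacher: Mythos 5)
Your proposal follows the same high-level route as the paper: relate $\hat M^n(0)$ to $\bar L^{n,m^n_0}$ up to small interpolation/one-step corrections, observe that on the good set $\{N_1\le r_1,\,\clj^n_0=0\}$ the stopping-time definition of $\tau^n$ directly gives $\|\bar L^{n,m^n_0}-q\|\le 2\veps_0$, and reduce the probability estimate to bounding $P(N_1>r_1)$ and $P(\cld^n_0)$ via \eqref{eq:r1choice2050} and the ergodic event $A_{\veps_0}$ of \eqref{eq:k0defn}. You also correctly note that the ``max over $y\in\Delta^o$'' built into $A_{\veps_0}$ is what lets one handle the random starting state, and that the delicate step is the transfer from $\bar L^{n,k+1}$ to the $\bar Y$-empirical average.

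However, there is a genuine gap in exactly that transfer. You decompose $\bar L^{n,m^n_0}$ into a ``pre-$Q$'' block of $k_2 = N_1 + k_1$ terms plus the $\bar Y^{\bar X^n_{k_2}}$-empirical average, and assert that the pre-$k_2$ block contributes $O(r_1/m^n_0)\le\veps_0$, citing the choices of $k_1$ and $m^n_0$ in \eqref{eq:2055aa}. This is incorrect: the pre-$k_2$ block contributes a fraction $k_2/m^n_0$ of the total mass, and $k_2 = N_1+k_1$ with $k_1 = k_0 + \lfloor 4(r_1+1)/\veps_0\rfloor + 1$, which is typically much larger than $r_1$. Assumption \eqref{eq:2055aa} only guarantees $m^n_0 > k_2$, not $m^n_0 \gg k_2$; for $n$ near $n_0$ the ratio $k_2/m^n_0$ can be arbitrarily close to $1$, so the pre-$k_2$ block can dominate $\bar L^{n,m^n_0}$ and your estimate breaks down. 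The paper's proof instead splits at $N_1$, not at $k_2$ (see \eqref{eq:25078aa} and \eqref{eq:954b}): the pre-$N_1$ block has at most $r_1+1$ terms, and the choice of $k_1$ gives precisely $\frac{2(N_1+1)}{k+1} \le \frac{2(r_1+1)}{k_2+1} \le \veps_0/2$ for all $k\ge k_2$; the remaining block (from time $N_1+1$ onward) follows the kernel $Q$ over $k-N_1 \ge k_1 > k_0$ steps, which is long enough to invoke $A_{\veps_0}$. So the role of $k_1$ is to make the $N_1$-burn-in negligible relative to $k_2$, not to make the $k_2$-burn-in negligible relative to $m^n_0$ — this is the conceptual point your proposal misses. (There are also a couple of looser bookkeeping issues — your stated total $2\veps_0 + \veps_0 = 3\veps_0$ does not quite cover the interpolation error $\le\veps_0$ together with the one-step correction $\le\veps_0$, and your bad-set bound is $2\veps_1$ rather than $3\veps_1$, but since the lemma only claims upper bounds those are minor.)
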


\begin{proof}
Fix $n \ge n_0$ and note that
$$\hat M^n(0) = \bar L^n(t_n-T) = \bar L^n(a(t_n-T)) + \clr^n,$$
where, from \eqref{eq:tanbd} and our choice of $n_0$  above \eqref{eq:2055aa},
$$\|\clr^n\| = \|\bar L^n(t_n-T)-\bar L^n(a(t_n-T))\| \le 2(m(t_n-T)+2)^{-1}
= 2(m^n_0+2)^{-1} \le \veps_0.$$
Thus, on recalling the definition of $\{a(s) , \; s \in \RR_+\}$ from \eqref{def:stepinv}, we have
\begin{equation}
\label{eq:907}
P(\|\hat M^n(0)-q\|> 3 \veps_0) \le P(\|\bar L^n(a(t_n-T))-q\|> 2\veps_0)
= P(\|\bar L^{n, m^n_0}-q\|> 2\veps_0).
\end{equation}
Also,
\begin{align}
P(\|\bar L^{n, m^n_0}-q\|> 2\veps_0)&\le P\left((\cle^n)^c\right) + P(\cle^n,
\|\bar L^{n, m^n_0}-q\|> 2\veps_0),\label{eq:953}
\end{align}
and
\begin{equation}\label{eq:24862485}
\{\om:  \clj_0^n(\om) = 1\} \subseteq \left\{ \om : \sup\limits_{k_2(\om) \le k \le m_n^0 - 1}  \|\bar L^{n, k+1}(\om)-q\|> 2\veps_0\right\}
\end{equation}
Also, for all $k \ge k_2(\om)$, we have
\[
\bar L^{n, {k}+1}(\om) = \frac{N_1(\om)+1}{{k}+1} \bar L^{n, N_1(\om)+1}(\om)
+ \frac{{k}-N_1(\om)}{{k}+1} \frac{1}{{k}-N_1(\om)}\sum_{j= N_1(\om)+1}^{{k}} \bdelta_{\bar X^n_j(\om)}.
\]
From this, using the fact that, on $\{N_1 \le r_1\}$, from the definitions of $k_1$ and $k_2(\om)$,
\[
\frac{2(N_1(\om)+1)}{k+1} \le \frac{2(r_1+1)}{N_1+k_0 + \lfloor 4(r_1+1)/\veps_0 \rfloor +1}
\le \frac{\veps_0}{2}, \mbox{ for all } k \ge k_2(\om),
\]
we have, on $\cld^n_0$ (recall the definition of $\cld^n_0$ from \eqref{eq:2443aa}) that, 
for all $k \in \{k_2(\om), k_2(\om)+1, \ldots, k_3(\om)\}$,
\begin{equation}\label{eq:25078aa}
\begin{split}
\|\bar L^{n, k+1}(\om)-q\| &\le \frac{\veps_0}{2} + \frac{k-N_1(\om)}{k+1}  \left\| \frac{1}{k-N_1(\om)}\sum_{j= N_1(\om)+1}^{k} \bdelta_{\bar X^n_j(\om)} - q \right\|\\
&\le  \frac{\veps_0}{2} +  \max\limits_{y\in \Delta^o} \left\| \frac{1}{k-N_1(\om)}\sum_{j= N_1(\om)+1}^{k} \bdelta_{\bar Y^y_{j - N_1(\om)} (\om)} - q \right\|\\
&\le \frac{\veps_0}{2}+ \max\limits_{y\in \Delta^o}  \left\| \frac{1}{k - N_1(\om)+1}  \sum\limits_{j=0}^{k-N_1(\om)}  \bdelta_{\bar Y^y_{j} (\om)} - q \right\| + \frac{2}{k - N_1(\om) + 1}\\
&\le \veps_0 + \max\limits_{y\in \Delta^o}  \left\| \frac{1}{k - N_1(\om)+1}  \sum\limits_{j=0}^{k-N_1(\om)}  \bdelta_{\bar Y^y_{j} (\om)} - q \right\|,
\end{split}
\end{equation}
where the {third inequality follows from the triangle inequality and the observation  that 
\begin{align*}
   &\frac{1}{k-N_1(\om)} \sum\limits_{j=N_1(\om)+1}^k \bdelta_{\bar Y^y_{j - N_1(\om)} (\om)}\\
   &\quad =  \frac{1}{(k-N_1(\om))(k-N_1(\om)+1)} \sum\limits_{j=1}^{k-N_1(\om)} \bdelta_{\bar Y^y_{j} (\om)} + \frac{1}{k-N_1(\om) + 1} \left[\sum\limits_{j=0}^{k-N_1(\om)} \bdelta_{\bar Y^y_{j} (\om)}  - \bdelta_{\bar{Y}^y_0(\om)}\right],
\end{align*}
and the} last inequality follows on recalling \eqref{eq:k0defn} and noting that  $k_2(\om)-N_1(\om)> k_0$ on $\{N_1 \le r_1\}$. Once more using the fact that $k_2(\om)-N_1(\om)> k_0$ on $\{N_1 \le r_1\}$, we have, due to \eqref{eq:k0defn},  \eqref{eq:24862485}, and \eqref{eq:25078aa}, that 
\begin{align}\label{eq:954b}
\PP(\cld^n_0) &\le P\left( \left\{ \om : N_1(\om) \le r_1, \sup\limits_{k_2(\om) \le k \le m_n^0 - 1}  \|\bar L^{n, k+1}(\om)-q\|> 2\veps_0\right\}\right)\\
&\le P\left( \left\{ \om: N_1(\om) \le r_1,\,  \max_{y \in\Delta^o}\sup_{k\ge k_2(\om)}\left\| \frac{1}{k - N_1(\om)+1}  \sum\limits_{j=0}^{k-N_1(\om)}  \bdelta_{\bar Y^y_{j} (\om)} - q \right\| > \veps_0 \right\} \right)\nonumber\\
&\le P(A_{\veps_0}) \le \veps_1,
\end{align}
where the next to last inequality uses $k_2(\om) - N_1(\om) = k_1> k_0$.
This, together with \eqref{eq:r1choice2050} and \eqref{eq:236nn}, shows that
\[
P\left((\cle^n)^c\right) \le P(N_1 > r_1) + P(\cld^n_0) + P\left(\cup_{i=1}^{l_0+1} \{\clj_i^n=1\}\right) \le \veps_1 + \veps_1 + \veps_1 = 3 \veps_1.
\]
Also, on $\cle^n$, we have $\tau^n \ge m^n_0-1$, and therefore
\begin{align*}
 P(\cle^n,\, 
\|\bar L^{n, m^n_0}-q\|> 2\veps_0) &=0.
\end{align*}
 The result follows on using the above estimate together with \eqref{eq:907}, \eqref{eq:953}, and \eqref{eq:954b}.
\end{proof}

For $n \in \NN$ and $t \in [0,c]$, let
\begin{equation}\label{eq:2475aa}
p_0^n \doteq m(t_{n}-T + \veps_0) - m^n_0, \;\; 
p_0^n(t) \doteq m(t_n-T+t)  - m^n_0,
\end{equation}
and note that $p^n_0(\veps_0) = p^n_0$. Recall the definitions of $k^*$ and $n_0$ from above \eqref{eq:236n} and \eqref{eq:2055aa}, respectively, 
and let $n_1\ge n_0$ be such that, for each $n \ge n_1$ and all $i=0,\dots,l_0$,
\begin{equation}\label{eq:2479ab}
p_0^{n}  > k^*, \;\;  \frac{ 2 \max\{k^*, 2c, 1\}}{m^n_0} \le \frac{\veps_0}{2}, 
\;\; \left\| \frac{ m(t_n-T+(i+1)c)+1}{m(t_n-T+ic+\veps_0)} - e^{c-\veps_0} \right\| < \veps_0. 
\end{equation}
Note that the last estimate in the previous display is possible due to Lemma \ref{lem:mtpsieasymptotics}  and that it implies that 
\begin{equation}\label{eq:2558mepsic}
\frac{ m(t_n-T+(i+1)c)+1 }{m(t_n-T+ic+\veps_0)} \le e^{c-\veps_0} +\veps_0 \le 2 + c,  \; \; i = 0 ,\dots, l_0,
\end{equation}
since $c -\veps_0 \in (0,1)$. Also, note that the first inequality in \eqref{eq:2479ab} implies that 
\[
p_i^n = m(t_n-T+ic+\veps_0) - m_i^n \ge k^*, \; \; i = 0 ,\dots, l_0, 
\]
which is used in Lemma \ref{lem:recstep}. Further, recall from \eqref{eq:impconst} that $d_1 \doteq e^c(12 + c)$ and $d_2=6$.

The following lemma shows that the controlled state process has the correct asymptotic behavior over the time interval $[0,c]$.
\begin{lemma}
\label{lem:firint}
For $n\ge n_1$,  
\[P\left(\sup_{t \in[0,c]} \|\hat M^n(t)-\hat M(t)\|\ge d_1\veps_0\right) \le d_2 \veps_1.
\]
\end{lemma}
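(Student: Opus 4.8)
The plan is to control $\hat M^n(t)$ on $[0,c]$ by comparing it, on the high-probability event $\cle^n$, with the piecewise-linear trajectory $\hat M$ that solves \eqref{eq:1233}. The evolution of $\bar L^n$ in backward time is governed by \eqref{eq:timrev}; rewriting this in terms of $\hat M^n$ and recalling that on the segment $[0,c]$ the control is $\bar \mu^{n,k}=\beta^0_{2|1}(\cdot\mid \bar X^n_{k-1})$ (for the indices $k$ corresponding to continuous time in $[t_n-T,t_n-T+c]$, up to the stopping time $\tau^{n,0}$), one sees that the drift pushing $\hat M^n$ is essentially the empirical occupation measure of the kernel $\beta^0_{2|1}$, which by \eqref{eq:236n}, \eqref{eq:236nb} is close to $\beta^0_{(1)}=\hat\eta_{(1)}(\cdot\mid 0)$, the drift appearing in \eqref{eq:1233}. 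First I would write, for $t\in[0,c]$,
\[
\hat M^n(t) = \hat M^n(0) + \int_0^t \hat\eta_{(1)}(\cdot\mid 0)\,ds - \int_0^t \hat M^n(s)\,ds + \clr^n_1(t) + \clr^n_2(t),
\]
where $\clr^n_1(t)$ collects the difference between the true increment of $\bar L^n$ (the $\check\Lambda^n$ term in \eqref{eq:timrev}) and the idealized drift $\hat\eta_{(1)}(\cdot\mid 0)t$, and $\clr^n_2(t)$ collects the time-discretization error in the integral $\int \bar L^n(t_n-a(s))\,ds$ versus $\int \hat M^n(s)\,ds$ (this second error is $O((m^n_0)^{-1})\le \veps_0$ by \eqref{eq:tanbd} and \eqref{eq:2055aa}).

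The main work is bounding $\clr^n_1$. Here I would split $[0,c]$ at the interpolated time corresponding to $m^n_0+p^n_0 = m(t_n-T+\veps_0)$: on the initial sub-interval of length $\approx\veps_0$ the drift is simply crudely bounded (contributing $O(\veps_0)$ since velocities are bounded by $2$), and on the remainder one uses that the number of steps between $m^n_0$ and $m(t_n-T+t)$ equals, up to $\pm1$, the quantity controlled in \eqref{eq:2558mepsic}, so the empirical occupation frequency of $\beta^0_{2|1}$ over that block of steps is, on the event $\{\tau^{n,0}>l^{n,0}\}$, within $\veps_0$ of $\beta^0_{(1)}$ by the definition of $\tau^{n,0}$ in \eqref{eq:taun0def} (and the choice of $k^*$, $n_1$). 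Converting the discrete-step occupation measure back to the continuous-time increment $\check\Lambda^n$ costs another $O(\veps_0)$ coming from the $\psi_e$/harmonic-series approximation (Lemma \ref{lem:mtpsieasymptotics}) and from the $1/(k+1)$ step sizes. Bundling all these errors, one gets $\sup_{t\in[0,c]}\|\clr^n_1(t)\|\le C\veps_0$ on $\cle^n$ for an explicit constant $C$ built from $e^c$ and the numerals in \eqref{eq:impconst}; combined with Lemma \ref{lem:initest} (which gives $\|\hat M^n(0)-q\|=\|\hat M^n(0)-\hat M(0)\|\le 3\veps_0$ off a set of probability $3\veps_1$) and Gr\"onwall's inequality applied to $\|\hat M^n(t)-\hat M(t)\|$, one obtains $\sup_{t\in[0,c]}\|\hat M^n(t)-\hat M(t)\|\le d_1\veps_0$ on $\cle^n\setminus(\text{bad set})$, with $d_1 = e^c(12+c)$ chosen precisely to absorb $e^c$ times the accumulated constant.

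For the probability bound: the statement fails only on $(\cle^n)^c$ together with the event from Lemma \ref{lem:initest}; since $P((\cle^n)^c)\le 3\veps_1$ (as established inside the proof of Lemma \ref{lem:initest}) and the event $\{\|\hat M^n(0)-q\|\ge 3\veps_0\}$ is already contained in a $3\veps_1$-set, a union bound gives $P(\sup_{t\in[0,c]}\|\hat M^n(t)-\hat M(t)\|\ge d_1\veps_0)\le 6\veps_1 = d_2\veps_1$. I expect the main obstacle to be the careful bookkeeping in $\clr^n_1$ — correctly matching the discrete step indices in Construction \ref{constr} with the continuous interpolation times and tracking how the $\tau^{n,0}$-stopping interacts with the block over which the ergodic estimate \eqref{eq:236n} is applied — rather than any conceptually deep point; Gr\"onwall and the union bound are then routine.
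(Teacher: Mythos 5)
Your proposal takes essentially the same route as the paper: write the evolution of $\hat M^n$ on $[0,c]$ as the idealized ODE plus error terms, bound the initial-value error by Lemma \ref{lem:initest}, bound the time-discretization error deterministically by \eqref{eq:tanbd}--\eqref{eq:2055aa}, control the drift error using the ergodic estimate \eqref{eq:236n} on the high-probability event $(B^n)^c\cap \cle_0^n$, then apply Gr\"onwall and a union bound to land on $6\veps_1=d_2\veps_1$. That is precisely the paper's architecture, and your identification of where the difficulty lies (the ``bookkeeping'' in the drift error) is correct.

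The one place the proposal is underspecified, and where a naive execution of your outline would actually fail, is the claim that converting the ergodic estimate on the unweighted empirical average $\frac{1}{m+1}\sum_{i\le m}\delta_{U^0_i}$ into a bound on the drift-error integral costs only ``another $O(\veps_0)$ from the $1/(k+1)$ step sizes.'' What actually appears in the evolution of $\hat M^n$ (see \eqref{eq:2497bb}) is the \emph{weighted} centered sum $\sum_{k}\frac{1}{k+1}\bigl(\bar\nu^{n,k}-\beta^0_{(1)}\bigr)$ with $k$ running over roughly $m^n_0(e^{c-\veps_0}-1)$ indices. Bounding each summand by $2/(k+1)$ and summing gives a harmonic tail of order $c$, i.e.\ an $O(1)$ bound, not $O(\veps_0)$; so one cannot simply pass from the ergodic theorem to the weighted sum by a triangle inequality. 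The paper closes this gap with the Abel-type recursion \eqref{eq:2508bb}: writing the weighted sum in terms of the running averages $V^{n,r}$ turns the harmonic weights into a telescoping sum over quantities $V^{n,r}$ that are each at most $\veps_0$ on $(B^n)^c\cap\cle_0^n$ by \eqref{eq:236n}, and the residual harmonic factor is then multiplied by $\veps_0$, giving the clean bound $(4+c)\veps_0$ of \eqref{eq:2584aabb}. You should make this summation-by-parts step explicit; without it, your claimed $\sup_{t\in[0,c]}\|\clr^n_1(t)\|\le C\veps_0$ is not justified.
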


\begin{proof}
Fix $n \ge n_1$. Using \eqref{eq:522} and \eqref{lem:mbardeflem7}, note that for each $t \in [0,c]$,
\begin{align}\label{eq:2438a}
\hat M^n(t) &= \hat M^n(0)
 + \int_{t_n-T}^{t_n-T+t} \sum\limits_{v 
	\in \clv^d} (v-\bar L^n(a(s))) \bar \Lambda^n(v\mid s) ds \nonumber \\
 &= q  + t \beta^0_{(1)} - \int_0^t \hat M^n(s) ds
 + \bar \clr^n_1(t) + \bar \clr^n_2(t) + \bar \clr_3^n,
 \end{align}
where
\[
\bar\clr^n_1 \doteq  \hat M^n(0) - q, \;\; \bar{\clr}^n_2(t) \doteq  - \int_{t_n-T}^{t_n-T+t} \left(\bar L^n(a(s))- \bar L^n(s)\right) ds,
\]
and
\[
\bar{\clr}^n_3(t) \doteq \int_{t_n-T}^{t_n- T +t} \left( \sum\limits_{v\in\clv^d} v \bar\Lambda^n(v \mid s) - \beta^0_{(1)}\right)ds.
\] 

We begin by considering $\bar \clr^n_1$ and $\bar \clr^n_2$. From Lemma \ref{lem:initest} we see that
\begin{equation}\label{eq:2511aa}
 P( \| \bar \clr^n_1 \|\ge 3\veps_0 ) \le 3 \veps_1,
\end{equation}
while \eqref{eq:tanbd} and \eqref{eq:2479ab} ensure that
\begin{equation}\label{eq:2519aa}
\sup\limits_{ t \in [0,  c]} \| \bar \clr^n_2(t) \| \le  \frac{2c}{ m(t_n-T) + 2} \le \veps_0.
\end{equation}
We now consider $\bar \clr^n_3$. First, observe that
\begin{equation}\label{eq:2515aa}
 \sup_{ t \in [0,  \veps_0]} \| \bar \clr^n_3(t)\| \leq 2 \veps_0,
\end{equation}
which ensures that, for $t \in (\veps_0, c]$,
\begin{equation}\label{eq:2528aabb}
\begin{split}
\| \bar \clr^n_3(t)\| \le 2 \veps_0 +\| \bar \clr^n_4\| + \| \bar\clr^n_5(t) \| + \| \bar\clr^n_6(t)\|,
\end{split}
\end{equation}
where, for $t \in ( \veps_0, c]$,
\[
\bar \clr^n_4 \doteq \int_{t_n - T + \veps_0}^{t_{m(t_n - T + \veps_0) + 1}}\left( \sum\limits_{v\in\clv^d} v \bar\Lambda^n(v \mid s) - \beta^0_{(1)}\right)ds, \; \; \bar \clr^n_5(t) \doteq  \int_{t_{m(t_n - T + \veps_0) + 1}}^{t_{m(t_n-T+t)}}\left( \sum\limits_{v\in\clv^d} v \bar\Lambda^n(v \mid s) - \beta^0_{(1)}\right)ds,
\]
and
\[
\bar \clr^n_6(t) \doteq \int_{t_{m(t_n-T+t)}}^{t_n - T + t}\left( \sum\limits_{v\in\clv^d} v \bar\Lambda^n(v \mid s) - \beta^0_{(1)}\right)ds,
\]
Using \eqref{eq:2479ab} we see that
\begin{equation}\label{eq:2551aabbb}
\| \bar \clr^n_4 \| \le 
\veps_0, \; \; \sup\limits_{t \in ( \veps_0, c]}\| \bar \clr^n_6(t) \| \le  \veps_0.
\end{equation}
Observe that, for $t \in (\veps_0, c]$,
\begin{equation}\label{eq:2497bb}
\begin{split}
 \bar \clr^n_5(t) 
 = \sum\limits_{k= m^n_0 + p^n_0 +2}^{m^n_0 + p^n_0(t)} \frac{1}{k+1} ( \bar v^{n,k} - \beta^0_{(1)}).
\end{split}
\end{equation}
Define
\[
V^{n,r} \doteq \frac{1}{m^n_0 + p^n_0 + r + 2} \sum\limits_{k=m^n_0}^{m^n_0 + p^n_0 + r + 1} \left( \bar v^{n,k} - \beta^0_{(1)}\right), \;\; r \in \NN_0.
\]
It is easy to verify using an induction argument that 
\begin{equation}\label{eq:2508bb}
V^{n,r} = V^{n,0} + \sum\limits_{k=1}^r \frac{1}{m^n_0 + p^n_0 + k + 2} \left( \left( \bar v^{n,m^n_0 + p^n_0 + k+1} - \beta^0_{(1)}\right) - V^{n,k-1} \right).
\end{equation}
From \eqref{eq:2497bb} and \eqref{eq:2508bb}, we have that, for each $t \in (\veps_0,c]$,
\begin{equation}\label{eq:2569aabb}
\begin{split}
\bar \clr^n_5(t) &= \sum\limits_{k=1}^{p^n_0(t) - p^n_0 - 1} \frac{1}{k+ m^n_0 + p^n_0 + 2} \left( \bar v^{n, k+m^n_0 + p^n_0 +1} - \beta^0_{(1)}\right)\\
&= V^{n, p^n_0(t) - p^n_0 -1} - V^{n,0} + \sum\limits_{k=1}^{p^n_0(t) - p^n_0 - 1} \frac{1}{k + m^n_0 + p^n_0 + 2}  V^{n,k-1}.
\end{split}
\end{equation}
Using \eqref{eq:236n} and \eqref{eq:2479ab}, note that, for  $t \in (\veps_0,c]$ and $r \in \{0,\dots, p^n_0(t) - p^n_0 -1\}$, on $(B^n)^c \cap \mathcal{E}_0^n$,
\begin{equation}\label{eq:2575bbb}
\begin{split}
\| V^{n,r} \| &= \frac{p^n_0 + r + 2}{m^n_0 + p^n_0  + r  + 1} \left\| \frac{1}{p^n_0 + r + 2} \sum\limits_{k=m^n_0}^{m^n_0+p^n_0 + r+1 }  \bar v^{n,k} - \beta^0_{(1)} \right\|\\
&\le  \left\| \frac{1}{p^n_0 + r + 2} \sum\limits_{k=m^n_0}^{m^n_0+ p^n_0 + r +1  }  \bar v^{n,k} - \beta^0_{(1)} \right\|\\
&=  \left\| \frac{1}{p^n_0 + r + 2} \sum\limits_{k=0}^{ p^n_0 + r+1 }  \bdelta_{U^0_k} - \beta^0_{(1)} \right\| \le \veps_0.\\
\end{split}
\end{equation}
From \eqref{eq:2558mepsic}, \eqref{eq:2569aabb}, and \eqref{eq:2575bbb} we see that for each $t \in (\veps_0, c]$, on $(B^n)^c \cap \cle_0^n$,
\begin{equation}\label{eq:2584aabb}
\| \bar \clr^n_5(t)\| \le \veps_0 + \veps_0 +  \veps_0 \left( \sum\limits_{k=1}^{p^n_0(t) - p^n_0 + 1} \frac{1}{k + m^n_0 + p^n_0 + 2}\right)
\le \veps_0 \left(2 + \frac{m(t_n-T+c)+1}{m(t_n- T + \veps_0)}\right) 
\le (4 + c) \veps_0.
\end{equation}
Combining \eqref{eq:2438a}, \eqref{eq:2519aa}, \eqref{eq:2515aa}, \eqref{eq:2528aabb}, \eqref{eq:2551aabbb}, and applying Gr\"{o}nwall's lemma we see that
\[
\sup\limits_{t \in [0,c]} \| \hat M^n(t) - \hat M (t) \|  \le e^c \left( 5 \veps_0 +  \|\bar \clr^n_1 \| +  \sup\limits_{t \in (\veps_0,c]}\| \bar \clr^n_5(t)\|\right).
\]
From  \eqref{eq:r1choice2050},
\eqref{eq:236nn}, \eqref{eq:954b}, \eqref{eq:2511aa}, \eqref{eq:2584aabb}, the last estimate, and the result stated in Lemma \ref{lem:initest}, it follows that 
\begin{align*}
&P\left( \sup\limits_{t \in [0,c]} \| \hat M^n(t) - \hat M (t) \|  \ge d_1 \veps_0\right)\\
&\quad \le P(  \| \clr^n_1\| \ge 3\veps_0) + P \left( \sup\limits_{t \in (\veps_0,c]} \| \bar \clr^n_5(t) \| > (4 + c ) \veps_0 \right)\\
&\quad \le 3 \veps_1 + P(N_1 > r_1)  + P(\cld^n_0) + P(B^n) +  P \left((B^n)^c ,\mathcal{E}_0^n,  \sup\limits_{t \in (\veps_0,c]} \| \bar \clr^n_5(t) \| > (4 + c ) \veps_0 \right)\\
&\quad \le 3 \veps_1 + \veps_1 + \veps_1 +\veps_1+ 0  = 6\veps_1.
\end{align*}
The result follows.
\end{proof}

We now give a recursion estimate that will allow us to replace
$\sup_{0\le t \le c}$ in Lemma
\ref{lem:firint} with $\sup_{0 \le t \le T}$. Recall $n_1$ introduced above \eqref{eq:2479ab}.
\begin{lemma}
\label{lem:recstep}
Fix $n \ge n_1$.
Suppose that for some $1 \le i \le l_0$ and $a_1, a_2 >0$,
\begin{equation}
P\left(\sup_{0\le t \le ic} \|\hat M^n(t)- \hat M(t)\| \ge a_1e^c \veps_0\right) \le a_2 \veps_1.\label{eq:1243a}
\end{equation}
Then
\[
P\left(\sup_{0\le t \le (i+1)c\wedge T} \|\hat M^n(t)- \hat M(t)\| \ge (a_1+ b_1) e^c\veps_0\right) \le (2a_2 +3) \veps_1
\]
where $b_1 = 4 + c$ is as in \eqref{eq:impconst}.
\end{lemma}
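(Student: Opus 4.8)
The strategy is to treat the segment $[ic, (i+1)c\wedge T]$ exactly as the segment $[0,c]$ was treated in Lemma \ref{lem:firint}, but now starting from the (random) state $\hat M^n(ic)$ rather than from $q$, and using the hypothesis \eqref{eq:1243a} to control the initial error. So first I would write the evolution equation for $\hat M^n$ on $[ic,(i+1)c]$ analogously to \eqref{eq:2438a}: on the high-probability event $\cle^n_{i+1}$ the controlled chain uses the kernel $\beta^{i}_{2|1}$ over the corresponding index block $\cli^{n,i}$, so that for $t\in[ic,(i+1)c]$,
\[
\hat M^n(t) = \hat M^n(ic) + (t-ic)\beta^i_{(1)} - \int_{ic}^t \hat M^n(s)\,ds + \bar\clr^n_1 + \bar\clr^n_2(t) + \bar\clr^n_3(t),
\]
where $\bar\clr^n_1 = \hat M^n(ic) - \hat M(ic)$ (controlled by the hypothesis), $\bar\clr^n_2$ is the interpolation-vs-step discrepancy bounded via \eqref{eq:tanbd} and the second estimate in \eqref{eq:2479ab}, and $\bar\clr^n_3$ is the ergodic-averaging error for $U^i_\cdot$ against $\beta^i_{(1)}$. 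Comparing with the defining equation \eqref{eq:1233} for $\hat M$ on the same interval (where $\hat\eta_{(1)}(s)=\beta^i_{(1)}$ for $s\in[ic,(i+1)c)$), subtracting, and applying Gr\"onwall's lemma over an interval of length $\le c<1$ gives
\[
\sup_{ic\le t\le (i+1)c\wedge T}\|\hat M^n(t)-\hat M(t)\| \le e^c\Big(\|\bar\clr^n_1\| + \sup_t\|\bar\clr^n_2(t)\| + \sup_t\|\bar\clr^n_3(t)\|\Big),
\]
and combining with the hypothesis on $[0,ic]$ yields the claimed bound on $[0,(i+1)c\wedge T]$.

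The second step is the bookkeeping of the error terms and probabilities. The term $\bar\clr^n_3$ must be split as in Lemma \ref{lem:firint} into an initial piece of interpolated length $\veps_0$ (trivially $\le 2\veps_0$), boundary pieces handled by \eqref{eq:2479ab}/\eqref{eq:2558mepsic}, and the main martingale-type average $\bar\clr^n_5$, which by the same $V^{n,r}$-recursion argument as in \eqref{eq:2508bb}--\eqref{eq:2584aabb} is bounded by $(4+c)\veps_0$ on $(B^n)^c\cap\cle^n_{i+1}$, using \eqref{eq:236n}, the fact that $p^n_i\ge k^*$ (first inequality in \eqref{eq:2479ab}), and that $U^i_0\sim\beta^i_{(1)}$ so that the partial sums $\frac{1}{m+1}\sum_{j=0}^m\delta_{U^i_j}$ are within $\veps_0$ of $\beta^i_{(1)}$ for $m\ge k^*$. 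On $\cle^n_{i+1}$ one also has $\tau^{n,i}>l^{n,i}$, so no defect term enters. Adding up: $\|\bar\clr^n_1\|$ contributes the hypothesis bound $a_1 e^c\veps_0$ with exceptional probability $a_2\veps_1$; the remaining errors contribute $b_1e^c\veps_0 = (4+c)e^c\veps_0$ (after multiplying $5\veps_0 + (4+c)\veps_0$ by $e^c$ — one absorbs the constant into $b_1$ as done in \eqref{eq:impconst}); and the exceptional set is contained in $\{\sup_{[0,ic]}\|\hat M^n-\hat M\|\ge a_1e^c\veps_0\}\cup\{N_1>r_1\}\cup\cld^n_0\cup B^n\cup\{\text{the }\bar\clr^n_5\text{ bad event}\}$, which by \eqref{eq:1243a}, \eqref{eq:r1choice2050}, \eqref{eq:954b}, \eqref{eq:236nn} has probability $\le a_2\veps_1 + 3\veps_1$. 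But one must be slightly more careful: the event $\{\sup_{[0,(i+1)c\wedge T]}\|\hat M^n-\hat M\|\ge (a_1+b_1)e^c\veps_0\}$ is contained in the union of $\{\sup_{[0,ic]}\cdots\ge a_1e^c\veps_0\}$ and $\{\sup_{[ic,(i+1)c\wedge T]}\cdots\ge b_1e^c\veps_0\}$, and the latter, on the good event, is controlled by the Gr\"onwall bound above with $\bar\clr^n_1$ itself already of size $a_1e^c\veps_0$ — so I would instead run Gr\"onwall with the hypothesis threshold folded in, i.e. on $\{\sup_{[0,ic]}\|\hat M^n-\hat M\|< a_1e^c\veps_0\}\cap(\text{good set})$ conclude $\sup_{[ic,(i+1)c\wedge T]}\|\hat M^n-\hat M\|\le e^c(a_1e^c\veps_0 + 5\veps_0 + (4+c)\veps_0)$, and since $e^c\cdot a_1e^c\veps_0$ overshoots, one actually keeps the additive structure by bounding $\sup_{[0,(i+1)c\wedge T]} \le \sup_{[0,ic]} + \sup_{[ic,(i+1)c\wedge T]}$ and using that on the good part the second sup is $\le e^c(\,\|\bar\clr^n_1\| + 5\veps_0 + (4+c)\veps_0\,)\le e^c\|\bar\clr^n_1\| + b_1e^c\veps_0$; absorbing $e^c\|\bar\clr^n_1\|\le$ a further $a_1e^{2c}\veps_0$ would break additivity, so one should instead observe $\bar\clr^n_1 = \hat M^n(ic)-\hat M(ic)$ and simply bound $\sup_{[0,(i+1)c\wedge T]}\|\hat M^n-\hat M\| \le \|\bar\clr^n_1\| + \sup_{[ic,(i+1)c\wedge T]}(\text{rest})$, where "rest" now does not contain $\bar\clr^n_1$ at all, giving $\le a_1e^c\veps_0 + e^c(5\veps_0+(4+c)\veps_0)\le (a_1+b_1)e^c\veps_0$. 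This is the precise accounting I would carry out.

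The third step is to assemble the probability estimate. The bad event for the conclusion is contained in
\[
\Big\{\sup_{[0,ic]}\|\hat M^n-\hat M\|\ge a_1e^c\veps_0\Big\}\ \cup\ (\cle^n_{i+1})^c\ \cup\ B^n\ \cup\ \Big\{(B^n)^c\cap\cle^n_{i+1}\cap\{\sup_{(ic+\veps_0,(i+1)c]}\|\bar\clr^n_5\|>(4+c)\veps_0\}\Big\},
\]
the last of which is empty by \eqref{eq:236n}--\eqref{eq:2479ab} just as in Lemma \ref{lem:firint}; $(\cle^n_{i+1})^c$ has probability $\le 3\veps_1$ by the union of $\{N_1>r_1\}$, $\cld^n_0$, and $\cup_{j=1}^{l_0+1}\{\clj^n_j=1\}$ via \eqref{eq:r1choice2050}, \eqref{eq:954b}, \eqref{eq:236nn}; and $B^n$ also has probability $\le\veps_1$ (already included in the $3\veps_1$ since $B^n\subseteq\cup\{\clj^n_j=1\}$). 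Hence the total exceptional probability is $\le a_2\veps_1 + 3\veps_1 = (2a_2+3)\veps_1$ only if one double-counts $a_2\veps_1$ — rather, since the statement claims $(2a_2+3)\veps_1$, the slack factor of $2$ on $a_2$ is there precisely to absorb the possibility that the hypothesis event and the new bad events overlap in a way requiring a crude union bound, or (more likely) because the hypothesis \eqref{eq:1243a} is itself re-invoked with a fresh independent realization at the new level and the $2$ comes from $a_2\veps_1 + a_2\veps_1$; in the write-up I would simply use $P(\text{bad})\le P(\sup_{[0,ic]}\ge a_1e^c\veps_0) + P((\cle^n_{i+1})^c) \le a_2\veps_1 + 3\veps_1\le (2a_2+3)\veps_1$, which is valid since $a_2\ge 1$ in all applications. \textbf{The main obstacle} is the careful propagation of the multiplicative Gr\"onwall constant $e^c$ so that the error stays \emph{additive} across segments (so that after $l_0$ iterations one gets a bound like $(d_1 + l_0 b_1)e^c\veps_0$ rather than an exponentially-in-$l_0$ blown-up constant); this is exactly why the argument keeps $\bar\clr^n_1$ outside the Gr\"onwall loop and only applies $e^c$ to the fresh per-segment errors, and it is the one place where one must resist the naive estimate.
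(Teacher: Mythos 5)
Your plan mirrors the paper's own argument: write the integral equation for $\hat M^n - \hat M$ on $[ic,(i+1)c]$, decompose the error into $\bar\clr^n_1$ (initial mismatch, controlled by \eqref{eq:1243a}), $\bar\clr^n_2$ (step-vs-interpolation), and $\bar\clr^n_3$ (ergodic averaging for $U^i_\cdot$), apply Gr\"onwall, and union-bound. You also correctly flag the central difficulty: a multiplicative $e^c$ on $\|\bar\clr^n_1\|$ at each step would compound geometrically over $l_0$ segments and destroy the additive structure the corollary requires.

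Your resolution does not close this gap, however, and the sticking point is arithmetic. After isolating $\bar\clr^n_1$, the per-segment error is $\sup\|\bar\clr^n_2\| + \sup\|\bar\clr^n_3\| \le \veps_0 + (2 + 1 + (4+c) + 1)\veps_0 = (9+c)\veps_0$, and your final step $a_1 e^c\veps_0 + e^c(5\veps_0 + (4+c)\veps_0)\le (a_1+b_1)e^c\veps_0$ reduces after cancellation to $9+c\le b_1=4+c$, which is false. Moreover ``keeping $\bar\clr^n_1$ outside the Gr\"onwall loop'' is not available from Gr\"onwall's inequality alone: the integral $\int_{ic}^t\|D(s)\|\,ds$ carries the $\bar\clr^n_1$ contribution, so Gr\"onwall applied to $\|D(t)\|\le\|\bar\clr^n_1\|+\cdots+\int_{ic}^t\|D\|\,ds$ genuinely puts the factor $e^c$ on $\|\bar\clr^n_1\|$. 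What one can do is solve the linear ODE exactly: $D(t)=\bar\clr^n_1 e^{-(t-ic)}+(\bar\clr^n_2(t)+\bar\clr^n_3(t))-\int_{ic}^t e^{-(t-u)}(\bar\clr^n_2(u)+\bar\clr^n_3(u))\,du$, so that the $\bar\clr^n_1$ piece \emph{decays} (factor $\le 1$, not $e^c$) while the moving part contributes at most $2(9+c)\veps_0$; but that is still larger than $b_1 e^c\veps_0$ for the paper's $b_1=4+c$. The argument does close once $b_1$ (and then $d_3$, $A_1$, \ldots) are enlarged enough to accommodate the true per-segment budget, so this is a constant-tracking issue rather than a missing idea — but as written your final inequality, like the paper's claim that its Gr\"onwall bound $e^c(5\veps_0+\|\bar\clr^n_1\|+(4+c)\veps_0)$ together with $\|\bar\clr^n_1\|<a_1e^c\veps_0$ yields $(a_1+4+c)e^c\veps_0$, does not hold. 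On the probability side, your account of the $2a_2$ (``slack,'' ``fresh realization'') is off: it simply reflects bounding both $P(\sup_{[0,ic]}\ge(a_1+b_1)e^c\veps_0)$ and $P(\|\bar\clr^n_1\|\ge a_1 e^c\veps_0)$ by the same $a_2\veps_1$ via \eqref{eq:1243a} (the latter because $\bar\clr^n_1=\hat M^n(ic)-\hat M(ic)$ is dominated by $\sup_{[0,ic]}\|\hat M^n-\hat M\|$) and not de-duplicating the union bound, with the $+3\veps_1$ coming from $P(N_1>r_1)+P(\cld^n_0)+P(B^n)$ and the residual event on $(B^n)^c\cap\cle^n$ being empty by the ergodic theorem.
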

\begin{proof}
We will only consider the case where $i< l_0$. The case $i=l_0$ is treated similarly.
Note that, for $t \in [ic, (i+1)c]$,
\begin{align}\label{eq:2495aaab}
\hat M^n(t) &= \hat M^n(ic)
 + \int_{t_n-T+ic}^{t_n-T+t} \sum\limits_{v 
	\in \clv^d} (v-\bar L^n(a(s))) \bar \Lambda^n(v\mid s) ds \nonumber \\
 &= \hat M(ic)  + (t-ic) \beta^i_{(1)} - \int_{ic}^{t} \hat M^n(s) ds
 + \bar \clr^n_1 + \bar \clr^n_2(t) + \bar \clr_3^n(t),
 \end{align}
where
\[
\bar\clr^n_1 \doteq  \hat M^n(ic) - \hat M(ic) , \;\; \bar{\clr}^n_2(t) \doteq  - \int_{t_n-T+ic}^{t_n-T+t} \left(\bar L^n(a(s))- \bar L^n(s)\right) ds,
\]
and
\[
\bar{\clr}^n_3(t) \doteq \int_{t_n-T+ic}^{t_n- T +t} \left( \sum\limits_{v\in\clv^d} v \bar\Lambda^n(v \mid s) - \beta^i_{(1)}\right)ds.
\] 

We begin by considering $\bar \clr^n_1$ and $\bar \clr^n_2$.  From  \eqref{eq:tanbd}, \eqref{eq:2479ab}, and the assumption stated in \eqref{eq:1243a}  we see that
\begin{equation}\label{eq:2511ab}
 P( \| \bar \clr^n_1 \|\ge a_1 e^c \veps_0 ) \le a_2\veps_1, \;\; \sup\limits_{ t \in [0,  c]} \| \bar \clr^n_2(t) \| \le  \frac{2c}{ m(t_n-T) + 2} \le \veps_0.
\end{equation}
We now consider $\bar \clr^n_3$. As in the proof of Lemma \ref{lem:firint}, we can write, for $t \in (\veps_0, c]$,
\begin{equation}\label{eq:2528aa}
\begin{split}
\| \bar \clr^n_3(t)\| \le 2 \veps_0 +\| \bar \clr^n_4 \| + \| \bar\clr^n_5(t) \| + \| \bar\clr^n_6(t)\|,
\end{split}
\end{equation}
where, 
\begin{equation}\label{eq:2551aa}
\| \bar \clr^n_4 \| \le 
\veps_0, \; \; \sup\limits_{t \in ( \veps_0, c]}\| \bar \clr^n_6(t) \| \le  \veps_0, \;\; \mbox{ and, on } (B^n)^c \cap \cle_0^n, \; \sup\limits_{t\in (\veps_0,c]} \| \bar \clr^n_5(t)\|
\le (4 + c) \veps_0.
\end{equation}

Combining \eqref{eq:2495aaab}, \eqref{eq:2511ab}, \eqref{eq:2528aa}, \eqref{eq:2551aa}, and applying Gr\"{o}nwall's lemma we see that  
\[
\sup\limits_{t \in [0,c]} \| \hat M^n(ic+t) - \hat M (ic+t) \|  \le e^c \left( 5 \veps_0 +  \|\bar \clr^n_1 \| +  \sup\limits_{t \in (\veps_0,c]}\| \bar \clr^n_5(t)\|\right).
\]
From \eqref{eq:2511ab}, the last estimate, and the assumption stated in \eqref{eq:1243a}, it follows that
\begin{align*}
&P\left( \sup\limits_{t \in [0,(i+1)c]} \| \hat M^n(t) - \hat M (t) \|  \ge (a_1+b_1)e^c \veps_0\right)\\
&\quad \le P\left( \sup\limits_{t \in [0,ic]} \| \hat M^n(t) - \hat M (t) \|  \ge (a_1+b_1)e^c \veps_0\right) + P\left( \sup\limits_{t \in [ic,(i+1)c]} \| \hat M^n(t) - \hat M (t) \|  \ge (a_1+b_1)e^c \veps_0\right)\\
&\quad \le  a_2 \veps_1+ P(  \| \clr^n_1\| \ge a_1 e^c\veps_0) + P \left( \sup\limits_{t \in (\veps_0,c]} \| \bar \clr^n_5(t) \| > (4 + c ) \veps_0 \right)\\
&\quad \le 2a_2 \veps_1 + P(N_1 > r_1) + P(\cld^n_0) + P(B^n) +  P \left((B^n)^c ,\mathcal{E}_{0}^n,  \sup\limits_{t \in (\veps_0,c]} \| \bar \clr^n_5(t) \| > (4 + c ) \veps_0 \right)\\
&\quad \le 2a_2 \veps_1 + \veps_1+\veps_1  + \veps_1 + 0  = (2a_2 + 3)\veps_1.
\end{align*}
The result follows.
\end{proof}
As an immediate consequence of the previous two lemmas we have the following corollary. Recall the constants $d_3, d_4$ defined  in  \eqref{eq:impconst}.
\begin{corollary}\label{cor:1101}
For all $n\ge n_1$
\[P\left(\sup_{0\le t \le T} \|\hat M^n(t)- \hat M(t)\| \ge d_3 \veps_0\right) \le d_4 \veps_1.\]
\end{corollary}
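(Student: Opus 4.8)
The plan is to obtain Corollary~\ref{cor:1101} by a straightforward finite induction on the block index, using Lemma~\ref{lem:firint} as the base case and Lemma~\ref{lem:recstep} as the inductive step, and then bookkeeping the constants so that the final bounds match the definitions of $d_3$ and $d_4$ in \eqref{eq:impconst}.

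\textbf{Base case.} Lemma~\ref{lem:firint} gives, for $n \ge n_1$,
\[
P\left(\sup_{0 \le t \le c} \|\hat M^n(t) - \hat M(t)\| \ge d_1 \veps_0\right) \le d_2 \veps_1,
\]
where $d_1 = e^c(12+c)$ and $d_2 = 6$. Since $d_1 = e^c(12+c) = a_1^{(1)} e^c \veps_0 / \veps_0$ with $a_1^{(1)} \doteq 12 + c$, this is exactly a statement of the form \eqref{eq:1243a} with $i = 1$, $a_1 = a_1^{(1)} = 12 + c$, and $a_2 = a_2^{(1)} = 6$.

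\textbf{Inductive step.} Suppose that for some $1 \le i \le l_0 - 1$ we have established
\[
P\left(\sup_{0 \le t \le ic} \|\hat M^n(t) - \hat M(t)\| \ge a_1^{(i)} e^c \veps_0\right) \le a_2^{(i)} \veps_1
\]
for all $n \ge n_1$. Then Lemma~\ref{lem:recstep} yields
\[
P\left(\sup_{0 \le t \le (i+1)c \wedge T} \|\hat M^n(t) - \hat M(t)\| \ge (a_1^{(i)} + b_1) e^c \veps_0\right) \le (2 a_2^{(i)} + 3)\veps_1,
\]
so the recursion runs with $a_1^{(i+1)} = a_1^{(i)} + b_1$ and $a_2^{(i+1)} = 2 a_2^{(i)} + 3$, where $b_1 = 4 + c$. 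Iterating from $i = 1$ up to $i = l_0$ gives $a_1^{(l_0)} = a_1^{(1)} + (l_0 - 1) b_1 = (12 + c) + (l_0 - 1)(4 + c)$ and $a_2^{(l_0)} = 2^{l_0 - 1} a_2^{(1)} + 3(2^{l_0 - 1} - 1) = 2^{l_0 - 1} \cdot 9 - 3$. Since $(i+1)c \wedge T = T$ once $i + 1 \ge l_0$ (recall $l_0 = \lfloor Tc^{-1}\rfloor$ and that the last block reaches $T$), the final application covers the whole interval $[0,T]$, and we conclude
\[
P\left(\sup_{0 \le t \le T} \|\hat M^n(t) - \hat M(t)\| \ge a_1^{(l_0)} e^c \veps_0\right) \le a_2^{(l_0)} \veps_1.
\]

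\textbf{Matching the constants.} It remains to check that $a_1^{(l_0)} e^c \le d_3$ and $a_2^{(l_0)} \le d_4$, where $d_3 = d_1 + l_0 b_1 e^c = e^c(12+c) + l_0(4+c)e^c$ and $d_4 = 2^{l_0}(3 + d_2) = 2^{l_0} \cdot 9$. For the first, $a_1^{(l_0)} e^c = e^c\bigl((12+c) + (l_0-1)(4+c)\bigr) \le e^c\bigl((12+c) + l_0(4+c)\bigr) = d_3$. For the second, $a_2^{(l_0)} = 9 \cdot 2^{l_0-1} - 3 \le 9 \cdot 2^{l_0} = d_4$. Since the event in the displayed bound is contained in $\{\sup_{0\le t\le T}\|\hat M^n(t) - \hat M(t)\| \ge d_3\veps_0\}^c$-complement appropriately (i.e.\ enlarging the threshold only shrinks the event), monotonicity of probability gives the claimed inequality. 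The only mild subtlety — and the one place that needs a careful word rather than pure arithmetic — is the treatment of the final block when $(i+1)c$ overshoots $T$: one must invoke the ``case $i = l_0$ is treated similarly'' clause in the proof of Lemma~\ref{lem:recstep} (equivalently apply it with $i = l_0 - 1$, for which $(i+1)c \wedge T = l_0 c \le T$, and then handle the residual stub $[l_0 c, T]$, which Lemma~\ref{lem:recstep}'s own proof already absorbs via the $\wedge T$), so that no interval is left uncovered. Everything else is routine constant-chasing, so I expect no real obstacle here; the corollary is essentially immediate from the two preceding lemmas.
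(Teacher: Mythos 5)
Your overall plan — induction on the block index with Lemma~\ref{lem:firint} as base and Lemma~\ref{lem:recstep} as step — is exactly what the paper intends (it calls the corollary ``an immediate consequence of the previous two lemmas'' and leaves the bookkeeping implicit). However, your bookkeeping has an off-by-one error that leaves the final sub-interval uncovered. Starting from the base case on $[0,c]$ with $a_1^{(1)} = 12 + c$, $a_2^{(1)} = 6$, after $k$ applications of Lemma~\ref{lem:recstep} (for $i = 1,\dots,k$) you control $\sup_{0\le t\le (k+1)c\wedge T}$ with constants $a_1^{(k+1)}, a_2^{(k+1)}$. To reach $[0,T]$ you need $(k+1)c \ge T$, i.e.\ $k \ge l_0$, since $l_0 c \le T < (l_0+1)c$ in general. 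Your claim that ``$(i+1)c\wedge T = T$ once $i+1\ge l_0$'' is false unless $T$ is an integer multiple of $c$: with $i+1 = l_0$ one only has $(i+1)c\wedge T = l_0 c \le T$. So you must apply Lemma~\ref{lem:recstep} $l_0$ times (not $l_0-1$), ending at
\[
a_1^{(l_0+1)} = (12+c) + l_0(4+c), \qquad a_2^{(l_0+1)} = 9\cdot 2^{l_0} - 3,
\]
the last application using the ``case $i=l_0$'' clause and the $\wedge T$ to absorb the stub $[l_0 c, T]$. The constants in \eqref{eq:impconst} are chosen precisely for this count: $a_1^{(l_0+1)}e^c = d_1 + l_0 b_1 e^c = d_3$ \emph{exactly}, and $a_2^{(l_0+1)} \le 9\cdot 2^{l_0} = d_4$. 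Your stated constants $a_1^{(l_0)}, a_2^{(l_0)}$ are one step short: they are indeed $\le d_3, d_4$, which makes your final comparisons look fine, but they only control $\sup_{0\le t\le l_0 c}$, not $\sup_{0\le t\le T}$. Your closing sentence gestures at the extra step but does not fold it into the arithmetic; with the correct iteration count the comparison against $d_3$ is tight, with no slack left over.
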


\subsection{Convergence of Costs of Controls}\label{subsec:conv-costs}
{The main result of this section is Lemma \ref{lem:costest} below which gives the desired inequality for the asymptotic cost.} \PZ{It roughly states that the relative entropy cost of the controls we have constructed is arbitrarily close to the relative entropy cost that was prescribed by the near-optimal controls we identified in Section \ref{sec:lowbd}}.

Recall the constants $A_1, B_1, C_1$ defined in \eqref{eq:impconst}, and recall $\hat \eta$, $\hat M$ introduced at the start of Section \ref{sec:constructcontrols}.

The following lemma estimates the cost of the constructed controls. 
\begin{lemma}
\label{lem:costest}
Let the collection $\{\bar\nu^{n,k}, \bar \mu^{n,k+1}, \bar L^{n,k+1}, \;  n\ge n_1, \; k \le n\}$ be given by Construction \ref{constr}. Then, 
\begin{multline*} \limsup_{n\to \infty} E\left(n^{-1} \sum\limits_{k=0}^{n-1} R\left(\bdelta_{\bar \nu^{n,k}}\otimes\bar \mu^{n,k+1} \|\bdelta_{\bar \nu^{n,k}}\otimes G^{\clv}(\bar L^{n,k+1})\right)\right)\\
\le 
e^{-T} \int_0^T e^u R\left( \hat \eta( u) \big\| \hat \eta_{(1)}( u) \otimes G(\hat M(u))  \right) du  + {C_1} \veps_1 + \veps + (l_0+1) (A_1 \veps_0 +{ B_1 \veps_1}).\end{multline*}
\end{lemma}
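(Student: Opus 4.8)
The plan is to decompose the relative-entropy cost sum according to the pieces of Construction~\ref{constr} and bound each group of terms. Recall that the cost in question equals, by \eqref{eq:540}--\eqref{eq:541}, a sum of $n^{-1}R(\delta_{\bar\nu^{n,k}}\otimes\bar\mu^{n,k+1}\|\delta_{\bar\nu^{n,k}}\otimes G^{\clv}(\bar L^{n,k+1})(\bar\nu^{n,k},\cdot))$ over $k=0,\dots,n-1$. I would split the index set $\{0,\dots,n-1\}$ into: (i) the initial ``uncontrolled'' block $k\le r_1\wedge N_1$, where $\bar\mu^{n,k}=G^{\clv}(\bar L^{n,k})(\bar X^n_{k-1},\cdot)$ so the relative entropy is exactly $0$; (ii) the ``$Q$-block'' $N_1+1\le k\le k_3$, where $\bar\mu^{n,k}=Q^{\clv}(\bar X^n_{k-1},\cdot)=G^{\clv}(q)(\bar X^n_{k-1},\cdot)$ by the disintegration of $\clt^1(T)$ (so the cost is $n^{-1}\sum R(G^{\clv}(q)(\cdot)\|G^{\clv}(\bar L^{n,k+1})(\cdot))$, a quantity that is $O(n^{-1}\cdot k_3)$ and hence $o(1)$ deterministically, since $k_3\le m^n_0$ and $n^{-1}m^n_0\to e^{-T}$ times a bounded quantity — more carefully one uses Lipschitz continuity of $G$, $\|\bar L^{n,k+1}-q\|\le 2\veps_0$ on this block, and the lower bound \eqref{eq:rhoboundlower}-type estimate coming from Assumption~\ref{ass1}\eqref{ass:aux1} to see each term is $O(\veps_0)$, contributing at most $O(\veps_0)$ after multiplying by $n^{-1}(\text{block length})\approx e^{-T}$, which is absorbed into the $\veps$ and $A_1\veps_0$ terms); (iii) for each $l=0,\dots,l_0$, the block $k\in\{m^n_l\}\cup\cli^{n,l}$ intersected with the ``good'' event $(B^n)^c\cap\cle^n$, where the control is $\beta^l_{(1)}$ (one step) then $\beta^l_{2|1}(\cdot\mid\bar X^n_{k-1})$, and the cost is $n^{-1}\sum_{k\in\cli^{n,l}} R(\beta^l_{2|1}(\cdot\mid\bar X^n_{k-1})\|G^{\clv}(\bar L^{n,k+1})(\bar X^n_{k-1},\cdot))$ plus the single $\beta^l_{(1)}$ term; (iv) the remaining ``bad'' events — $\{r_1<N_1\}$, $\cld^n_0$, and $\cup_l\{\clj^n_l=1\}$ — on which the dynamics are again uncontrolled so the \emph{pointwise} cost is $0$, but these events must be handled because the expectation is over the whole space (here the key is simply that on these events the summand vanishes, so they contribute nothing; the only subtlety is the single nonzero-control steps $m^n_l$ and the first step after switching to $\beta^l$ that may occur before a bad event is detected, which are $O(l_0)$ many terms each of magnitude $\le|\log\delta_1|$, contributing $O(n^{-1}l_0|\log\delta_1|)=o(1)$).

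The heart of the estimate is block (iii). For fixed $l$ and $n\ge n_1$, on $(B^n)^c\cap\cle^n$ the empirical measure of $\{\bar X^n_k\}_{k\in\cli^{n,l}}$ stays within $\veps_0$ of $\beta^l_{(1)}$ (by the definition of $\tau^{n,l}$ and $\cle^n$), and $\hat M^n$ stays within $d_3\veps_0$ of $\hat M$ on $[0,T]$ by Corollary~\ref{cor:1101}; also $\bar L^{n,k+1}$ for $k\in\cli^{n,l}$ is close to $\hat M^n$ evaluated at the corresponding interpolated time, hence close to $\hat M(lc)=\hat M(\alpha_c(\cdot))$ up to $O(\veps_0)$ errors. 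I would then write
\begin{align*}
&n^{-1}\sum_{k\in\cli^{n,l}} R\!\left(\beta^l_{2|1}(\cdot\mid\bar X^n_{k-1})\,\big\|\,G^{\clv}(\bar L^{n,k+1})(\bar X^n_{k-1},\cdot)\right)\\
&\quad= n^{-1}\sum_{x\in\Delta^o}\Big(\textstyle\sum_{k\in\cli^{n,l}}\bm 1_{\{\bar X^n_{k-1}=x\}}\Big) R\!\left(\beta^l_{2|1}(\cdot\mid x)\,\big\|\,G^{\clv}(\bar M_x^{n,l})( x,\cdot)\right)+(\text{err}),
\end{align*}
where the bracketed empirical occupation frequency of $x$ over $\cli^{n,l}$, divided by $|\cli^{n,l}|$, is within $\veps_0$ of $\beta^l_{(1)}(x)$, and $n^{-1}|\cli^{n,l}|\to e^{-lc}(1-e^{-c})$ (from Lemma~\ref{lem:mtpsieasymptotics}, since $t_n-T+lc$ corresponds to index $\sim ne^{-T+lc}$). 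The main technical point is controlling the ``(err)'' term: here one must replace $\bar L^{n,k+1}$ (which varies with $k$ across $\cli^{n,l}$) by a single measure close to $\hat M(lc)$, using Lipschitz continuity of $G$ (Assumption~\ref{ass1}\eqref{ass:linear} gives $L_G=1$) together with the lower bounds $\inf_{(x,y)\in A_+}G(\hat M(s))_{x,y}\ge\delta\delta_0^A/8=\delta_1$ — which follow from \eqref{eq:219}, \eqref{eq:2013aa}, Assumption~\ref{ass1}\eqref{ass:aux1}, and the estimate $\|\bar L^{n,k+1}-\hat M(lc)\|\lesssim d_3\veps_0$; the logarithm's local Lipschitz constant is then $O(\delta_1^{-1})$, producing errors $O(A_1\veps_0)$ per block. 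Summing the main terms over $l=0,\dots,l_0$ and recognizing a Riemann sum, one gets
\[
\sum_{l=0}^{l_0} e^{-lc}(1-e^{-c})\sum_{x}\beta^l_{(1)}(x)R\!\left(\beta^l_{2|1}(\cdot\mid x)\|G^{\clv}(\hat M(lc))(x,\cdot)\right)
\approx\int_0^T e^{-u}R\!\left(\hat\eta(\cdot\mid u)\|\hat\eta_{(1)}(\cdot\mid u)\otimes G(\hat M(u))\right)du,
\]
which after the $e^{-T}\!\int_0^T e^u(\cdots)du = \int_0^T e^{-(T-u)}$ reindexing matches the claimed leading term, using that $\hat\eta$ is piecewise constant on the $c$-grid exactly as in Section~\ref{sec:piecewiseconstant}.

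For the expectation bound I would then assemble the pieces: on the good event $(B^n)^c\cap\cle^n$ the cost is at most [leading term] $+\veps+(l_0+1)A_1\veps_0+o(1)$; on the complement, whose probability is at most $C_1'\veps_1$ for a suitable constant (combining \eqref{eq:r1choice2050}, \eqref{eq:954b}, \eqref{eq:236nn}, Corollary~\ref{cor:1101}), the pointwise cost is bounded by the deterministic a~priori bound $\sum_{k}n^{-1}R(\cdots)\le$ [something like $(l_0+3)^2|\log\delta_1|$] — here one uses that the total cost over $[0,T]$-worth of steps is at most the number of blocks squared times $|\log\delta_1|$ because each relative entropy term is bounded by $|\log\delta_1|$ whenever a control is exercised and the number of nonzero-cost steps is $\le$ (number of interpolated time units in $[t_n-T,t_n]$)$\,\cdot n$, which after the $n^{-1}$ normalization is $O(T)$, times $|\log\delta_1|$; multiplying by $P((\cle^n)^c\cup B^n)\le\text{const}\cdot\veps_1$ gives the $C_1\veps_1$ term with $C_1=|\log\delta_1|(l_0+3)^2$. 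Taking $\limsup_{n\to\infty}$ kills all the $o(1)$ Riemann-sum and harmonic-series errors (via Lemma~\ref{lem:mtpsieasymptotics}), leaving exactly the asserted inequality. The main obstacle is the bookkeeping in step (iii): making precise, uniformly over the $O(n)$ indices in each block and over $l$, that replacing $\bar L^{n,k+1}$ by $\hat M(lc)$ inside the logarithm costs only $O(A_1\veps_0)$, which requires simultaneously the uniform lower bound $\ge\delta_1$ on the relevant entries of $G$ (so the $\log$ is Lipschitz), the empirical-measure closeness from $\tau^{n,l}$, and the trajectory closeness from Corollary~\ref{cor:1101}, all of which hold only on $(B^n)^c\cap\cle^n$ — so the decomposition of the probability space must be done carefully before, not after, these bounds are invoked.
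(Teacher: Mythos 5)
Your overall architecture (block decomposition aligned with Construction~\ref{constr}, ergodic estimates, a Lipschitz bound for $m\mapsto R(\beta^l_{2|1}(\cdot\mid x)\|G(m)(x,\cdot))$ on the set where $G\ge\delta_1$, Riemann sum) matches the paper's, and you correctly flag at the end that the bookkeeping in block~(iii) is the heart of the matter. That is, however, precisely where the sketch has a gap, and there are two smaller concrete errors as well.

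The central gap is in freezing $\hat M$ over the whole $c$-block. You assert $\|\bar L^{n,k+1}-\hat M(lc)\|\lesssim d_3\veps_0$ for $k\in\cli^{n,l}$, but this is false: Corollary~\ref{cor:1101} gives $\|\hat M^n(s_k)-\hat M(s_k)\|\le d_3\veps_0$ where $s_k\doteq t_k-(t_n-T)\in[lc,(l+1)c]$, yet $\|\hat M(s_k)-\hat M(lc)\|$ can be as large as $2c$, and $c=\kappa_3$ is a \emph{fixed} constant with $\veps_0<c$ (from~\eqref{eq:conscho}), not a quantity $O(\veps_0)$. Pulling the block-wide occupation frequency of $x$ out of a $k$-varying relative entropy therefore incurs an error of order $\delta_1^{-1}c$ per block, which does not sit inside $A_1\veps_0$ (or any other error term in the lemma's conclusion), and your Riemann sum would deliver $e^{-T}\int_0^T e^s\sum_x\beta^{\lfloor s/c\rfloor}_{(1)}(x)R(\beta^{\lfloor s/c\rfloor}_{2|1}(\cdot\mid x)\|G(\hat M(\lfloor s/c\rfloor c))(x,\cdot))ds$ rather than the target with $G(\hat M(s))$. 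The paper's proof resolves this with a sliding-window device: one replaces $h_x(\hat M(t_k-(t_n-T)))$ not by a block-wide constant but by $H_{k,x}\doteq\frac{1}{k^*+1}\sum_{j=0}^{k^*}h_x(\hat M(t_{j+k}-(t_n-T)))$ (see~\eqref{eq:2930aaa}). Because the window of $k^*+1$ steps has interpolated-time width $\le 2|t_{m^n_0+2k^*}-t_{m^n_0+k^*}|\le\veps_0/2$ by~\eqref{eq:2479ab}, the replacement costs only $\delta_1^{-1}\veps_0$; and because the window has $k^*+1$ steps, swapping the order of summation produces the local occupation frequency $\frac{1}{k^*+1}\sum_{j=0}^{k^*}\bm{1}_{\{\bar X^{n,r-j}=x\}}$, which the conditional-expectation ergodic estimate~\eqref{eq:236nb} bounds by $\beta^l_{(1)}(x)$ within $\veps_0$ in expectation. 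This is the missing idea: the averaging scale must be much shorter than $c$ (to make $h_x(\hat M(\cdot))$ nearly constant on it) but long enough for the ergodic theorem, and this is what lets the $s$-dependence of $G(\hat M(s))$ survive into the limit exactly.

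Two smaller points. First, the per-term $O(\veps_0)$ bound you propose for the $Q$-block uses $Q^{\clv}=G^{\clv}(q)$ implicitly, which is false: $Q$ is the disintegration of $\clt^1(T)$ (end of Section~\ref{subsec:non-degen}), a coupling whose marginals are $q$ but which is not $q\otimes G(q)$ in general. The actual per-term bound is $|\log\delta_1|$, and the normalized block cost converges to $|\log\delta_1|e^{-T}$, which is $\le\veps$ only because $T$ was chosen via~\eqref{eq:sizeofT}; this choice of $T$ is what produces the standalone $\veps$ in the lemma's conclusion. Second, your block~(iv) opens with the claim that on bad events the summand vanishes and they contribute nothing, which contradicts the a~priori bound you then invoke: on $\cld^n_l$, nonzero cost accrues from $N_1$ up to $m^n_l+\tau^{n,l}$ before the dynamics switch to zero-cost uncontrolled motion, and the paper tracks this accumulated cost explicitly in~\eqref{eq:2951aa},~\eqref{eq:910nn},~\eqref{eq:3227aa}, multiplying by the event probabilities to get the $C_1\veps_1$ term.
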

\begin{proof}
For notational simplicity, denote 
\[
R^{n,k} \doteq R\left(\bdelta_{\bar \nu^{n,k}}\otimes\bar \mu^{n,k+1} \|\bdelta_{\bar \nu^{n,k}}\otimes G^{\clv}(\bar L^{n,k+1})\right), \;\;  n \ge n_1, \; k \le n,
\]
and fix $n \ge n_1$, where $n_1$ is as introduced above \eqref{eq:2479ab}. We begin with the following observations.
\begin{itemize}
\item By construction,
\begin{equation}
\frac{1}{n} \sum\limits_{k=0}^{N_1-1} R^{n,k} = 0,\;\; 
\bm{1}_{\{r_1<N_1\}}\frac{1}{n} \sum\limits_{k=0}^{n-1} R^{n,k} = 0, \; \; n \in \NN.
\end{equation}
\item On $\{N_1 \le r_1\}$, 
$
\min\limits_{x \in \Delta^o} \bar L^{n, N_1}(x) > a^*
$,
which says that, with
$
a_1^* \doteq \frac{a^*}{k_1 + 2}$,
we have
\[
\min\limits_{x\in\Delta^o} \inf\limits_{N_1 \le k \le k_2} \bar L^{n, k+1}(x) \ge a^*_1.
\]
This in turn, from Assumption \ref{ass1}(2b) implies that 
\begin{equation}\label{eq:2896aa}
\min\limits_{(\bar v^{n,k},y) \in A_+} \inf\limits_{N_1 \le k \le k_2}G^{\clv}(\bar L^{n,k+1})(\bar \nu^{n,k}, \bdelta_y) \ge a_1^* \delta_0^A,
\end{equation}
 Together, \eqref{eq:1945aaa} and \eqref{eq:2896aa} imply that 
\begin{equation}\label{eq:2905aa}
\frac{1}{n} \sum\limits_{k=N_1}^{k_2-1} R^{n,k} 
= \frac{1}{n} \sum\limits_{k=N_1}^{k_2-1} R\left(\bdelta_{\bar \nu^{n,k}}\otimes Q^{\clv} \|\bdelta_{\bar \nu^{n,k}}\otimes G^{\clv}(\bar L^{n,k+1}) \right)
\le \left|\log\left(a_1^*\delta_0^A\right)\right|\frac{k_1}{n}.
\end{equation}
\item Recall from \eqref{eq:2443aa} that
$\cld^n_0 = \{N_1 \le r_1 \} \cap \{ \clj^n_0 = 1\}$,
and note that, as in \eqref{eq:2905aa}, if $\cld^n_0$ occurs, then from Construction \ref{constr} (iii)
\begin{equation}\label{eq:2912aa}
\frac{1}{n} \sum\limits_{k=0}^{n-1} R^{n,k} 
= \frac{1}{n} \sum\limits_{k=N_1}^{k_3-1} R^{n,k}
= \frac{1}{n} \sum\limits_{k=N_1}^{k_2-1} R^{n,k}
+ \frac{1}{n} \sum\limits_{k=k_2}^{k_3-1} R^{n,k}  
\le \left|\log\left(a_1^*\delta_0^A\right)\right|\frac{k_1}{n} +
\frac{1}{n} \sum\limits_{k=k_2}^{k_3-1} R^{n,k}.
\end{equation}
Additionally, from \eqref{eq:219}, 
$
\min\limits_{x\in \Delta^o} q_x > \delta/4$,
and, from \eqref{eq:conscho}, $\veps_0 < \delta/16$, so, on 
recalling the definition of $k_3$, we see that  
\[
\min\limits_{x\in \Delta^o}\inf\limits_{k_2 \le k \le k_3 - 1} \bar L^{n,k+1}(x) \ge \delta /8.
\]
Thus, for each $k \in \{ k_2, \dots, k_3 - 1\}$, with $\delta_1$ defined in \eqref{eq:impconst}, 
\begin{equation}\label{eq:2933aa}
R^{n,k} = R\left(\bdelta_{\bar \nu^{n,k}}\otimes Q^{\clv}  \|\bdelta_{\bar \nu^{n,k}}\otimes G^{\clv}(\bar L^{n,k+1})\right) \le |\log\delta_1 |.
\end{equation}
From \eqref{eq:2912aa} and \eqref{eq:2933aa} we obtain
\begin{align}\label{eq:2951aa}
\bm{1}_{\cld^n_0} \frac{1}{n} \sum\limits_{k=0}^{n-1} R^{n,k} \le \left|\log\left(a_1^*\delta_0^A\right)\right|\frac{k_1}{n} + |\log\delta_1|\frac{m^n_0}{n}.
\end{align}
Using Lemma \ref{lem:mtpsieasymptotics} along with \eqref{eq:954b} and \eqref{eq:2951aa} we see that  
\begin{equation}\label{eq:1201}
\limsup_{n\to \infty}  E \left(\bm{1}_{\cld_0^n} \frac{1}{n} \sum\limits_{k=0}^{n-1}
R\left(\bdelta_{\bar \nu^{n,k}}\otimes\bar \mu^{n,k+1} \|\bdelta_{\bar \nu^{n,k}}\otimes G^{\clv}(\bar L^{n,k+1})
\right)\right) \le |\log \delta_1|\veps_1.
\end{equation}
\item On $\cle^n_0$, 
a calculation similar to the one above shows that
\begin{align}\label{eq:2965aa}
\frac{1}{n} \sum\limits_{k=0}^{m^n_0-1} R^{n,k}
&= \frac{1}{n} \sum\limits_{k=0}^{N_1-1} R^{n,k}
+ \frac{1}{n} \sum\limits_{k=N_1}^{k_2-1} R^{n,k}
+ \frac{1}{n} \sum\limits_{k=k_2}^{m^n_0-1} R^{n,k}\nonumber\\
&\le \left|\log\left(a_1^*\delta_0^A\right)\right|\frac{k_1}{n} + |\log\delta_1|\frac{m^n_0}{n}.
\end{align}
\item Next, on $\cld^n_1 \doteq \cle^n_0 \cap \{ \clj^n_1 = 1\}$, 
since
$\min\limits_{x\in \Delta^o} \bar L^{n,m^n_0  -1}(x) > \delta/8$, we have
\[
\inf\limits_{x\in \Delta^o} \inf\limits_{0 \le k \le k^*}\bar L^{n,m^n_0  + k}(x)  \ge \frac{ \delta}{8(k^* + 2)}.
\]
Thus, using \eqref{eq:2965aa} we see that, since $\tau^{n,0} \ge k^*$  and $\bm{1}_{\cld^n_1}R^{n,k} = 0$ for all $k \in \{m^n_0 + \tau^{n,0}, \dots, n-1\}$, we have, on $\cld^n_1$, that 
\begin{align}
\frac{1}{n} \sum\limits_{k=0}^{n-1} R^{n,k}
&= \frac{1}{n} \sum\limits_{k=0}^{m^n_0-1} R^{n,k}
+ \frac{1}{n} \sum\limits_{k=m^n_0}^{m^n_0+k^*-1} R^{n,k}
+ \frac{1}{n} \sum\limits_{k=m^n_0+k^*}^{m^n_0+\tau^{n,0}-1} R^{n,k} 
 + \frac{1}{n} \sum\limits_{k=m^{n}_0 + \tau^{n,0}}^{n-1} R^{n,k} \nonumber\\
&\le \left|\log\left(a_1^*\delta_0^A\right)\right|\frac{k_1}{n}+ |\log\delta_1|\frac{m^n_0}{n} + 
\left|\log\left( \frac{\delta_1}{k^*+2} \right)\right|\frac{k^*}{n}
+ \frac{1}{n} \sum\limits_{k=m^n_0+k^*}^{m^n_0+\tau^{n,0}-1} R^{n,k}.
\end{align}
 Now, recall from  \eqref{eq:2013aa} that
$\inf\limits_{x\in\Delta^o}\beta^0_{(1)}(x) \ge \delta$, and from \eqref{eq:219} that $\inf\limits_{x\in\Delta^o} q(x) \ge \delta/4$,
which, from the definition of $\tau^{n,0}$ in \eqref{eq:taun0def} and the fact that $\veps_0 \le \delta/16$, says that
\begin{equation}\label{eq:848nn}
\inf\limits_{x\in\Delta^o}\inf\limits_{m^n_0 + k^* \le k \le m_0^n + \tau^{n,0}-1} \bar L^{n, k+1}(x) \ge \delta /8.
\end{equation}
It then follows that
\[
\frac{1}{n} \sum\limits_{k=m^n_0+k^*}^{m^n_0+\tau^{n,0}-1} R^{n,k} \le
|\log\delta_1|, \quad R^{n,m^n_0+\tau^{n,0}} \le \left| \log \left(\frac{\delta_1}{2}\right)\right|
\]
and consequently, on $ \cld^n_1$,
\begin{align*}
  \frac{1}{n} \sum\limits_{k=0}^{n-1} R^{n,k} 
  \le \left|\log\left(a_1^*\delta_0^A\right)\right|\frac{k_1}{n}+
|\log\delta_1|\frac{m^n_0}{n} + 
\left|\log\left(\frac{\delta_1}{k^* + 2}\right)\right|\frac{k^*}{n} + |\log\delta_1| + \frac{\left| \log\left( \frac{\delta_1}{2}\right)\right|}{n}.
\end{align*}
Thus, on using 
\eqref{eq:236n}, we see that 
\begin{equation}\label{eq:910nn}
\limsup_{n\to \infty}  E \left(\bm{1}_{\cld_1^n} \frac{1}{n} \sum\limits_{k=0}^{n-1}
R\left(\bdelta_{\bar \nu^{n,k}}\otimes\bar \mu^{n,k+1} \|\bdelta_{\bar \nu^{n,k}}\otimes G^{\clv}(\bar L^{n,k+1})
\right)\right)
\le 2|\log \delta_1| \veps_1.
\end{equation}

\item By a similar calculation, on $\cle^n_1$,
\begin{multline}
\frac{1}{n} \sum\limits_{k=0}^{m^n_0+ l^{n,0}-1} R^{n,k}
= \frac{1}{n} \sum\limits_{k=0}^{m^n_0-1} R^{n,k}
+ \frac{1}{n} \sum\limits_{k=m^n_0}^{m^n_0+k^*-1} R^{n,k}
+ \frac{1}{n} \sum\limits_{k=m^n_0+k^*}^{m^n_0+ l^{n,0}-1} R^{n,k}\\
\le \left|\log\left(a_1^*\delta_0^A\right)\right|\frac{k_1}{n}+ |\log\delta_1|\frac{m^n_0}{n} + 
\left|\log\left(\frac{\delta_1}{k^* + 2 }\right)\right|\frac{k^*}{n}
+ \frac{1}{n} \sum\limits_{k=m^n_0+k^*}^{m^n_0+ l^{n,0}-1} R^{n,k}.\label{eq:1101}
\end{multline}
Next, recalling the relationship between $\{\bar{X}^{n}_{k}, \; k \le n\}$ and $\{ \bar v^{n,k}, \; k \le n\}$, note that 
\begin{align}
\frac{1}{n} \sum\limits_{k=m^n_0+k^*}^{m^n_0+ l^{n,0}-1} R^{n,k}
&= \frac{1}{n} \sum\limits_{k=m^n_0+k^*}^{m^n_0+ l^{n,0}-1} R\left(\bdelta_{\bar{X}^{n}_{k}}\otimes \beta^0_{2|1}(\cdot \mid \bar{X}^{n}_{k}) \|\bdelta_{\bar{X}^{n}_{k}}\otimes G(\bar L^{n,k+1}) \right)\nonumber\\
&= \frac{1}{n} \sum\limits_{k=m^n_0+k^*}^{m^n_0+ l^{n,0}-1} \sum_{x \in \Delta^o} 
\bm{1}_{\{\bar{X}^{n}_{k}=x\}} h_x(\bar L^{n,k+1})\label{eq:102n}
\end{align}
where
\begin{equation}
h_x(m) \doteq R\left( \beta^0_{2|1}(\cdot \mid x) \| G(m)(x, \cdot)\right), \; \; x \in \Delta^o, m \in \clp(\Delta^o).
\end{equation}

Note that for $m,m' \in \clm_{\delta/8} \doteq \{\pi \in \clp(\Delta^o): \inf_{x \in \Delta^o}\pi_x > \delta/8\}$,
\begin{equation}\label{eq:852nn}
	|h_x(m) - h_x(m')| \le \delta_1^{-1} \|m-m'\|.\end{equation}
Additionally, on $\cle^n_1$, for each $k \in \{m^n_0 + k^*, \dots, m^n_0 + l^{n,0} - 1\}$, we have that $\bar L^{n,k+1} \in \clm_{\delta/8}$, which, together with \eqref{eq:852nn}, ensures that 
\begin{align*}h_x(\bar L^{n,k+1}) &= h_x(\bar L^{n}(t_k)) = h_x(\bar L^{n}(t_n-T + (t_k-t_n+T)))\\
&= h_x(\hat M^n(t_k - (t_n-T))) \le  h_x(\hat M(t_k - (t_n-T))) + \delta_1^{-1} \sup_{t \in [0,T]} \|\hat M^n(t)-\hat M(t)\|,
\end{align*}
for each such $k$. Thus,
\begin{equation}
\frac{1}{n} \sum\limits_{k=m^n_0+k^*}^{m^n_0+ l^{n,0} - 1} R^{n,k} 
\le \frac{1}{n} \sum\limits_{k=m^n_0+k^*}^{m^n_0+ l^{n,0} - 1} \sum_{x \in \Delta^o} 
\bm{1}_{\{\bar{X}^{n}_{k}=x\}} h_x(\hat M(t_k - (t_n-T))) +\delta_1^{-1} \sup_{t \in [0,T]} \|\hat M^n(t)-\hat M(t)\|.\label{eq:1102}
\end{equation}
Next, for $k \in \{m^n_0 + k^* ,\dots, m^n_0 + l^{n,0} - 1\}$, let
\[
H_{k,x} \doteq \frac{1}{k^*+1} \sum_{j=0}^{k^*} h_x(\hat M(t_{j+k} - (t_n-T))).
\]
Then, using \eqref{eq:1233} and \eqref{eq:2479ab}, we see that, for each $k \in \{m^n_0 + k^* ,\dots, m^n_0 + l^{n,0} - 1\}$,
\
\begin{equation}\label{eq:2930aaa}
\begin{split}
H^*_{k,n}& \doteq \sup\limits_{x\in \Delta^o} |H_{k,x}-h_x(\hat M(t_k - (t_n-T)))|\\
& \le \delta_1^{-1}
\max_{0\le j \le k^*} \|\hat M(t_{j+k} - (t_n-T)) - \hat M(t_k - (t_n-T))\|\\
&\le 2 \delta_1^{-1} | t_{m^n_0 + 2k^*} - t_{m^n_0 + k^*}|
\le  \delta_1^{-1} \veps_0.
\end{split}
\end{equation}
Using this estimate we see that 
\begin{equation}\label{eq:3083aa}
\begin{split}
&\frac{1}{n} \sum\limits_{k=m^n_0+k^* }^{m^n_0+ l^{n,0} - 1} \sum_{x \in \Delta^o} 
\bm{1}_{\{\bar{X}^{n}_{k}=x\}} h_x(\hat M(t_k - (t_n-T)))  \\
&\le \frac{1}{n} \sum\limits_{k=m^n_0+k^*}^{m^n_0+ l^{n,0} - 1} \sum_{x \in \Delta^o} 
\bm{1}_{\{\bar{X}^{n}_{k}=x\}} H_{k,x}+ \frac{1}{n}\sum\limits_{k=m^n_0+k^*}^{m^n_0+l^{n,0}-1} H^*_{k,n}\\
&\le \frac{1}{n} \sum\limits_{k=m^n_0+k^*}^{m^n_0+ l^{n,0} - 1} \sum_{x \in \Delta^o} 
\bm{1}_{\{\bar{X}^{n}_{k}=x\}} H_{k,x}+ \delta_1^{-1} \veps_0.
\end{split}
\end{equation}
Furthermore, on recalling, from \eqref{eq:219}, that
$
\sup\limits_{x \in \Delta^o}\sup\limits_{t \in [0,T]} h_x( \hat M(t)) \le | \log \delta_1|
$,
we see that
\begin{equation}\label{eq:3111aa}
\begin{aligned}	
&\frac{1}{n} \sum\limits_{k=m^n_0+k^*}^{m^n_0+ l^{n,0} - 1} \sum_{x \in \Delta^o} 
\bm{1}_{\{\bar{X}^{n}_{k}=x\}} H_{k,x}\\
&\le \frac{1}{n} \sum\limits_{r=m^n_0+2k^* }^{m^n_0+ l^{n,0} - 1} \sum_{x \in \Delta^o} 
\left( \frac{1}{k^*+1} \sum_{j=0}^{k^*} \bm{1}_{\{\bar X^{n}_{r-j}=x\}}\right) h_x(\hat M(t_{r} - (t_n-T))) + \frac{2k^*}{n} |\log \delta_1|,
\end{aligned}
\end{equation}
and, from  \eqref{eq:236nb},  we have that, for each $r \in \{m^n_0 + 2k^*, \dots, m^n_0 + l^{n,0} - 1\}$,
\begin{equation}\label{eq:3119aa}
\sum_{x \in \Delta^o} E \left|  \frac{1}{k^*+1} \sum_{j=0}^{k^*} \bm{1}_{\{\bar X^{n}_{r-j}=x\}} - \beta^0_{(1)}(x)\right| \le \veps_0.
\end{equation}
From \eqref{eq:3083aa}, \eqref{eq:3111aa}, and \eqref{eq:3119aa}, we see that
\begin{align}
&\frac{1}{n} \sum\limits_{k=m^n_0+k^*}^{m^n_0+ l^{n,0} - 1} \sum_{x \in \Delta^o} 
\bm{1}_{\{\bar{X}^{n}_{k}=x\}} h_x(\hat M(t_k - (t_n-T)))\nonumber\\
&\quad \le \frac{1}{n} \sum\limits_{r=m^n_0+2k^*}^{m^n_0+ l^{n,0} -1} \sum_{x \in \Delta^o}
 \beta^0_{(1)}(x)h_x(\hat M(t_{r} - (t_n-T))) + \clr^{n} \label{eq:859nn}
\end{align}
where
\begin{equation}
E|\clr^n| \le  \left(  \veps_0  + \frac{2 k^* }{n} \right) |\log \delta_1| +  \veps_0 \delta_1^{-1} . 
\label{eq:1103}\end{equation}
Next, letting $u^n_1 \doteq t_{m^n_0+ 2k^*}$, we have that
\begin{align}
&\frac{1}{n} \sum\limits_{r=m^n_0+2k^*}^{m^n_0+ l^{n,0}-1} \sum_{x \in \Delta^o} 
\beta^0_{(1)}(x)h_x(\hat M(t_{r} - (t_n-T))) \nonumber\\
&\quad \le \frac{1}{n} \int_{u^n_1}^{t_{m^n_1}} \psi_e(s) \sum_{x \in \Delta^o} h_x(\hat M(a_n(s)-(t_n-T)))
\beta^0_{(1)}(x) ds \nonumber\\
&\quad \le \frac{1}{n} \int_{u^n_1}^{t_{m^n_1}} \psi_e(s) \sum_{x \in \Delta^o} h_x(\hat M(s-(t_n-T)))
\beta^0_{(1)}(x) ds +  \veps_0 \delta_1^{-1} \nonumber \\
&\quad \le \frac{1}{n}\int_{t_n-T}^{t_n-T+c}\psi_e(s) \sum_{x \in \Delta^o} h_x(\hat M(s-(t_n-T)))
\beta^0_{(1)}(x) ds +  \veps_0 \delta_1^{-1}\nonumber\\
&\quad =\frac{1}{n}\int_{0}^{c}\psi_e(t_n-(T-s)) \sum_{x \in \Delta^o} h_x(\hat M(s))
\beta^0_{(1)}(x) ds +  \veps_0 \delta_1^{-1},\label{eq:1104}
\end{align}
where the second inequality uses \eqref{eq:1233}, \eqref{eq:2055aa}, and \eqref{eq:852nn}.
Also, from Lemma \ref{lem:mtpsieasymptotics} and recalling the definition of
$h_x$ and $\beta^0$, 
\begin{align}
&\lim\limits_{n\to\infty}\frac{1}{n}\int_{0}^{c}\psi_e(t_n-(T-s)) \sum_{x \in \Delta^o} h_x(\hat M(s))
\beta^0_{(1)}(x) ds \nonumber \\
&\quad = e^{-T}\int_{0}^{c}e^s \sum_{x \in \Delta^o} \beta^0_{(1)}(x) 
R\left( \beta^0_{2|1}(\cdot \mid x) \| G(\hat M(s))(x, \cdot)\right) ds\nonumber\\
&\quad = e^{-T}\int_{0}^{c}e^s R\left( \hat\eta(  s) \| \hat \eta_{(1)}( s) \otimes G(\hat M(s))\right) ds.
\label{eq:1105}
\end{align}
Finally, using the fact that  on $\cle^n_1$, from \eqref{eq:848nn}, $\bar L^{n, m^n_0 + l^{n,0}} \in \clm_{\delta/8}$, we see that 
\begin{equation}\label{eq:3183aa}
\begin{split}
R^{n, m^n_0 + l^{n,0} } \le   | \log (\delta_1/2)|.
\end{split}
\end{equation}
\item 
Combining the estimates in \eqref{eq:1101}, \eqref{eq:1102}, \eqref{eq:859nn},
\eqref{eq:1103}, \eqref{eq:1104}, and \eqref{eq:3183aa}, we see that 
\begin{align}
&\limsup_{n\to \infty} E \left(\bm{1}_{\cle^n_1} \frac{1}{n} \sum\limits_{k=0}^{m^n_0+ l^{n,0}}
R\left(\bdelta_{\bar \nu^{n,k}}\otimes\bar \mu^{n,k+1} \|\bdelta_{\bar \nu^{n,k}}\otimes G^{\clv}(\bar L^{n,k+1})\right)\right)
\nonumber\\
&\quad \le \limsup_{n\to \infty}\Bigg(
\left|\log\left(a_1^*\delta_0^A\right)\right|\frac{k_1}{n}+ |\log\delta_1|\frac{m^n_0}{n} +
\left|\log\left(\frac{\delta_1}{k^*+2}\right)\right|\left(\frac{k^*+1}{n}\right) + \left| \log\left(\frac{\delta_1}{2}\right)\right|\frac{1}{n} \nonumber \\
&\qquad + \delta_1^{-1} E\sup_{t \in [0,T]} \|\hat M^n(t)-\hat M(t)\| +  \left(\veps_0 + \frac{2k^*}{n} \right) |\log \delta_1| +   \veps_0 \delta_1^{-1}
+ \veps_0\delta_1^{-1} \nonumber\\
&\qquad + \frac{1}{n}\int_{0}^{c}\psi_e(t_n-(T-s)) \sum_{x \in \Delta^o} h_x(\hat M(s))
\beta^0_{(1)}(x) ds\Bigg)\nonumber\\
&\quad \le |\log\delta_1|e^{-T}+ \left(   |\log \delta_1|+ (2 + d_3) \delta_1^{-1} \right)\veps_0 + 2 d_4 \delta_1^{-1} \veps_1  \nonumber\\
&\qquad +  \limsup_{n\to \infty} \frac{1}{n}\int_{0}^{c}\psi_e(t_n-(T-s)) \sum_{x \in \Delta^o} h_x(\hat M(s))
\beta^0_{(1)}(x) ds\nonumber\\
&\quad \le \veps + A_1\veps_0+ B_1 \veps_1 + e^{-T}\int_{0}^{c} \exp(s) R\left( \hat\eta(  s) \| \hat \eta_{(1)}( s) \otimes G(\hat M(s))\right) ds,\label{eq:101n}
\end{align}
where the second inequality follows from Corollary \ref{cor:1101}, and  the 
third inequality follows from Lemma \ref{lem:mtpsieasymptotics}, our choice of $T$ in \eqref{eq:sizeofT}, and using \eqref{eq:impconst}, and \eqref{eq:1105}.

\item Letting, for $l \in \{1, \ldots, l_0+1\}$,
\[
\cld^n_l \doteq   \cle^n_{l-1} \cap \{ \clj_l^n = 1\},
\]
we see exactly as in the proof of \eqref{eq:1201} and \eqref{eq:910nn} , that 
\begin{align}\label{eq:3227aa}
&\limsup_{n\to \infty}  E \left(\bm{1}_{\cld_l^n} \frac{1}{n} \sum\limits_{k=0}^{n-1}
R\left(\bdelta_{\bar \nu^{n,k}}\otimes\bar \mu^{n,k+1} \|\bdelta_{\bar \nu^{n,k}}\otimes G^{\clv}(\bar L^{n,k+1})
\right)\right)\nonumber\\
 &\quad \le |\log \delta_1|(l+2) \veps_1
\le |\log \delta_1|(l_0+3) \veps_1.
\end{align}

\item 
Next, we show that, for each  $l \in \{0,\dots, l_0\}$, 
\begin{align}
&\limsup_{n\to \infty} E \left(\bm{1}_{\cle^n_{l+1}} \frac{1}{n} \sum\limits_{k=0}^{m^n_l+ l^{n,l}}
R\left(\bdelta_{\bar \nu^{n,k}}\otimes\bar \mu^{n,k+1} \|\bdelta_{\bar \nu^{n,k}}\otimes G^{\clv}(\bar L^{n,k+1})\right)\right)\nonumber\\
&\le \veps + (l+1) (A_1 \veps_0  + {B_1 \veps_1}) + e^{-T} \int_0^{(l+1)c \wedge T} e^s R\left( \hat\eta (s) \| \hat \eta_{(1)}( s) \otimes G(\hat M(s)) \right) ds.\nonumber\\
 \label{eq:105n}
\end{align}
Note that by  \eqref{eq:101n}, the statement in \eqref{eq:105n} holds for $l = 0$. Now, suppose, for some $r < l_0$, that the statement in \eqref{eq:105n} holds for all $l \in \{0, 1, \ldots r\}$.
We argue that it also holds for $l = r+1$.  We only give the argument for $r< l_0-1$, as the case when $r = l_0-1$ is treated similarly.
Note that, by our inductive hypothesis, 
\begin{multline}\label{eq:3245aa}
\limsup_{n\to \infty} E \left(\bm{1}_{\cle_{r+1}^n} \frac{1}{n} \sum\limits_{k=0}^{m^n_{r+1}+ l^{n,r+1}} R^{n,k}\right)\\
\le \limsup_{n\to \infty} E \left(\bm{1}_{\cle_{r}^n} \frac{1}{n} \sum\limits_{k=0}^{m^n_{r}+ l^{n,r}} R^{n,k} 
 + \bm{1}_{\cle_{r+1}^n} \frac{1}{n} \sum\limits_{k=m^n_{r+1}}^{m^n_{r+1}+ l^{n,r+1}} R^{n,k}\right)  \\
\le \veps + (r+1) (A_1 \veps_0  +  B_1 \veps_1) + e^{-T} \int_0^{(r+1)c } e^s R\left( \hat\eta(  s) \| \hat \eta_{(1)}( s) \otimes G(\hat M(s))\right) ds  \\
 + \limsup_{n\to \infty} E \left(\bm{1}_{\cle_{r+1}^n} \frac{1}{n} \sum\limits_{k=m^n_{r+1}}^{m^n_{r+1}+ l^{n,r+1}} R^{n,k}\right).
\end{multline}
Now, an argument along the lines of the one used for \eqref{eq:102n} -- \eqref{eq:101n} shows that, with 
\[
h^{r+1}_x(m) \doteq  R\left( \beta^{r+1}_{2|1}(\cdot \mid x) \| G(m)(x, \cdot)\right), \;\; x \in \Delta^o, m \in \clp(\Delta^o),
\]
we have 
\begin{align}
&\limsup_{n\to \infty} E \left(\bm{1}_{\cle_{r+1}^n} \frac{1}{n} \sum\limits_{k=m^n_{r+1}}^{m^n_{r+1}+ l^{n,r+1}} R^{n,k}\right)   \nonumber \\
&\quad \le A_1 \veps_0 + B_1\veps_1 +  \limsup_{n\to \infty} \frac{1}{n}\int_{t_n-T+(r+1)c}^{t_n-T+(r+2)c}\psi_e(s) \sum_{x \in \Delta^o} h^{r+1}_x(\hat M(s-(t_n-T)))\beta^{r+1}_{(1)}(x) ds \nonumber \\
&\quad \le A_1 \veps_0 + B_1\veps_1 + \limsup_{n\to \infty} \frac{1}{n} \int_{(r+1)c}^{(r+2)c}\psi_e(t_n-(T-s)) \sum_{x \in \Delta^o} h^{r+1}_x(\hat M(s))
\beta^{r+1}_{(1)}(x) ds \nonumber \\
&\quad = A_1 \veps_0 + B_1\veps_1 + e^{-T} \int_{(r+1)c}^{(r+2)c} e^s \sum_{x \in \Delta^o} h^{r+1}_x(\hat M(s)) \beta^{r+1}_{(1)}(x) ds  \nonumber\\
&\quad = A_1 \veps_0 + B_1 \veps_1 +  e^{-T} \int_{(r+1)c}^{(r+2)c} e^sR\left( \hat\eta(  s) \| \hat \eta_{(1)}( s) \otimes  G(\hat M(s))\right) ds. \label{eq:3257aa}
\end{align}
Combining the estimates in \eqref{eq:3245aa} and\eqref{eq:3257aa} we have the inequality in \eqref{eq:105n} for $l =r+1$, which proves the statement in \eqref{eq:105n} with $l=r+1$.
\item
Finally, combining  \eqref{eq:1201}, \eqref{eq:3227aa} and \eqref{eq:105n}  we see that 
\begin{align*}
&\limsup_{n\to \infty} E\left(n^{-1} \sum\limits_{k=0}^{n-1} R\left(\bdelta_{\bar \nu^{n,k}}\otimes\bar \mu^{n,k+1} \|\bdelta_{\bar \nu^{n,k}}\otimes G^{\clv}(\bar L^{n,k+1}) \right)\right)\\
&\quad \le \sum_{j=0}^{l_0+1}\limsup_{n\to \infty}  E \left(\bm{1}_{\cld_j^n} \frac{1}{n} \sum\limits_{k=0}^{n-1}
R\left(\bdelta_{\bar \nu^{n,k}}\otimes\bar \mu^{n,k+1} \|\bdelta_{\bar \nu^{n,k}}\otimes G^{\clv}(\bar L^{n,k+1})
\right)\right)\\
& \qquad + \limsup_{n\to \infty} E \left(\bm{1}_{\cle_{l_0+1}^n} \frac{1}{n} \sum\limits_{k=0}^{m^n_{l_0}+ l^{n,l_0}}
R\left(\bdelta_{\bar \nu^{n,k}}\otimes\bar \mu^{n,k+1} \|\bdelta_{\bar \nu^{n,k}}\otimes G^{\clv}(\bar L^{n,k+1})\right)\right)\\
&\quad \le C_1 \veps_1 + \veps + (l_0+1) (A_1 \veps_0 + B_1 \veps_1)\\
& \qquad + e^{-T} \int_0^{ T} \exp(s) R\left( \hat\eta( s) \| \hat \eta_{(1)}( s) \otimes G(\hat M(s))\right) ds.
\end{align*}
\end{itemize}
The result follows.
\end{proof}

\subsection{Proof of Laplace Lower Bound}\label{sec:pflowbd}
We now complete the proof of the Laplace lower bound in Theorem \ref{thm:lowbd}. Recall the Lipschitz function $F: \clp(\Delta^o) \to \RR$ and  $\veps \in (0,1)$ fixed in Section \ref{sec:prelimest}. Also recall the constant $T \in (0, \infty)$ from \eqref{eq:sizeofT} with $M^1$ chosen as in Section \ref{subsec:non-degen}. 
Let $\hat M^3 \doteq \hat M$ and $\hat \eta^3 \doteq \hat \eta $ be as constructed in 
\eqref{eq:601fin} -- \eqref{eq:219}. Also, recall the constants  $c\doteq \kappa_3$ and $l_0 \doteq \lfloor Tc^{-1} \rfloor$ associated with $\hat M$ and $\hat \eta$ defined in Section \ref{sec:constructcontrols}.
Fix $\veps_0, \veps_1$ as in \eqref{eq:conscho}.
Let the collection $\{\bar\nu^{n,k}, \bar \mu^{n,k+1}, \bar L^{n,k+1}, \; n\ge n_0, \; k \le n\}$ be given by Construction \ref{constr}.
Then, using \eqref{eq:varrep}, 
\begin{multline}\label{eq:varrepfin}
	-n^{-1}\log E \exp[-n F(L^{n+1})]
	\\
        \le E \left[F(\bar L^n(t_n) ) + n^{-1} \sum\limits_{k=0}^{n-1} R\left(\bdelta_{\bar \nu^{n,k}}\otimes\bar \mu^{n,k+1} \|\bdelta_{\bar \nu^{n,k}}\otimes G^{\clv}(\bar L^{n,k+1})\right)\right].
\end{multline}
From Corollary \ref{cor:1101},
\begin{align*}
E(F(\bar L^n(t_n) ) )= E(F(\hat M^n(T)))
\le E(F(\hat M(T)) + F_{\mbox{\tiny{lip}}} (d_3 \veps_0 + 2d_4 \veps_1)
\le F(m^3) + \veps,
\end{align*}
where the second  inequality follows on recalling that $\hat M(T) = \hat M^3(T) = m^3$
and on using \eqref{eq:conscho}.
Also, from Lemma \ref{lem:costest}, 
\begin{multline*}
\limsup_{n\to \infty}
E\left[ n^{-1} \sum\limits_{k=0}^{n-1} R\left(\bdelta_{\bar \nu^{n,k}}\otimes\bar \mu^{n,k+1} \|\bdelta_{\bar \nu^{n,k}}\otimes G^{\clv}(\bar L^{n,k+1})\right)\right]\\
\le e^{-T} \int_0^T \exp(u) R\left( \hat \eta( u) \big\| \hat \eta_{(1)}( u)\otimes G(\hat M(u)) \right) du + C_1\veps_1 + \veps + (l_0+1)( A_1 \veps_0 + B_1 \veps_1)\\
\le e^{-T} \int_0^T \exp(u) R\left( \hat \eta( u) \big\| \hat \eta_{(1)}( u)\otimes G(\hat M(u)) \right) du + 2\veps,
\end{multline*}
where the last line follows from \eqref{eq:conscho}.
Combining the last two estimates 
\begin{multline*}
\limsup_{n\to \infty}
-n^{-1}\log E \exp[-n F(L^{n+1})]\\
\le F(m^3) + e^{-T} \int_0^T e^u R\left( \hat \eta( u) \big\| \hat \eta_{(1)}( u)\otimes G(\hat M(u) )\right) du + 3\veps\\
\le \inf_{m \in \clp(\Delta^o)} [F(m) + I(m)] + (9+2L_G) \veps, 
\end{multline*}
where the last line is from \eqref{eq:601fin}. Since $\veps>0$ is arbitrary, the proof of Theorem \ref{thm:lowbd} is complete.
  \hfill \qed

 \section{Compactness of Level Sets}\label{sec:levelset}
 In this section we show that the function $I_A$ defined in \eqref{def:ratefunction1}, for each fixed $A \in \cla$,  is a rate function.
  
\begin{proposition}\label{lem:compactlevelsets}
{For each $A \in \cla$, the  function $I_A$ defined in \eqref{def:ratefunction1} is a rate function. Namely, for each $k \in (0,\infty)$, the set
 $S_k = \{m \in \clp(\Delta^o): I_A(m)\le k\}$ is compact in $\clp(\Delta^o)$.}
 \end{proposition}
 \begin{proof}
    {Fix $A \in \cla$. Since $A$ is fixed, we write $I$ in place of $I_A$.} Since Let $\{m_n, \; n \in \NN\}$ be a sequence in $S_k$. Since $\clp(\Delta^o)$ is compact, $\{m_n, \; n \in \NN\}$ converges along a subsequence to some limit point $m \in \clp(\Delta^o)$. It suffices to show that $m \in S_k$. Since $m_n \in S_k$, for each $n \in \NN$ we can find $\eta^n \in \clu(m_n)$
 such that 
 \begin{equation}\label{eq:935}
 \int_0^{\infty} \exp(-s) R\left(\eta^n( s) \| \eta^n_{(1)} ( s) \otimes G(M^n(s)) \right) ds
 \le I(m_n) +n^{-1}  \le k +n^{-1},
 \end{equation}
 where $M^n$ solves $\clu(m^n, \eta^n)$. For each $n \in \NN$, define  $\hat \lambda^n, \hat \rho^n \in \clp( \Delta^o \times \Delta^o \times \RR_+)$ as, for $t \in \RR_+$ and $x,y \in \Delta^o$,
 \begin{align*}
 \hat \lambda^n(\{x\}\times \{y\}\times [0,t]) &= \int_0^t \exp(-s) \eta^n(x, y \mid s) ds\\
 \hat \rho^n(\{x\}\times \{y\}\times [0,t]) &= \int_0^t \exp(-s) \eta^n_{(1)} (x \mid s)G(M^n(s))(x, y) ds.
 \end{align*}
 Since $\Delta^o$ is compact and $\hat \lambda^n_{(3)}(ds) = \hat \rho^n_{(3)}(ds) = \exp(-s) ds$ for each $n \in \NN$, it follows that the sequences $\{\hat \lambda^n, \; n\in \NN\}$, $\{\hat \rho^n, \; n \in \NN\}$ are tight in 
 $\clp(\Delta^o  \times \Delta^o \times \RR_+)$. Consider a further subsequence (of the subsequence along which $m^n$ converges) along which $\hat \lambda^n$
 and $\hat \rho^n$ converge to $\hat \lambda$ and $\hat \rho$, respectively, and relabel this subsequence once more as $\{n\}$.
 Note that, for each $n \in \NN$, since $M^n$ solves $\clu(m^n, \eta^n)$, we have, for $t \in \RR_+$,
 $$M^n(t) = m^n - \int_0^t \eta^n_{(1)}(s) ds + \int_0^t M^n(s) ds.$$
 A straightforward calculation shows that, for each $n \in \NN$, 
 $\|M^n(t) - M^n(s)\| \le 2 (t-s)$ for all $0 \le s \le t <\infty$,
 from which it follows that $\{M^n, \; n \in \NN \}$ is relatively compact in
 $C(\RR_+:\clp(\Delta^o))$. Assume without loss of generality (by selecting a further subsequence if needed) that $M^n \to M$ in 
 $C(\RR_+:\clp(\Delta^o))$ as $n\to \infty$.  Note that we can write, for  $t \in \RR_+$ and $x \in \Delta^o$,
 $$M^n(t)(x) = m^n(x) - \int_0^t \exp(s) \hat \lambda^n_{(1,3)}(\{x\} \times  ds) + \int_0^t M^n(s)(x) ds.$$
 Sending $n \to \infty$ in the previous display, we get, for each $t \in \RR_+$,
 \begin{equation}\label{eq:923}
 M(t)(x) = m(x) - \int_0^t \exp(s) \hat \lambda_{(1,3)}(\{x\} \times  ds) + \int_0^t M(s)(x) ds.
 \end{equation}
 Furthermore, since $\hat \lambda_{(3)}(ds) = \exp(-s) ds$, we can disintegrate $\hat \lambda$ as 
 \begin{equation}\label{eq:disinhatgam}
 \hat \lambda(\cdot \times ds) = \hat\eta(\cdot\mid s) \exp(-s) ds,
 \end{equation}
 where
 $s \mapsto \hat\eta(s) \doteq \hat \eta(\cdot \mid s)$ is a measurable map from $\RR_+$ to $\clp(\Delta^o\times \Delta^o)$. 
 Also, for $x,y \in \Delta^o$ and $s \in \RR_+$, we can disintegrate $\hat \eta(s)(x,y)$ as $\hat \eta_{(1)}(x\mid s) \hat \eta_{2|1}(x,y \mid s)$.
 With this observation and \eqref{eq:923}, we have, for $t \in \RR_+$,
 $$
 M(t) = m - \int_0^t  \hat \eta_{(1)}(s) + \int_0^t M(s) ds.
 $$
Since $\eta^n \in \clu(m^n)$, we have that, with 
$$\hat \beta^n(\{x\} \times \{y\}\times [0,t]) \doteq \int_0^t  \eta^n(x,y \mid s) ds, \; x,y \in \Delta^o, t \in \RR_+,$$
 \eqref{P1} holds with $\beta$ replaced with $\hat \beta^n$ for all $n \in \NN$. Letting
 $$\hat \beta(\{x\} \times \{y\}\times [0,t]) \doteq \int_0^t \hat \eta(x,y \mid s) ds, \; x,y \in \Delta^o, t \in \RR_+,$$
 we have on sending $n \to \infty$, and recalling the convergence $\hat \lambda^n \to \hat \lambda$, that \eqref{P1} holds with $\beta$ replaced with $\hat \beta$. Consequently, Property \ref{prop:z1}(a) holds.

 Next, since $\hat \eta^n \in \clu(m^n)$, we have that, for each $n \in \NN$, \eqref{P2} holds with $\eta$
 replaced with $\eta^n$. This says that, for each $n \in \NN$,
 $$\hat \lambda^n(\{x\} \times \Delta^o \times [0,t]) =
 \hat \lambda^n(\Delta^o \times \{x\} \times  [0,t]), \;\; t \in \RR_+, x \in \Delta^o.$$
 Sending $n\to \infty$, recalling the convergence $\hat \lambda^n \to \hat \lambda$, and the definition of $\hat \eta$, we now see that \eqref{P2} holds with $\eta$
 replaced with $\hat \eta$ as well, thereby ensuring that Property \ref{prop:z1}(b)  holds.
 
 Next note that, since $\eta^n \in \clu(m^n)$, for each $n \in \NN$, there is some $\MP^n \in C([0, \infty): \clp(\Delta^o\times \Delta^o))$ such that Property \ref{prop:z1}(c) holds  with $(\MP , M)$ replaced with $(\MP^n, M^n)$. Note that this in particular says that $\{\MP^n, \; n \in \NN\}$ is tight in $C(\RR_+: \clp(\Delta^o\times \Delta^o))$. Thus, by considering a further subsequence if needed, we can assume without loss of generality that $\MP^n$ converges to $\MP$
 as $n \to \infty$ in $C(\RR_+: \clp(\Delta^o\times \Delta^o))$.
 It is easily checked that Property \ref{prop:z1}(c)  holds for $(\MP, M)$.

 Together, the above observations say that $\hat \eta$ satisfies Property \ref{prop:z1}, showing that
 \begin{equation}\label{eq:940}
 \hat \eta \in \clu(m) \mbox{ and } M \mbox{ solves } \clu(m, \hat \eta).
 \end{equation}
 
 Next, from the definition of $\{ (\hat \rho^n, \hat \lambda^n), \; n \in \NN\}$ and \eqref{eq:935}, we see that $R(\hat \lambda^n \| \hat \rho^n) \le k+1/n$ for each $n \in \NN$. Using the fact that  
 $(\hat \lambda^n, \hat \rho^n) \to (\hat \lambda, \hat \rho)$ as $n \to \infty$ and the lower semicontinuity of relative entropy, we have on sending $n\to \infty$ that
 $R(\hat \lambda \| \hat \rho) \le k$.
 We now claim that
 \begin{equation}\label{eq:33193319}
 \hat \rho(\{x\}\times \{y\}\times [0,t]) = \int_0^t \exp(-s) \hat\eta_{(1)} (x \mid s)G(M(s))(x, y) ds, \; t \in \RR_+,\; x,y \in \Delta^o.
 \end{equation}
 Fix $t \in \RR_+$ and $x,y \in \Delta^o$. Since $\hat \lambda^n_{(1,3)} \to \hat \lambda_{(1,3)}$  as $n \to \infty$, we have on using the continuity of $G$ and $M$ that, as $n \to \infty$,
 $$
 \int_0^t \exp(-s) \hat\eta^n_{(1)} (x \mid s)G(M(s))(x, y) ds
 \to \int_0^t \exp(-s) \hat\eta_{(1)} (x \mid s)G(M(s))(x, y) ds.
 $$
 Also,
 $$
 \int_0^t \exp(-s) \hat\eta^n_{(1)} (x \mid s)|G(M^n(s))(x, y) -G(M(s))(x, y)|  ds
 \le L_G \sup_{0\le s \le t} \|M^n(s) - M(s)\| \to 0, \; \mbox{ as } n \to \infty.
 $$
 Combining the last two observations we have that, as $n \to \infty$,
 \begin{align*}
 \hat \rho^n(\{x\}\times \{y\}\times [0,t]) &= \int_0^t \exp(-s) \eta^n_{(1)} (x \mid s)G(M^n(s))(x, y) ds\\
 &\to 
 \int_0^t \exp(-s) \eta_{(1)} (x \mid s)G(M(s))(x, y) ds.
 \end{align*}
 Combining this with the fact that $\hat \rho^n \to \hat \rho$ as $n \to \infty$, we now have \eqref{eq:33193319}.
 Finally, on combining \eqref{eq:disinhatgam} with \eqref{eq:33193319} and using the chain rule for relative entropies, we have
 $$
 k \ge R(\hat \lambda \| \hat \rho) = \int_0^{\infty} \exp(-s) 
 R\left(\hat\eta(\cdot \mid s) \| \hat\eta_{(1)} (\cdot \mid s) \otimes G(M(s))(\cdot, \cdot)\right) ds.
 $$
 Combining this with \eqref{eq:940} we now see that $I(m) \le k$. The result follows. 
 \end{proof}

 \section{Examples}\label{sec:examp}
 In Section \ref{sec:statres} (see Example \ref{ex:operatorexamples}) we presented one important setting where the conditions of Theorem \ref{thm:ldp1} are met.
 In this section we provide several other examples for which Theorem \ref{thm:ldp1} holds.

	 \begin{enumerate}
	\item \label{ex:sec8example1} Suppose that $P^o \in \clk(\Delta^o)$ is irreducible and $G(m) = P^o$ for all $m \in \Delta^o$. Clearly this $G$ satisfies Assumption \ref{ass1} with $A$ defined as  {$A_{x,y} = \bm{1}_{\{P^o(x,y)>0\}}$}.
 Theorem \ref{thm:ldp1} in this case is exactly the large deviation principle for empirical measures of irreducible finite state Markov chains (cf. \cite{donvar1,  donvar3}). To see that the rate function $I$ given in \eqref{def:ratefunction1} coincides  with the well known formula \eqref{eq:donvarrf},
	we note the following. 
	The inequality $I(m) \le \tilde I(m)$ was argued in Remark \ref{rem:ivstili}.
 Conversely, suppose that $m \in \clp(\Delta^o)$ is such that $I(m) < \infty$ and that we are given a $\eta \in \clu(m)$.
	Define $\lambda \in \clp(\Delta^o\times \Delta^o)$ as $\lambda \doteq \int_0^{\infty} \exp(-s) \eta(s) ds$ and observe that
	 $\lambda_{(1)} = \lambda_{(2)}$. Also, if $M$ solves $\clu(m, \eta)$, then it is easily checked 
	by multiplying both sides of \eqref{P3}  by $\exp(-t)$ and integrating over $[0, \infty)$ that
	$m = \int_0^{\infty} \exp(-s) \eta_{(1)}(s) ds = \lambda_{(1)}$, namely $\lambda \in \cli(m)$, where $\cli(m)$ is defined below \eqref{eq:donvarrf}. Finally, from the convexity of relative entropy  
	\begin{align*}&\int_0^{\infty} \exp(-s) R (\eta( s)\| \eta_{(1)}( s) \otimes P^o) ds\\
	&\quad \ge R\left (\int_0^{\infty} \exp(-s) \eta( s) ds\|  \int_0^{\infty} \exp(-s) \eta_{(1)}(  s) \otimes P^o  ds\right)\\
	&\quad = R(\lambda \| m \otimes P^o)
\end{align*}
	which shows that $\tilde I(m) \le I(m)$. This proves that $I=\tilde I$. Note that when $P^o$ is replaced with $G(\cdot)$ (with a general $G$), one cannot carry out a similar convexity argument.
	 \item \label{ex:oldmodel} 
	  Let $A$ be an irreducible  adjacency matrix. Then we have $\sum_{j=1}^d A^j>0$.
	 For each $z \in \Delta^o$, let $M^z \in \clk(\Delta^o)$ be such that $M^z(x,y)>0$  if and only if $(x,y) \in A_+$. Define $G: \clp(\Delta^o) 
	 \to \clk(\Delta^o)$ as
	 $$G(m)(x,y) \doteq \sum_{z \in \Delta^o} m(z) M^z(x,y), \;\; x,y \in \Delta^o.$$
	 Clearly, Assumption \ref{ass1} part 1 and part 2(a) are satisfied. Assumption \ref{ass1} part 2(b) is also satisfied with 
	 $\delta_0^A \doteq \min_{(x,y) \in A_+} \min_{z \in \Delta^o} M^z(x,y)$.
	 Also, since $G(m)$ is irreducible for every $m \in \clp(\Delta^o)$, from Remark \ref{rem:conditions1}(3) we see that Assumption \ref{ass1} part 3 holds. Finally, 
  since $\sum_{j=1}^d A^j>0$ it follows that
  for all $m_1, \ldots m_{d} \in \clp(\Delta^o)$, $\sum_{j=1}^d G(m_1) \cdots G(m_j)>0$. Using  Remark \ref{rem:conditions1}(3) again, we see that Assumption \ref{ass1} part 4 holds as well. Thus, this family of models satisfies all the conditions of Theorem \ref{thm:ldp1}. This model can be viewed as a { generalized P\'{o}lya urn} in the following manner. Consider an urn that contains balls of $d$ different colors. Initially there is a single ball in the urn which is of color $x_0$. At each time instant a ball is selected from the urn, and then that ball, together with a new ball (of possibly different color), is added back to the urn according to the following probabilistic rule.
	 Given that the  ball drawn at time instant $n$ is of color $z$ and the new ball added at time instant $n-1$ was of color $x$, we return the drawn ball to the urn (namely the ball with color $z$) and add a new ball to the urn of color $y$ with probability $M^z(x,y)$.
  
\item Let $M \in \clk(\Delta^o)$ be such that $M$ is irreducible. Define
$G: \clp(\Delta^o) \to \clk(\Delta^o)$ as
$$G(m)(x,y) = \sum_{z \in \Delta^o} m(z) M(z,y) = (mM)(y), \; x,y \in \Delta^o.$$
Under the condition $M(x,y)>0$ for all $x,y\in \Delta^o$, a large deviation principle of the form in Theorem \ref{thm:ldp1} was recently established in \cite{BudWat3}. The current work shows that the above strict positivity condition can be relaxed to simply the requirement that $M$ is irreducible. To see this, it suffices to verify Assumption \ref{ass1}. Clearly, part 1 of the assumption holds. Also take $A$ to be the $d\times d$ matrix with all entries $1$. Then, part 2(a) of the assumption holds (vacuously). Also, part 2(b) holds with $\delta_0^A \doteq \min_{(x,y)\in M_+} M(x,y)$, where $M_+ = \{(x,y)\in \Delta^o \times \Delta^o: M(x,y)>0\}$.
The fixed point equation in part 3 in this case reduces to the equation $\pi^*M = \pi^*$, which, since $M$ is irreducible, has a unique solution in $\clp_+(\Delta^o)$.
Finally for part 4, note that from the irreducibility of $M$, $\inf_{x,y \in \Delta^o} \sum_{k=1}^d M^k(x,y) \doteq \alpha >0$.
Now for $k \in \NN$, by a straightforward conditioning argument it follows that, for every $x \in \Delta^o$
$$P(L^{k(d+1)}(x) =0) = P(L^{k(d+1)}(x) =0, L^{kd}(x) =0) \le (1-\alpha) P(L^{kd}(x) =0).$$
So by Borel Cantelli lemma
$P(L^{k(d+1)}(x) = 0 \mbox{ for infinitely many } k) =0$, which verifies part 4 of the assumption.

\item For each $x \in \Delta^o$ 
	  let $M^x \in \clk(\Delta^o)$ be irreducible, and let $P$ and $P^o$ be as in  Example \ref{ex:operatorexamples}. Define $G: \clp(\Delta^o) \to \clk(\Delta^o)$  as
     \begin{equation}\label{eq:137nn}
     G(m)_{x,y} = P_{x,y} + P_{x,0} (mM^x)_y, \;\;  x,y \in \Delta^o,  m \in \clp(\Delta^o).
     \end{equation}
Let $A$ be as introduced in Example \ref{ex:operatorexamples}. 
 Assumption \ref{ass1} part 1 and part 2(a) are clearly satisfied. Also, Assumption \ref{ass1}  part 2(b)
 holds with $$\delta_0^A \doteq \left(\inf_{(x,y) \in A_+} (P_{x,y}+ P_{x,0}) \right) \inf_{(x,y) \in \Delta^o \times \Delta^o} \sum_{z\in \Delta^o} M^x(z,y)$$
 which is clearly positive from the definition of $A_+$ and the irreducibility assumption on each $M^x$. From the irreducibility of $P^o$ it follows that $\sum_{k=1}^d (P^o)^k$ is strictly positive. This shows that the condition \eqref{eq:1059nn} in Remark \ref{rem:conditions1}(3) is satisfied, which, in 
 view of the discussion in the same remark, shows that Assumption \ref{ass1} parts 3 and 4 hold as well.
 Thus, Theorem \ref{thm:ldp1} holds with $G$ defined as above under the assumed conditions on $P$ and $\{M^x, \; x \in \Delta^o\}$.
 
 One family of models that fits the above setting is a variant of the Personalized PageRank (PPR) algorithm, see e.g., \cite{BOURCHTEIN2016149,XieBinDemGeh} and the references therein. 
 Consider an individual performing a random walk on the graph of webpages. Denote by $\Delta^o$ the set of webpages and, for each $x,y \in \Delta^o$, let $\clv(x,y)$ denote the number of links from webpage $x$ to webpage $y$. 
 Let, for each $x \in \Delta^o$,
 $\clv(x) \doteq \{ y \in \Delta^o : \clv(x,y) > 0 \}$
 denote the set of webpages that are linked to by webpage $x$, and  assume that  $\clv(x)$ is nonempty for each $x \in \Delta^o$. For each $x \in \Delta^o$, let $D_x^+ \doteq \sum\nolimits_{y \in \clv(x)} \clv(x,y)$
 denote the out-degree of webpage $x$.
 Consider the transition kernel $Q$ on $\Delta^o$ defined as 
 $Q_{x,y} \doteq \frac{ \clv(x,y)}{D^+_x} $, $x,y \in \Delta^o$.
 For $x \in \Delta^o$, fix a \emph{damping factor} $\alpha_x \in (0,1)$   and define $G: \clp(\Delta^o) \to \clk(\Delta^o)$ as
 \[
 G(m)(x,y) \doteq (1- \alpha_x) Q_{x,y} + \alpha_x L(m)(x,y), \quad x,y \in \Delta^o,
 \]
 where $L: \clp(\Delta^o) \to \clk(\Delta^o)$ is defined as
 $$L(m) = \theta q + (1-\theta) \sum_{z \in \Delta^o} m_z M^x(z,y)$$
 for some $\theta \in (0,1]$, $q \in \clp_+(\Delta^o)$, and $M^z \in \clk(\Delta^o)$ for $z \in \Delta^o$.
 The self-interacting chain defined using the map $G$ as above,  in the special case where
 $\alpha_x = \alpha \in (0,1)$,  $\theta = 1$, and $q_y = \frac{1}{|\Delta^o|}$ for all $y \in \Delta^o$, is the well-known PageRank (PR) Markov chain.
 A limitation of classical PR is that it does not take into consideration the user's preferences. For that reason,   variants of the PR algorithm have been proposed that account for personal preferences, see, e.g. \cite{XieBinDemGeh}.
 Such variants can be captured by a $G$ of the above form that reflects an individual's browsing history in determining transition probabilities. 
 It is easy to verify that the above $G$ can be expressed in the form \eqref{eq:137nn} 
 with $P_{x,y} = (1-\alpha_x) Q_{xy} + \theta \alpha_x q_y$ for $x,y \in \Delta^o$ and $P_{x,0} = \alpha_x(1-\theta)$,
 and that,
 under the assumption that $M^x$ is irreducible for every $x \in \Delta^o$, Assumption \ref{ass1} holds.
 \item \label{ex:edgereinforced} As noted in Example \eqref{ex:operatorexamples}, our assumptions cover certain types of vertex reinforced random walks.
 We now give an example that shows that certain variants of edge reinforced random walks are also covered by our assumptions.
 Suppose that $\clg$ is a connected undirected graph on the vertex set $\clv = \{1, \ldots, \ell\}$, with the edge set denoted as $\cle$. For $x \in \clv$, we  denote by $d(x)$ the degree of vertex $x$. Let $\tilde A$ be the incidence matrix of the graph, namely it is the $\ell\times \ell$ matrix with entries $0$ or $1$ such that
 {$\tilde A_{u,v} = \tilde A_{v,u} = 1$} if and only if $\{u, v\} \in \cle$. For simplicity of presentation, we assume that the graph has no self-loops, namely the diagonal entries of $\tilde A$ are $0$. Let $\Delta^o = \{(x,y) \in \clv \times \clv: \{x,y\} \in \cle\}$.
  For $z \in \Delta^o$, $z_i$, $i=1,2$, will denote the $i$-th coordinate of $z$.
 Fix $\{x_0, y_0\}\in \cle$ so that $\tilde A_{x_0, y_0} = 1$, and let $\delta \in (0,1)$. The latter parameter will control the strength of the reinforcement mechanism.
 
 We now define a sequence $\{X_n, \; n\in \NN_0\}$ of $\clv$-valued random variables, recursively, as follows. Let $X_0=x_0$ and $X_1=y_0$, and set $Z_0 = (X_0, X_1)$.
 Having defined $\{X_i, \; 0\le i \le n\}$ and $\{Z_i \doteq (X_i, X_{i+1}), \; 0 \le i \le n-1\}$, we now define $X_{n+1}$ 
 according to the following conditional law:
 \begin{multline}
 P(X_{n+1}=y \mid X_0, \ldots X_n) \\
 \doteq {\tilde A_{X_n,y}} \left[ \delta \hat L^{n-1}[(X_n,y)] + \frac{1}{d(X_n)} \left(1-\delta \sum_{\bar z \in \Delta^o} \hat L^{n-1}(\bar z) {\tilde A_{X_n, \bar z_2}}\bm{1}_{\{X_n = \bar z_1\}} \label{eq:958nn}
 \right)\right ],	
 \end{multline}
 where, denoting by $L^{n-1} \doteq \frac{1}{n} \sum_{i=0}^{n-1} \bdelta_{Z_i}$, and for
 $z = (z_1, z_2) \in \Delta^o$, $z^r = (z_2, z_1)$,
 $$\hat L^{n-1}(z) = \frac{1}{2n} \sum_{i=0}^{n-1} [\bdelta_{Z_i} (z) + \bdelta_{Z_i} (z^r)] = \frac{1}{2} (L^{n-1}(z) + L^{n-1}(z^r)), \; z \in \Delta^o.$$
  Now set $Z_n = (X_n, X_{n+1})$. 
 The above conditional law can be interpreted as follows. At each time instant $n\ge2$, for each neighboring site $y$, the walker jumps to site $y$ with probability $\frac{\delta}{2}$ times the fraction of time the edge connecting with that site has been traversed  (in either direction) by the walker by time $n-1$; and with the remaining probability it  selects one of the neighboring sites (including $y$) at random. Thus, the first term on the right side of \eqref{eq:958nn} captures the edge-reinforcement  mechanism.
 It is convenient to  directly describe the evolution of the sequence $\{Z_n, \; n \in \NN_0\}$. With $d\doteq |\Delta^o|$, define the $d\times d$ dimensional incidence matrix $A$ as {$A_{z, \tilde z}=1$} if and only if $z_2 = \tilde z_1$ and {$\tilde A_{\tilde z_1, \tilde z_2}=1$}. Since the graph is connected, $A$ is irreducible.
 Then, in terms of $A$, the conditional law of $Z_{n}$ can be written as
 $$P(Z_{n}=\tilde z \mid Z_0, \ldots , Z_{n-1}=z) = G(L^n)(z, \tilde z), \; z, \tilde z \in \Delta^o,$$
 where for $m \in \clp(\Delta^o)$,
 \begin{equation}
G(m)(z, \tilde z) \doteq {A_{z, \tilde z}}\left[ \delta \hat m(\tilde{z})
 + \frac{1}{d(z_2)}   \left(1-\delta \sum_{\bar z \in \Delta^o} \hat m(\bar z) {A_{z, \bar z}}\right)\right]
\end{equation}
and $\hat m(z) = \frac{1}{2}(m(z) + m(z^r))$.We now verify that  Assumption \ref{ass1} holds. Part 1 and Part 2(a) of the assumption clearly hold with the above definition of $A$. Also, since $d(z_2) \le \ell$ and $\sum_{\bar z \in \Delta^o} \hat m(\bar z) {A_{z, \bar z}} \le 1$, Part 2(b) holds with $\delta_0^A = (1-\delta)/\ell$. This observation, together with the fact that 
$A$ is irreducible,  also shows that $G(m)$ is irreducible for every $m \in \clp(\Delta^o)$ and in fact, for every $j \in \NN$
 and all  $m_1, \ldots , m_j \in \clp(\Delta^o)$,
     $G(m_1)G(m_2)\cdots G(m_j) \ge (\delta_0^A)^j A^j$, coordinate wise. These observations, in view of 
Remark \ref{rem:conditions1} (3) show that parts 3 and 4 of the assumption are satisfied as well.
Thus, Theorem \ref{thm:ldp1} holds with $G$ defined as above for the sequence $\{Z_n , \; n \in \NN_0\}$.
Note that the empirical measure $L^{n,X} \doteq \frac{1}{n+1} \sum_{i=0}^n \bdelta_{X_i}$ can be obtained from $L^{n}$ using the relation $L^{n,X}(x) = \sum_{y \in \Delta^o} L^n(x, y)$, $x \in \Delta^o$, and so, by using the contraction principle, one  also obtains a large deviation principle for $\{L^{n,X}, \; n \in \NN\}$.

 \end{enumerate}
 
 \appendix

  \section{Some Auxiliary Results}
  \begin{lemma}
  \label{lem:posit}
  Let $\{L^{k,Z}, \; k \in \NN\}$ be the sequence introduced in Section \ref{sec:constructcontrols} and $\veps_1$ be as fixed in 
  \eqref{eq:conscho}. Then, under Assumption \ref{ass1}, there is an $a^*>0$ and $r_1 \in \NN$ such that $P(N_1 > r_1) \le \veps_1$, where $N_1$ is as defined in \eqref{eq:255nn}.
   \end{lemma}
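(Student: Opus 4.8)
The plan is to show that the empirical measure $L^{k,Z}$ of the \emph{uncontrolled} self-interacting chain $\{Z_i\}$ a.s.\ eventually charges every state in $\Delta^o$ with a mass bounded below by a fixed constant, and then deduce the quantitative statement about $N_1$ by a simple limit argument. The key point is that $\{Z_i, \, i \in \NN_0\}$ is distributionally equivalent to the original chain $\{X_i\}$ of Section~\ref{sec:modeldescription} (both are built from the recursion driven by $G$ applied to the running empirical measure, started at $x_0$), so Assumption~\ref{ass1}\eqref{ass:conv-occ} applies to $\{Z_i\}$: for every $x \in \Delta^o$, $P(L^{n,Z}(x) > 0 \text{ for some } n) = 1$.

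First I would upgrade ``eventually positive'' to ``eventually bounded below by a fixed $a^*$''. Define, for $x \in \Delta^o$, the stopping time $\sigma_x \doteq \inf\{n : L^{n,Z}(x) > 0\}$, which by Assumption~\ref{ass1}\eqref{ass:conv-occ} is a.s.\ finite. Once $L^{n,Z}(x) > 0$, the count $n L^{n,Z}(x) = \sum_{i=0}^{n-1}\bm{1}_{\{Z_i = x\}}$ is a nondecreasing integer that is already $\ge 1$, so $L^{m,Z}(x) \ge 1/m$ for all $m \ge \sigma_x$; but this bound degrades to $0$, so I need a genuinely uniform lower bound. This is exactly what Remark~\ref{rem:conditions1}\eqref{rem:brouwercomment} / the proof of Lemma~\ref{lem:irred} provides under the stronger hypothesis~\eqref{eq:1059nn}; in the generality of Assumption~\ref{ass1}\eqref{ass:conv-occ} alone, the clean route is instead to set $N_0 \doteq \max_{x \in \Delta^o} \sigma_x$ (a.s.\ finite, being a finite max of a.s.\ finite random variables), observe that for $m \ge 2 N_0$ every state has been visited at least once among $Z_0,\dots,Z_{m-1}$ so $m L^{m,Z}(x) \ge 1$, hence $L^{m,Z}(x) \ge 1/m \ge 1/(2N_0)$ is \emph{not} uniform — so instead I would argue directly on the event $\{N_0 \le r\}$: on this event, for all $m$, $L^{m,Z}(x) \ge (m L^{m,Z}(x))/m$, and since each state is visited by time $r$, for $m$ in the range $[r, 2r]$ we get $L^{m,Z}(x) \ge 1/(2r)$, and for $m > 2r$ the state $x$ could in principle not be revisited. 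To avoid this last gap I instead exploit that the chain, once all states are charged, has $\min_z L^{m,Z}(z) > 0$ and the one-step transition $G(L^{m+1,Z})$ is then bounded below on $A_+$ by $\delta_0^A \min_z L^{m+1,Z}(z) > 0$ (Assumption~\ref{ass1}\eqref{ass:aux1}); combined with irreducibility of $A$ this gives a uniform positive lower bound on $\liminf_m \min_z L^{m,Z}(z)$ — precisely the content I will cite from Lemma~\ref{lem:irred} in the Appendix. Concretely: by Lemma~\ref{lem:irred} (or a direct argument using Assumption~\ref{ass1}\eqref{ass:aux1} and irreducibility of $A$), there exists a deterministic $a^* > 0$ such that $P\big(\liminf_{m\to\infty} \min_{x \in \Delta^o} L^{m,Z}(x) \ge a^*\big) = 1$, equivalently $P(N_1 < \infty) = 1$ for this $a^*$, where $N_1 = \inf\{k : L^{k,Z}(x) > a^* \text{ for all } x\}$.

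Given $P(N_1 < \infty) = 1$, the final conclusion is immediate: since $\{N_1 > r_1\} \downarrow \{N_1 = \infty\}$ as $r_1 \to \infty$, continuity of measure from above gives $\lim_{r_1 \to \infty} P(N_1 > r_1) = P(N_1 = \infty) = 0$, so we may choose $r_1 \in \NN$ with $P(N_1 > r_1) \le \veps_1$, with $\veps_1$ as fixed in \eqref{eq:conscho}. The main obstacle, and the only substantive step, is the existence of the \emph{uniform} constant $a^*$ with $P(N_1 < \infty) = 1$ — i.e.\ promoting the qualitative a.s.\ charging in Assumption~\ref{ass1}\eqref{ass:conv-occ} to a quantitative uniform-in-time lower bound on the minimal mass. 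I would handle this by a conditional Borel--Cantelli / coupling argument: on the a.s.\ event that all states are charged by some finite (random) time $N_0$, for all subsequent times the empirical measure feeds into $G$ a measure with strictly positive (though shrinking) minimum, so the transition probabilities along $A_+$-edges are bounded below by $\delta_0^A \min_z L^{k,Z}(z)$; then an argument analogous to the proof of Lemma~\ref{lem:posit}/\ref{lem:irred} shows the minimal mass cannot decay to $0$, yielding $a^*$. The remaining steps are the routine distributional-equivalence observation and the measure-theoretic limit, which require no computation.
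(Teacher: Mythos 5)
Your proposal contains a genuine gap, and it is worth comparing against the paper's much simpler route. You correctly observe that $\{Z_i\}$ has the same law as the original chain so Assumption~\ref{ass1}\eqref{ass:conv-occ} applies, and you correctly note that once you know $P(N_1 < \infty) = 1$ for a fixed $a^*$, the existence of $r_1$ follows by continuity of measure. The problem is the intermediate step you yourself flag as the ``main obstacle'': you want a \emph{deterministic} $a^* > 0$ such that $P(N_1 < \infty) = 1$, equivalently $P\big(\liminf_{m} \min_x L^{m,Z}(x) \ge a^*\big) = 1$. You claim this follows from Lemma~\ref{lem:irred} or from a ``conditional Borel--Cantelli / coupling argument'', but Lemma~\ref{lem:irred} only verifies the qualitative Assumption~\ref{ass1}\eqref{ass:conv-occ} (that each state is eventually charged a.s.), not any uniform-in-time lower bound on $\min_x L^{m,Z}(x)$. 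Under Assumption~\ref{ass1} alone no such deterministic $a^*$ with $P(N_1<\infty)=1$ is guaranteed to exist (it does when a strong LLN to a point in $\clp_+(\Delta^o)$ holds, but that is not assumed), and your sketch never substantiates it.

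The paper avoids this issue entirely by not requiring $P(N_1 < \infty) = 1$. Let $M \doteq \inf\{n : \inf_x L^{n,Z}(x) > 0\}$ and $M^a \doteq \inf\{n : \inf_x L^{n,Z}(x) > a\}$. Since $L^{n,Z}$ puts mass in multiples of $1/n$, the event $\{M = n\}$ implies $\inf_x L^{n,Z}(x) \ge 1/n > 1/(n+1)$, hence $M^{1/(n+1)} \le n$; this gives the identity $\{M < \infty\} = \bigcup_{k \ge 1} \{M^{1/k} < \infty\}$. Assumption~\ref{ass1}\eqref{ass:conv-occ} gives $P(M<\infty)=1$, so by continuity of measure from below one can choose $a^* = 1/k$ with $P(M^{a^*} < \infty) > 1 - \veps_1/2$ --- close to $1$ but not necessarily equal to $1$ --- and then a second application of continuity from below (through $\bigcup_m \{M^{a^*} \le m\} = \{M^{a^*}<\infty\}$) produces the desired $r_1$. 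Your proof tries to prove a strictly stronger (and unneeded) statement and does not close that step; the observation $\{M<\infty\}=\bigcup_k\{M^{1/k}<\infty\}$ is what makes the quantitative bound elementary.
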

 \begin{proof}
 From Assumption \ref{ass1}(4) it follows that, with
 $M(\om) \doteq \inf\{n \in \NN: \inf_{x\in \Delta^o} L^{n,Z}(\om)(x) >0\}$, we have $P(\om: M(\om)<\infty)=1$.
 For $a \in \RR_+$, let $M^a(\om) \doteq \inf\{n \in \NN: \inf_{x\in \Delta^o} L^{n,Z}(\om)(x) >a\}$.
 Note that  $\{M<\infty\} = \cup_{k=1}^{\infty} \{M^{1/k}<\infty\}$. 
 Thus, there exists an $a^*>0$ such that $P(M^{a^*}<\infty) > 1 - \veps_1/2$.
 Since $\cup_{m=1}^{\infty} \{M^{a^*}<m\} =\{M^{a^*}<\infty\}$, we can find an $r_1 \in \NN$ such that $P(M^{a^*} \le r_1) > 1-\veps_1$.
 The result follows on noting that $N_1 = M^{a^*}$.
 \end{proof}
  
   \begin{lemma}
   \label{lem:irred}
   Suppose that $G$ satisfies Assumption \ref{assu:lip} and for  some $K \in \NN$ and all $m_1, \ldots , m_K \in \clp(\Delta^o)$, and $x,y \in \Delta^o$,
    $\sum_{j=1}^K[G(m_1)G(m_2)\cdots G(m_j)]_{x,y}>0$. Then Assumption \ref{ass1}(4) is satisfied.
    \end{lemma}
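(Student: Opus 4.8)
The goal is to show that the strong irreducibility condition \eqref{eq:1059nn} implies Assumption \ref{ass1}\eqref{ass:conv-occ}, i.e., that for every $x \in \Delta^o$, almost surely the empirical measure $L^n$ eventually charges $x$. The plan is to proceed by a Borel--Cantelli argument along a subsequence of time steps of length $K$ (the integer appearing in \eqref{eq:1059nn}), exploiting the uniform lower bound that the hypothesis provides on the $K$-step transition probabilities regardless of the (history-dependent) kernels actually in force.

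First I would fix $x \in \Delta^o$ and observe that, by \eqref{eq:1059nn} and a compactness/continuity argument (the map $(m_1, \dots, m_K) \mapsto \sum_{j=1}^K [G(m_1)\cdots G(m_j)]_{z,x}$ is continuous on the compact set $\clp(\Delta^o)^K$ by Assumption \ref{assu:lip}, and by hypothesis it is strictly positive, hence bounded below away from zero), there exists $\alpha > 0$ such that for all $m_1, \dots, m_K \in \clp(\Delta^o)$ and all $z \in \Delta^o$,
\[
\sum_{j=1}^K [G(m_1)G(m_2)\cdots G(m_j)]_{z,x} \ge \alpha.
\]
Next I would translate this into a statement about the chain: conditionally on $\clf_{nK}$, starting from $X_{nK} = z$, the probability that $X_{nK+j} = x$ for at least one $j \in \{1, \dots, K\}$ is at least $\alpha$, uniformly in $z$ and in the realized empirical measures (the key point being that the kernels $G(L^{nK+1}), \dots, G(L^{nK+K})$ are $\clp(\Delta^o)$-valued regardless of the past, so the uniform bound applies). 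Consequently, letting $B_n \doteq \{X_j \ne x \text{ for all } nK < j \le (n+1)K\}$, we have $P(B_n \mid \clf_{nK}) \le 1 - \alpha$ for every $n \in \NN_0$.

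From here the argument is a standard conditional Borel--Cantelli / martingale estimate: iterating the bound gives $P\big(\bigcap_{n=0}^{N} B_n\big) \le (1-\alpha)^{N+1} \to 0$, so $P\big(\bigcap_{n=0}^{\infty} B_n\big) = 0$. On the complement of $\bigcap_n B_n$, the chain visits $x$ at some finite time instant, hence $L^m(x) > 0$ for all sufficiently large $m$; in particular $P(\text{for some } n, L^n(x) > 0) = 1$. Taking the union over the finitely many $x \in \Delta^o$ completes the proof.

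The only mildly delicate point — and the one I would write out carefully — is the very first step: extracting the uniform lower bound $\alpha$ from \eqref{eq:1059nn}. One must be careful that \eqref{eq:1059nn} as stated asserts that the \emph{matrix} $\sum_{j=1}^K G(m_1)\cdots G(m_j)$ has all strictly positive entries for each fixed tuple; upgrading this to a bound uniform over the tuple requires the continuity of $G$ (Assumption \ref{assu:lip}) together with compactness of $\clp(\Delta^o)^K$, plus the observation that a continuous strictly positive function on a compact set has a positive minimum. Everything after that is routine.
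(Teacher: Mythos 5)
Your proposal follows the same skeleton as the paper's proof: use continuity of $G$ and compactness of $\clp(\Delta^o)$ to extract a uniform positive lower bound, bound the conditional probability of avoiding $x$ over each block of $K$ steps, iterate, and conclude. The last steps (iterating the block estimate, and passing from a finite hitting time to Assumption \ref{ass1}\eqref{ass:conv-occ}) are indeed routine. But you have mislocated the delicate point. Extracting $\alpha$ by compactness is the easy part; the step you describe as routine — ``conditionally on $\clf_{nK}$ \ldots the probability that $X_{nK+j}=x$ for at least one $j\in\{1,\dots,K\}$ is at least $\alpha$ \ldots the kernels are $\clp(\Delta^o)$-valued regardless of the past, so the uniform bound applies'' — is exactly where the argument has a gap.

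Two things are wrong there. Minor: $\sum_{j=1}^K[G(m_1)\cdots G(m_j)]_{z,x}$ is the \emph{expected number of visits} to $x$ in $K$ steps of the time-inhomogeneous chain with frozen kernels $G(m_1),\dots,G(m_K)$ started at $z$, not the probability of at least one visit; since the number of visits is at most $K$, this only yields $P(B_n\mid\clf_{nK})\le 1-\alpha/K$, which is harmless but should be stated. Substantive: conditionally on $\clf_{nK}$, the block $(X_{nK+1},\dots,X_{nK+K})$ is \emph{not} a time-inhomogeneous Markov chain driven by deterministic kernels, because $L^{nK+j}$ is a function of the not-yet-realized $X_{nK+1},\dots,X_{nK+j-1}$. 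The number $\alpha$ is an infimum over deterministic tuples $(m_1,\dots,m_K)$; a pointwise bound on the realized quantity $\sum_j[G(L^{nK+1})\cdots G(L^{nK+j})]_{z,x}$ does not control $E[N_x\mid\clf_{nK}]$ or $P(B_n\mid\clf_{nK})$, since the conditional path law does not factor as a product of the realized $G$'s, and an adaptively chosen (path-dependent) sequence of kernels can in general make the expected visit count strictly smaller than the infimum over fixed sequences. To repair this one must invoke the self-interacting structure directly: $\|L^{nK+j}-L^{nK+1}\|\le 2(j-1)/(nK+j)=O(1/n)$ for $j\le K$, so by Assumption \ref{assu:lip} every realized kernel lies within $O(L_G/n)$ of the frozen kernel $G(L^{nK+1})$. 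Comparing the block's avoidance probability with that of the time-homogeneous chain driven by $G(L^{nK+1})$ (whose avoidance probability is at most $1-\alpha/K$, by the hypothesis with $m_1=\cdots=m_K=L^{nK+1}$) gives $P(B_n\mid\clf_{nK})\le 1-\alpha/K+O(1/n)$, hence $\le 1-\alpha/(2K)$ for all $n\ge n_0$; iterating from $n_0$ onward still drives $P(\bigcap_{n\ge n_0}B_n)$ to zero, which is all that is needed. In fairness, the paper compresses all of this into the phrase ``a straightforward conditioning argument,'' so your proposal is no less detailed than the published proof — but since you explicitly single out the compactness extraction as the only delicate point, it is worth knowing that the real content of the lemma is hiding in that conditioning step.
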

    
     \begin{proof}
     By continuity of $G$ and compactness of $\clp(\Delta^o)$
     $$\inf_{x,y \in \Delta^o} \inf_{m_1, \ldots m_K \in \clp(\Delta^o)} \sum_{j=1}^K [G(m_1)G(m_2)\cdots G(m_j)]_{x,y} \doteq \tilde{\veps} >0.$$
     Then by a straightforward conditioning argument it follows that, for any $x \in \Delta^o$, and $n>1$,
     \begin{align*}
     P(L^{nK, X}(x) = 0) = P(L^{(n-1)K, X}(x) = 0, L^{nK, X}(x) = 0) \le (1-\tilde{\veps}) P(L^{(n-1)K, X}(x) = 0).
     \end{align*}
     Thus, $P(L^{nK, X}(x) = 0) \le (1-\tilde{\veps})^{n-1}$ and so 
     the result follows from the Borel-Cantelli lemma.

      \end{proof}
{
The following chain rule for relative entropies is well known (cf. \cite[Corollary 2.7]{buddupbook}).
      \begin{theorem}[{\bf Chain rule for relative entropies}]\label{thm:chainrule}
         Let   $\mathcal{X}$ and $\mathcal{Y}$ be Polish spaces, and let $\sigma( dy \mid x)$ and  $\tau(dy \mid x)$ be transition kernels on $\mathcal{Y}$ given $\mathcal{X}$. Then, for each probability measure $\theta$ on $\mathcal{X}$, the function mapping $x \in \mathcal{X} \mapsto R(\sigma(\cdot \mid x) \| \tau(\cdot \mid x))$ is measurable, and 
         \begin{align*}
             \int_{\mathcal{X}} R( \sigma(\cdot \mid x) \| \tau(\cdot \mid x))\theta(dx)  = R(\theta \otimes \sigma \| \theta \otimes \tau).
         \end{align*}
      \end{theorem}
 }
\section{Commonly Used Notation}\label{sec:notationtable}

\PZ{
\subsection{Notation Used Primarily in Section \ref{sec:controlproc} and \ref{sec:lub}}\label{sec:notationtablepart1}

\renewcommand{\arraystretch}{1.5}
\begin{tabular}{|c|p{0.9\textwidth}|}
\hline
\textbf{Symbol} & \textbf{Description} \\
\hline
$X_n$      & self-interacting Markov chain, \eqref{eq:rmcdynamic1} \\
\hline
$L^n $ & empirical measure, \eqref{eq:occmzr} \\
\hline
$\nu^k$ & iid $\clv^d$-valued random field, above \eqref{eq:defL}  \\
\hline
$\bar{\mu}^{n,k}$ &  control measure, beginning of Section \ref{sec:contrepx} \\
\hline
$\bar{\nu}^{n,k}$ & control  with conditional law $\bar{\mu}^{n,k}$,  \eqref{eq:condlawnu}\\
\hline 
$\bar{X}^n_k$ &  controlled analogue of $X^n_k$, below \eqref{eq:condlawnu}\\
\hline 
$\bar{L}^{n,k}$ & controlled analogue of $L^n$, \eqref{eq:barlnk} \\
\hline
$\bar{L}^{n,k}_{\text{pair}}$ & two-step controlled analogue of $L^n$, \eqref{eq:bartnk2} \\
\hline
$\bar\Lambda^n$ & $\bar \Lambda^n(\cdot \mid t) = \bdelta_{\bar \nu^{n,k+1}}(\cdot)$,  \eqref{eq:576new}\\
\hline
$\bar{\xi}^n$ & $\bar \xi^n(\cdot \mid t) = \bdelta_{\bar \nu^{n,k}} \otimes  \bar \mu^{n,k+1}(\cdot)$,  \eqref{eq:576new} \\
\hline
$\bar{\Xi}^n$ & $\bar \Xi^n(\cdot \mid t) = \bdelta_{\bar \nu^{n,k+1}}  \otimes \bdelta_{\bar \nu^{n,k+2}}(\cdot)$,  \eqref{eq:576new} \\
\hline
$\bar\zeta^n$ &
	$\bar \zeta^n\left(\cdot\mid t,m\right) = \bdelta_{\bar \nu^{n,k}} \otimes G^{\clv}(m)(\cdot)$, \eqref{eq:zetadef3} \\
\hline 
$\lambda^n$ & $\lambda^n(\{\bdelta_x\} \times [0,t]) = n^{-1} \int_{0}^{t_n \wedge t} \psi_e(t_n-s) \bar \Lambda^n(\bdelta_x \mid t_n-s) ds$, \eqref{eq:525}\\
\hline
$\beta^n$ & $\beta^n(\{\bdelta_x\} \times \{\bdelta_y\}\times [0,t]) =  n^{-1} \int_{0}^{t_n \wedge t} \psi_e(t_n-s)\bar \xi^n((\bdelta_x, \bdelta_y)  \mid t_n-s) ds$, \eqref{eq:525} \\
\hline
$\rho^n$ & $\rho^n(\{\bdelta_x\} \times \{\bdelta_y\}\times [0,t])=  n^{-1} \int_{0}^{t_n \wedge t}  \psi_e(t_n-s) \bar \zeta^n
\left((\bdelta_x, \bdelta_y)  \mid t_n-s, \bar L^n(a(t_n-s))\right) ds$, \eqref{eq:525}\\
\hline
$\check\bfL^n$ &  time-reversal of $\bar{L}^{n}$, \eqref{eq:526}\\
\hline
$\check \bfL^n_{\text{pair}}$ &  time-reversal of $\bar{L}^{n}_{\text{pair}}$, \eqref{eq:526} \\
\hline
$\check \Lambda^n$ & time-reversal of $\bar\Lambda^n$, \eqref{eq:527}  \\
\hline
$\check \Xi^n$ & time-reversal of $\bar{\Xi}^n$, below \eqref{eq:527} \\
\hline
\end{tabular}
}

\PZ{
    \subsection{Notation Used Primarily in Section \ref{sec:llb}}\label{sec:notationtablepart2}

\renewcommand{\arraystretch}{1.5}
\begin{tabular}{|c|p{0.9\textwidth}|}
\hline
\textbf{Symbol} & \textbf{Description} \\
\hline
$T$  & sufficiently large length of time,  \eqref{eq:sizeofT}\\
\hline
$\hat \eta$ &  piecewise constant   control, \eqref{eq:mhat3defn} and  \eqref{eq:etahatmhatcq}\\
\hline
$\hat M$ & trajectory associated with $\hat \eta$, \eqref{eq:mhat3defn} and  \eqref{eq:etahatmhatcq}  \\
\hline
$q$ & $q = M^1(T)$,  Section \ref{subsec:non-degen} and \eqref{eq:etahatmhatcq} \\
\hline
$Q$ & irreducible  transition kernel  with stationary distribution $q$, \eqref{eq:Qdef1}\\
\hline
$\beta^j$ & transition kernel given by $\beta^j(x,y) = \hat \eta(x,y \mid cj)$, \eqref{eq:betjdef4343}\\
\hline
\end{tabular}
}

  \section*{Acknowledgments}
 {We thank the two referees for a careful review of this work which led to a substantial improvement in the presentation of the results.} AB was supported in part by the NSF (DMS-2152577, DMS-2134107).  PZ was supported in part by a dissertation completion fellowship from UNC's graduate school. Later work of PZ was funded by the Deutsche Forschungsgemeinschaft (DFG, German Research Foundation) under Germany's Excellence Strategy EXC 2044 –390685587, Mathematics Münster: Dynamics–Geometry–Structure.

 \bibliographystyle{plain}
\bibliography{cas-refs}

\vspace{\baselineskip}

\scriptsize{\textsc{\noindent A. Budhiraja \newline
Department of Statistics and Operations Research\newline
University of North Carolina\newline
Chapel Hill, NC 27599, USA\newline
email:  budhiraj@email.unc.edu
\vspace{\baselineskip} }

\textsc{\noindent A. Waterbury\newline
Department of Mathematics,\newline
 Denison University\newline
 Granville,  OH 43023, USA\newline
email: waterburya@denison.edu \vspace{\baselineskip}  }

\scriptsize{\textsc{\noindent P. Zoubouloglou\newline
Institute for Mathematical Stochastics \newline
University of Münster\newline
Münster, 48149 Germany \newline
email:  p.zoubouloglou@uni-muenster.de }
}
  
\end{document}